\documentclass{article}

\usepackage{amsmath,amsthm}
\usepackage{latexsym}
\usepackage{amssymb}
\usepackage{colonequals}
\usepackage{todonotes}
%
%
\newtheorem{thm}{Theorem}[section]
\newtheorem{la}[thm]{Lemma}
\newtheorem{Defn}[thm]{Definition}
\newtheorem{Remark}[thm]{Remark}
\newtheorem{Conj}[thm]{Conjecture}
\newtheorem{prop}[thm]{Proposition}
\newtheorem{cor}[thm]{Corollary}
\newtheorem{Example}[thm]{Example}
\newenvironment{defn}{\begin{Defn}\rm}{\end{Defn}}

\newenvironment{rem}{\begin{Remark}\rm}{\end{Remark}}

\newtheorem{Number}[thm]{\!\!}
\newenvironment{numba}{\begin{Number}\rm}{\end{Number}}

\newcommand{\cO}{{\mathcal O}}

\newcommand{\cE}{{\mathcal E}}
\newcommand{\cL}{{\mathcal L}}
\newcommand{\cC}{{\mathcal C}}

\newcommand{\cF}{{\mathcal F}}
\newcommand{\cB}{{\mathcal B}}
\newcommand{\cS}{{\mathcal S}}

\newcommand{\cD}{{\mathcal D}}

\newcommand{\ve}{\varepsilon}
\newcommand{\R}{{\mathbb R}}
\newcommand{\N}{{\mathbb N}}

\newcommand{\Sph}{{\mathbb S}}
\newcommand{\mto}{\mapsto}
\newcommand{\sub}{\subseteq}
\newcommand{\wt}{\widetilde}

\newcommand{\wb}{\overline}
\newcommand{\bL}{{\mathcal L}}
\DeclareMathOperator{\id}{id}

\DeclareMathOperator{\pr}{pr}

\DeclareMathOperator{\Evol}{Evol}
\DeclareMathOperator{\Lf}{\mathbf{L}}

\newcommand{\coloneq}{\colonequals}
\newcommand{\cg}{{\mathfrak g}}

\newcommand{\Frechet}{Fr\'{e}chet }
\DeclareMathOperator{\Mes}{\mathcal{M}} 
\DeclareMathOperator{\VC}{VC} 
\DeclareMathOperator{\BC}{BC} 
\DeclareMathOperator{\Var}{Var} 
\DeclareMathOperator{\Md}{\Mes_{\mathrm{RN}}}
\DeclareMathOperator{\Mdna}{\Mes_{\mathrm{RN,na}}} 
\DeclareMathOperator{\BV}{BV} 
\DeclareMathOperator{\op}{op}
\DeclareMathOperator{\rc}{rc}
\DeclareMathOperator{\sta}{st}
\DeclareMathOperator{\AC}{AC}
\DeclareMathOperator{\evol}{evol}
\DeclareMathOperator{\diam}{diam}
\newcommand{\st}{{\hspace*{.3mm}\sta}}
\newcommand{\bI}{{\mathbb{I}}}

\begin{document}
$\;$\\[-27mm]
\begin{center}
{\bf\Large Manifolds of continuous BV-functions
and\\[2.3mm]
vector measure regularity
of Banach-Lie groups}\\[6mm]
{\bf Helge Gl\"{o}ckner,\footnote{Universit\"{a}t Paderborn, Institut für Mathematik, Warburger Str.\ 100,
33098 Paderborn,
Germany; glockner@math.uni-paderborn.de, asuri@math.uni-paderborn.de}
Alexander Schmeding,\footnote{NTNU, Department of Mathematical Sciences,
1338 Sentralbygg 2 Gl\o{}shaugen, Trondheim;
alexander.schmeding@ntnu.no,
ORCID: 0000-0001-9463-3674.}
and Ali Suri\hspace*{.1mm}${}^1$ }\vspace{2mm}
\end{center}
\begin{abstract}
\hspace*{-4.8mm}We construct a smooth Banach manifold $\BV([a,b],M)$
whose elements are suitably-defined
functions $f\colon [a,b]\to M$
of bounded variation with values in a smooth Banach manifold~$M$
which admits a local addition.
If the target manifold is a Banach--Lie group $G$, with Lie algebra $\cg$,
we obtain a Banach--Lie group $\BV([a,b],G)$
with Lie algebra $\BV([a,b],\cg)$.
Strengthening known regularity properties of Banach--Lie groups,
we construct a smooth evolution map
from a Banach space of $\cg$-valued vector measures
on $[0,1]$ to $\BV([0,1],G)$.\vspace{3mm}
\end{abstract}
\textbf{MSC 2020 subject classification:}
22E65,
34A06,
58D15 (primary);
%
%
%
34A12,
%
45G10,
%
46E40,
%
46G10,
%
46T10
(secondary)\\[2.3mm]
\textbf{Key words:}
Vector measure, bounded variation, BV-function, Radon--Nikodym theorem,
Banach manifold, local addition,
Lie group, regularity, logarithmic derivative,
evolution, product integral, non-atomic measure\vspace{-1mm}
\tableofcontents\vspace{4mm}

\section{Introduction and statement of main results}
By the classical theory of
differential equations
on Banach manifolds, each Banach--Lie group $G$
is a regular Lie group \cite{Mil,Nee},
and it even is $L^1$-regular in the sense
that each Bochner-integrable function $\gamma\colon [0,1]\to \cg$ to its Lie algebra has an absolutely continuous
evolution
$\eta\colon [0,1]\to G$
which depends smoothly on $[\gamma]\in L^1([0,1],\cg)$
(see \cite{Mea}).
We show that Banach--Lie groups have an even stronger
regularity property, which we call \emph{vector measure
regularity}. Suitable $\cg$-valued vector measures~$\mu$
on $[0,1]$ give rise to evolutions $\eta\colon [0,1]\to G$
which are functions of bounded variation,
with smooth dependence on~$\mu$.
To achieve this goal,
topics are addressed which are
irrespective of Lie theory:
\begin{itemize}
\item
For real numbers $a<b$,
we define and study $\BV$-functions $[a,b]\to M$
with values in a smooth Banach manifold~$M$.
\item
For suitable $M$,
we construct a smooth manifold structure
on the set $\BV([a,b],M)$
of all $M$-valued $\BV$-functions.
\item
We define and study differential
equations whose solutions are $\BV$-functions
with values in a Banach space, or in a Banach
manifold.
\end{itemize}
Let us become more specific and provide background and context.\\[2.3mm]
{\bf Manifolds of mappings.}
Manifolds of mappings are important tools in global analysis
and geometry.
It is a classical fact
that the set $C^k(N,M)$ of $C^k$-maps
$f\colon N\to M$
can be made a smooth manifold
for each compact manifold~$N$ (which may have a boundary),
finite-dimensional smooth manifold~$M$, and $k\in \N_0\cup\{\infty\}$.
More generally, $M$ can be a smooth manifold modeled on locally convex spaces
which admits a local addition
(see \cite{Eel,Mic,KaM,AGS}
and the references therein).
Analogous constructions are possible for mappings
of other classes of regularity, like $H^s$-maps
(cf.\ \cite{EaM,Pal}).\\[2.3mm]
For a smooth manifold~$M$
(modeled on a locally convex space) and $p\in [1,\infty]$,
the set $\AC_{L^p}([0,1],M)$ of $M$-valued absolutely
continuous functions with $L^p$-derivatives in local charts
can be made a smooth manifold, if~$M$
admits a local addition (see \cite{Pin};
for Hilbert manifolds, see \cite{Sme};
for $G$ a Lie group, see \cite{Mea,Nik}).
The subset of $f\in \AC_{L^p}([0,1],M)$
with $f(0)=f(1)$ is a submanifold which can be identified with
$\AC_{L^p}(\Sph_1,M)$.
For $M$ a compact Riemannian manifold, $\AC_{L^2}(\Sph_1,M)$
has first been used in \cite{FaK,Kli}
to show the existence of closed geodesics.
It is also used in string topology (see \cite{Ste}
and
its references).\\[2.3mm]
{\bf Manifolds and Lie groups of BV-functions.}
Beyond absolutely continuous functions,
scalar-valued
functions of bounded variation are a classical
topic of analysis with specific applications;
$\BV$-functions with values in Banach spaces have been treated in~\cite{MaV}.
We specialize their approach to continuous functions.
Moreover, for $M$ a manifold modeled on a Banach space~$E$,
we define
a class of $M$-valued functions
$f\colon [0,1]\to M$ of bounded variation
($\BV$-functions, in short).
The functions in the class are continuous;
in local charts,
they are distribution functions of suitable
non-atomic $E$-valued vector measures.
We show:
\begin{thm}\label{fithm}
$\BV([0,1],M)$ can be turned into a smooth
Banach manifold for each smooth Banach manifold~$M$
admitting a local addition,
using modeling spaces and local charts
as described in Proposition~{\rm\ref{BVmfd-struct}}.
\end{thm}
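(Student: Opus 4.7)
The plan is to adapt the standard manifold-of-mappings construction (as in \cite{Mic,KaM,AGS}) to the $\BV$-setting, using the local addition $\Sigma\colon \cO\to M$ defined on an open neighborhood $\cO\sub TM$ of the zero section. Around each fixed $f\in \BV([0,1],M)$, the modeling Banach space will consist of $\BV$-sections of the pullback bundle $f^*TM\to [0,1]$, and the chart centered at $f$ will send a section $X$ to the pointwise local addition $t\mto \Sigma(X(t))$.

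First I would establish a local-triviality input for the pullback bundle. Since $f$ is, by definition, continuous and $[0,1]$ is compact, one can choose a partition $0=t_0<t_1<\cdots<t_n=1$ and charts $\kappa_i\colon U_i\to V_i\sub E$ of $M$ with $f([t_{i-1},t_i])\sub U_i$. Over each $[t_{i-1},t_i]$ the pullback bundle $f^*TM$ trivializes via $\kappa_i$, so a $\BV$-section amounts to a continuous $\BV$-function into $E$ in the sense already defined in the paper. Gluing these pieces continuously at the nodes $t_i$ yields the modeling space $\BV_f\coloneq \BV([0,1], f^*TM)$, which becomes a Banach space under the sum of the $\BV$-norms in the local trivializations; independence up to equivalent norms of both the partition and the chart choices must be verified using change-of-chart arguments for the Banach-valued $\BV$-calculus that was set up earlier.

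Next, using the open neighborhood $\cO\sub TM$ on which $\Sigma$ is a diffeomorphism onto its image, I would take as chart domain $\cU_f\sub\BV_f$ the set of $\BV$-sections $X$ with $X(t)\in\cO$ for all $t\in[0,1]$ (an open condition because $X$ is continuous and $[0,1]$ compact), and define $\Phi_f(X)\coloneq \Sigma\circ X$. The central task is then to prove smoothness of the transition $\Phi_g^{-1}\circ\Phi_f$ between two such charts. In the local trivializations provided by the partitions attached to $f$ and $g$ (passing to a common refinement), this transition is a Nemytskii-type operator $X\mto h\circ(\id,X)$ with $h$ a smooth map between open subsets of products of intervals and copies of the model space~$E$. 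The main obstacle will be the smoothness of such composition operators on the $\BV$-spaces of Banach-valued functions; I would attack it by establishing, beforehand, that composition with a smooth map induces a smooth map between $\BV$-spaces, exploiting the compactness of the image of each continuous $\BV$-function $X$ and standard estimates for the $\BV$-norm of $h\circ X$ in terms of the $\BV$-norm of $X$ and Lipschitz bounds for~$h$ on compacta.

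Finally, I would verify that the chart domains cover $\BV([0,1],M)$ (the zero section lies in each $\cU_f$ and maps to $f$), that the transition domains are open, and that the resulting initial topology is Hausdorff (separation follows from continuity of all point evaluations on every chart, together with Hausdorffness of $M$). These remaining checks are routine once the smoothness of the transition maps has been secured, and together with the identification of $\BV_f$ and $\Phi_f$ from Proposition~\ref{BVmfd-struct} they yield the smooth Banach manifold structure claimed.
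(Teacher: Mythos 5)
Your proposal is correct and follows essentially the same route as the paper: your modeling space of $\BV$-sections of $f^*TM$ is (up to the canonical identification) the paper's $\Gamma_f$, your charts $X\mapsto\Sigma\circ X$ are the paper's $\phi_f$, and you correctly reduce the whole matter to smoothness of superposition operators $X\mapsto h\circ(\id,X)$ on Banach-valued $\BV$-spaces, which is exactly the content of Proposition~\ref{prop:star_smooth} (proved there via the $\BV$-chain rule, a Seeley extension, and induction on the order of differentiability). The only point to keep in mind is that your ``standard estimates'' step really does require the Moreau--Valadier chain rule adapted to non-atomic measures, not just Lipschitz bounds on compacta.
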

\noindent
Each Banach--Lie group admits a smooth local
addition (see \cite[C2]{Schm}), whence Theorem~\ref{fithm} applies.
We show (see Theorem \ref{prop:Lie_grp}):
\begin{thm}\label{thm-is-Lie}
For each Banach--Lie group~$G$
with Lie algebra $\cg$,
the pointwise
group operations make
$\BV([0,1],G)$ a Banach--Lie group
whose Lie algebra is isomorphic to
$\BV([0,1],\cg)$ with the pointwise
Lie bracket as a topological Lie algebra.
\end{thm}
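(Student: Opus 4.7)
The plan is to deduce the Lie group structure from Theorem~\ref{fithm} together with an ``$\Omega$-lemma'' for $\BV$-manifolds, and then to identify the Lie algebra by differentiating the group operations at the constant map~$e$.

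First, since the Banach--Lie group $G$ admits a smooth local addition, Theorem~\ref{fithm} furnishes a smooth Banach manifold structure on $\BV([0,1],G)$. The pointwise group operations are well defined on this set: if $f,g\colon [0,1]\to G$ are continuous $\BV$-functions (so in local charts they are distribution functions of non-atomic $\cg$-valued vector measures), then so are $fg$ and $f^{-1}$, because continuity is preserved and, in a neighbourhood of the identity, pushing through the smooth maps $m\colon G\times G\to G$ and $\iota\colon G\to G$ composed with the exponential chart sends $\cg$-valued non-atomic vector measures to $\cg$-valued non-atomic vector measures. Thus the group structure exists at the set-theoretic level; what remains is smoothness.

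Next, I would prove smoothness via the composition principle suggested by the chart description in Proposition~\ref{BVmfd-struct}. The key technical lemma needed is an $\Omega$-lemma of the form: for any smooth map $\varphi\colon M_1\to M_2$ of Banach manifolds admitting local additions, the push-forward $\varphi_*\colon \BV([0,1],M_1)\to \BV([0,1],M_2)$, $f\mapsto \varphi\circ f$, is smooth. Granted this, together with the canonical diffeomorphism $\BV([0,1],G\times G)\cong \BV([0,1],G)\times \BV([0,1],G)$ (which should drop out of the modelling spaces in Proposition~\ref{BVmfd-struct}, as $\BV$ is built pointwise in local charts), smoothness of $m\colon G\times G\to G$ and $\iota\colon G\to G$ lifts to smoothness of the pointwise operations on $\BV([0,1],G)$. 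This $\Omega$-lemma is the step I expect to be the main obstacle: it requires showing that composition with a smooth map of Banach manifolds preserves the local chart description of $\BV$-functions and depends smoothly on the function, which in turn reduces (via the local addition) to an $\Omega$-lemma for Banach-space valued $\BV$-functions, i.e.\ smoothness of $\BV([0,1],E_1)\supseteq U\to \BV([0,1],E_2)$, $f\mapsto \psi\circ f$, for $\psi\colon U_0\to E_2$ smooth between open sets in Banach spaces. The continuous/non-atomic setting should make this manageable, using the chain rule together with estimates on the variation of $\psi\circ f$.

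Finally, for the Lie algebra: the chart around the neutral element $e\in \BV([0,1],G)$ (the constant function) is modelled on $\BV([0,1],\cg)$ by construction, so $T_e\BV([0,1],G)\cong \BV([0,1],\cg)$ canonically as topological vector spaces. To compute the bracket, I would differentiate the conjugation $c_f(g)=fgf^{-1}$ at $g=e$ to obtain the adjoint representation, note that because the group operations are pointwise the resulting map $\mathrm{Ad}\colon \BV([0,1],G)\to \GL(\BV([0,1],\cg))$ is given pointwise by $(\mathrm{Ad}(f)X)(t)=\mathrm{Ad}_G(f(t))X(t)$, and then differentiate once more at $f=e$. The chain rule yields $[X,Y](t)=[X(t),Y(t)]_{\cg}$, so the Lie bracket of $\BV([0,1],G)$ is the pointwise bracket on $\BV([0,1],\cg)$; continuity of this bracket on $\BV([0,1],\cg)$ (needed for the topological Lie algebra statement) follows from the continuity of $[\,\cdot\,,\,\cdot\,]_{\cg}$ and the standard estimates on pointwise products of $\BV$-functions. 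This completes the identification claimed in Theorem~\ref{thm-is-Lie}.
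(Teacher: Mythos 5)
Your proposal follows essentially the same route as the paper: the manifold structure comes from Theorem~\ref{fithm}, smoothness of the pointwise operations is obtained from the product identification $\BV([0,1],G\times G)\cong\BV([0,1],G)\times\BV([0,1],G)$ (the paper's Lemma~\ref{lem:product_mfd}) together with the smoothness of push-forwards by smooth maps (your ``$\Omega$-lemma'' is exactly the paper's Proposition~\ref{prop:smooth_pushfwd}, proved by localizing to the Banach-space-valued case of Proposition~\ref{prop:star_smooth} as you anticipate), and the Lie algebra is identified with $\Gamma_{\mathbf 1}(TG)=\BV([0,1],\cg)$ via the tangent space at the constant identity. The only cosmetic difference is in pinning down the bracket: you differentiate conjugation twice to see that $\mathrm{Ad}$ acts pointwise, while the paper observes that each point evaluation $\ve_x$ is a smooth group homomorphism, so $\Lf(\ve_x)$ is a Lie algebra homomorphism and the isomorphism $v\mapsto(\Lf(\ve_x)(v))_x$ automatically carries the bracket to the pointwise one; both arguments are correct.
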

\noindent
Among others, we have applications in mind in the regularity
theory of infinite-dimensional Lie groups.\\[2.3mm]
{\bf Regularity properties of Lie groups.}
For a Lie group~$G$ modeled on a locally convex space,
consider the right action
\begin{equation}\label{leftact}
\sigma\colon TG\times G \to TG,\quad (g,v)\mto v.g:=TR_g(v)
\end{equation}
of $G$ on its tangent bundle, where $R_g\colon G\to G$, $x\mto xg$.
Let $e$ be the neutral element of~$G$ and
$\cg:=\Lf(G):=T_eG$ be its Lie algebra.
For $k\in \N_0\cup\{\infty\}$,
a Lie group~$G$ modeled on a locally
convex space is called \emph{$C^k$-regular}
if the initial value problem
\begin{equation}\label{iniva}
\dot{\eta}(t)=\gamma(t).\eta(t),\quad \eta(0)=e
\end{equation}
has a (necessarily unique) $C^{k+1}$-solution $\Evol(\gamma):=\eta\colon [0,1]\to G$
for each $C^k$-curve $\gamma\colon [0,1]\to \cg$
and the map $\Evol\colon C^k([0,1],\cg)\to C^{k+1}([0,1],G)$
is smooth\footnote{Which is equivalent
to smoothness of $\Evol\colon C^k([0,1],\cg)\to C([0,1],G)$
and to smoothness of the time-$1$-map
$\evol\colon C^k([0,1],\cg)\to G$,
$\gamma\mto \Evol(\gamma)(1)$, cf.\ \cite{Sem,GaN}.}
(see \cite{Sem,GaN}).
Then $G$ is $C^\ell$-regular for all $\ell\geq k$.
The $C^\infty$-regular Lie groups
are simply called \emph{regular};
for Lie groups with sequentially complete
modeling spaces, they were first considered by Milnor~\cite{Mil}.
Regularity is an important tool in the theory of infinite-dimensional
Lie groups (see \cite{Mil,Nee,GaN}, cf.\ \cite{KaM}).\\[2.3mm]
For $p\in [1,\infty]$
and $G$ a Lie group modeled on a sequentially
complete locally convex space, $\AC_{L^p}([0,1],G)$
is a Lie group (see \cite{Nik}, cf.\ \cite{Mea}).
The Lie group~$G$ is called \emph{$L^p$-regular}
if (\ref{iniva}) has a Carath\'{e}odory
solution $\Evol([\gamma]):=\eta\in \AC_{L^p}([0,1],G)$
for all $[\gamma]\in L^p([0,1],\cg)$
and $\Evol\colon L^p([0,1],\cg)\to \AC_{L^p}([0,1],G)$
is smooth (see \cite{Mea,Nik} for details).\footnote{It is equivalent to require
that $\Evol\colon L^p([0,1],\cg)\to C([0,1],G)$ is smooth,
see \cite{Mea,Nik}.}
Each $L^p$-regular Lie group is $L^q$-regular for all
$q\geq p$ and it is $C^0$-regular.
See \cite{Mea,Nik,GaH} for further information.
By the preceding,
there is a hierarchy of regularity properties;
for a Lie group $G$ modeled on a locally convex space~$E$,
we have\\[2mm]
\hspace*{10mm}$L^1$-reg.\ $\Rightarrow$ $L^p$-reg.\ $\Rightarrow$
$L^\infty$-reg.\
$\Rightarrow$ $C^0$-reg.\ $\Rightarrow$ $C^k$-reg.\ $\Rightarrow$
regular,\\[2mm]
assuming $E$ sequentially complete for the measurable
regularity properties.\pagebreak

$\;$\\[-12mm]
Strengthened regularity properties
are useful. For example,
the Trotter Product Formula and Commutator Formula
hold for each $L^\infty$-regular
Lie group (cf.\ \cite{Mea});
this remains valid for $C^0$-regular
Lie groups (cf.\ \cite{Hn2}).\\[2.3mm]
Each Banach--Lie group is $L^1$-regular~\cite{Mea}.
We'd like to get beyond this
and connect functions
$\eta\in \BV([0,1],G)$ to vector measures $\mu$
in a suitable Banach space $\Mdna([0,1],\cg)$
of $\cg$-valued vector measures.\\[2.3mm]
{\bf Vector measure regularity for Banach--Lie groups.}
Let $G$ be a Banach--Lie group with Lie algebra
$\cg$. The smooth right action $\sigma$ from
(\ref{leftact}) restricts to a smooth mapping
\begin{equation}\label{partial-action}
f\colon \cg\times G\to TG,\quad (v,g)\mto v.g=T_eR_g(v)
\end{equation}
such that $f(\cdot,g)\colon \cg\to T_gG$
is linear for each $g\in G$.
If $\gamma\in L^p([0,1],\cg)$
is replaced with suitable $\cg$-valued
vector measures $\mu$ on $[0,1]$,
we are able to give a sense to an initial
value problem
\begin{equation}\label{new-iniva}
\dot{\eta}=f_*(\mu,\eta),\quad \eta(0)=e
\end{equation}
that replaces~(\ref{iniva});
see \ref{situ-m-2}.
Its solutions are functions
$\eta\colon [0,1]\to G$ of bounded variation.
\begin{thm}\label{Banach-Lie-VM-reg}
For each Banach--Lie group $G$,
the initial value problem {\rm(\ref{new-iniva})}
has a necessarily unique $\BV$-solution
$\Evol(\mu):=\eta\colon [0,1]\to G$
for each $\mu\in \Mdna([0,1],\cg)$ and the map
$\Evol\colon \Mdna([0,1],\cg)\to \BV([0,1],G)$ is smooth.
\end{thm}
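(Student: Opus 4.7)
The plan is to establish Theorem~\ref{Banach-Lie-VM-reg} in three steps: a local reduction to a fixed-point equation in a chart of $G$, smoothness of the solution as a function of $\mu$ via a parametric implicit function argument, and globalization using the Lie group structure.

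\emph{Local fixed-point formulation.}
I would start by working in a chart $\phi\colon V\to U\sub\cg$ around $e\in G$ (e.g.\ an exponential chart) in which the partial right action $f\colon\cg\times G\to TG$ of~(\ref{partial-action}) takes a manageable form: trivializing $TG$ by right translation, $f$ is represented by a smooth map $F\colon U\to L(\cg,\cg)$ with $F(0)=\id$. Writing $\eta=\phi^{-1}\circ\xi$ for a curve $\xi\colon[0,1]\to U$ with $\xi(0)=0$, the initial value problem~(\ref{new-iniva}) translates into an integral equation of the form
\[
\xi(t)\;=\;\int_{[0,t]} F(\xi(s))\,d\mu(s),
\]
interpreted via the vector-measure integration theory developed earlier in the paper. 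For $\mu$ of sufficiently small total variation, the right-hand side defines a contraction $T_\mu$ on a closed ball in the Banach space of continuous BV-curves $[0,1]\to\cg$ that vanish at $0$, yielding a unique local BV-solution $\xi_\mu$ by the Banach fixed-point theorem.

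\emph{Smoothness in $\mu$.}
The central step is to show that $(\mu,\xi)\mto T_\mu(\xi)$ is smooth jointly on a neighborhood of $(0,0)$ into continuous BV-curves, with $\partial_\xi T_\mu$ a strict contraction at $\mu=0$. This reduces to verifying joint smoothness of the vector-measure integral $(\mu,\xi)\mto\int_{[0,\cdot]}F(\xi(s))\,d\mu(s)$, which should follow from the smoothness of $F$ combined with the bilinearity and continuity of integration against elements of $\Mdna([0,1],\cg)$ in both the integrand and the measure. Granted this, the parametric implicit function theorem in Banach spaces gives that $\mu\mto\xi_\mu$ is smooth on a neighborhood of $0\in\Mdna([0,1],\cg)$. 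I expect this verification to be the main obstacle: the BV-topology on the curves, the Radon--Nikodym norm on $\mu$, and the smoothness properties of the composition $\xi\mto F\circ\xi$ must be carefully reconciled, and the non-atomicity of $\mu$ must be used to ensure that the resulting $\xi_\mu$ is genuinely continuous (not merely right-continuous).

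\emph{Globalization and conclusion.}
For arbitrary $\mu\in\Mdna([0,1],\cg)$, non-atomicity of $|\mu|$ allows a partition $0=t_0<t_1<\cdots<t_N=1$ on each subinterval of which $|\mu|$ is small enough for the local theory to apply. I would solve the IVP on $[0,t_1]$ to obtain $\eta_1$; on $[t_1,t_2]$ solve the IVP driven by the restricted measure starting at $e$, then right-translate by $\eta_1(t_1)$, using that the equation is right-equivariant; iterate $N$ times and concatenate. Uniqueness follows from local uniqueness on each piece via a Gronwall-type estimate in the chart. Smooth dependence of $\Evol(\mu)$ on $\mu$ as a map into $\BV([0,1],G)$ is then inherited from the local smoothness: each concatenation step is a composition of the locally smooth solution map with the smooth pointwise group operations on $\BV([0,1],G)$ provided by Theorem~\ref{thm-is-Lie}, and the target manifold structure from Theorem~\ref{fithm} is compatible with these chart-wise descriptions.
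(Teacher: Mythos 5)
Your proposal follows essentially the same route as the paper: a contraction-mapping argument in a chart for measures of small total variation (Proposition~\ref{carath-ex}), smooth parameter dependence via a parametric fixed-point theorem (\ref{pardep-fp}), and globalization by subdividing $[0,1]$ using non-atomicity (Lemma~\ref{so-subdivide}), solving piecewise, right-translating by the accumulated endpoint values, and concatenating with the smooth group operations of $\BV([0,1],G)$. The only cosmetic difference is that the paper runs the contraction on a closed subset of $(C([0,1],\cg),\|\cdot\|_\infty)$ and observes that the fixed point is automatically of bounded variation, rather than contracting directly in the BV-norm.
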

\noindent
To each $\BV$-function $\eta\colon [0,1]\to G$,
a right logarithmic derivative
$\delta^r(\eta)\in \Mdna([0,1],\cg)$
can be associated
such that $\delta^r\!\colon \!\!\BV([0,1],G)\! \to \! \Mdna([0,1],\cg)$
is a left inverse of
$\Evol$
(cf.\ Proposition~\ref{props-logder}).
We deduce that
the map
\[
\Mdna([0,1],\cg)\times G\to \BV([0,1],G),
\;\;
(\mu,g)\mto \Evol(\mu)g
\]
is a $C^\infty$-diffeomorphism, for each Banach--Lie group $G$ (see Proposition~\ref{semid-again}).\\[2.3mm]
\noindent
{\bf Applications in rough path theory.}
Paths of bounded variation with values in Banach spaces also feature prominently in the theory of rough paths (cf.\ \cite{FaV10} or the more recent \cite{FaH14}, which however uses the dual picture of H\"older continuous maps). Rough path theory analyzes differential equations driven by irregular signals, in particular for paths of finite $p$-variation. Recall that $\BV$-paths are $1$-variation paths, whence they belong to the most regular class of paths considered in the theory. While the $\BV$-paths are too simple to necessitate the full power of rough path theory, they are interesting for analysis due to their connection with the signature. Associating to a $\BV$-path the family of iterated integrals of the path against itself, one obtains a path in the tensor algebra (see \cite[Definition 7.2]{FaV10}) and it turns out that up to a natural equivalence relation the $\BV$-path is uniquely determined by the signature  \cite{HaL10}. Moreover, the signature takes values in an infinite-dimensional subgroup of the tensor algebra (which unfortunately is at best a \Frechet algebra but not a Banach algebra, see, e.g., \cite[Chapter 8]{Schm} or \cite{GaNaS22} for the Banach case). Truncating the signature at the $N$-th tensor level yields the truncated signature of a path and this path is a $\BV$-path taking values in $\mathcal{G}^N(E)$, the free nilpotent Lie group of step $N$ over the Banach space $E$, \cite[Theorem 7.30]{FaV10}. Hence the truncated signature maps $E$-valued $\BV$-paths to the Lie group $\BV([0,1],\mathcal{G}^N(E))$ constructed in Theorem~\ref{thm-is-Lie}.
We remark that there is an intricate relation between the signature and the Lie group of $\BV$-paths and its regularity (see again \cite[Chapter 8]{Schm} for more information).
Signatures have many applications (see, e.g., \cite{Sig1,Sig2,Sig3} and the references therein) and it would be interesting to investigate the implications of our results for the analysis of signatures. Note, however, that we only treat $\BV$-paths but not $p$-variation paths for $p>1$. Our results are therefore too restrictive
to tackle finer
theoretical questions arising at the interface of rough path theory and (infinite-dimensional) differential geometry and Lie theory.\\[2.3mm]
{\bf Structure of the article.}
We first fix general notation and describe
the class of vector measures and
vector-valued $\BV$-functions
which are suitable for our goals (Section~\ref{secprels}).
Basically, the concepts are as in~\cite{MaV},
where a Chain Rule was established for
the composition $f\circ g$ of a scalar-valued continuously
Fr\'{e}chet differentiable function $f$ and
a vector-valued $\BV$-function~$g$.
But we restrict attention to non-atomic vector measures
and continuous $\BV$-functions,
whence the chain rule of~\cite{MaV} simplifies to a form
which carries over to $f\circ g$ for vector-valued~$f$
(Proposition~\ref{chainrule}, Corollary~\ref{chainrule2}). This is essential, as
it enables $\BV$-functions $[a,b]\to M$
to Banach manifolds to be defined.
In Section~\ref{sec-bilin-meas},
we give a streamlined exposition of vector measures
with a density,
with a view towards our
applications
(see \cite{Din} for a comprehensive treatment).
Our notation for $f\mu$ is $f\odot_\beta \mu$,
if $\beta\colon E_1\times E_2\to F$ is a continuous bilinear
map between Banach spaces, $\mu\colon \cS\to E_2$
a vector measure of bounded variation
on a measurable space $(X,\cS)$
and $f\colon X\to E_1$
a totally measurable function in the sense of
\cite[\S6.2, Definition~2]{Din}
(called an $\cL^\infty_{\rc}$-function in \ref{L-infty-concept}).\footnote{The concepts coincide
in this situation by \cite[Remark~2 in \S9.1]{Din}.}
A special case, denoted $f_*(\mu,\gamma)$,
is introduced and studied in Section~\ref{sec-the-rhs};
such measures
appear later as the right-hand sides
of differential equations.
Continuity of superposition operators and composition
operators on spaces of vector-valued $\BV$-functions
(and open subsets) are studied in Section~\ref{sec-superpo},
and corresponding operators between Banach spaces $\Gamma_f
\sub \BV([a,b],TM)$ for real numbers $a<b$
and $f\in \BV([a,b],M)$
(Section~\ref{sec-model}).
They enable a smooth manifold structure
to be defined on the set $\BV([a,b],M)$
of $M$-valued $\BV$-functions, for each Banach manifold~$M$ admitting a local addition (see Section~\ref{sect:BV-mfd}).
Its modeling space at $f\in \BV([a,b],M)$
is $\Gamma_f$. Some technical aspects
concerning the tangent bundle
of $\BV([a,b],M)$ have been relegated to
an appendix (Appendix~\ref{app:details_tangent}).
Superposition operators and other relevant maps
between manifolds of $\BV$-functions are studied in Section~\ref{sec-canonical}.
Notably, they allow $\BV([a,b],G)$ to be considered
as a Banach--Lie group for each Banach--Lie group~$G$.
In Section~\ref{sec-BV-ode}, we introduce
differential equations for $\BV$-function both locally in
open subsets of a Banach space and globally on a Banach manifold.
Local existence, local uniqueness, and parameter dependence
of solutions are addressed. This enables us to
define and establish
vector measure regularity for Banach--Lie groups
(Section~\ref{sec-vm-reg}).\\[2.3mm]
\textbf{Acknowledgements.}
The second author would like to thank the mathematical\linebreak
institute at the University of Paderborn for its hospitality while conducting the work presented in this article.
He was supported by Norwegian Research council, IS-DAAD-Forskerutveksl. Norge-Tyskland project number 318974.
The first and third authors were supported
by Deutsche Forschungsgemeinschaft (DFG),
project~517512794.
\section{Preliminaries and notation}\label{secprels}
\noindent
We now introduce concepts for later use and
collect basic facts.
\subsection*{Basic notation}
\noindent
We write $\N:=\{1,2,\ldots\}$
and $\N_0:=\N\cup\{0\}$.
We abbreviate
``Hausdorff locally convex topological
$\R$-vector space''
as ``locally convex space.''
If $(E,\|\cdot\|_E)$
is a normed space, $x\in E$ and $r>0$,
we write $B^E_r(x):=\{y\in E\colon \|y-x\|_E<r\}$
and $\wb{B}^E_r(x):=\{y\in E\colon \|y-x\|_E\leq r\}$
for balls.
\subsection*{Infinite-dimensional calculus}
\noindent
We work in the setting of differential calculus
going back to Andr\'{e}e Bastiani~\cite{Bas}
(see \cite{Res, GaN, Ham, Mic, Mil, Nee, Schm}
for discussions in
varying generality),
also known as Keller's $C^k_c$-theory~\cite{Kel}.
\begin{numba}\label{defn-Ck}
Consider locally convex spaces $E$, $F$
and a map $f\colon U\to F$
on an open subset $U\sub E$.
Write
\[
(D_yf)(x):=df(x,y):=\frac{d}{dt}\Big|_{t=0}f(x+ty)
\]
for the directional derivative of~$f$ at $x\in U$
in the direction $y\in E$, if it exists.
Let $k\in \N_0\cup\{\infty\}$.
If $f$ is continuous, the iterated directional derivatives
\[
d^jf(x,y_1,\ldots, y_j):=(D_{y_j}\ldots D_{y_1}f)(x)
\]
exist for all $j\in\N_0$ such that
$j\leq k$, $x\in U$ and $y_1,\ldots, y_j\in E$,
and the maps $d^jf\colon U\times E^j\to F$
are continuous, then $f$ is called~$C^k$.
If $U$ may not be open,
but has dense interior~$U^0$
and is locally convex in the sense
that each $x\in U$ has a convex neighborhood
in~$U$, following~\cite{GaN}
a map $f\colon U\to F$ is called $C^k$
if it is continuous,
$f|_{U^0}$ is $C^k$
and $d^j(f|_{U^0})$ has a continuous
extension $d^jf\colon U\times E^j\to F$
for all $j\in \N_0$ with $j\leq k$.
The $C^\infty$-maps are also called~\emph{smooth}.
\end{numba}
\noindent
Sometimes, we shall use mappings between open subsets of Banach spaces which are differentiable in the \Frechet sense. To distinguish these from the Bastiani setting just recalled, we write $\text{FC}^k$ for a mapping which is $k$times continuously \Frechet differentiable. It is well known that each $C^{k+1}$-map on a Banach space is $\text{FC}^k$ and each $\text{FC}^k$ map is $C^k$ (see, e.g., \cite{Schm}).
\begin{numba}\label{conventions-mfd}
All manifolds and Lie groups considered
in the article are modeled on locally
convex spaces which may be infinite-dimensional,
unless the contrary is stated. All manifolds are assumed to be smooth and all diffeomorphisms are $C^\infty$-diffeomorphisms, unless the contrary is stated.
If $M$ is a manifold modeled on a Banach space~$E$
and $F\sub E$ is a closed vector subspace,
then a subset $N\sub M$ is called a \emph{submanifold}
of~$M$ modeled on~$F$ if, for each $p\in N$,
there is a $C^\infty$-diffeomorphism (chart)
$\phi\colon U_\phi\to V_\phi$ from an open $p$-neighborhood
$U_\phi\sub M$ onto an open subset $V_\phi\sub E$ such that
$\phi(U_\phi\cap N)=V_\phi\cap F$.
Submanifolds of Lie groups which are subgroups
are called \emph{Lie subgroups}.
\end{numba}
\begin{numba}\label{defn-df}
If $U$ is an open subset
of a locally convex space~$E$
(or a locally convex subset with dense interior),
we identify its tangent bundle $TU$ with $U\times E$,
as usual,
with bundle projection $(x,y)\mto x$.
If $M$ is a $C^k$-manifold
and $f\colon M\to U$
a $C^k$-map with
$k\geq 1$, we write $df$ for the second component
of $Tf\colon TM\to TU=U\times E$.
Thus $Tf=(f\circ \pi_{TM},df)$,
using the bundle projection $\pi_{TM}\colon TM\to M$.
\end{numba}
\subsection*{Vector measures}
\noindent
We follow \cite{Sche} and recall some basic notation for vector measures. In the following, let $(X,\cS)$ denote a measurable space (i.e.\ a set $X$ with a $\sigma$-algebra $\cS$). If $Y$ is a topological space, we write $\cB(Y)$
for the $\sigma$-algebra of Borel sets.
\begin{numba}
For a Banach space $(E,\lVert \cdot\rVert_E)$, we let $\VC (X,E)$ denote the set of \emph{vector charges}, i.e., finitely additive mappings $\mu \colon \cS \rightarrow E$.
If $E=\R$, we abbreviate $\VC(X)\coloneq \VC(X,\R)$.
If $\mu \in \VC(X)$ takes its values in $[0,\infty[$, we call $\mu$ a \emph{positive charge}. The set of positive charges is denoted by $\VC_+(X)$.
\end{numba}

\begin{numba}
 For a vector charge $\mu \in \VC(X,E)$, its \emph{variation} is the function
 \[
 \Var(\mu) \colon \cS \rightarrow [0,\infty],\;\;
 \Var(\mu) (A):=\sup \big(
 \|\mu(S_1)\|_E+\cdots + \|\mu (S_n)\|_E\big),
 \]
 for $n\in \N$ and all partitions
 of $A$ into disjoint measurable
 sets $S_1,\ldots, S_n$.
 If $\Var(\mu)(X)<\infty$, the charge $\mu$ has \emph{bounded variation}. Then $\Var (\mu)$ is a positive charge (see \cite[29.6 a]{Sche}).
 For a charge $\mu$ of bounded variation,
 $$\lVert \mu \rVert \coloneq \Var (\mu)(X)$$
 is called its \emph{total variation}.
 By \cite[29.6(c)]{Sche}, $\lVert \cdot \rVert$ is a norm on the vector space $\BC(X,E)$ of all charges
 of bounded variation
 and makes it a Banach space;
 we call it the \emph{variation norm}.
For each $S\in \cS$, the map
\begin{equation}\label{bounded-op}
\BC(X,E)\to E,\quad \mu\mto\mu(S)
\end{equation}
is linear and continuous with operator norm $\leq\|\mu\|$.
\end{numba}

\begin{numba}
Denote by $\Mes (X,E) \subseteq \BC(X,E)$ the set of \emph{$E$-valued measures of bounded variation}, i.e.\ charges of bounded variation
which are $\sigma$-additive mappings. The Nikodym Convergence Theorem \cite[29.8]{Sche}
implies that the measures form a closed vector subspace of $\BC(X,E)$;
thus $(\Mes(X,E),\lVert \cdot \rVert)$ is a Banach space.\\[2.3mm]
Further, we let $\Mes(X)\coloneq \Mes(X,\R)$ and
$\Mes_+(X)\coloneq \Mes(X)\cap \VC_+(X)$.
Given $\mu \in \Mes_+(X)$,
a function $f \colon X \rightarrow E$
is called \emph{integrable}
if it is a measurable
mapping from $(X,\cS)$ to $(E,\cB(E))$,
the image $f(X)$ is separable,
and $\|f\|_{\cL^1}:=\int_X\|f(x)\|_E\,{\rm d}\mu(x)<\infty$
(see \cite[22.28 and 21.4]{Sche}).
We write $\cL^1(\mu,E)$ for the vector space of
integrable functions and
$L^1(\mu,E)$
for the Banach space of equivalence classes
$[f]$ of such functions (modulo functions vanishing
almost everywhere), with norm $\|\,[f]\,\|_{L^1}:=\|f\|_{\bL^1}$.
Each integrable function $f$ admits a \emph{Bochner integral}
$\int_Xf\,{\rm d}\mu\in E$, see \cite[23.16]{Sche}.
\end{numba}
\noindent
Given $A\in \cS$, let us write $1_A\colon X\to \{0,1\}\sub\R$
for its characteristic function (indicator function).
\begin{numba}\label{with-density} (Vector measures with density).
If $\mu\in \Mes(X)_+$ and $f\in \cL^1(\mu,E)$,
then
\[
\cS\to E, \quad A\mto\int_X 1_Af\, {\rm d}\mu
\]
defines an element of $\Mes(X,E)$,
denoted $f\,{\rm d}\mu$ (see \cite[29.10]{Sche}).
Its total variation is given by\footnote{In fact, the formula is clear for measurable
functions with finite image (simple functions).
The general case follows since both sides
of the equation are continuous in $[f]\in L^1(\mu,E)$
and simple functions are dense in $L^1(\mu,E)$
(see \cite[22.30 b]{Sche}).}
\begin{equation}\label{tovar}
\|f \, {\rm d}\mu\|=\|f\|_{\cL^1}.
\end{equation}
\end{numba}
\begin{rem}\label{newrem}
By (\ref{tovar}), the linear map $L^1(\mu,E)\to \Mes(X,E)$,
$f\mto f\, {\rm d}\mu$
is an isometric embedding.
\end{rem}
\begin{numba}\label{defn-meas-d}
Define the space
$$\Md (X,E) \coloneq \{\mu \in \Mes (X,E)\mid \exists \nu \in \Mes_+(X), \exists \rho \in \cL^1(\nu,E) \text{ with }\mu = \rho \,\mathrm{d}\nu\}.$$
In other words, we consider the space of vector-valued measures which admit a Radon--Nikodym derivative with respect to some finite positive measure~$\nu$.
Then $\Var(\mu)=\|\rho(x)\|_E\, {\rm d}\nu(x)$
(cf.\ (\ref{tovar})).
Setting $h(x):=\rho(x)/\|\rho(x)\|_E$ if $\rho(x)\not=0$
and $h(x):=0$ otherwise, we obtain a function
$h\in \cL^1(\Var(\mu),E)$
such that $\mu=h\, {\rm d}\Var(\mu)$,
using the Change of Variables Formula \cite[29.12 a]{Sche}.
Thus~$\nu$ can always be chosen as
$\Var(\mu)$, if it exists.
\end{numba}
\noindent
If $\mu\in \Mes(X,E)$ and $\nu\colon \cS\to [0,\infty[$
is a finite positive measure, we say that $\mu$ is
\emph{absolutely continuous} with respect to $\nu$ (and we write
$\mu \ll \nu$) if $\mu(A)=0$
for each $A\in \cS$ such that $\nu(A)=0$.
Then also $\Var(\mu)\ll \nu$.
\begin{rem}
For a Banach space~$E$,
the following properties are
equivalent:\footnote{If $\mu\in \Mes(X,E)$, then $\mu\ll \Var(\mu)$.
Thus $\mu=\rho\,{\rm d}\Var(\mu)$ for some $\rho\in\cL^1(\Var(\mu),E)$
and hence $\mu\in \Md(X,E)$,
if $E$ has the (RNP).
If (b) holds and $\mu$, $\nu$
are as in~(a), then $\mu=\rho\,{\rm d}\Var(\mu)$
for some $\rho\in \cL^1(\Var(\mu),E)$.
Since $\mu\ll\nu$, also $\Var(\mu)\ll \nu$,
whence $\Var(\mu)=h\, {\rm d}\nu$
for some $h\in \cL^1(\nu,\R)$
by the Radon--Nikodym Theorem.
Then $\mu=\rho h\, {\rm d}\nu$
by \cite[29.12 b]{Sche}.}
\begin{itemize}
\item[(a)]
$E$ has
the Radon--Nikodym property (RNP),
i.e., for each finite measure space $(X,\cS,\nu)$
and $\mu\in \Mes(X,E)$ with $\mu\ll \nu$,
there exists $\rho\in \cL^1(\nu,E)$
such that $\mu=\rho\,{\rm d}\nu$
(see \cite[Definition 29.21]{Sche}).
\item[(b)]
$\Md(X,E)=\Mes(X,E)$ for each measurable space
$(X,\Sigma)$.
\end{itemize}
\noindent
By Phillip's Theorem,
each reflexive
Banach spaces has the (RNP) (see \cite[29.26]{Sche});
notably, any Hilbert space has the (RNP).
By the Theorem of Dunford--Pettis,
the dual $E:=F'$
of a Banach space $F$ has the (RNP)
if $E$ is separable.
\end{rem}
\begin{la}
 The set $\Md(X,E)$ is a closed vector subspace of $\Mes (X,E)$,
 and hence a Banach space.
\end{la}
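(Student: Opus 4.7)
\textbf{Subspace structure.} Clearly $0\in \Md(X,E)$, and if $\mu=\rho\,{\rm d}\nu\in \Md(X,E)$ and $c\in \R$, then $c\mu=(c\rho)\,{\rm d}\nu\in \Md(X,E)$. For stability under addition, suppose $\mu_i=\rho_i\,{\rm d}\nu_i$ for $i=1,2$ with $\nu_i\in \Mes_+(X)$ and $\rho_i\in \cL^1(\nu_i,E)$. Set $\nu:=\nu_1+\nu_2\in \Mes_+(X)$. Both $\nu_i\ll\nu$, so the scalar Radon--Nikodym theorem produces $h_i\in \cL^1(\nu,\R_+)$ with $\nu_i=h_i\,{\rm d}\nu$. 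By \cite[29.12 b]{Sche}, $\mu_i=\rho_i h_i\,{\rm d}\nu$, and $\rho_1 h_1+\rho_2 h_2\in \cL^1(\nu,E)$, showing $\mu_1+\mu_2\in \Md(X,E)$.

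\textbf{Closedness.} Let $(\mu_n)_{n\in\N}$ be a sequence in $\Md(X,E)$ converging to some $\mu\in \Mes(X,E)$ in the variation norm. By \ref{defn-meas-d} we may write $\mu_n=\rho_n\,{\rm d}\Var(\mu_n)$ with $\rho_n\in \cL^1(\Var(\mu_n),E)$. The plan is to exhibit a single dominating positive measure for the entire sequence, convert the Cauchy condition in variation norm into a Cauchy condition in an $L^1$-space of densities, and then invoke completeness of that $L^1$-space.

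\textbf{Common dominating measure.} Define
\[
\nu\colon\cS\to[0,\infty),\quad \nu(A)\coloneq\sum_{n=1}^\infty 2^{-n}\,\frac{\Var(\mu_n)(A)}{1+\|\mu_n\|},
\]
which is a finite positive measure on $\cS$ (monotone convergence yields $\sigma$-additivity, while $\nu(X)\leq 1$ follows from $\Var(\mu_n)(X)=\|\mu_n\|$). By construction $\Var(\mu_n)\ll\nu$ for every $n$. The scalar Radon--Nikodym theorem gives $g_n\in \cL^1(\nu,\R_+)$ with $\Var(\mu_n)=g_n\,{\rm d}\nu$, and then \cite[29.12 b]{Sche} yields $\mu_n=\widetilde{\rho}_n\,{\rm d}\nu$ with $\widetilde{\rho}_n\coloneq \rho_ng_n\in \cL^1(\nu,E)$.

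\textbf{Transfer to $L^1(\nu,E)$.} By Remark~\ref{newrem}, the map $[f]\mapsto f\,{\rm d}\nu$ is an isometric embedding $L^1(\nu,E)\hookrightarrow \Mes(X,E)$. Since $\mu_n-\mu_m=(\widetilde{\rho}_n-\widetilde{\rho}_m)\,{\rm d}\nu$, we obtain
\[
\|\widetilde{\rho}_n-\widetilde{\rho}_m\|_{L^1(\nu,E)}=\|\mu_n-\mu_m\|.
\]
Thus $(\widetilde{\rho}_n)$ is Cauchy in the Banach space $L^1(\nu,E)$, hence converges to some $\widetilde{\rho}\in L^1(\nu,E)$. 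The isometry then implies $\mu_n\to \widetilde{\rho}\,{\rm d}\nu$ in $\Mes(X,E)$, and uniqueness of limits forces $\mu=\widetilde{\rho}\,{\rm d}\nu\in \Md(X,E)$, as required.

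The one delicate point is the construction of the common dominating measure $\nu$ and the verification that it is $\sigma$-additive; once this is in place, the scalar Radon--Nikodym theorem and the isometry of Remark~\ref{newrem} reduce closedness of $\Md(X,E)$ to completeness of $L^1(\nu,E)$.
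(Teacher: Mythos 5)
Your proof is correct and the closedness argument is essentially the paper's own: a common dominating finite measure built as a normalized weighted sum over the sequence, scalar Radon--Nikodym plus the change-of-variables formula to get densities with respect to that single measure, and then the isometric embedding of Remark~\ref{newrem} to transfer the Cauchy condition to $L^1(\nu,E)$ and conclude by completeness. The only difference is cosmetic (you normalize via $\Var(\mu_n)$ rather than the given $\nu_n$), and you additionally spell out the vector-subspace verification, which the paper's proof leaves implicit.
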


\begin{proof}
Assume that $\mu_n \in \Md(X,E)$ for $n \in \N$ converges in $\Mes (X,E)$ to $\mu$ with respect to the total variation norm. Write $\mu_n = \rho_n \,\mathrm{d}\nu_n$ with $\nu_n \in \Mes_+(X)$ and $\rho_n \in \cL^1(\nu_n,E)$. Since all the $\nu_n$ are finite positive measures, also
$$
\nu \coloneq \sum_{n=1}^\infty
\frac{\nu_n}{2^n(\nu_n(X)+1)}
$$
is a finite positive measure.
By construction, $\nu_n \ll \nu$, whence
$\nu_n = h_n \, {\rm d}\nu$ for some $h_n \in \cL^1 (\nu,\R)$, cf.\ \cite[Theorem 29.20]{Sche}. By the Change of Variables Formula \cite[29.12 b]{Sche},
we have $\mu_n=\rho_nh_n\,{\rm d}\nu$.
By Remark~\ref{newrem}, $([\rho_n h_n])_{n\in \N}$
is a Cauchy sequence in $L^1(\nu,E)$
and thus convergent to some $[\rho]\in L^1(\nu,E)$.
Then $\mu_n=\rho_nh_n\,{\rm d}\nu\to \rho\, {\rm d}\,\nu$
as $n\to\infty$, whence $\mu=\rho\, {\rm d}\,\nu\in \Md(X,E)$.
\end{proof}
\noindent
For our purposes, it will be important to work with continuous functions, whence we focus on non-atomic measures.
We recall: If $(X,\cS)$ is a measurable space
and $\mu\colon \cS\to E$ a vector measure,
then a set $A\in \cS$
is called an \emph{atom} if $\mu (A)\neq 0$ and for each $B \in \cS$ with $B \subseteq A$,
either $\mu(B) =\mu(A)$ or $\mu(B)=0$.  
\begin{numba}
A vector measure without atoms is called
\emph{non-atomic}. For $(X,\cS)$ a measurable space and $E$ a Banach space, we define
\[
\Mdna (X,E) \coloneq\{\mu \in \Md (X,E) \colon \mu \text{ is non-atomic}\}.
\]
\end{numba}
\begin{rem}\label{easy-nonatomic}
Let $\cS$ be as before and $\mu\in \Mes(X,E)$.
\begin{itemize}
\item[(a)]
If $A$ is an atom for $\mu$ and $A=A_1\cup\cdots\cup A_n$
with disjoint measurable sets, then there exists $k\in \{1,\ldots,n\}$ such that $\mu(A_k)=\mu(A)$ and $\mu(A_j)=0$
for all $j\in \{1,\ldots, n\}\setminus\{k\}$.\smallskip

\noindent
[In fact, if $\ell$ is the number of indices $j\in \{1,\ldots,n\}$ such that $\mu(A_j)\not=0$ and thus $\mu(A_j)=\mu(A)$, then $\mu(A)=\ell\mu(A)$ and thus $\ell=1$.]
\item[(b)]
If $A$ is an atom for $\Var(\mu)$,
then $\mu(B)\not=0$ for some $B\in \cS$
with $B\sub A$. Any such~$B$ is an atom for~$\mu$.\smallskip

\noindent
[For each $C\in \cS$ with $C\sub B$
and $\mu(C)\not=0$, we have $\Var(\mu)(C)\not=0$
and hence hence $\Var(\mu)(C)=\Var(\mu)(A)$.
Now $\lVert \mu(B\setminus C)\rVert_E\leq\Var(\mu)(B\setminus C)\leq \Var(\mu)(A\setminus C)=0$,
whence $\mu(C)=\mu(B)$.]
\item[(c)]
Consider finite positive measures $\mu$ and $\nu$
on $(X,\cS)$.
If $\nu$ is non-atomic and $\mu$ is absolutely
continuous with respect to~$\nu$,
then also $\mu$ is non-atomic.\smallskip

\noindent
This well-known fact is easy to check,
using that $\mu$ has a density with respect to $\nu$
by the Radon--Nikodym Theorem.
\end{itemize}
\end{rem}
\noindent
The following result is standard and repeated here for the reader's convenience.

\begin{la}\label{la:nonatomic}
A vector measure $\mu \in \Mes (X,E)$ is non-atomic if and
only if the positive measure $\Var(\mu)$ is non-atomic.
\end{la}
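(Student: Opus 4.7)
The plan is to establish both directions of the equivalence as short contradiction arguments that lean on the two parts of Remark~\ref{easy-nonatomic} that are already available.

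For the direction ``$\Var(\mu)$ non-atomic $\Rightarrow$ $\mu$ non-atomic,'' I would argue the contrapositive: assume $A\in\cS$ is an atom for $\mu$ and show $A$ is then an atom for $\Var(\mu)$. The key observation is that if $A=S_1\cup\cdots\cup S_n$ is any finite measurable partition, Remark~\ref{easy-nonatomic}(a) forces exactly one $S_k$ to carry all the $\mu$-mass and the rest to have $\mu(S_j)=0$, hence $\sum_j\|\mu(S_j)\|_E=\|\mu(A)\|_E$. Taking the supremum over partitions gives $\Var(\mu)(A)=\|\mu(A)\|_E$. The same dichotomy applied to an arbitrary measurable $B\sub A$ shows $\mu(B)\in\{0,\mu(A)\}$, and then iterating the argument on partitions of $B$ yields $\Var(\mu)(B)\in\{0,\|\mu(A)\|_E\}=\{0,\Var(\mu)(A)\}$. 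Thus $A$ is an atom for $\Var(\mu)$, contradicting the assumption.

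The reverse direction ``$\mu$ non-atomic $\Rightarrow$ $\Var(\mu)$ non-atomic'' is essentially already packaged in Remark~\ref{easy-nonatomic}(b): if $A$ were a $\Var(\mu)$-atom, that remark produces a measurable $B\sub A$ which is an atom for $\mu$, contradicting non-atomicity of $\mu$. So here I would just cite Remark~\ref{easy-nonatomic}(b) and conclude.

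I do not expect genuine obstacles; the only place where one must be a bit careful is in the forward direction, to check that $\Var(\mu)(B)$ actually takes only the two values $0$ and $\Var(\mu)(A)$ for $B\sub A$. This is handled by noticing that when $\mu(B)=0$, every measurable subset $C\sub B$ also has $\mu(C)=0$ (since $A$ is a $\mu$-atom and $C\sub A$), forcing every partition sum for $\Var(\mu)(B)$ to vanish; and when $\mu(B)=\mu(A)$, the same argument applied to $A\setminus B$ shows $\Var(\mu)(A\setminus B)=0$, so by additivity of $\Var(\mu)$ one gets $\Var(\mu)(B)=\Var(\mu)(A)$.
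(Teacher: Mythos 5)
Your proposal is correct and follows essentially the same route as the paper: the forward direction reduces to Remark~\ref{easy-nonatomic}\,(a) applied to partitions of a putative $\mu$-atom $A$ (the paper phrases this for two-set partitions $A=B\cup C$ and concludes $\Var(\mu)(B)=0$ or $\Var(\mu)(C)=0$, which is your two-value observation in compressed form), and the reverse direction is exactly the citation of Remark~\ref{easy-nonatomic}\,(b). Your extra computation $\Var(\mu)(A)=\|\mu(A)\|_E$ is correct but not needed; otherwise the two arguments coincide.
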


\begin{proof}
Let $\Var(\mu)$ be non-atomic and $A$ be a measurable set
with $\mu (A) \neq 0$. If $A$ was an atom for~$\mu$, then for each partition $A =B \cup C$ into disjoint measurable sets,
we must have
$\mu (B)=\mu(A)$ or $\mu (C)=\mu(A)$. By
Remark~\ref{easy-nonatomic}\,(a), this implies
$\Var(\mu)(C)=0$ or $\Var(\mu)(B)=0$.
Hence $A$ would be an atom for $\Var(\mu)$, contradiction.
Conversely: If $\mu$ is non-atomic, then also
$\Var(\mu)$, by Remark~\ref{easy-nonatomic}\,(b).
\end{proof}

\begin{la}\label{na-closed}
\,\,The set $\Mdna (X,E)$ is a vector subspace and closed
in\linebreak
$(\Md (X,E),\lVert \cdot \rVert)$,
hence a Banach space with respect to the variation norm. 
\end{la}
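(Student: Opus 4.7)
The plan splits into showing that $\Mdna(X,E)$ is a vector subspace of $\Md(X,E)$ and then that it is closed in the total variation norm. Closure under scalar multiplication is immediate: atoms of $\lambda\mu$ coincide with atoms of $\mu$ when $\lambda\neq 0$, and the zero measure has no atoms. For closure under addition, given $\mu,\nu\in\Mdna(X,E)$, Lemma~\ref{la:nonatomic} reduces the task to showing $\Var(\mu+\nu)$ is non-atomic. The subadditivity $\Var(\mu+\nu)\leq\Var(\mu)+\Var(\nu)$ on $\cS$ gives $\Var(\mu+\nu)\ll\Var(\mu)+\Var(\nu)$. A short direct argument shows that the sum of two non-atomic finite positive measures is again non-atomic: for $A$ with $(\Var(\mu)+\Var(\nu))(A)>0$, assume without loss of generality $\Var(\mu)(A)>0$, use non-atomicity of $\Var(\mu)$ to find $B\sub A$ with $0<\Var(\mu)(B)<\Var(\mu)(A)$, and conclude $0<(\Var(\mu)+\Var(\nu))(B)<(\Var(\mu)+\Var(\nu))(A)$. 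Remark~\ref{easy-nonatomic}(c) then makes $\Var(\mu+\nu)$ non-atomic, and Lemma~\ref{la:nonatomic} finishes the additive step.

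For closedness, suppose $\mu_n\in \Mdna(X,E)$ converges to $\mu$ in the variation norm. Writing $\mu_n=\mu+(\mu_n-\mu)$ and using subadditivity of $\Var$ yields the uniform estimate
\[
\sup_{A\in\cS}|\Var(\mu_n)(A)-\Var(\mu)(A)|\;\leq\;\Var(\mu_n-\mu)(X)\;=\;\|\mu_n-\mu\|\;\longrightarrow\;0.
\]
Assume for contradiction that $\mu$ is not non-atomic. By Lemma~\ref{la:nonatomic}, there is an atom $A$ of $\Var(\mu)$, so that $\Var(\mu)(A)>0$ and $\Var(\mu)(B)\in\{0,\Var(\mu)(A)\}$ for every measurable $B\sub A$. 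For all sufficiently large $n$ we have $\Var(\mu_n)(A)>0$, and since $\Var(\mu_n)$ is non-atomic by Lemma~\ref{la:nonatomic}, Sierpi\'nski's intermediate value theorem for non-atomic finite positive measures produces $B_n\sub A$ with $\Var(\mu_n)(B_n)=\Var(\mu_n)(A)/2$. The uniform convergence displayed above then forces $\Var(\mu)(B_n)\to \Var(\mu)(A)/2$, a value strictly between $0$ and $\Var(\mu)(A)$, contradicting the atom property of $A$.

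The main obstacle is producing subsets $B_n\sub A$ whose $\Var(\mu)$-measure converges to an intermediate value, and I resolve it by invoking Sierpi\'nski's theorem. If one prefers a self-contained argument, the same conclusion can be reached without citation: non-atomicity of $\Var(\mu_n)$ lets one partition $A$ into finitely many pieces each of $\Var(\mu_n)$-measure less than $\Var(\mu_n)(A)/4$, and taking $B_n$ to be the union of a suitable subcollection with cumulative $\Var(\mu_n)$-measure in $[\Var(\mu_n)(A)/4,\,3\Var(\mu_n)(A)/4]$ yields the required contradiction by the same uniform-convergence step.
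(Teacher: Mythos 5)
Your proof is correct, but the closedness part takes a genuinely different route from the paper. For the subspace claim your argument (dominate $\Var(\mu+\nu)$ by $\Var(\mu)+\Var(\nu)$, show the latter is non-atomic by splitting an alleged atom, then transfer non-atomicity back via absolute continuity and Remark~\ref{easy-nonatomic}\,(c)) is essentially the paper's argument restricted to two summands. The paper, however, handles the subspace property and closedness \emph{simultaneously} by one absolutely-convergent-series argument: for $\sum_n\lVert\mu_n\rVert<\infty$ the limit $\mu=\sum_n\mu_n$ satisfies $\Var(\mu)\leq\sum_n\Var(\mu_n)$, the dominating measure is shown non-atomic exactly as in your two-term case, and absolute continuity finishes; choosing $\mu_n=0$ for $n\geq 3$ gives closure under addition, while convergence of all absolutely convergent series gives completeness and hence closedness. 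Your closedness argument instead exploits the uniform $1$-Lipschitz estimate $\sup_{A\in\cS}|\Var(\mu_n)(A)-\Var(\mu)(A)|\leq\lVert\mu_n-\mu\rVert$ and derives a contradiction by producing, via Sierpi\'nski's intermediate value theorem for non-atomic measures, sets $B_n\subseteq A$ on which $\Var(\mu)$ would have to converge to $\Var(\mu)(A)/2$, impossible for an atom. The trade-off: the paper's route needs only the definition of non-atomicity plus absolute continuity and avoids any Darboux-type property, while your route avoids constructing an auxiliary dominating measure but invokes the (standard, though strictly stronger) intermediate value property of non-atomic measures; your sketched fallback via fine finite partitions is fine but itself requires a small exhaustion argument that is morally a weak form of the same theorem. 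Both arguments are valid.
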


\begin{proof}
If $\nu\in \Mdna(X,E)$ and $r\in \R$,
then $\Var(r\nu)=|r|\Var(\nu)$ is non-atomic,
whence $r\nu\in \Mdna(X,E)$.
Let $(\mu_n)_{n\in \N}$ be a sequence in $\Mdna (X,E)$ such that $\sum_{n=1}^\infty \lVert \mu_n\rVert<\infty$.
Then the limit
$\mu := \sum_{n=1}^\infty \mu_n$
exists in $\Md(X,E)$, as $\Md(X,E)$ is a Banach space.
We claim that~$\mu$ is non-atomic.
If this always holds, choosing $\mu_n=0$ for $n\geq 3$
shows that $\Mdna(X,E)$ is closed under addition and hence
a vector subspace of $\Md(X,E)$.
Returning to the general case,
the claim shows that each absolutely convergent
sequence in the normed space $\Mdna(X,E)$ converges,
whence $\Mdna(X,E)$ is a Banach space and hence
closed in $\Md(X,E)$.
In view of Lemma \ref{la:nonatomic}, the claim
will hold if we can show that
$\Var (\mu)$ is non-atomic. Note that
\begin{equation}\label{hence-absoc}
\Var (\mu) = \Var \left(\sum_{n=1}^\infty \mu_n\right) \leq
\sum_{n=1}^\infty \Var (\mu_n)
\end{equation}
pointwise.
Both $\Var(\mu)$ and $\sum_{n=1}^\infty \Var(\mu_n)$
are finite positive measures (of total
mass $\leq \sum_{n=1}^\infty \lVert\mu_n\rVert$),
and the former is absolutely continuous with respect to the
latter, by~(\ref{hence-absoc}).
If we can show that $\sum_{n=1}^\infty \Var(\mu_n)$ is non-atomic, then also $\Var(\mu)$ is non-atomic, by Remark~\ref{easy-nonatomic}\,(c).
Let $A \subseteq X$ be a measurable set
with $\sum_{n=1}^\infty \Var (\mu_n)  (A)\neq 0$.
There exists $m \in \N$ with $\Var (\mu_m) (A) \neq 0$.
Since $\mu_m$ and also $\Var(\mu_m)$ is non-atomic, we find
a measurable set $B \subseteq A$ such that $0< \Var (\mu_m)(B)<\Var(\mu_m)(A)$. Then 
 $$0 < \sum_{n=1}^\infty  \Var(\mu_n)(B)
 < \sum_{n=1}^\infty \Var (\mu_n)(A).$$
Thus $A$ is not an atom for $\sum_{n=1}^\infty \Var(\mu_n)$
and $\sum_{n=1}^\infty \Var (\mu_n)$ is non-atomic.
\end{proof}
\noindent
Our interest lies in the special case when $X=[a,b]$ is a compact interval,
which we always endow with the
$\sigma$-algebra $\cB([a,b])$
of Borel sets.
\begin{la}\label{char-na}
For a Banach space $E$ and $\mu\in\Mes([a,b],E)$,
the following conditions are equivalent:
\begin{itemize}
\item[\rm(a)]
$\mu$ is non-atomic.
\item[\rm(b)]
$\mu(\{x\})=0$ for all $x\in [a,b]$.
\item[\rm(c)]
The function $f\colon [a,b]\to E$, $x\mto\mu([a,x])$
is continuous and $f(a)=0$.
\end{itemize}
\end{la}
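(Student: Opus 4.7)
The plan is to close the cycle (a) $\Rightarrow$ (b) $\Rightarrow$ (c) $\Rightarrow$ (a). The first two implications should be essentially bookkeeping with $\sigma$-additivity and the definition of an atom; the last is the substantive step, where I would argue by contradiction using connectedness of the image of a continuous map.

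For (a) $\Rightarrow$ (b), I would argue contrapositively: if $\mu(\{x\}) \neq 0$ for some $x \in [a,b]$, then the singleton $\{x\}$ is manifestly an atom of $\mu$, its only Borel subsets being $\emptyset$ and itself. For (b) $\Rightarrow$ (c), the equality $f(a) = \mu(\{a\}) = 0$ is immediate. For right continuity at $x \in [a,b)$ I would apply continuity from above of the $\sigma$-additive, bounded-variation measure $\mu$ to $[a, x_n] \downarrow [a, x]$. For left continuity at $x \in (a, b]$, I would apply continuity from below to $[a, x_n] \uparrow [a, x)$, yielding $f(x_n) \to \mu([a, x)) = f(x) - \mu(\{x\})$, whereupon (b) absorbs the jump.

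For (c) $\Rightarrow$ (a), I would first note that (c) implies (b): the same left-limit argument identifies the jump of $f$ at $x$ with $\mu(\{x\})$, which must vanish by continuity of $f$. Now assume for contradiction that $\mu$ has an atom $A \in \cB([a,b])$, and introduce the auxiliary function
\[
h\colon [a,b] \to E,\qquad h(t) := \mu(A \cap [a,t]).
\]
Since $\mu(A \cap \{x\}) \in \{0, \mu(\{x\})\} = \{0\}$ for every $x$ by (b), the same $\sigma$-additivity argument as in (b) $\Rightarrow$ (c) (applied with $A \cap [a, x_n]$ in place of $[a, x_n]$) shows that $h$ is continuous, with $h(a) = 0$ and $h(b) = \mu(A) \neq 0$. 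On the other hand, for each $t \in [a,b]$ the decomposition of $A$ into the disjoint measurable pieces $A \cap [a,t]$ and $A \cap (t,b]$, together with Remark~\ref{easy-nonatomic}(a), forces $h(t) \in \{0, \mu(A)\}$. Thus $h$ is a continuous map from the connected space $[a,b]$ into the two-point subset $\{0, \mu(A)\}$ of the Hausdorff space $E$, whose image must contain both points. Since any two-point subset of a Hausdorff space is disconnected, this is the desired contradiction.

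The only genuine obstacle is this last implication, and its core is the connectedness argument for $h$; the main technical point is the continuity of $h$, which however reduces verbatim to the argument already carried out for $f$ with the Borel set $A$ inserted throughout.
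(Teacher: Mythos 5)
Your proof is correct, but it closes the equivalence by a genuinely different route than the paper. The paper proves the four implications (a)$\Leftrightarrow$(b) and (b)$\Leftrightarrow$(c) separately; the substantive one, (b)$\Rightarrow$(a), is done by a bisection argument: starting from an atom $A_0$ it builds a nested sequence of Borel sets $A_n$ of diameter $\leq 2^{-n}(b-a)$ all carrying the full value $\mu(A_0)$, whose intersection is a singleton with nonzero measure, contradicting (b). You instead place the substance in (c)$\Rightarrow$(a): you attach to a putative atom $A$ the distribution function $h(t)=\mu(A\cap[a,t])$, observe that the atom property forces $h(t)\in\{0,\mu(A)\}$ while continuity (inherited from the argument for $f$, since $\mu(A\cap\{x\})=0$) and $h(a)=0$, $h(b)=\mu(A)\neq 0$ make $h$ a continuous surjection of $[a,b]$ onto a two-point discrete set --- a clean connectedness contradiction that exploits the continuity statement (c) rather than reproving compactness facts. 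Your (b)$\Rightarrow$(c) also differs in technique: you use sequential continuity from above/below of the $\sigma$-additive measure along monotone sequences $[a,x_n]$ (which suffices for one-sided limits, since any one-sided approximating sequence has a monotone subsequence), whereas the paper passes to $\Var(\mu)$ and invokes its outer regularity via Rudin. Your version is more elementary and self-contained; the paper's bisection argument for (b)$\Rightarrow$(a) has the mild advantage of producing an explicit singleton atom rather than arguing by contradiction through (c).
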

\begin{proof}
(a)$\Rightarrow$(b): If $\mu(\{x\})\not=0$, then $\{x\}$
is an atom.\smallskip

\noindent
(b)$\Rightarrow$(a): If $A_0\in \cB([a,b])$
is a $\mu$-atom, we find a
sequence $A_0\supseteq A_1\supseteq A_2\supseteq\cdots$
of Borel sets such that $\mu(A_n)=\mu(A_0)$
for all $n\in \N_0$ and $A_n$ has diameter
$\diam(A_n)\leq 2^{-n}(b-a)$. We have $A_0$.
If $A_n$ has been found, write
$A_n=B_1\cup\cdots\cup B_{2^{n+1}}$ with
$B_j:=A_n\cap [a+(j-1)(b-a)2^{-n-1},a+j(b-a)2^{-n-1}]$
for $j\in \{1,\ldots, 2^{n+1}\}$.
Then $\mu(B_j)\not=0$ (and hence
$\mu(B_j)=\mu(A_0)$) for some $j$,
and we let $A_{n+1}:=B_j$.
Then $A:=\bigcap_{n\in \N_0}A_n$
is not empty as $\mu(A)=\lim_{n\to\infty}\mu(A_n)=\mu(A_0)$.
Since $\diam(A)=0$, we have $A=\{x\}$
for some $x\in [a,b]$. Then $\mu(\{x\})=\mu(A)=\mu(A_0)
\not=0$.\smallskip

\noindent
(b)$\Rightarrow$(c): We have $\Var(\mu)(\{x\})=\lVert\mu(\{x\})\rVert_E=0$ for all $x\in [a,b]$. Thus $f(a)=0$.
We now use that $\Var(\mu)$
is outer regular by \cite[Theorem 2.18]{Rud}.
Hence, given $\ve>0$, we find $\delta>0$
such that $\Var(\mu)(U)<\ve$ holds for the
neighborhood $U:=[x-\delta,x+\delta]\cap [a,b]$ of $x$
in $[a,b]$. For any $y\in U$, let $\alpha:=\min\{x,y\}$
and $\beta:=\max\{x,y\}$.
Then $\lVert f(y)-f(x)\rVert_E=\lVert\mu(]\alpha,\beta])\rVert_E\leq \Var(\mu)(U)<\ve$.\smallskip

\noindent
(c)$\Rightarrow$(b): We have $\mu(\{a\})=f(a)=0$.
For $x\in \;]a,b]$, we have $\mu(\{x\})=\lim_{y\to x_-}
\mu(]y,x])=\lim_{y\to x_-}(f(x)-f(y))=0$.
\end{proof}
\subsection*{Vector-valued functions of bounded variation}
\begin{numba}
We say that a function $f\colon [a,b]\to E$
to a Banach space~$E$
is \emph{of bounded variation} (or also a \emph{$\BV$-function}, or
a \emph{$\BV$-map})
if there exists a vector measure $\mu \in \Mdna([a,b],E)$
such that
\begin{equation}\label{stara}
f(x)=f(a)+\mu([a,x])\quad\mbox{for all $\,x\in [a,b]$.}
\end{equation}
Then $f$ is continuous, by Lemma~\ref{char-na}.
We write $\BV([a,b],E)$ for the vector space of all $E$-valued $\BV$-functions on $[a,b]$.
\end{numba}

\begin{numba}\label{def-f-prime}
Applying continuous linear functionals, we deduce from
\cite[29.34]{Sche}
that $\mu\in\Mdna([a,b],E)$ with (\ref{stara})
is determined by
$f\in\BV([a,b],E)$; we write
\[
f'\; :=\; \mu.
\]
\end{numba}
\noindent
By the preceding, the map
\begin{equation}\label{isometric}
\BV([a,b],E)\to E\times \Mdna([a,b],E),\quad f\mto (f(a),f')
\end{equation}
is a bijection. We give $\BV([a,b],E)$
the locally convex vector topology turning the map
into an isomorphism of topological vector spaces,
using the norm $(y,\mu)\mto \|y\|_E+ \lVert \mu\rVert$
on the right-hand side. Then the norm
\begin{align}\label{BV:NORM}
 \lVert f\rVert_{BV} \coloneq \lVert f(a)\rVert_E +\lVert \mu \rVert 
\end{align}
defines the topology on $\BV([a,b],E)$
and turns the map~(\ref{isometric})
into an isometry, whence $\BV([a,b],E)$
is a Banach space.
Since
\[
\|f(x)\|_E=\|f(a)+\mu([a,x])\|_E
\leq \|f(a)\|_E+\|\mu([a,x])\|_E\leq \|f(a)\|_E+
\lVert\mu\rVert
\]
for all $x\in [a,b]$,
we have $\|f\|_\infty\leq \|f\|_{BV}$.
Conversely, $\|f(a)\|_E\leq \|f\|_\infty$.
We deduce that $\|\cdot\|_{BV}$
is equivalent to the norm on $\BV([a,b],E)$
given by
\begin{align}\label{BV:NORM2}
 \lVert f \rVert_{BV}^\st \coloneq \|f\|_\infty+
 \lVert \mu \rVert,
\end{align}
which is the standard choice of norm in the
scalar-valued case.
\begin{rem} \label{rem:cont:inclusion}
In the following, it will be essential that the inclusion map
\[
I \colon \BV([a,b],E) \rightarrow C([a,b],E)
\]
is continuous. In fact, the map is linear and $\|f\|_\infty\leq\|f\|^{\st}_{BV}$
 for all $f\in \BV([a,b],E)$. The assertion follows since
 the norm $\lVert\cdot\rVert_{BV}^\st$ is equivalent to $\lVert\cdot\rVert_{BV}$.
\end{rem}
\begin{rem}
For a discussion of $E$-valued functions
$[a,b]\to E$ of bounded variation
which need not be continuous,
the reader is referred to \cite[22.19]{Sche},
where a norm analogous to $\lVert\cdot\rVert_{BC}$
in (\ref{BV:NORM}) is used.
The starting point there is
the variation of a function $f\colon [a,b]\to X$
to a metric space (see \cite[19.21]{Sche}).
For scalar-valued functions, cf.\ also
\cite[29.34]{Sche}.
\end{rem}
\noindent
We record three simple facts.
\begin{la}\label{into-prod}
If $a=t_0<\cdots<t_n=b$,
then the map
\begin{equation}\label{thethem}
\BV([a,b],E)\to \prod_{j=1}^n\BV([t_{j-1},t_j],E),\;\;
f\mto (f|_{[t_{j-1},t_j]})_{j=1}^n
\end{equation}
is linear and a topological embedding with closed image.
The image consists of all $(f_j)_{j=1}^n$
such that $f_j(t_j)=f_{j+1}(t_j)$
for all $j\in \{0,\ldots, n-1\}$.
\end{la}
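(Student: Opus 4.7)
The map is plainly linear (restriction is pointwise), and it is injective because knowing $f$ on every subinterval $[t_{j-1},t_j]$ recovers $f$ on $[a,b]$. Continuity follows because, for each $j$,
\[
\lVert f|_{[t_{j-1},t_j]}\rVert_{BV}
 \;=\; \lVert f(t_{j-1})\rVert_E
 + \Var(f')([t_{j-1},t_j])
 \;\leq\; \lVert f\rVert_\infty + \lVert f'\rVert
 \;=\; \lVert f\rVert_{BV}^\st,
\]
and $\lVert\cdot\rVert_{BV}^\st$ is equivalent to $\lVert\cdot\rVert_{BV}$ by Remark~\ref{rem:cont:inclusion} (and the discussion following (\ref{BV:NORM2})). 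Summing over $j$ bounds the product norm by a constant times $\lVert f\rVert_{BV}$.

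Next I would identify the image. If $f\in\BV([a,b],E)$, then continuity of $f$ forces $f(t_j)=f(t_j)$ read from either side, so $f|_{[t_{j-1},t_j]}(t_j)=f|_{[t_j,t_{j+1}]}(t_j)$, showing the stated compatibility is necessary. For the converse, given a compatible tuple $(f_j)_{j=1}^n$ with $f_j=f_j(t_{j-1})+\mu_j([t_{j-1},\sbull])$ for some $\mu_j\in\Mdna([t_{j-1},t_j],E)$, define $f\colon[a,b]\to E$ by $f(x):=f_j(x)$ on $[t_{j-1},t_j]$; this is well defined and continuous by compatibility. Extend each $\mu_j$ to a Borel measure $\widetilde\mu_j$ on $[a,b]$ by $\widetilde\mu_j(B):=\mu_j(B\cap[t_{j-1},t_j])$; then $\widetilde\mu_j\in\Mdna([a,b],E)$ (non-atomicity is inherited, and a Radon--Nikodym density persists under this trivial extension). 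Put $\mu:=\sum_{j=1}^n\widetilde\mu_j\in\Mdna([a,b],E)$ using Lemma~\ref{na-closed}. A telescoping computation, using $\mu(\{t_j\})=0$ (Lemma~\ref{char-na}) and the compatibility $f_j(t_j)=f_{j+1}(t_j)$, yields $f(x)=f(a)+\mu([a,x])$, so $f\in\BV([a,b],E)$ with $f'=\mu$, and its restrictions recover the $f_j$.

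For closedness of the image, note that for any $s\in[t_{j-1},t_j]$ the evaluation $g\mto g(s)$ from $\BV([t_{j-1},t_j],E)$ to $E$ is continuous since $\lVert g(s)\rVert_E\leq\lVert g\rVert_\infty\leq\lVert g\rVert_{BV}^\st$. The compatibility conditions $f_j(t_j)-f_{j+1}(t_j)=0$ therefore cut out a closed linear subspace of the product, which we have shown coincides with the image.

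Finally, the map is a continuous linear bijection from the Banach space $\BV([a,b],E)$ onto this closed, hence Banach, subspace of $\prod_{j=1}^n\BV([t_{j-1},t_j],E)$; the Open Mapping Theorem provides a continuous inverse, proving that it is a topological embedding. The only step that requires more than bookkeeping is the telescoping verification of $f(x)=f(a)+\mu([a,x])$, which is where the non-atomicity (so that the endpoint sets $\{t_j\}$ are $\mu$-null) and the compatibility interact; this is the main, though still routine, computation.
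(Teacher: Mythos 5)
Your proof is correct and takes essentially the same route as the paper's: identify the image via the gluing construction (sum of the extended measures $\mu_j$, telescoping with the compatibility conditions), deduce closedness from continuity of point evaluations, and bound $\lVert f|_{[t_{j-1},t_j]}\rVert_{BV}^{\st}\leq\lVert f\rVert_{BV}^{\st}$ for continuity of the map itself. The only divergence is the final step: the paper gets continuity of the inverse from the explicit estimate $\lVert f\rVert_{BV}^{\st}\leq\sum_{j=1}^n\lVert f|_{[t_{j-1},t_j]}\rVert_{BV}^{\st}$, whereas you invoke the Open Mapping Theorem; both are valid, the paper's version being slightly more elementary and quantitative.
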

\begin{proof}
The map in (\ref{thethem}) is linear.
For $(f_j)_{j=1}^n$ as above, define
$f\colon [a,b]\to E$ via $f(x):=f_j(x)$ if $x\in [t_{j-1},t_j]$. Let $\mu(A):=
\sum_{j=1}^n f_j'(A\cap [t_{j-1},t_j])$
for $A\in \cS$. Then
$f(x)=f(a)+\mu([a,x])$ for $x\in [a,b]$.
Also, $\mu\in \Mdna([a,b],E)$.
Thus $f\in \BV([a,b],E)$.
Hence, the image of the map in
(\ref{thethem}) is as asserted.
It is therefore closed,
using Remark~\ref{rem:cont:inclusion}
and the continuity of the point evaluation
$\ve_x\colon C(K,E)\to E$, $f\mto f(x)$
on $(C(K,E),\|\cdot\|_\infty)$
for each compact interval~$K$
and $x\in K$.
Since $\|f|_{[t_{j-1},t_j]}\|^{\st}_{BV}\leq \|f\|^{\st}_{BV}$, the injective linear map in (\ref{thethem})
is continuous. Its inverse is continuous
as $\|f\|^{\st}_{BV}\leq\sum_{j=1}^n
\|f|_{[t_{j-1},t_j]}\|^{\st}_{BV}$.
\end{proof}
\noindent
If $\mu\!\in \!\Mdna([a,b],E)$,
then $\mu([\alpha,\gamma])\!=\!\mu([\alpha,\beta])+\mu([\beta,\gamma])$ for $\alpha\!\leq\! \beta\!\leq\! \gamma$.\ Hence:
\begin{la}\label{bv-basepoint}
Let $\mu\in \Mdna([a,b],E)$,
$f\colon [a,b]\to E$ be a function
and $t_0\in [a,b]$.
Then the following conditions are equivalent.
\begin{itemize}
\item[\rm(a)]
$f$ is a $\BV$-function and $\mu=f'$.
\item[\rm(b)]
$f(x)=f(t_0)+\mu([t_0,x])$ for all $x\in [t_0,b]$
and  $f(x)=f(t_0)-\mu([x,t_0])$ for all
$x\in [a,t_0]$. \qed
\end{itemize}
\end{la}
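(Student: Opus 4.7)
The plan is to derive both implications from the additivity identity $\mu([\alpha,\gamma])=\mu([\alpha,\beta])+\mu([\beta,\gamma])$ for $\alpha\leq\beta\leq\gamma$ (which holds because $\mu$ is non-atomic, so the overlap point $\{\beta\}$ has measure $0$), applied relative to the base point $t_0$ instead of $a$.

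For the implication (a)$\Rightarrow$(b), I would start from the definition: $f(x)=f(a)+\mu([a,x])$ for all $x\in[a,b]$. Specializing to $x=t_0$ gives $f(t_0)=f(a)+\mu([a,t_0])$. Subtracting these two equations, for $x\in[t_0,b]$ I use $\mu([a,x])=\mu([a,t_0])+\mu([t_0,x])$ to conclude $f(x)-f(t_0)=\mu([t_0,x])$. For $x\in[a,t_0]$, I similarly use $\mu([a,t_0])=\mu([a,x])+\mu([x,t_0])$ to obtain $f(t_0)-f(x)=\mu([x,t_0])$, i.e., $f(x)=f(t_0)-\mu([x,t_0])$.

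For (b)$\Rightarrow$(a), I would take $x=a$ in (b) to obtain $f(a)=f(t_0)-\mu([a,t_0])$. Then for $x\in[t_0,b]$, (b) yields $f(x)=f(t_0)+\mu([t_0,x])=f(a)+\mu([a,t_0])+\mu([t_0,x])=f(a)+\mu([a,x])$, again using the additivity at $t_0$. For $x\in[a,t_0]$, (b) yields $f(x)=f(t_0)-\mu([x,t_0])=f(a)+\mu([a,t_0])-\mu([x,t_0])=f(a)+\mu([a,x])$, using the additivity at $x$. Thus $f$ satisfies (\ref{stara}) with the given $\mu\in\Mdna([a,b],E)$, so by definition $f\in\BV([a,b],E)$, and $f'=\mu$ by the uniqueness noted in \ref{def-f-prime}.

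There is no real obstacle here; the only subtlety is to invoke non-atomicity correctly so that the two closed subintervals sharing an endpoint can be combined additively. Everything else is bookkeeping.
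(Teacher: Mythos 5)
Your proposal is correct and follows exactly the route the paper intends: the paper proves this lemma simply by recording the additivity identity $\mu([\alpha,\gamma])=\mu([\alpha,\beta])+\mu([\beta,\gamma])$ for $\alpha\leq\beta\leq\gamma$ (valid since $\mu(\{\beta\})=0$ by non-atomicity, cf.\ Lemma~\ref{char-na}) immediately before the statement and leaving the remaining bookkeeping to the reader. Your write-up just makes that bookkeeping explicit, including the correct appeal to the uniqueness of $f'$ from \ref{def-f-prime} in the direction (b)$\Rightarrow$(a).
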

\noindent
Affine reparametrizations lead to isomorphic Banach spaces of $\BV$-functions.

\begin{la}\label{BV:aff_repara}
Let $\alpha \colon [c,d] \rightarrow [a,b], s \mapsto a+ (b-a)\cdot (s-c)/(d-c)$. Then
$\BV(\alpha, E)\colon$
$\BV([a,b],E) \rightarrow \BV([c,d],E)$,
$f \mapsto f \circ \alpha$ is an isomorphism of Banach spaces.
\end{la}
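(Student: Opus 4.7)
The plan is to construct an explicit two-sided inverse and show that $\BV(\alpha,E)$ is isometric with respect to the norm~(\ref{BV:NORM}), which at once yields an isomorphism of Banach spaces. For $f\in \BV([a,b],E)$ with derivative $\mu:=f'\in \Mdna([a,b],E)$, I produce a candidate derivative for $f\circ \alpha$ on $[c,d]$ as the pushforward of $\mu$ along $\alpha^{-1}$: define $\nu(A):=\mu(\alpha(A))$ for $A\in \cB([c,d])$. Since $\alpha$ is an order-preserving affine homeomorphism, $\nu$ is well-defined and $\sigma$-additive, and the bijection $\alpha$ carries Borel partitions of $A$ to Borel partitions of $\alpha(A)$ and vice versa; hence $\Var(\nu)(A)=\Var(\mu)(\alpha(A))$. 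In particular $\|\nu\|=\|\mu\|$, and $\nu$ is non-atomic because $\Var(\mu)$ is (Lemma~\ref{la:nonatomic}).

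The only non-routine check is that $\nu\in \Md([c,d],E)$, not merely in $\Mes([c,d],E)$. For this I use~\ref{defn-meas-d} to write $\mu=\rho\,{\rm d}\Var(\mu)$ with $\rho\in \cL^1(\Var(\mu),E)$, introduce the finite positive pushforward measure $\tilde\nu(A):=\Var(\mu)(\alpha(A))$ on $[c,d]$, and invoke the standard change of variables (verified first for indicator functions and extended by density of simple functions in $L^1$) to conclude
\[
\nu(A)=\mu(\alpha(A))=\int_{\alpha(A)}\rho\,{\rm d}\Var(\mu)=\int_A(\rho\circ\alpha)\,{\rm d}\tilde\nu,
\]
so that $\nu=(\rho\circ\alpha)\,{\rm d}\tilde\nu$ and thus $\nu\in \Mdna([c,d],E)$.

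Using $\alpha(c)=a$ together with monotonicity, $\alpha([c,s])=[a,\alpha(s)]$ for all $s\in [c,d]$, hence
\[
(f\circ\alpha)(s)=f(a)+\mu([a,\alpha(s)])=(f\circ\alpha)(c)+\nu([c,s]),
\]
so $f\circ\alpha\in \BV([c,d],E)$ with $(f\circ\alpha)'=\nu$. Thus $\BV(\alpha,E)$ is a well-defined linear map, and the identity $\|f\circ\alpha\|_{BV}=\|(f\circ\alpha)(c)\|_E+\|\nu\|=\|f(a)\|_E+\|\mu\|=\|f\|_{BV}$ shows it is isometric. Since $\alpha^{-1}\colon [a,b]\to [c,d]$ is an affine reparametrization of the same type, running the above with the roles of $\alpha$ and $\alpha^{-1}$ interchanged exhibits $\BV(\alpha^{-1},E)$ as a two-sided inverse to $\BV(\alpha,E)$, so $\BV(\alpha,E)$ is an isometric linear bijection and hence an isomorphism of Banach spaces.

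The main potential obstacle is the RN-density transfer, i.e.\ confirming $\nu\in \Md$ rather than just $\nu\in \Mes$; pushing forward both $\mu$ and its reference measure $\Var(\mu)$ along $\alpha^{-1}$ handles this uniformly, with $\rho\circ\alpha$ serving as the transported density.
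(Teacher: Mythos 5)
Your proof is correct and follows essentially the same route as the paper: both push $\mu$ and a positive reference measure forward along $\alpha^{-1}$, use the Change of Variables Formula to transport the Radon--Nikodym density (so that the image measure lies in $\Mdna([c,d],E)$), and conclude that the pullback is a norm-preserving linear bijection. Your treatment is a little more explicit about the variation identity $\Var(\nu)(A)=\Var(\mu)(\alpha(A))$ and about exhibiting $\BV(\alpha^{-1},E)$ as the two-sided inverse, but these are details the paper leaves implicit rather than a different argument.
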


\begin{proof}
 Let $f \in \BV ([a,b],E)$ with associated measure $\mu = \rho \, {\rm d} \nu$. Since $\alpha$ is a homeomorphism,
 the map $\cB([c,d])\to \cB([a,b])$, $A\mto \alpha(A)$
 between the Borel $\sigma$-algebras is a bijection.
 Consider the image measures $(\alpha^{-1})_*(\mu)$ and $(\alpha^{-1})_*(\nu)$ on $[c,d]$. By construction, $\nu$ is positive. Since $\mu$ is non-atomic, $(\alpha^{-1})_*(\mu)$ is non-atomic. Finally, for a Borel set $B \subseteq [c,d]$, we have $$
 (\alpha^{-1})_*(\mu) (B) = \mu (\alpha(B)) = \int_{\alpha(B)} \rho \, {\rm d}\nu = \int_B \rho \circ \alpha \, {\rm d} (\alpha^{-1})_* (\nu),$$
 by the Change of Variables Formula,
 \cite[29.12 b]{Sche}. So $(\alpha^{-1})_*(\mu) \in \Mdna ([c,d],E)$ and we have $(f \circ \alpha) (x) = f(a)+\mu([a,\alpha(x)])
 =f(\alpha(c))+(\alpha^{-1})_*(\mu) ([c,x])$.
 Notably, $f \circ \alpha \in \BV([c,d],E)$. The pullback $f\mto f\circ \alpha$ is linear and preserves the norms \eqref{BV:NORM} and \eqref{BV:NORM2}, whence it is an isometry of Banach spaces.
\end{proof}
\noindent
To construct manifold structures on spaces of functions of bounded variation, we need a chain rule. We adapt
the chain rule by Moreau and Valadier \cite[Theorem 3]{MaV},
which is stated there only for scalar-valued~$\psi$.
\begin{prop}[Chain Rule] \label{chainrule}
Let $E$ and $F$ be Banach spaces, $\Omega\sub E$ be
a convex open subset,
$\psi\colon \Omega\to F$ a continuously Fr\'{e}chet
differentiable mapping and $f \in \BV ([a,b],E)$ such that $f([a,b])\sub \Omega$.
Let $\mu:=f'\in \Mdna([a,b],E)$ and $\nu$ be a positive
measure on $[a,b]$ such that $\mu=\rho\,d\nu$
for some $[\rho]\in L^1(\nu,E)$.
Then $\psi\circ f\in \BV([a,b],F)$ and
 \begin{align}\label{chainrule:1}
(\psi\circ f) (t)=(\psi\circ f)(a)+\int_a^t \psi' (f(s))(\rho(s))\,\mathrm{d}\nu(s)
\;\;\mbox{for all $\,t\in [a,b]$.}
\end{align}
\end{prop}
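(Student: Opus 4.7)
The strategy is to define the candidate derivative measure on the right-hand side of~(\ref{chainrule:1}) explicitly, verify that it lies in $\Mdna([a,b],F)$, and then obtain the claimed identity by testing against continuous linear functionals and invoking the scalar chain rule \cite[Theorem~3]{MaV}.

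First I would show that $\Phi\colon [a,b]\to F$, $s\mto \psi'(f(s))(\rho(s))$, belongs to $\cL^1(\nu,F)$. Since $f$ is continuous (Lemma~\ref{char-na} applied to the non-atomic $\mu$), the image $K:=f([a,b])\sub\Omega$ is compact and continuity of $\psi'$ makes $\psi'(K)\sub L(E,F)$ compact; in particular $M:=\sup_{x\in K}\|\psi'(x)\|_{\op}<\infty$. Joint continuity of the evaluation map $L(E,F)\times E\to F$, together with continuity of $\psi'\circ f$ and measurability of $\rho$, makes $\Phi$ measurable with essentially separable image, and $\|\Phi(s)\|_F\leq M\|\rho(s)\|_E$ is $\nu$-integrable. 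Set $\tilde\mu:=\Phi\,{\rm d}\nu\in\Mes([a,b],F)$. By the density formulas in~\ref{with-density} and~\ref{defn-meas-d}, $\Var(\tilde\mu)$ has density $\|\Phi(\cdot)\|_F$ and $\Var(\mu)$ has density $\|\rho(\cdot)\|_E$ with respect to $\nu$, so the pointwise bound $\|\Phi\|_F\leq M\|\rho\|_E$ yields $\Var(\tilde\mu)\leq M\cdot\Var(\mu)$ and hence $\Var(\tilde\mu)\ll\Var(\mu)$. Since $\mu$ is non-atomic, Lemma~\ref{la:nonatomic} gives that $\Var(\mu)$ is non-atomic, and Remark~\ref{easy-nonatomic}\,(c) then forces $\Var(\tilde\mu)$ to be non-atomic, whence $\tilde\mu\in\Mdna([a,b],F)$ by Lemma~\ref{la:nonatomic}.

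For the identity itself, I would fix $\lambda\in F'$. The scalar function $\lambda\circ\psi\colon\Omega\to\R$ is continuously Fr\'echet differentiable with $(\lambda\circ\psi)'(x)=\lambda\circ\psi'(x)$, so the scalar version of the chain rule \cite[Theorem~3]{MaV} applied to $\lambda\circ\psi$ and $f$ yields
\[
(\lambda\circ\psi\circ f)(t)-(\lambda\circ\psi\circ f)(a)=\int_a^t \lambda\bigl(\psi'(f(s))(\rho(s))\bigr)\,{\rm d}\nu(s).
\]
Continuity of $\lambda$ lets me pull it out of the Bochner integral, so the right-hand side equals $\lambda(\tilde\mu([a,t]))$. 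As $\lambda\in F'$ is arbitrary, Hahn--Banach gives $(\psi\circ f)(t)-(\psi\circ f)(a)=\tilde\mu([a,t])$ pointwise in $t$, exhibiting $\psi\circ f$ in the form~(\ref{stara}) with associated measure $\tilde\mu\in\Mdna([a,b],F)$; this establishes $\psi\circ f\in\BV([a,b],F)$ with $(\psi\circ f)'=\tilde\mu$ and the formula~(\ref{chainrule:1}).

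The main technical hurdle is the membership $\tilde\mu\in\Mdna([a,b],F)$ rather than merely $\Mes([a,b],F)$: measurability and integrability of $\Phi$ are straightforward consequences of the compactness of $f([a,b])$, but non-atomicity of $\tilde\mu$ has to be routed through the identification of $\Var(\tilde\mu)$ via its density and the absolute-continuity step of Remark~\ref{easy-nonatomic}\,(c). Once this is in place, the Hahn--Banach reduction cleanly transfers the known scalar Moreau--Valadier identity to the vector-valued codomain.
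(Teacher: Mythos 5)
Your proposal is correct and follows essentially the same route as the paper's proof: test against continuous linear functionals, invoke the scalar Moreau--Valadier chain rule for $\lambda\circ\psi$ (simplified since the non-atomicity of $\mu$ makes $f$ continuous), pull the functional out of the Bochner integral, and conclude by Hahn--Banach. The only difference is that you explicitly verify $\Phi\in\cL^1(\nu,F)$ and the membership $\tilde\mu\in\Mdna([a,b],F)$ (via $\Var(\tilde\mu)\ll\Var(\mu)$ and Remark~\ref{easy-nonatomic}\,(c)), steps the paper compresses into the remark that establishing~(\ref{chainrule:1}) suffices.
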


\begin{proof}
Using a translation, we may assume that $f(a)=0$.
It suffices to establish \eqref{chainrule:1} to prove the proposition. To this end, let $\alpha \colon F \rightarrow \R$ be continuous linear. We recall that $f$ is continuous
as $\nu$ is non-atomic.
This simplifies the chain rule in
\cite[Theorem~3]{MaV} which we use for the second identity
in the following calculation:
 \begin{align*}
  \alpha((\psi \circ f)(t))&= (\alpha \circ \psi) (f(t))
  = \int_a^t (\alpha \circ \psi)'(f(s))(\rho(s))\,\mathrm{d}s\\
  &= \int_a^t \alpha \left(\psi' (f(s))(\rho(s))\right)\,\mathrm{d}\nu(s)\\
  &= \alpha \left(\int_a^t \psi'(f(s))(\rho(s))\,\mathrm{d}\nu(s)\right).
 \end{align*}
We used twice that $\alpha$ is continuous linear, to pass from the first line to the second and then to the third. Note that $\alpha$ can be taken out of the Bochner integral. As continuous linear
functionals separate points on~$F$,
\eqref{chainrule:1} follows.
\end{proof}
\noindent
Convexity of~$\Omega$ is irrelevant for the chain rule to be valid.
\begin{cor}\label{chainrule2}
The conclusions of Proposition~\emph{\ref{chainrule}}
remain valid if $\Omega\sub E$ is open,
but not necessarily convex.
\end{cor}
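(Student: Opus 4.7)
The plan is to reduce to the convex case of Proposition~\ref{chainrule} by working on a sufficiently fine partition of $[a,b]$ on which $f$ stays inside small open balls in~$\Omega$.

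Since $\mu=f'\in\Mdna([a,b],E)$, the function $f$ is continuous by Lemma~\ref{char-na}, so $K:=f([a,b])\sub \Omega$ is compact. As $\Omega$ is open, a standard Lebesgue-number/compactness argument yields $\delta>0$ such that $B^E_\delta(x)\sub \Omega$ for every $x\in K$. By uniform continuity of $f$, choose a partition $a=t_0<t_1<\cdots<t_n=b$ such that $f([t_{j-1},t_j])\sub B^E_\delta(f(t_{j-1}))$ for each $j\in\{1,\ldots,n\}$; each such ball is convex and contained in~$\Omega$.

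By Lemma~\ref{into-prod}, the restriction $f_j:=f|_{[t_{j-1},t_j]}$ lies in $\BV([t_{j-1},t_j],E)$. Its associated non-atomic measure is the restriction $\mu_j:=\mu|_{\cB([t_{j-1},t_j])}$, which satisfies $\mu_j=\rho\,{\rm d}(\nu|_{\cB([t_{j-1},t_j])})$. Applying Proposition~\ref{chainrule} on each subinterval with the \emph{convex} open set $B^E_\delta(f(t_{j-1}))$ in place of~$\Omega$, we obtain $\psi\circ f_j\in\BV([t_{j-1},t_j],F)$ together with the identity
\[
(\psi\circ f)(t)-(\psi\circ f)(t_{j-1})=\int_{t_{j-1}}^t \psi'(f(s))(\rho(s))\,{\rm d}\nu(s)
\]
for $t\in[t_{j-1},t_j]$. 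Since each $\psi\circ f_j$ is continuous and the $f_j$ agree at the matching endpoints $t_j$, Lemma~\ref{into-prod} shows $\psi\circ f\in\BV([a,b],F)$. Summing the above identities from $j=1$ up to the subinterval containing $t$ yields \eqref{chainrule:1} telescopically, completing the proof.

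The only nontrivial step is the Lebesgue-number argument producing the partition; everything else is a routine gluing via Lemma~\ref{into-prod} and additivity of the Bochner integral over $\nu$. I do not expect any serious obstacle.
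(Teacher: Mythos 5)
Your proposal is correct and follows essentially the same route as the paper: partition $[a,b]$ so that $f$ maps each subinterval into a convex open subset of $\Omega$ (the paper covers $f([a,b])$ by finitely many convex open sets and uses a Lebesgue number, where you use uniform-radius balls and uniform continuity of $f$ — a cosmetic difference), apply Proposition~\ref{chainrule} on each piece, and glue with Lemma~\ref{into-prod}. The telescoping of the integral identity is exactly how the global formula \eqref{chainrule:1} is recovered.
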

\begin{proof}
As $f([a,b])$ is compact and $\Omega$ is open,
we find convex open subsets $\Omega_1,\ldots,\Omega_n\sub \Omega$
such that $f([a,b])\sub \Omega_1\cup\cdots\cup\Omega_n$.
Let $\delta>0$ be a Lebesgue number
for the open cover $f^{-1}(\Omega_1),\ldots,f^{-1}(\Omega_n)$
of $[a,b]$ and
$a=t_0<\cdots<t_m$
be a subdivision with $t_i-t_{i-1}\leq \delta$
for all $i\in \{1,\ldots, m\}$.
For each $i\in \{1,\ldots,m\}$,
we then find $j(i)\in \{1,\ldots,n\}$
such that $f([t_{i-1},t_i])\sub \Omega_{j(i)}$.
by Proposition~\ref{chainrule},
we have $f|_{[t_{i-},t_i]}\in \BV([t_{i-1},t_i],F)$
for all $i\in \{1,\ldots,m\}$
and (\ref{chainrule:1}) holds for $f|_{[t_{i-1},t_i]}$
in place of~$f$.
Using Lemma~\ref{into-prod}, the conclusion follows.
\end{proof}
\section{The vector measures {\boldmath$f\odot_\beta\mu$}}\label{sec-bilin-meas}
We construct new vector measures from
given ones, which will be essential
in the following.
Certain function spaces will be used,
which we now describe.
\begin{numba}\label{L-infty-concept}
Given a Banach space $(E,\|\cdot\|_E)$
and a measurable space $(X,\cS)$,
we write $\cF(X,E)$
for the vector space of all $E$-valued simple functions,
i.e., measurable
functions $f\colon (X,\cS)\to (E,\cB(E))$
with finite image.
We let $\cL^\infty(X,E)$
be the vector space of all
measurable functions
$f\colon X \to E$
such that $f(X)$ is bounded
and has a countable dense subset.
The supremum norm $\|\cdot\|_\infty$ makes $\cL^\infty(X,E)$
a Banach space; see, e.g.,
\cite[Lemma~1.19]{Mea} (applied with counting measure)
or
\cite[22.27 b]{Sche}.
We let $\cL^\infty_{\rc}(X,E)$
be the set of all measurable functions
$f\colon X\to E$ whose image $f(X)$
is relatively compact in~$E$.
Then $(\cL^\infty_{\rc}(X,E),\|\cdot\|_\infty)$
is complete (see \cite[Proposition~3.21]{CAN},
again using the counting measure).
Hence $\cL^\infty_{\rc}(X,E)$
is a closed vector subspace of $\cL^\infty(X,E)$.
Moreover, $\cF(X,E)$ is dense in $\cL^\infty_{\rc}(X,E)$
(see \cite[Proposition 3.18]{CAN} or \cite[21.4\,(E)]{Sche}).
\end{numba}
\begin{rem}
If $X$ is compact topological space and $\cS=\cB(X)$,
then $C(X,E)$\linebreak
$\sub \cL^\infty_{rc}(X,E)$,
i.e., each continuous function $f\colon X\to E$
is in $\cL^\infty_{rc}(X,E)$.
\end{rem}
\noindent
We consider the following setting:
\begin{numba}
Let $(E_1,\|\cdot\|_1)$, $(E_2,\|\cdot\|_2)$
and $(F,\|\cdot\|_F)$ be
Banach spaces and
\[
\beta\colon E_1\times E_2\to F
\]
be a continuous bilinear map.
Let $(X,\cS)$ be a measurable space.
Our goal is to define a vector measure
\[
f\odot_\beta \mu\in \Mes(X,F)
\]
for $f\in \cL^\infty_{\rc}(X,E_1)$
and $\mu\in \Mes(X,E_2)$.
This measure is a generalization
of the familiar vector measures
$f\,{\rm d\mu}$ with density recalled in \ref{with-density}
for a finite positive measure $\mu$
and $f\in \cL^1(\mu,E)$ (as we shall
see in \ref{ordinary-density}).
\end{numba}
\begin{numba}
Given $f\in \cF(X,E_1)$, we write $f=\sum_{i=1}^n{\bf 1}_{A_i}v_j$
with disjoint measurable
subsets $A_1,\ldots, A_n\sub X$ and $v_1,\ldots, v_n\in E_1$,
using characteristic functions.
If $\mu\in \Mes(X,E_2)$ is a vector measure
of bounded variation,
we define a function $\mu_f\colon \cS\to F$ via
\[
\mu_f(A):=\sum_{i=1}^n\beta(v_i,\mu(A\cap A_i))
\]
for $A\in\cS$. Then $\mu_f(A)$ is well defined.
In fact, assume that also $f=\sum_{j=1}^m w_j{\bf 1}_{B_j}$
with disjoint sets $B_1,\ldots, B_m\in\cS$
and $w_1,\ldots, w_m\in E_1$.
Adding the set $A_{n+1}:=X\setminus (A_1\cup\cdots\cup A_n)$
and the vector $v_{n+1}:=0$ if necessary,
we may assume that $A\cup\cdots\cup A_n=X$.
Likewise, we may assume that $B_1\cup\cdots\cup B_m=X$.
For each $i\in\{1,\ldots, n\}$, the set
$A_i$ is the disjoint union of the intersections
$A_i\cap B_j$ for $j\in\{1,\ldots, m\}$.
If $A_i\cap B_j\not=\emptyset$,
we pick an element $x\in A_i\cap B_j$ and see that
\[
v_i=f(x)=w_j.
\]
If $A_i\cap B_j=\emptyset$, then $\mu(A\cap A_i\cap B_j)=0$.
Thus
\begin{eqnarray*}
\sum_{i=1}^n\beta(v_i,\mu(A\cap A_i))
&=&
\sum_{i=1}^n\beta\left(v_i,\mu\left(\bigcup_{j=1}^m A\cap A_i\cap B_j\right)\right)\\
&= & \sum_{i=1}^n\sum_{j=1}^m \underbrace{\beta(v_i,\mu(A\cap A_i\cap B_j))}_{=
\beta(w_j,\mu(A\cap A_i\cap B_j))}
=\sum_{j=1}^m\beta(w_j,\mu(A\cap B_j)).
\end{eqnarray*}
For each sequence $(C_k)_{k\in\N}$
of disjoint measurable subsets of~$X$, we have
\begin{eqnarray*}
\mu_f\left(\bigcup_{k\in\N}C_k\right)
&=&\sum_{j=1}^n\beta\left(v_j,\mu\left(\bigcup_{k\in\N}C_k\cap A_j\right)\right)
=\sum_{j=1}^n\sum_{k=1}^\infty \beta(v_j,\mu(C_k\cap A_j))\\
& =& \sum_{k=1}^\infty
\mu_f(C_k),
\end{eqnarray*}
whence $\mu_f$ is a vector measure.
Assuming that $A_i$ is non-empty for each $i$, we get
$\|v_i\|_1\leq \|f\|_\infty$ for each $i\in\{1,\ldots, n\}$.
For each $A\in \cS$, we have
\begin{eqnarray*}
\|\mu_f(A)\|_F &\leq &\sum_{i=1}^n\|\beta(v_i,\mu(A\cap A_i))\|_F
\leq\sum_{i=1}^n\|\beta\|_{\op}\|v_i\|_1\|\mu(A\cap A_i)\|_2\\
&\leq & \|\beta\|_{\op}\|f\|_\infty \sum_{i=1}^n\|\mu(A\cap A_i)\|_2
\leq\|\beta\|_{\op}\|f\|_\infty \Var(\mu)(A).
\end{eqnarray*}
For all disjoint measurable sets $S_1,\ldots, S_m$
with union $A$, we deduce that
\[
\sum_{k=1}^m\|\mu_f(S_k)\|_F\leq \|\beta\|_{\op}\|f\|_\infty \sum_{k=1}^m\Var(\mu)(S_k)
=\|\beta\|_{\op}\|f\|_\infty \Var(\mu)(A).
\]
Hence
\[
\Var(\mu_f)\leq \|\beta\|_{\op}\|f\|_\infty\Var(\mu).
\]
Notably,
\begin{equation}\label{bil-cts}
\|\mu_f\|\leq \|\beta\|_{\op}\|f\|_\infty \|\mu\|\,.
\end{equation}
Direct calculation shows that the map
\[
b\colon \cF(X,E_1)\times \Mes(X,E_2)\to \Mes(X,F),\quad (f,\mu)\mto \mu_f
\]
is bilinear. By (\ref{bil-cts}), the bilinear map $b$ is continuous
with $\|b\|_{\op}\leq \|\beta\|_{\op}$.
It therefore has a unique continuous bilinear extension
\begin{equation}\label{the-b-bar}
\wb{b}\colon \cL^\infty_{\rc}(X,E_1)\times \Mes(X,E_2)\to \Mes(X,F),
\end{equation}
and the latter satisfies $\|\wb{b}\|_{\op}\leq \|\beta\|_{\op}$.
We define
\[
f\odot_\beta \mu \; :=\; \wb{b}(f,\mu)
\]
for $f\in\cL^\infty_{\rc}(X,E_1)$ and $\mu\in \Mes(X,E_2)$.
By construction,
$f\odot_\beta\mu\in \Mes(X,F)$
and
\begin{equation}\label{size-dense}
\|f\odot_\beta\mu\|
\leq \|\beta\|_{\op}\|f\|_\infty\|\mu\|\,.
\end{equation}
\end{numba}
\begin{numba}\label{ordinary-density}
Let $(X,\cS,\nu)$ be a finite measure space
and $f\in \cL^\infty_{\rc}(X,E)$.
Consider the map $\sigma \colon E\times \R\to E$, $(v,t)\mto tv$
given by scalar multiplication.
Then $f$ is in $\cL^1(\nu,E)$
and
\[
(f\odot_\sigma \nu)(A)=\int_A f(x)\, {\rm d}\nu(x),
\]
the Bochner integral, for each $A\in\cS$.
In fact, equality is obvious if $f\in \cF(X,E)$.
In the general case, let $(f_k)_{k\in\N}$
be a sequence in $\cF(X,E)$ with $\|f-f_k\|_\infty\to 0$
as $k\to\infty$.
Then $\|f-f_k\|_{\cL^1}\to 0$ as well and thus
\[
(f\odot_\sigma \nu)(A)=\lim_{k\to\infty}(f_k\odot_\sigma \nu)(A)
=\lim_{k\to\infty}\int_Af_k\, d\nu=\int_A f\, d\nu\,.
\]
Thus, in more traditional notation,
\[
f\odot_\sigma \nu=f\, d\nu=f(t)\, d\nu(t)\, .
\]
\end{numba}
\begin{la}\label{superpo-vm}
Let $\alpha\colon F_1\to F_2$
be a continuous linear map between Banach spaces
$(F_j,\|\cdot\|_j)$ for $j\in\{1,2\}$
and $(X,\cS)$ be a measurable space.
Then $\alpha\circ\mu\in \Mes(X,F_2)$
for each $\mu\in \Mes(X,F_1)$
and $\|\alpha\circ \mu\|\leq \|\alpha\|_{\op}\|\mu\|$.
Hence
\[
\Mes(X,\alpha)\colon \Mes(X,F_1)\to \Mes(X,F_2),\quad\mu\mto\alpha\circ\mu
\]
is a continuous linear map of operator norm $\leq \|\alpha\|_{\op}$.
\end{la}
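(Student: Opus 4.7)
The plan is to verify the three claims in the statement in order: $\sigma$-additivity of $\alpha\circ\mu$, the variation bound, and finally the conclusion about $\Mes(X,\alpha)$.

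First, I would check that $\alpha\circ\mu\colon \cS\to F_2$ is actually a vector measure of bounded variation, so that membership in $\Mes(X,F_2)$ makes sense. Finite additivity is immediate from linearity of $\alpha$. For $\sigma$-additivity, given disjoint $(A_n)_{n\in\N}$ in $\cS$ with union $A$, the series $\sum_{n=1}^\infty \mu(A_n)$ converges in $F_1$ to $\mu(A)$ by $\sigma$-additivity of~$\mu$, and continuity of $\alpha$ then yields
\[
(\alpha\circ\mu)(A)=\alpha\!\left(\sum_{n=1}^\infty \mu(A_n)\right)=\sum_{n=1}^\infty \alpha(\mu(A_n)).
\]

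Next I would establish the variation bound. For any $A\in\cS$ and any finite partition $A=S_1\cup\cdots\cup S_n$ into disjoint measurable sets, the operator norm estimate for $\alpha$ gives
\[
\sum_{k=1}^n \|\alpha(\mu(S_k))\|_2\;\leq\;\|\alpha\|_{\op}\sum_{k=1}^n \|\mu(S_k)\|_1\;\leq\;\|\alpha\|_{\op}\,\Var(\mu)(A).
\]
Taking the supremum over all such partitions yields $\Var(\alpha\circ\mu)(A)\leq \|\alpha\|_{\op}\Var(\mu)(A)$, and specialization to $A=X$ delivers the total variation bound $\|\alpha\circ\mu\|\leq \|\alpha\|_{\op}\|\mu\|$. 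In particular $\alpha\circ\mu$ has bounded variation, so $\alpha\circ\mu\in\Mes(X,F_2)$.

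Finally, linearity of $\Mes(X,\alpha)$ in $\mu$ is immediate from the linearity of $\alpha$ (composition from the left with a linear map preserves sums and scalar multiples of vector measures pointwise on $\cS$), and the inequality $\|\Mes(X,\alpha)(\mu)\|\leq \|\alpha\|_{\op}\|\mu\|$ just established says exactly that the linear map is continuous with operator norm at most~$\|\alpha\|_{\op}$. There is no genuine obstacle here; the only thing to be a little careful about is not to confuse the two uses of the word ``norm'' (the variation norm on measures versus the operator norm of $\alpha$), which the inequality threads together cleanly.
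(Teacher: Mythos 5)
Your proposal is correct and follows essentially the same route as the paper: the paper likewise observes that $\alpha\circ\mu$ is $\sigma$-additive (you just spell out the continuity argument) and then derives $\Var(\alpha\circ\mu)(A)\leq\|\alpha\|_{\op}\Var(\mu)(A)$ from the same partition estimate, with the operator-norm conclusion following immediately. No gaps.
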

\begin{proof}
The map $\alpha\circ\mu\colon \cS\to F_2$
is $\sigma$-additive and hence a vector measure. For
each $A\in\cS$ and all disjoint sets $S_1,\ldots, S_n\in\cS$
with union~$A$, we have
\[
\sum_{i=1}^n\|\alpha(\mu(S_i))\|_2\leq\|\alpha\|_{\op}
\sum_{i=1}^n\|\mu(S_i)\|_1\leq \|\alpha\|_{\op}\Var(\mu)(A);
\]
thus $\Var(\alpha\circ \mu)(A)\leq \|\alpha\|_{\op}\Var(\mu)(A)$.
Notably, $\|\alpha\circ\mu\|\leq\|\alpha\|_{\op}\|\mu\|<\infty$.
\end{proof}
\begin{la}\label{compo-lin}
Let $E_1$, $E_2$, $F_1$ and $F_2$ be Banach spaces,
$\beta\colon E_1\times E_2\to F_1$ be a continuous bilinear map
and $\lambda\colon F_1\to F_2$
be a continuous linear map. Then
\[
\lambda \circ (f\odot_\beta \mu)=f\odot_{\lambda \circ \beta}\mu
\]
for all $f\in\cL^\infty_{\rc}(X,E_1)$
and $\mu\in \Mes(X,E_2)$.
\end{la}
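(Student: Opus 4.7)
The natural plan is a two-step argument: verify the identity on the dense subspace of simple functions by a direct unfolding of the definition, then extend by continuity.

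First I would handle $f\in\cF(X,E_1)$. Writing $f=\sum_{i=1}^n \mathbf{1}_{A_i}v_i$ with disjoint $A_i\in\cS$ and $v_i\in E_1$, the definition of $\mu_f$ recalled before (\ref{the-b-bar}) gives, for any $A\in\cS$,
\[
(f\odot_\beta\mu)(A)=\sum_{i=1}^n\beta(v_i,\mu(A\cap A_i)).
\]
Applying $\lambda$ and using its linearity,
\[
\lambda\bigl((f\odot_\beta\mu)(A)\bigr)=\sum_{i=1}^n\lambda\bigl(\beta(v_i,\mu(A\cap A_i))\bigr)=\sum_{i=1}^n(\lambda\circ\beta)(v_i,\mu(A\cap A_i)),
\]
which is precisely $(f\odot_{\lambda\circ\beta}\mu)(A)$ by the same definition applied to the bilinear map $\lambda\circ\beta$. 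Hence the identity holds on $\cF(X,E_1)\times\Mes(X,E_2)$.

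Next I would show that both sides of the claimed identity depend continuously (in fact bilinearly and continuously) on $(f,\mu)\in\cL^\infty_{\rc}(X,E_1)\times\Mes(X,E_2)$. The right-hand side $(f,\mu)\mto f\odot_{\lambda\circ\beta}\mu$ is continuous by construction of $\odot$, with bound $\|\lambda\circ\beta\|_{\op}\|f\|_\infty\|\mu\|$ coming from (\ref{size-dense}). For the left-hand side, the map $f\odot_\beta\mu$ is continuous bilinear by the same estimate, and post-composition with $\lambda$ is continuous linear by Lemma~\ref{superpo-vm}, with $\|\lambda\circ(f\odot_\beta\mu)\|\le\|\lambda\|_{\op}\|\beta\|_{\op}\|f\|_\infty\|\mu\|$.

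Finally, since $\cF(X,E_1)$ is dense in $\cL^\infty_{\rc}(X,E_1)$ (recalled in \ref{L-infty-concept}), the two continuous bilinear maps agree on a dense subset of the product and therefore everywhere. No step is a genuine obstacle; the only point that requires mild attention is invoking continuity of the post-composition operator on $\Mes(X,F_1)$, which is exactly what Lemma~\ref{superpo-vm} supplies.
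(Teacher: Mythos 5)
Your proposal is correct and follows essentially the same route as the paper: verify the identity for simple functions by unfolding the definition of $\mu_f$, then conclude by continuity of both sides (via Lemma~\ref{superpo-vm} and the bound (\ref{size-dense})) together with density of $\cF(X,E_1)$ in $\cL^\infty_{\rc}(X,E_1)$. No gaps.
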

\begin{proof}
Using Lemma~\ref{superpo-vm},
we obtain continuous functions $h_1,h_2\colon \cL^\infty_{\rc}(X,E_1)\times
\Mes(X,E_2)\to \Mes(X,F_2)$ via
\[
h_1(f,\mu):=\lambda\circ (f\odot_\beta \mu)\qquad\mbox{and}\qquad
h_2(f,\mu):=f\odot_{\lambda\circ\beta}\mu.
\]
For each $f=\sum_{i=1}^nv_i{\bf 1}_{A_i}\in\cF(X,F_1)$
and each $\mu\in \Mes(X,F_1)$, we have
\[
h_1(f,\mu)(A)=\lambda\left(\sum_{i=1}^n\beta(v_i,\mu(A\cap A_i))\right)
\!=\! \sum_{i=1}^n(\lambda\circ\beta)(v_i,\mu(A\cap A_i))=h_2(f,\mu)(A)
\]
for all $A\in \cS$ and hence $h_1(f,\mu)=h_2(f,\mu)$.
As $\cF(X,F_1)$ is dense in $\cL^\infty_{\rc}(X,E_1)$
and both $h_1$ and $h_2$ are continuous,
$h_1=h_2$ follows.
\end{proof}
\begin{la}\label{odot-density}
Let $(E_1,\|\cdot\|_1)$, $(E_2,\|\cdot\|_2)$, and $(F,\|\cdot\|_F)$ be Banach spaces
and $\beta\colon E_1\times E_2\to F$ be a continuous
bilinear map. Let $\nu\in \Mes_+(X)$.
Then
\[
f\odot_\beta (\rho\, d\nu)=
\beta(f(t),\rho(t))\, d\nu(t)
\]
for all $f\in\cL^\infty_{\rc}(X,E_1)$
and $\rho\in \cL^1(\nu,E_2)$.
\end{la}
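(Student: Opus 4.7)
The plan is to prove the identity by verifying it first on simple functions and then extending it by a density argument, exploiting the fact that $\cF(X,E_1)$ is dense in $\cL^\infty_{\rc}(X,E_1)$ (see \ref{L-infty-concept}).

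First I would check that both sides of the asserted equation define continuous linear maps from $\cL^\infty_{\rc}(X,E_1)$ to $\Mes(X,F)$, with $\rho$ and $\nu$ fixed. For the left-hand side this is immediate from the construction of $\odot_\beta$, which gives the estimate $\lVert f\odot_\beta \mu\rVert\leq \lVert\beta\rVert_{\op}\lVert f\rVert_\infty\lVert\mu\rVert$ applied to $\mu=\rho\,{\rm d}\nu$. For the right-hand side, one first has to check that $g(t):=\beta(f(t),\rho(t))$ defines an element of $\cL^1(\nu,F)$: the map $(f,\rho)\colon X\to E_1\times E_2$ is measurable, so $g$ is measurable as the composition with continuous $\beta$; since $f(X)$ is relatively compact and $\rho(X)$ separable, the image $g(X)$ is separable; and the pointwise bound $\lVert g(t)\rVert_F\leq \lVert\beta\rVert_{\op}\lVert f\rVert_\infty \lVert\rho(t)\rVert_2$ gives $\lVert g\rVert_{\cL^1}\leq \lVert\beta\rVert_{\op}\lVert f\rVert_\infty \lVert\rho\rVert_{\cL^1}$. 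Combined with the isometric embedding $L^1(\nu,F)\hookrightarrow \Mes(X,F)$, $[h]\mapsto h\,{\rm d}\nu$ from Remark~\ref{newrem}, this shows that $f\mapsto \beta(f(\cdot),\rho(\cdot))\,{\rm d}\nu$ is continuous linear as well.

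Next I would verify the identity for a simple function $f=\sum_{i=1}^n v_i\mathbf{1}_{A_i}$ with disjoint $A_i\in \cS$. On any $A\in \cS$, the definition of $\odot_\beta$ gives
\[
(f\odot_\beta (\rho\,{\rm d}\nu))(A)=\sum_{i=1}^n \beta\bigl(v_i,(\rho\,{\rm d}\nu)(A\cap A_i)\bigr)
=\sum_{i=1}^n \beta\Bigl(v_i,\int_{A\cap A_i}\rho\,{\rm d}\nu\Bigr),
\]
while, by linearity of $\beta(v_i,\cdot)$ and its commutation with the Bochner integral,
\[
\int_A \beta(f(t),\rho(t))\,{\rm d}\nu(t)
=\sum_{i=1}^n \int_{A\cap A_i}\beta(v_i,\rho(t))\,{\rm d}\nu(t)
=\sum_{i=1}^n \beta\Bigl(v_i,\int_{A\cap A_i}\rho\,{\rm d}\nu\Bigr).
\]
Hence both sides coincide on $\cF(X,E_1)$. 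Since both are continuous in $f$ and $\cF(X,E_1)$ is dense in $\cL^\infty_{\rc}(X,E_1)$, the identity extends to all $f\in\cL^\infty_{\rc}(X,E_1)$.

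The only real obstacle is the bookkeeping in the first step, in particular checking that $\beta(f(\cdot),\rho(\cdot))$ genuinely lies in $\cL^1(\nu,F)$ (separability of its range and $\cL^1$-bound) so that the right-hand side is defined as an element of $\Mes(X,F)$; everything else is routine linearity and the density of simple functions.
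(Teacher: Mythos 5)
Your proof is correct. It follows the same basic skeleton as the paper's (check continuity of both sides, verify on simple functions, conclude by density of $\cF(X,E_1)$ in $\cL^\infty_{\rc}(X,E_1)$), but with one genuine difference in the density step. The paper treats both arguments symmetrically: it sets up a continuous \emph{bilinear} map $b\colon \cL^\infty(X,E_1)\times L^1(\nu,E_2)\to L^1(\nu,F)$, verifies the identity only when \emph{both} $f$ and $\rho$ are simple, and then uses density of simple functions in each of the two spaces together with joint continuity. You instead fix $\rho\in\cL^1(\nu,E_2)$ once and for all, verify the identity for simple $f$ against this \emph{arbitrary} $\rho$ by pulling the continuous linear map $\beta(v_i,\cdot)$ out of the Bochner integral, and then only need a one-variable density argument in $f$. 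Your route is slightly leaner -- it avoids invoking density of simple functions in $L^1(\nu,E_2)$ and the bilinear-continuity bookkeeping -- at the cost of the (standard) commutation of continuous linear maps with the Bochner integral; the paper's route keeps the verification step purely combinatorial on simple functions. Both the measurability of $\beta\circ(f,\rho)$ (which the paper disposes of by citation) and the separability of its range are handled adequately in your write-up, since the ranges of $f$ and $\rho$ are separable.
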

\begin{proof}
The map
\[
\theta\colon L^1(\nu,E_2)\to \Mes(X,E_2),\quad [\rho]\mto \rho\, {\rm d}\nu
\]
is linear and continuous, being actually an isometric embedding
(see Remark~\ref{newrem}). Likewise,
the corresponding map $\Theta\colon L^1(\nu,F)\to \Mes(X,F)$
is continuous.
For all $f\in \cL^\infty(X,E_1)$
and $\rho\in \cL^1(\nu,E_2)$,
the map $\beta\circ (f,\rho)\colon X\to F$
is measurable (cf.\ \cite[Lemma~2.7]{CAN})
with separable image and
\[
\int_X \underbrace{\|\beta(f(t),\rho(t))\|_F}_{\leq\|\beta\|_{\op}\|f(t)\|_1\|\rho(t)\|_2}\,{\rm d}\nu(t)\leq\|\beta\|_{\op}\|f\|_\infty \int_X\|\rho(t)\|_2\, {\rm d}\nu(t)=
\|\beta\|_{\op}\|f\|_\infty\|\rho\|_{\cL^1},
\]
whence $\beta\circ (f,\rho)\in \cL^1(\nu,F)$
with
\begin{equation}\label{again-ct}
\|\beta\circ (f,\rho)\|_{\cL^1}\leq \|\beta\|_{\op}\|f\|_\infty\|\rho\|_{\cL^1}.
\end{equation}
Hence
\[
b\colon \cL^\infty(X,E_1)\times L^1(\nu,E_2)\to L^1(\nu,F),\quad
(f,[\rho])\mto [\beta\circ (f,\rho)]
\]
makes sense. A direct calculation shows that $b$ is bilinear;
by (\ref{again-ct}), the bilinear map
$b$ is continuous with $\|b\|_{\op}\leq \|\beta\|_{\op}$.
Hence both of the maps
$h_1,h_2\colon \cL^\infty(X,E_1)\times L^1(\nu,E_2)\to L^1(\nu,F)$,
\[
h_1(f,[\rho]):=f\odot_\beta(\rho\, d\nu)=f\odot_\beta \theta(\rho)\;\,\mbox{and}\;\,
h_2(f,[\rho])=(\beta\circ (f,\rho))\,d\nu=
\Theta(b(f,\rho))
\]
are continuous.
If $f=\sum_{i=1}^nv_i{\bf 1}_{A_i}\in\cF(X,E_1)$
and $\rho=\sum_{j=1}^mw_j{\bf 1}_{B_j}\in\cF(X,E_2)$
with disjoint sets $A_1,\ldots, A_n\in\cS$
and $B_1,\ldots,B_m\in\cS$ with union~$X$, then
\[
\beta\circ (f,\rho)=\sum_{i=1}^n\sum_{j=1}^m \beta(v_i,w_j){\bf 1}_{A_i\cap B_j}
\]
and we get for all $A\in \cS$
\begin{eqnarray*}
h_1(f,\rho)(A) &=&\sum_{i=1}^n
\beta(v_i,\rho\,{\rm d}\nu (A\cap A_i))\\
&=&\sum_{i=1}^n\sum_{j=1}^m\beta(v_i,w_j)\nu(A\cap A_i\cap B_j)
=((\beta\circ (f,\rho))\, {\rm d}\nu)(A)\\
&=& h_2(f,\rho)(A).
\end{eqnarray*}
Thus $h_1(f,\rho)=h_2(f,\rho)$.
As $\cF(X,E_1)$ is dense in $\cL^\infty_{\rc}(X,E_1)$
and $\cF(X,E_2)$ is dense in $L^1(\nu,E_2)$
and both $h_1$ and $h_2$ are continuous,
$h_1=h_2$ follows.
\end{proof}
\noindent
We immediately deduce:
\begin{la}\label{odot-RN}
Let $E_1$, $E_2$, and $F$ be Banach spaces,
$\beta\colon E_1\times E_2\to F$ be a continuous bilinear
map and $(X,\cS)$ be a measurable space.
Then $f\odot_\beta \mu\in \Md(X,F)$
for all $f\in\cL^\infty_{\rc}(X,E_1)$
and $\mu\in \Md(X,E_2)$.
If $\mu\in \Mdna(X,E_2)$,
then also $f\odot_\beta\mu\in \Mdna(X,E_2)$.
\end{la}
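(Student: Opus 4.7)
The plan is to reduce both assertions to Lemma~\ref{odot-density}, which expresses $f\odot_\beta\mu$ explicitly as a measure with density whenever $\mu$ already has a density.

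First I would address membership in $\Md(X,F)$. Given $\mu\in\Md(X,E_2)$, by definition there exist $\nu\in\Mes_+(X)$ and $\rho\in\cL^1(\nu,E_2)$ with $\mu=\rho\,d\nu$. Applying Lemma~\ref{odot-density} yields
\[
f\odot_\beta\mu \;=\; f\odot_\beta(\rho\,d\nu) \;=\; \beta(f(\sbull),\rho(\sbull))\,d\nu,
\]
and the integrability estimate $\|\beta\circ(f,\rho)\|_{\cL^1}\leq \|\beta\|_{\op}\|f\|_\infty\|\rho\|_{\cL^1}$ (formula~(\ref{again-ct}) from the proof of Lemma~\ref{odot-density}) guarantees that $\beta\circ(f,\rho)\in\cL^1(\nu,F)$. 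Hence $f\odot_\beta\mu$ has a Radon--Nikodym density with respect to the finite positive measure $\nu$, which is exactly the defining property of $\Md(X,F)$.

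For the non-atomic case, I would exploit the freedom in the choice of $\nu$. If $\mu\in\Mdna(X,E_2)$, then by Lemma~\ref{la:nonatomic} the positive measure $\Var(\mu)$ is non-atomic, and as noted in~\ref{defn-meas-d} we may represent $\mu=h\,d\Var(\mu)$ with some $h\in\cL^1(\Var(\mu),E_2)$. Setting $\nu:=\Var(\mu)$, Lemma~\ref{odot-density} gives
\[
f\odot_\beta\mu \;=\; \beta(f(\sbull),h(\sbull))\,d\nu,
\]
and from~(\ref{tovar}) its variation equals $\|\beta(f(\sbull),h(\sbull))\|_F\,d\nu$, a finite positive measure absolutely continuous with respect to the non-atomic measure $\nu$. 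By Remark~\ref{easy-nonatomic}(c), $\Var(f\odot_\beta\mu)$ is non-atomic, and a second application of Lemma~\ref{la:nonatomic} shows that $f\odot_\beta\mu$ itself is non-atomic, hence lies in $\Mdna(X,F)$.

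There is no serious obstacle here; the only point requiring a little care is the choice of reference measure in the non-atomic half: one must pick $\nu=\Var(\mu)$ (rather than an arbitrary dominating measure) to transfer non-atomicity, but this is precisely what the paragraph~\ref{defn-meas-d} makes available.
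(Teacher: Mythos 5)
Your proposal is correct and follows essentially the same route as the paper's own proof: both pass through Lemma~\ref{odot-density} to exhibit $\beta\circ(f,\rho)$ as a density for $f\odot_\beta\mu$, and both handle the non-atomic case by choosing $\nu=\Var(\mu)$ and invoking Lemma~\ref{la:nonatomic} together with Remark~\ref{easy-nonatomic}\,(c). No substantive differences.
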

\begin{proof}
Assume that $\mu=\rho\, {\rm d}\nu$ for a finite measure
$\nu$ on $(X,\cS)$ and $\rho\in\cL^1(\nu,E_2)$.
By Lemma~\ref{odot-density}, we have
$f\odot_\beta\mu=r\,{\rm d}\nu$
with $r:=\beta\circ (f,\rho)$, whence $f\odot_\beta\mu\in \Md(X,E_2)$.\\[2.3mm]
If $\mu$ is non-atomic, then $\Var(\mu)$
is non-atomic (by Lemma~\ref{la:nonatomic})
and we may assume that $\nu=\Var(\mu)$
has been chosen (see \ref{defn-meas-d}).
Then $\Var(f\odot_\beta\mu)
=\Var(\beta(f(x),\rho(x))\,{\rm d}\nu(x))
=\|\beta(f(x),\rho(x))\|_F\, {\rm d}\nu(x)$
(see \ref{defn-meas-d}),
which is non-atomic by Remark~\ref{easy-nonatomic}\,(c).
Hence also $f\odot_\beta\mu$ is non-atomic, by
Lemma~\ref{la:nonatomic}.
\end{proof}
\section{The vector measures {\boldmath$f_*(\mu,\gamma)$}}\label{sec-the-rhs}
With a view towards right-hand sides
of differential equations,
we introduce notation for
a special case of the construction in Section~\ref{sec-bilin-meas}.
\begin{numba}\label{situ-rhs}
Let $(E_1,\|\cdot\|_1)$,
$(E_2,\|\cdot\|_2)$
and $(F,\|\cdot\|_F)$
be Banach spaces, $U\sub E_2$ be an open subset
and $f\colon E_1\times U\to F$
be a mapping such that
\[
\wt{f}(y):=f(\cdot,y)\colon E_1\to F
\]
is continuous linear for each $y\in U$ and
the map
\[
\wt{f}\colon U\to (\cL(E_1,F),\|\cdot\|_{\op})
\]
is continuous.
Let $\ve\colon \cL(E_1,F)\times E_1\to F$,
$(\alpha,x)\mto\alpha(x)$ be the evaluation map,
which is bilinear and continuous.
If $a<b$ are real numbers,
we write
\[
\cL^\infty_{\rc}([a,b],U)
\]
for the set of all $\gamma\in \cL^\infty_{\rc}([a,b],E_2)$
such that $\wb{\gamma([a,b])}\sub U$.
Then $\cL^\infty_{\rc}([a,b],U)$
is an open subset of $\cL^\infty_{\rc}([a,b],E_2)$
(cf.\ \cite[Lemma~3.14\,(a)]{Mea}).
For each $\gamma\in \cL^\infty_{\rc}([a,b], U)$, we have
\[
\wt{f}\circ \gamma\in \cL^\infty_{\rc}([a,b],\cL(E_1,F)).
\]
In fact, the composition is measurable
and the continuous map $\wt{f}$
takes the compact set $\wb{\gamma([a,b])}$
to a compact set.
\end{numba}
\noindent
The following abbreviation is useful.
\begin{defn}
In the situation of~\ref{situ-rhs}, we define
\[
f_*(\mu,\gamma)\; :=\; (\wt{f}\circ \gamma)\odot_\ve \mu
\in \Mes([a,b],F)
\]
for each $\mu\in\Mes([a,b],E_1)$ and $\gamma\in \cL^\infty_{\rc}([a,b],U)
\sub \cL^\infty([a,b], E_2)$.
\end{defn}
If $\mu\in \Mdna([a,b],E_1)$, then
$f_*(\mu,\gamma)\in \Mdna([a,b],F)$,
by Lemma~\ref{odot-RN}.\\[2.3mm]
As before, let $(E_1,\|\cdot\|_1)$, $(E_2,\|\cdot\|_2)$,
and $(F,\|\cdot\|)$ be Banach spaces
and
$U\sub E_2$ be an open subset.
\begin{la}\label{diff-pfwd}
Let $k\in \N_0\cup\{\infty\}$
and $f\colon E_1\times U\to F$
be a mapping such that $\wt{f}(y):=f(\cdot,y)\in \cL(E_1,F)$
for all $y\in U$
and the map
\[
\wt{f}\colon U\to (\cL(E_1,F),\|\cdot\|_{\op})
\]
is $C^k$. Let $a< b$ be real numbers.
Then also the mapping
\[
f_*\colon
\Mes([a,b],E_1)\times\cL^\infty_{\rc}([a,b],U)\to\Mes([a,b],F),\;\,
(\mu,\gamma)\mto f_*(\mu,\gamma)
\]
is $C^k$ and also its restriction to a map
\[
f_*\colon \Mdna([a,b],E_1)\times\cL^\infty_{\rc}([a,b],U)\to\Mdna([a,b],F).
\]
\end{la}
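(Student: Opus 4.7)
My plan is to factor $f_*$ as a composition of two maps whose smoothness properties are more tractable. Specifically, write
\[
f_*(\mu,\gamma) \; = \; \bar b\bigl(\Phi(\gamma), \mu\bigr),
\]
where $\bar b\colon \cL^\infty_{\rc}([a,b],\cL(E_1,F)) \times \Mes([a,b],E_1) \to \Mes([a,b],F)$ is the continuous bilinear extension from \eqref{the-b-bar} associated with the evaluation $\ve$ (with the arguments in the order appropriate to our definition), and
\[
\Phi\colon \cL^\infty_{\rc}([a,b],U) \to \cL^\infty_{\rc}([a,b],\cL(E_1,F)), \quad \gamma \mapsto \wt f \circ \gamma
\]
is the superposition operator induced by $\wt f$. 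The bilinear map $\bar b$ is continuous, hence smooth, so the whole question reduces to showing $\Phi$ is $C^k$.

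For this, I would invoke (or reprove) the standard fact that, since $\wt f\colon U \to \cL(E_1,F)$ is $C^k$ between open subsets of Banach spaces, the induced superposition operator between $\cL^\infty_{\rc}$-spaces (equipped with the sup-norm) is again $C^k$, with iterated differentials given by pointwise composition with $\wt f^{(j)}$. The analogous statement for $L^p$-spaces is given in \cite[Lemma~3.14]{Mea}; for $\cL^\infty_{\rc}$ the argument is in fact simpler because the sup-norm topology makes the classical difference-quotient computation work essentially verbatim. Concretely, on a bounded open subset of $\cL^\infty_{\rc}([a,b],U)$ the image $\gamma([a,b])$ stays inside a fixed compact set $K\subset U$ on which $\wt f$ and its derivatives are uniformly continuous and bounded; a standard Taylor expansion
\[
\wt f(\gamma(t)+\eta(t)) \; = \; \sum_{j=0}^{k} \tfrac{1}{j!}\wt f^{(j)}(\gamma(t))(\eta(t))^{\otimes j} + R_k(t)
\]
with uniform (in~$t$) control of the remainder then yields Bastiani-$C^k$ differentiability of $\Phi$, with $d\Phi(\gamma,\eta)(t) = \wt f'(\gamma(t))\cdot\eta(t)$ and so on. Openness of $\cL^\infty_{\rc}([a,b],U)$ in $\cL^\infty_{\rc}([a,b],E_2)$ is already noted in \ref{situ-rhs}. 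Composing the $C^k$-map $(\mu,\gamma)\mapsto(\Phi(\gamma),\mu)$ with the smooth bilinear map $\bar b$ gives that $f_*$ is $C^k$.

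For the restriction to $\Mdna$, two things must be checked: that $f_*$ indeed maps into $\Mdna([a,b],F)$ and that $C^k$-ness is inherited. The first is exactly Lemma~\ref{odot-RN}. For the second, note that $\Mdna([a,b],E_1)$ is a closed subspace of $\Mes([a,b],E_1)$ by Lemma~\ref{na-closed}, and $\Mdna([a,b],F)$ is closed in $\Mes([a,b],F)$ for the same reason. Hence the restriction of the $C^k$-map $f_*$ to the Banach manifold $\Mdna([a,b],E_1)\times \cL^\infty_{\rc}([a,b],U)$ takes values in the closed subspace $\Mdna([a,b],F)$; since $C^k$-ness into a closed subspace of a Banach space is equivalent to $C^k$-ness into the ambient space (the inclusion being a topological embedding with closed image, and all iterated differentials landing in the subspace by continuity), the restricted map is $C^k$ as asserted.

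The one genuine obstacle is the differentiability of the superposition operator~$\Phi$: the bilinear-composition step is immediate from Section~\ref{sec-bilin-meas}, and the non-atomic restriction is essentially bookkeeping. If a clean citation is preferred over reproving the Taylor-expansion estimate, then \cite[Lemma~3.14]{Mea} (suitably adapted to the sup-norm setting via the compactness of $\wb{\gamma([a,b])}$ and uniform continuity of $\wt f^{(j)}$ on compact subsets of $U$) supplies exactly what is needed.
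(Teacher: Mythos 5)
Your proposal is correct and follows essentially the same route as the paper: the same factorization $f_*(\mu,\gamma)=\wb{b}(\wt f\circ\gamma,\mu)$ through the continuous bilinear map from \eqref{the-b-bar}, reduction of the key step to $C^k$-ness of the superposition operator $\gamma\mapsto\wt f\circ\gamma$ on $\cL^\infty_{\rc}$-spaces (which the paper obtains by citing \cite[Proposition~4.1]{Mea} rather than reproving it via Taylor expansion), and the same closed-subspace argument for the restriction to $\Mdna$. The only cosmetic difference is the precise reference within \cite{Mea}; the mathematical content matches.
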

\begin{proof}
It suffices to consider
the first mapping as it takes $\Mdna([a,b],E_1)$
into $\Mdna([a,b],F)$
and the latter is a closed vector subspace
of $\Mes([a,b],F)$ (see \cite[Lemma~1.25]{Schm}).
Using the continuous bilinear mapping
\[
b\colon \cL^\infty_{\rc}([a,b],\cL(E_1,F))\times \Mes([a,b],E_1)
\to\Mes([a,b],F),\;\,
(g,\mu)\mto g\odot_\ve \mu
\]
we have
\[
f_*(\mu,\gamma)=b(\wt{f}\circ\gamma,\mu).
\]
It therefore suffices to note
that the map
\[
\cL^\infty_{\rc}([a,b],\wt{f})\colon \cL^\infty_{\rc}([a,b], U)\to
\cL^\infty_{\rc}([a,b],\cL(E_1,F)),\;\,
\gamma \mto \wt{f}\circ \gamma
\]
is $C^k$, by \cite[Proposition~4.1]{Mea} (applied with
a singleton~$P$
as in the proof of \cite[Corollary~4.4]{Mea}).
\end{proof}
\begin{numba}
Let $(E,\lVert\cdot\rVert_E)$ and $(F,\lVert\cdot\rVert_F)$
be normed spaces and $f\colon U\to F$
be a map on a subset $U\sub E$.
The map $f$ is called \emph{Lipschitz}
if there exists a real number $L\geq 0$
(called a Lipschitz constant) such that
\[
\|f(y)-f(x)\|_F\;\leq \; L\, \|y-x\|_E\quad\mbox{for all $\,x,y\in U$.}
\]
\end{numba}
\begin{la}\label{star-Lipschitz}
Let $E_1$, $E_2$, $F$,
$f\colon E_1\times U\to F$, and $\wt{f}$
be as in {\rm \ref{situ-rhs}}.
If $\wt{f}\colon U\to (\cL(E_1,F),\|\cdot\|_{\op})$
is Lipschitz with constant~$L$,
then the mapping
\[
\cL^\infty_{\rc}([a,b],U)\to \Mdna([a,b],F),\quad
\gamma\mto f_*(\mu,\gamma)
\]
is Lipschitz with Lipschitz constant $\|\mu\|\,L$
for all real numbers $a<b$ and
each $\mu\in \Mdna([a,b],E_1)$.
\end{la}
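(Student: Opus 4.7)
The plan is to unfold the definition $f_*(\mu,\gamma) = (\wt{f}\circ\gamma)\odot_\ve\mu$ and exploit bilinearity of $\odot_\ve$ together with the norm bound \eqref{size-dense}, so the core estimate reduces to a uniform (supremum) comparison of $\wt{f}\circ\gamma_1$ and $\wt{f}\circ\gamma_2$.

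First, I would fix $\gamma_1,\gamma_2\in\cL^\infty_{\rc}([a,b],U)$ and write
\[
f_*(\mu,\gamma_1)-f_*(\mu,\gamma_2) \;=\; \bigl((\wt{f}\circ\gamma_1)-(\wt{f}\circ\gamma_2)\bigr)\odot_\ve \mu,
\]
using that $(g,\mu)\mapsto g\odot_\ve\mu$ is bilinear (as noted in the construction in Section~\ref{sec-bilin-meas}, $\odot_\beta$ is the continuous bilinear extension~$\wb{b}$ from \eqref{the-b-bar}). Here I need $\wt{f}\circ\gamma_j\in\cL^\infty_{\rc}([a,b],\cL(E_1,F))$ for $j=1,2$, which was already verified in \ref{situ-rhs}. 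The difference also lies in this space since it is a vector subspace.

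Next, I would apply the universal bound \eqref{size-dense} with $\beta=\ve$, noting that the evaluation map $\ve\colon\cL(E_1,F)\times E_1\to F$, $(\alpha,x)\mapsto\alpha(x)$ has $\|\ve\|_{\op}\leq 1$ since $\|\alpha(x)\|_F\leq\|\alpha\|_{\op}\|x\|_1$. This yields
\[
\bigl\|f_*(\mu,\gamma_1)-f_*(\mu,\gamma_2)\bigr\| \;\leq\; \|\wt{f}\circ\gamma_1-\wt{f}\circ\gamma_2\|_\infty\cdot\|\mu\|.
\]

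Finally, the Lipschitz hypothesis on $\wt{f}\colon U\to(\cL(E_1,F),\|\cdot\|_{\op})$ gives pointwise
\[
\|\wt{f}(\gamma_1(t))-\wt{f}(\gamma_2(t))\|_{\op}\;\leq\; L\,\|\gamma_1(t)-\gamma_2(t)\|_2
\]
for every $t\in[a,b]$, hence $\|\wt{f}\circ\gamma_1-\wt{f}\circ\gamma_2\|_\infty\leq L\,\|\gamma_1-\gamma_2\|_\infty$. Combining, one obtains the Lipschitz bound $\|\mu\|\,L$ as asserted. There is no real obstacle here; the statement is essentially a direct consequence of bilinearity of $\odot_\ve$ plus the continuity estimate \eqref{size-dense}, and the only thing to be careful about is recording that the target indeed lies in $\Mdna([a,b],F)$, which is ensured by Lemma~\ref{odot-RN} since $\mu\in\Mdna([a,b],E_1)$.
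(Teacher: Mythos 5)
Your proposal is correct and follows essentially the same route as the paper's proof: both write the difference as $((\wt{f}\circ\gamma_1)-(\wt{f}\circ\gamma_2))\odot_\ve\mu$ by bilinearity, bound it via \eqref{size-dense} with $\|\ve\|_{\op}\leq 1$, and conclude with the pointwise Lipschitz estimate for $\wt{f}$. The remark that the values lie in $\Mdna([a,b],F)$ by Lemma~\ref{odot-RN} matches the paper's earlier observation following the definition of $f_*$.
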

\begin{proof}
Let $\gamma,\eta\in \cL^\infty_{\rc}([a,b],U)$.
For each $x\in [a,b]$, we then have
\[
\|\wt{f}(\gamma(x))-\wt{f}(\eta(x))\|_F
\leq L\, \|\gamma(x)-\eta(x)\|_2\leq L\, \|\gamma-\eta\|_\infty.
\]
Thus $\|(\wt{f}\circ \gamma)-(\wt{f}\circ\eta)\|_\infty\leq
L\, \|\gamma-\eta\|_\infty$. Since $\|\ve\|_{\op}\leq 1$
holds for the evaluation map $\ve\colon \cL(E_1,F)\times E_1\to F$,
we deduce that
\begin{eqnarray*}
\|f_*(\mu,\gamma)-f_*(\mu,\eta)\|
&=&\|(\wt{f}\circ \gamma)\odot_\ve\mu-(\wt{f}\circ\eta)\odot_\ve\mu\|\\
&=&
\|((\wt{f}\circ\gamma)-(\wt{f}\circ \eta))\odot_\ve\mu\|\\
&\leq & \|\ve\|_{\op}\|\mu\|\,
\|(\wt{f}\circ\gamma)-(\wt{f}\circ \eta)\|_\infty
\leq  \|\mu\|\,L\, \|\gamma-\eta\|_\infty,
\end{eqnarray*}
using (\ref{size-dense}) for the first estimate.
This completes the proof.
\end{proof}
\begin{rem}\label{from-axioms}
\begin{itemize}
\item[(a)]
If $f\colon E_1\times U\to F$
is a $C^{k+2}$-map with $k\in \N_0\cup\{\infty\}$
and $\wt{f}(y):=f(\cdot,y)$
is linear for each $y\in U$,
then $\wt{f}\colon U\to (\cL(E_1,F),\|\cdot\|_{\op})$
is a $C^{k+1}$-map
(see \cite[Proposition~1\,(b)]{Asp})
and hence $\text{FC}^k$.
In particular, $\wt{f}$
is $C^1$ and hence \emph{locally Lipschitz}
in the sense that $\wt{f}$ is Lipschitz
on some neighborhood of each point in its domain
(see \cite[Lemma~1.3.19]{GaN}).\vspace{2mm}
\end{itemize}
\noindent
Now assume that $E_1=E_2$ and write $E:=E_1$.
\begin{itemize}
\item[(b)] If $g\colon U\to F$
is an $\text{FC}^{k+1}$-map, then
\[
f\colon E\times U\to F,\;\,
(y,x)\mto dg(x,y)
\]
satisfies the hypotheses of Lemma~\ref{diff-pfwd}
as $\wt{f}=g'\colon U\to(\cL(E,F),\|\cdot\|_{\op})$
is an $\text{FC}^k$-map and hence $C^k$,
by definition of an $\text{FC}^{k+1}$-map.
\item[(c)] If $g\colon U\to F$
is an $C^{k+2}$-map, then
$g$ is $\text{FC}^{k+1}$,
whence (b) applies.
\end{itemize}
\end{rem}
\begin{rem}
We shall mainly use $f_*(\mu,\gamma)$
for continuous maps $\gamma\colon [a,b]\to E_2$
(rather than general $\gamma\in \cL^\infty_{\rc}([a,b],E_2)$).
For each Banach space $(E,\lVert\cdot\rVert_E)$,
the supremum norm $\|\cdot\|_\infty$ turns
the vector space $C([a,b],E)$ of continuous $E$-valued
functions into a Banach space which is a closed
vector subspace of $\cL^\infty_{\rc}([a,b], E)$.
\end{rem}
\begin{rem}\label{relax}
If $\alpha,\beta\in [a,b]$ with $\alpha<\beta$
in the situation of~\ref{situ-rhs}
and $\mu|_{[\alpha,\beta]}$
denotes the restriction of~$\mu$
to a map $\cB([\alpha,\beta])\to E_1$,
we relax notation for $\gamma\in \cL^\infty_{\rc}([\alpha,\beta],U)$ and write $f_*(\mu,\gamma)$
in place of $f_*(\mu|_{[\alpha,\beta]},\gamma)$.
\end{rem}
\begin{rem}\label{rem-explicit-1}
If $\mu=\rho\, {\rm d}\nu$
with $\nu\in \Mes([a,b])_+$
and $\rho\in \cL^1(\nu,E_1)$
in \ref{situ-rhs},
we get
\begin{equation}\label{f-star-explic}
f_*(\rho\, {\rm d}\nu,\gamma)=
(f\circ (\rho,\gamma))\, {\rm d}\nu .
\end{equation}
In fact,
$f_*(\rho\, {\rm d}\nu,\gamma)=
(\wt{f}\circ\gamma)\odot_\ve \rho\,{\rm d}\nu=
(\ve\circ (\wt{f}\circ \gamma,\rho))\, {\rm d}\nu$
by Lemma~\ref{odot-density}.
\end{rem}
\begin{rem}\label{swap-rem}
If $f\colon E_1\times U\to F$ is as in~\ref{situ-rhs},
consider $g\colon U\times E_1\to F$,
$(x,y)\mto f(y,x)$. It is sometimes convenient to swap
the order of the arguments and define
$g_*(\gamma,\mu):=f_*(\mu,\gamma)$
for $\mu\in \Mes([a,b],E_1)$ and $\gamma\in \cL^\infty_{\rc}([a,b],U)$.
\end{rem}
\begin{rem}\label{rem-explicit-2}
In the situation of Corollary~\ref{chainrule2},
we have
\[
(\psi\circ f)'=(d\psi)_*(f, f'),
\]
with notation as in Remark~\ref{swap-rem};
in fact, writing $f'=\rho\, {\rm d}\nu$, the corollary
yields
$(\psi\circ f)'=((d\psi)\circ(f,\rho))\, {\rm d}\nu$;
this equals $(d\psi)_*(f,f')$, by~(\ref{f-star-explic}).
\end{rem}
\section{Differentiability of certain mappings of BV-functions}\label{sec-superpo}
We study superposition operators
and composition operators
on open subsets of spaces of vector-valued
$\BV$-functions.
We let $a<b$ be real numbers and abbreviate
$\bI:=[a,b]$.\\[2.3mm]
We begin with superposition of continuous linear maps.
\begin{la}\label{linear-superpo}
Let $\alpha\colon E\to F$ be a continuous linear map
between Banach spaces.
Then $\alpha\circ f\in \BV(\bI,F)$
for each $f\in \BV(\bI,E)$
and the linear map
\[
\alpha_*:=\BV(\bI,\alpha)\colon \BV(\bI,E)\to\BV(\bI,F),\;\;
f\mto \alpha\circ f
\]
is continuous, with operator norm $\|\alpha\|_{\op}$
with respect to the standard norms,
and also with respect to the norms $\|\cdot\|_{BV}$.
\end{la}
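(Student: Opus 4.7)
The plan is to obtain $\alpha\circ f\in \BV(\bI,F)$ by transferring the defining data of $f$ through $\alpha$, then read off the operator norm directly from the definition of $\|\cdot\|_{BV}$ and $\|\cdot\|_{BV}^{\st}$.

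First I would note that if $f\in \BV(\bI,E)$ and $\mu:=f'\in \Mdna(\bI,E)$, then by Lemma~\ref{superpo-vm} the composition $\alpha\circ \mu$ is a vector measure in $\Mes(\bI,F)$ with $\|\alpha\circ \mu\|\leq \|\alpha\|_{\op}\|\mu\|$. I would then verify that $\alpha\circ \mu$ lies in $\Mdna(\bI,F)$: for non-atomicity, use the characterisation in Lemma~\ref{char-na}, observing $(\alpha\circ\mu)(\{x\})=\alpha(\mu(\{x\}))=0$ for all $x\in \bI$; for the $\Md$-property, write $\mu=\rho\,{\rm d}\nu$ with $\rho\in \cL^1(\nu,E)$ and observe that $\alpha\circ \mu=(\alpha\circ\rho)\,{\rm d}\nu$, since continuous linear maps commute with Bochner integrals. (Alternatively, this can be read off from Lemma~\ref{compo-lin} applied to scalar multiplication.) Applying $\alpha$ to the identity $f(x)=f(a)+\mu([a,x])$ gives
\[
(\alpha\circ f)(x)=(\alpha\circ f)(a)+(\alpha\circ\mu)([a,x]),
\]
so $\alpha\circ f\in \BV(\bI,F)$ with $(\alpha\circ f)'=\alpha\circ f'$.

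Next I would establish the operator-norm estimate. Linearity of $\alpha_*$ is obvious. Using the identification (\ref{isometric}) and the bound from Lemma~\ref{superpo-vm},
\[
\|\alpha_* f\|_{BV}=\|\alpha(f(a))\|_F+\|\alpha\circ f'\|\leq \|\alpha\|_{\op}\bigl(\|f(a)\|_E+\|f'\|\bigr)=\|\alpha\|_{\op}\|f\|_{BV},
\]
so $\|\alpha_*\|_{\op}\leq \|\alpha\|_{\op}$ with respect to $\|\cdot\|_{BV}$. The same calculation, using $\|\alpha\circ f\|_\infty\leq \|\alpha\|_{\op}\|f\|_\infty$ in place of the evaluation at $a$, yields the bound with respect to $\|\cdot\|_{BV}^{\st}$. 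For the reverse inequality, I would test on constant functions $f\equiv v$ with $v\in E$: such $f$ has $f'=0$, so $\|f\|_{BV}=\|f\|_{BV}^{\st}=\|v\|_E$, while $\alpha_* f\equiv \alpha(v)$ satisfies $\|\alpha_* f\|_{BV}=\|\alpha_* f\|_{BV}^{\st}=\|\alpha(v)\|_F$; taking the supremum over $\|v\|_E\leq 1$ recovers $\|\alpha\|_{\op}$.

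There is no real obstacle here; the only point that requires a moment's thought is verifying that $\alpha\circ f'$ remains in $\Mdna$ (both non-atomic and having a density), and both parts follow immediately from the results already assembled in Section~\ref{secprels} and Section~\ref{sec-bilin-meas}.
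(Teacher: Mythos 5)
Your proof is correct, and the norm estimates at its core coincide with the paper's. The two differences are worth noting. First, where you establish $(\alpha\circ f)'=\alpha\circ f'\in\Mdna(\bI,F)$ by hand---applying $\alpha$ to the identity $f(x)=f(a)+\mu([a,x])$ and checking the $\Md$- and non-atomicity properties of $\alpha\circ\mu$ directly via $\alpha\circ(\rho\,{\rm d}\nu)=(\alpha\circ\rho)\,{\rm d}\nu$ and Lemma~\ref{char-na}---the paper simply invokes the Chain Rule (Corollary~\ref{chainrule2}) together with Lemma~\ref{superpo-vm}; your route is more elementary and self-contained, the paper's is shorter. Second, you prove the \emph{reverse} inequality $\|\alpha_*\|_{\op}\geq\|\alpha\|_{\op}$ by testing on constant functions, whereas the paper's proof only establishes the upper bound $\|\alpha_*(f)\|_{BV}\leq\|\alpha\|_{\op}\|f\|_{BV}$ (and likewise for $\|\cdot\|_{BV}^{\st}$) even though the statement asserts that the operator norm \emph{equals} $\|\alpha\|_{\op}$; so your argument actually completes a step the paper leaves implicit.
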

\begin{proof}
We have $\|\alpha(f(x))\|_F\leq
\|\alpha\|_{\op}\|f(x)\|_E$
for all $x\in \bI$, whence $\|\alpha_*(f)\|_\infty\leq
\|\alpha\|_{\op}\|f\|_\infty$
and $\|\alpha_*(f)(a)\|_F\leq \|\alpha\|_{\op}\|f(a)\|_E$.
Moreover, $(\alpha\circ f)'=\alpha\circ f'\in  \Mdna(\bI,F)$ by the Chain Rule
with $\|\alpha\circ f'\|\leq \|\alpha\|_{\op}\|f'\|$,
by Lemma~\ref{superpo-vm}.
Thus $\|\alpha_*(f)\|_{BV}=\|\alpha_*(f)(a)\|_F
+\|(\alpha_*(f))'\|\leq\|\alpha\|_{\op}(\|f(a)\|_E+\|f'\|)=
\|\alpha\|_{\op}\|f\|_{BV}$
and $\|\alpha_*(f)\|_{BV}^{\st}=\|\alpha_*(f)\|_\infty
+\|(\alpha_*(f))'\|\leq \|\alpha\|_{\op}(\|f\|_\infty
+\|f'\|)=\|\alpha\|_{\op}\|f\|_{BV}^{\st}$.
\end{proof}
\begin{la}\label{prod:iso}
Let $E_1$ and $E_2$ be Banach spaces, $E:=E_1\times E_2$
and $\pr_j\colon E\to E_j$ be the projection onto
the $j$th component.
Then the map
\[
\Phi\colon
\BV(\bI,E)\to\BV(\bI,E_1)\times \BV(\bI,E_2),\;\,
f\mto (\pr_1\circ f,\pr_2\circ f)
\]
taking a function to its pair of components
is an isomorphism of topological
vector spaces.
\end{la}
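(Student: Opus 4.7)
The plan is to show that $\Phi$ is a continuous linear bijection with continuous linear inverse by exhibiting the inverse explicitly as a sum of superpositions with the coordinate inclusions, and then applying Lemma~\ref{linear-superpo} (both to the projections and to the inclusions) to obtain the required norm estimates on both sides.

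First I would note that $\Phi$ is manifestly linear, and that each component $f \mto \pr_j \circ f$ is a continuous linear map $\BV(\bI,E) \to \BV(\bI,E_j)$ by Lemma~\ref{linear-superpo}, since $\pr_j \colon E \to E_j$ is continuous linear (of operator norm $\le 1$ with respect to, say, the max or sum norm on $E = E_1 \times E_2$). Hence $\Phi$ is continuous.

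For the inverse, let $\iota_j \colon E_j \to E$ denote the canonical continuous linear inclusion $\iota_1(x) = (x,0)$, $\iota_2(y) = (0,y)$, and define
\[
\Psi \colon \BV(\bI,E_1) \times \BV(\bI,E_2) \to \BV(\bI,E), \quad
(f_1,f_2) \mto \iota_1 \circ f_1 + \iota_2 \circ f_2.
\]
By Lemma~\ref{linear-superpo} each summand lies in $\BV(\bI,E)$ and depends continuously and linearly on the input, so $\Psi$ is a continuous linear map. A pointwise check gives $\Phi \circ \Psi = \id$ and $\Psi \circ \Phi = \id$, so $\Psi = \Phi^{-1}$; this uses only that $\pr_j \circ \iota_j = \id_{E_j}$, $\pr_j \circ \iota_k = 0$ for $j \ne k$, and $\iota_1 \circ \pr_1 + \iota_2 \circ \pr_2 = \id_E$. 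Therefore $\Phi$ is an isomorphism of topological vector spaces.

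The argument has no real obstacle: the only point requiring a moment of thought is verifying that $\Psi(f_1,f_2)$ actually lands in $\BV(\bI,E)$, but this is immediate once one observes that $\BV(\bI,E)$ is a vector space and that Lemma~\ref{linear-superpo} guarantees $\iota_j \circ f_j \in \BV(\bI,E)$ with derivative measure $\iota_j \circ f_j'$, so that $\Psi(f_1,f_2)$ is a $\BV$-function whose associated measure is the $E$-valued measure $A \mto (f_1'(A), f_2'(A))$; non-atomicity and the Radon--Nikodym property with respect to a suitable positive measure (e.g.\ $\nu_1 + \nu_2$ from representations $f_j' = \rho_j\,d\nu_j$) then follow automatically from the corresponding properties of $f_1'$ and $f_2'$, or, more cleanly, directly from closedness of $\Mdna$ under addition (Lemma~\ref{na-closed}) applied after composition with the inclusions $\iota_j$.
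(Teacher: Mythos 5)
Your proof is correct and follows essentially the same route as the paper: both arguments build $\Phi$ from the pushforwards of the projections and exhibit the inverse as $(f_1,f_2)\mto \iota_1\circ f_1+\iota_2\circ f_2$ using the coordinate inclusions, with continuity on both sides supplied by Lemma~\ref{linear-superpo}. The extra discussion of why $\Psi(f_1,f_2)$ lies in $\BV(\bI,E)$ is harmless but unnecessary, since Lemma~\ref{linear-superpo} already places each summand in the vector space $\BV(\bI,E)$.
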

\begin{proof}
Define continuous linear mappings $\lambda_1\colon E_1\to E$
and $\lambda_2\colon E_2\to E$
via $x_1\mto (x_1,0)$ and $x_2\mto (0,x_2)$,
respectively.
Then
\[
(\pr_j)_*:=\BV(\bI,\pr_j)\colon \BV(\bI,E)\to\BV(\bI,E_j),\;\,
f\mto \pr_j\circ f
\]
is a continuous linear mapping for $j\in \{1,2\}$
and also the linear map
$(\lambda_j)_*\:=\BV(\bI,\lambda_j)\colon \BV(\bI,E_j)\to\BV(\bI,E)$
is continuous.
Thus $\Phi=((\pr_1)_*,(\pr_2)_*)$
is continuous linear and also the map
\[
\Psi:=(\lambda_1)_*\circ \pi_1+(\lambda_2)_*\circ \pi_2,
\]
where $\pi_j\colon \BV(\bI,E_1)\times\BV(\bI,E_2)\to \BV(\bI,E_j)$
is the projection onto the $j$th component.
One readily checks that $\Psi\circ \Phi$
is the identity on $\BV(\bI,E)$
and $\Phi\circ \Psi$
is the identity on $\BV(\bI,E_1)\times\BV(\bI,E_2)$.
The assertion follows.
\end{proof}
\noindent
We shall frequently identify $\BV(\bI,E)$
with the product.
Keeping one argument of $\Psi$ fixed,
we obtain:
\begin{la}\label{lem:smooth_insertion}
If $f\in \BV(\bI,E_1)$
in the situation of Lemma~\emph{\ref{prod:iso}},
then
\[
\BV (\bI,E_2) \rightarrow \BV(\bI,E_1)\times \BV(\bI,E_2)
\cong \BV(\bI,E),\quad g \mapsto (f,g)
=(f,0)+(0,g)
\]
is a continuous affine map
$($and hence smooth$)$. $\,\square$
\end{la}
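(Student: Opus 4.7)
The plan is to show the map is the sum of a constant map and a continuous linear map, hence continuous and affine, and then invoke the well-known fact that continuous affine maps between locally convex spaces are smooth in the Bastiani sense.

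First, I would split the map according to the very decomposition $(f,g) = (f,0) + (0,g)$ suggested in the statement. The constant summand is the map $g \mapsto (f,0)$, which has the fixed value $(f,0) \in \BV(\bI,E_1)\times \BV(\bI,E_2)$ and is therefore trivially continuous. For the linear summand, I would observe that under the isomorphism $\Phi$ of Lemma \ref{prod:iso} the map $g \mapsto (0,g)$ corresponds to $g \mapsto \lambda_2 \circ g = (\lambda_2)_*(g)$, where $\lambda_2\colon E_2 \to E$, $x_2 \mapsto (0,x_2)$, is the continuous linear inclusion already used in the proof of Lemma \ref{prod:iso}. By Lemma \ref{linear-superpo}, $(\lambda_2)_* \colon \BV(\bI,E_2)\to \BV(\bI,E)$ is continuous linear, and composing with the topological isomorphism $\Phi$ keeps continuity and linearity. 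Summing these two pieces gives a continuous affine map $g\mapsto (f,g)$.

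It remains to upgrade continuous affine to smooth. Any affine map $A\colon X \to Y$ between locally convex spaces admits the form $A(g) = L(g) + c$ for a continuous linear $L$ and a constant $c$; the directional derivative is $dA(g,h) = L(h)$, which is continuous jointly in $(g,h)$, and all higher directional derivatives vanish. Hence $A$ is $C^\infty$ in the sense of \ref{defn-Ck}. Applying this to our map concludes the proof.

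There is essentially no obstacle; the statement is an immediate packaging of Lemmas \ref{prod:iso} and \ref{linear-superpo} together with the smoothness of continuous affine maps. The only thing to be careful about is to route the linear part through the isomorphism $\Phi$ from Lemma \ref{prod:iso} cleanly, so that the identification $\BV(\bI,E_1)\times \BV(\bI,E_2)\cong \BV(\bI,E)$ is used transparently.
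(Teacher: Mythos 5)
Your proof is correct and matches the paper's intent exactly: the paper omits the proof, noting only that the lemma follows by keeping one argument of the map $\Psi=(\lambda_1)_*\circ\pi_1+(\lambda_2)_*\circ\pi_2$ from Lemma~\ref{prod:iso} fixed, which is precisely your decomposition into the constant $(f,0)$ plus the continuous linear $(\lambda_2)_*$. Your added remark that continuous affine maps are smooth in the Bastiani sense is the same (standard) final step the paper takes for granted.
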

\begin{numba}
If $E$ is a Banach space and $U\sub E$ an open subset,
we shall write
\[
\BV(\bI,U):=\{f\in \BV(\bI,E)\colon f(\bI)\sub U\}
\]
in the following.
The latter is an open subset of $\BV(\bI,E)$:
In view of Remark~\ref{rem:cont:inclusion},
this follows from the openness
of $C(\bI,U)$ in $(C(\bI,E),\|\cdot\|_\infty)$.
\end{numba}
\begin{la}\label{improve-chn}
In the situation of Corollary~\emph{\ref{chainrule2}},
the map
\[
\BV(\bI,\psi)\colon \BV(\bI,\Omega)\to \BV(\bI,F),\quad
f\mto \psi\circ f
\]
is continuous, where $\bI:=[a,b]$.
Moreover, we have
\begin{equation}\label{better-chain}
(\psi\circ f)'=
(\psi'\circ f)\odot_\ve f'\quad\mbox{for each $\,f\in \BV(\bI,\Omega)$,}
\end{equation}
using the
continuous bilinear
evaluation
$\ve \colon \cL(E,F)\times E\to F$,
$(\alpha,x)\mto \alpha(x)$.
\end{la}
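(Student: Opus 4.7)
The formula (\ref{better-chain}) will come almost for free from Corollary~\ref{chainrule2} combined with Lemma~\ref{odot-density}. Fix $f\in \BV(\bI,\Omega)$ and write $f'=\rho\,{\rm d}\nu$ with $\nu\in \Mes_+([a,b])$ non-atomic and $\rho\in \cL^1(\nu,E)$. Since $f$ is continuous and $\psi'\colon \Omega\to \cL(E,F)$ is continuous, the composition $\psi'\circ f$ is continuous on the compact interval $\bI$ and hence lies in $\cL^\infty_{\rc}(\bI,\cL(E,F))$, so that $(\psi'\circ f)\odot_\ve f'$ makes sense. Corollary~\ref{chainrule2} gives
\[
(\psi\circ f)(t)=(\psi\circ f)(a)+\int_a^t \psi'(f(s))(\rho(s))\,{\rm d}\nu(s),
\]
so $(\psi\circ f)'=\ve\circ(\psi'\circ f,\rho)\,{\rm d}\nu$. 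Lemma~\ref{odot-density} applied with $\beta=\ve$ identifies the right-hand side as $(\psi'\circ f)\odot_\ve(\rho\,{\rm d}\nu)=(\psi'\circ f)\odot_\ve f'$, establishing (\ref{better-chain}).

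For continuity, I shall fix $f_0\in\BV(\bI,\Omega)$ and estimate $\|\psi\circ f-\psi\circ f_0\|_{BV}$ for $f$ near $f_0$. The base-point term $\|(\psi\circ f)(a)-(\psi\circ f_0)(a)\|_F$ tends to zero since $f\mapsto f(a)$ is continuous on $\BV(\bI,E)$ and $\psi$ is continuous. For the derivative term I apply (\ref{better-chain}) and split
\[
(\psi'\circ f)\odot_\ve f'-(\psi'\circ f_0)\odot_\ve f_0'
= (\psi'\circ f-\psi'\circ f_0)\odot_\ve f' + (\psi'\circ f_0)\odot_\ve (f'-f_0').
\]
By (\ref{size-dense}) and $\|\ve\|_{\op}\leq 1$, the total variation of the right-hand side is bounded by
\[
\|\psi'\circ f-\psi'\circ f_0\|_\infty\,\|f'\| + \|\psi'\circ f_0\|_\infty\,\|f'-f_0'\|.
\]
As $f\to f_0$ in the $\BV$-norm, the family $\{f'\}$ stays bounded (say by $\|f_0'\|+1$) and $\|f'-f_0'\|\to 0$ directly from the definition of $\|\cdot\|_{BV}$. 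By Remark~\ref{rem:cont:inclusion}, $\|f-f_0\|_\infty\to 0$; since $\psi'$ is continuous and $f_0(\bI)$ is compact, uniform continuity of $\psi'$ on a compact neighborhood of $f_0(\bI)$ in $\Omega$ yields $\|\psi'\circ f-\psi'\circ f_0\|_\infty\to 0$.

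The main obstacle is the first summand in the splitting: one must know that $\|f'\|$ stays under control as $f$ varies, and that $\psi'\circ f\to \psi'\circ f_0$ uniformly even though $f$ is only close to $f_0$ in variation norm. Both points are handled via Remark~\ref{rem:cont:inclusion} (continuous inclusion into $C(\bI,E)$) together with the standard compactness/uniform continuity argument for $\psi'$, so no deeper input is required.
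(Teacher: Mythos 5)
Your derivation of \eqref{better-chain} is exactly the paper's: apply Corollary~\ref{chainrule2} and then identify $(\ve\circ(\psi'\circ f,\rho))\,{\rm d}\nu$ with $(\psi'\circ f)\odot_\ve f'$ via Lemma~\ref{odot-density}. For continuity your argument is also essentially the paper's, just executed by hand at a fixed $f_0$: the paper instead observes that the topology on $\BV(\bI,F)$ is initial with respect to $j_F\colon\BV(\bI,F)\to C(\bI,F)$ and $D_F\colon g\mto g'$, and writes $j_F\circ\BV(\bI,\psi)=C(\bI,\psi)\circ j_E$ and $D_F\circ\BV(\bI,\psi)=\wb{b}\circ(C(\bI,\psi'),D_E)$ as compositions of maps already known to be continuous; your bilinear splitting and the estimate via \eqref{size-dense} is precisely the continuity of $\wb{b}$, and your claim that $\psi'\circ f\to\psi'\circ f_0$ uniformly is precisely the continuity of $C(\bI,\psi')$.

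One justification needs repair: you invoke ``uniform continuity of $\psi'$ on a compact neighborhood of $f_0(\bI)$ in $\Omega$,'' but in an infinite-dimensional Banach space a compact set has no compact neighborhood, so no such neighborhood exists. The fact you need is nonetheless true and standard: for a continuous map $\psi'$ on the open set $\Omega$ and the compact set $K:=f_0(\bI)\subseteq\Omega$, for every $\ve>0$ there is $\delta>0$ with $\|\psi'(y)-\psi'(x)\|_{\op}<\ve$ whenever $x\in K$ and $\|y-x\|_E<\delta$ (cover $K$ by finitely many balls on which $\psi'$ oscillates by at most $\ve/2$ and take half the smallest radius); this is the content of the continuity of the pushforward $C(\bI,\psi')$ on $C(\bI,\Omega)$ that the paper cites. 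With that substitution your proof is complete and agrees in substance with the paper's.
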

\begin{proof}
For $f$, $\mu$, $\nu$, and $\rho$ as in the corollary,
setting $y_0:=\psi(f(a))$ we have
\[
(\psi\circ f)(t)=y_0+\int_a^t (\psi'(f(s))(\rho(s))\,{\rm d}\nu(s)
=y_0+\int_a^t \ve\circ (\psi'\circ f,\rho)\, {\rm d}\nu
\]
and hence $(\psi\circ f)'=(\ve\circ(\psi'\circ f,\rho))\, {\rm d}\nu
=(\psi'\circ f)\odot_\ve(\rho\,{\rm d} \nu)
=(\psi'\circ f)\odot_\ve\mu$,
using Lemma~\ref{odot-density}.
This establishes~(\ref{better-chain}).
The topology on $\BV(\bI,F)$
is initial with respect to the inclusion map
\[
j_F\colon \BV(\bI,F)\to C(\bI,F)
\]
and the map $D_F\colon \BV(\bI,F)\to \Mdna (\bI,F)$,
$g\mto g'$.
Let $j_E$ and $D_E$ be analogous
with $E$ in place of~$F$. The map
\[
j_F\circ \BV(\bI,\psi)=C(\bI,\psi)\circ j_E|_{\BV(\bI,\Omega)}
\]
is continuous as $C(\bI,\psi)$
is continuous
(see, e.g., \cite[Lemma B.8]{Schm}).
For $\wb{b}$ as in (\ref{the-b-bar}) (with $X:=\bI$
and $\beta:=\ve$) we have
\[
D_F\circ \BV(\bI,\psi)=\wb{b}\circ (C(\bI,\psi'),D_E),
\]
which is a continuous function as well. Hence $\BV(\bI,\psi)$
is continuous.
\end{proof}
\begin{prop}\label{prop:star_smooth}
Let $k\in \N_0\cup\{\infty\}$
and $\varphi \colon [a,b] \times U \rightarrow F$ be a $C^{k+2}$-map,
where $E$ and $F$ are Banach spaces
and $U\sub E$ an open subset.
Then the following map is $C^k$:
\[
\varphi_\star \colon \BV([a,b],U) \rightarrow \BV([a,b],F),\quad f \mapsto \varphi \circ (\id_{[a,b]},f).
\]
Notably, $\varphi_\star$ is smooth whenever $\varphi$
is smooth.
\end{prop}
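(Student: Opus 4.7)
I factor $\varphi_\star$ through smooth building blocks and reduce to a composition operator on an open domain, which I then handle by induction on~$k$. The insertion
$$
\iota \colon \BV(\bI, U) \to \BV(\bI, \R) \times \BV(\bI, E) \cong \BV(\bI, \R \times E), \quad f \mapsto (\id_\bI, f),
$$
is continuous affine by Lemmas~\ref{prod:iso} and~\ref{lem:smooth_insertion}, hence smooth, and takes values in $\BV(\bI, [a,b] \times U)$. Using a Seeley-type extension in the $t$-variable (applying a continuous linear extension operator $C^{k+2}([a,b]) \to C^{k+2}(\R)$ parameterwise in $x \in U$ and checking joint regularity), I obtain a $C^{k+2}$-map $\tilde\varphi \colon V \to F$ on some open neighborhood $V \supseteq [a,b] \times U$ in $\R \times E$ with $\tilde\varphi|_{[a,b] \times U} = \varphi$. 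Then $\varphi_\star = \tilde\varphi_* \circ \iota$, so the task reduces to showing: for any open subset $V' \subseteq E'$ of a Banach space $E'$ and any $C^{k+2}$-map $\psi \colon V' \to F$, the superposition operator $\psi_* := \BV(\bI, \psi) \colon \BV(\bI, V') \to \BV(\bI, F)$ is of class~$C^k$.

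I would prove this claim by induction on $k \in \N_0 \cup \{\infty\}$. The base case $k = 0$ is Lemma~\ref{improve-chn}. For the induction step, assume the result for~$k$ and let $\psi$ be $C^{k+3}$. The key is to establish that the directional derivative of $\psi_*$ at $g \in \BV(\bI, V')$ in direction $h \in \BV(\bI, E')$ is given by the natural pointwise formula
\begin{equation}\label{plan-deriv}
d\psi_*(g;h) \;=\; \mathrm{ev}_*\bigl(\psi'_*(g),\, h\bigr),
\end{equation}
where $\psi'_* := \BV(\bI, \psi')$ and $\mathrm{ev}_* \colon \BV(\bI, \cL(E', F)) \times \BV(\bI, E') \to \BV(\bI, F)$ is the bilinear pointwise-application operator induced by the evaluation $\ve \colon \cL(E', F) \times E' \to F$. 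Granted~\eqref{plan-deriv}, $\mathrm{ev}_*$ is continuous bilinear (a $\BV$ product-rule estimate built from \eqref{better-chain} applied to $\ve$ and the bound~\eqref{size-dense} on $\odot_\ve$) and hence smooth; since $\psi'$ is $C^{k+2}$, the induction hypothesis makes $\psi'_*$ of class~$C^k$, so $d\psi_*$ is $C^k$ and $\psi_*$ is $C^{k+1}$, completing the induction. The case $k = \infty$ follows by the usual limit argument.

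The main obstacle is verifying \eqref{plan-deriv} rigorously in the $\BV$-topology, not merely pointwise or in sup-norm. Pointwise convergence of $\tfrac{1}{t}(\psi_*(g+th) - \psi_*(g))$ to $\mathrm{ev}_*(\psi'_*(g), h)$ in sup-norm is a routine second-order Taylor expansion combined with uniform boundedness of $\psi''$ on a compact subset of $V'$ enclosing $g(\bI)$. Upgrading to convergence in variation norm is more delicate. Using Lemma~\ref{improve-chn},
$$
(\psi_*(g+th) - \psi_*(g))' \;=\; \bigl((\psi'\circ (g+th)) - (\psi'\circ g)\bigr) \odot_\ve g' \;+\; (\psi'\circ (g+th)) \odot_\ve (th)',
$$
one expands the first bracketed difference via a Taylor expansion of $\psi'$, divides by $t$, and passes to the limit using the continuity estimate~\eqref{size-dense} and the continuity of the composition $f \mapsto \psi' \circ f$ from Lemma~\ref{improve-chn} (applied to $\psi'$). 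A secondary technicality is the Seeley extension of~$\varphi$ with continuous dependence on the parameter $x \in U$; alternatively, one can avoid the extension altogether by adapting Lemma~\ref{improve-chn} to the Bastiani setting on the locally convex subset $[a,b] \times U$ of dense interior, invoking that the iterated directional derivatives of $\varphi$ extend continuously to $\{a,b\} \times U$ by~\ref{defn-Ck}.
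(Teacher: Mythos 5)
Your overall architecture (reduce via a Seeley extension to a pure superposition operator, then induct on $k$ with base case given by the chain rule) matches the paper's, but your induction step takes a genuinely different route, and as written it has two problems.

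First, a degree-counting error that prevents the induction from closing. You curry the derivative into the operator-valued map $\psi'\colon V'\to(\cL(E',F),\|\cdot\|_{\op})$ and claim that $\psi$ being $C^{k+3}$ makes $\psi'$ a $C^{k+2}$-map. By the paper's own Remark~\ref{from-axioms}\,(a) (i.e.\ \cite[Proposition~1\,(b)]{Asp}), passing to the operator-norm topology costs one order: $\psi\in C^{k+3}$ only gives $d\psi\in C^{k+2}$ and hence $\psi'\in C^{k+1}$. Your inductive hypothesis then yields $\psi'_*\in C^{k-1}$, so $d\psi_*\in C^{k-1}$ and $\psi_*\in C^{k}$ --- not the $C^{k+1}$ you need. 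The slip is repairable if you phrase the whole induction in Fr\'{e}chet classes ($\psi\in \text{FC}^{k+1}\Rightarrow\psi_*\in C^k$, using that $\text{FC}^{k+2}$ gives $\psi'\in \text{FC}^{k+1}$ by definition, and that $C^{k+2}\Rightarrow \text{FC}^{k+1}$), but as stated the step fails. The paper avoids this entirely by never currying: its derivative formula is $d(\varphi_\star)(f,g)=(d_2\varphi)_\star(f,g)$ with $d_2\varphi\colon [a,b]\times U\times E\to F$ an $F$-valued map of the same type as $\varphi$, which is $C^{k+1}$ when $\varphi$ is $C^{k+2}$ --- exactly what the inductive hypothesis for $k-1$ requires.

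Second, the step you yourself flag as ``delicate'' --- convergence of the difference quotient in the variation norm, not just the sup norm --- is precisely the analytic heart of the proof, and you leave it as a sketch requiring a second-order Taylor expansion of $\psi'$ plus an (unstated) Leibniz rule for $(\mathrm{ev}\circ(\alpha,\beta))'$. The paper's device here is worth knowing: it writes the difference quotient exactly, via the integral form of the Mean Value Theorem, as the parameter-dependent weak integral $\Delta(t)=\int_0^1 (d_2\varphi)_\star(f+stg,g)\,{\rm d}s$ with values in the Banach space $\BV(\bI,F)$; continuity of the integrand (from the inductive hypothesis) and \cite[Lemma 1.1.11]{GaN} then give convergence in the $\BV$-norm with no Taylor remainder estimates at all. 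Finally, your ``parameterwise'' Seeley extension with ``checking joint regularity'' is hand-waving --- joint $C^{k+2}$-regularity of a fibrewise-applied extension operator is not automatic; the paper instead invokes Hanusch's two-sided version \cite[Theorem~3.1]{Han} and glues with a cut-off, which you should do as well (or carry out your proposed alternative of redoing Lemma~\ref{improve-chn} on the locally convex set $[a,b]\times U$ with dense interior, which would itself require extending Proposition~\ref{chainrule} to that setting).
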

\begin{proof}
It suffices to prove the assertion for finite~$k$.
The proof is by induction.
Assume $k=0$ first.
Using the version
\cite[Theorem~3.1]{Han} of Seeley's extension theorem,
we get $C^2$-functions
$\varphi_+\colon \;]a,\infty[\,\times U\to F$
and $\varphi_-\colon \;]{-\infty},b[\,\times U\to F$
which extend $\varphi|_{]a,b]\times U}$
and $\varphi|_{[a,b[\,\times U}$,
respectively. Let $h\colon \R\to \R$
be a non-negative smooth function supported in
$[a+(b-a)/3,a+2(b-a)/3]$
with $\int_{-\infty}^\infty
h(s)\,{\rm d}s
=1$;\vspace{-.5mm} set $\theta(t):=\int_{-\infty}^th(s)\,
{\rm d}s$ for $t\in \R$.
Then
$\psi\colon \R\times U\to F$,
\[
(t,x)\mto\left\{
\begin{array}{cl}
\varphi_-(t,x) &\mbox{if $t<a+(b-a)/3$;}\\
\theta(t)\varphi_+(t,x)+(1-\theta(t))\varphi_-(t,x) &\mbox{if $t \in \,]a,b[$;}\\
\varphi_+(t,x) &\mbox{if $t>a+2(b-a)/3$}
\end{array}\right.
\]
is a $C^2$-map extending~$\varphi$,
with open domain $\Omega:=\R\times U$.
By Proposition~\ref{chainrule},
\[
\varphi_\star(f)=\psi\circ (\id_{[a,b]},f)=\BV(\bI,\psi)(\id_{[a,b]},f)
\]
is a continuous function of $f\in\BV([a,b],U)$.
We here used the identification from Lemma~\ref{prod:iso}.
Note that
$\id_{[a,b]}\in \BV([a,b],\R)$
with $(\id_{[a,b]})'=\lambda|_{[a,b]}$.\\[2.3mm]
Induction step. Let $k\geq 1$
and assume the proposition holds for
$k-1$ in place of~$k$. Let $\varphi$ be a $C^{k+2}$-map
as described in the proposition.
Then $\varphi_\star$
is $C^{k-1}$ by the inductive hypothesis
and hence continuous.
The function
\[
d_2\varphi\colon \bI\times U\times E\to F,\;\,
(t,x,y)\mto d\varphi((t,x),(0,y))
\]
with $\bI:=[a,b]$
is $C^{k+1}$, whence
\[
(d_2\varphi)_\star\colon \BV(\bI,U\times E)\to \BV(\bI,F),
\;\,
(f,g)\mto d_2\varphi \circ (f,g)
\]
is $C^{k-1}$, by the inductive hypothesis.
We identify the domain with the cartesian product
$\BV(\bI,U)\times \BV(\bI,E)$
here, as in Lemma~\ref{prod:iso}.
Given $f\in \BV(\bI,U)$ and $g\in \BV(\bI,E)$,
there exists $\ve>0$ such that $f(x)+tg(x)\in U$
for all $t\in [{-\ve},\ve]$
and $x\in \bI$, and thus $f+tg\in \BV(\bI,U)$.
Then
\[
h\colon [{-\ve},\ve]\times [0,1]\to\BV(\bI,F),\;\,
(t,s)\mto (d\varphi_2)_\ast(f+stg,g)
\]
is a continuous mapping.
Hence
\[
H\colon [{-\ve},\ve]\to\BV(\bI,F),\;\,
t\mto\int_0^1 h(t,s)\,{\rm d}s
\]
is continuous by the theorem
on continuous parameter-dependence
of integrals (see, e.g., \cite[Lemma 1.1.11]{GaN});
the weak integral exists since
$\BV(\bI,F)$ is a Banach space.
For non-zero $t\in [{-\ve},\ve]$,
consider the difference quotient
\[
\Delta(t):=\frac{\varphi_\star(f+tg)-\varphi_\star(f)}{t}.
\]
For each $x\in \bI$, the Mean Value Theorem
shows that
\begin{eqnarray*}
\Delta(t)(x) &=&\frac{\varphi(x,f(x)+tg(x))-\varphi(x,f(x))}{t}\\
&=& \int_0^1 d_2f(x,f(x)+stg(x),g(x))\, {\rm d}s\\
&= &\int_0^1 h(s,t)(x)\, {\rm d}s
=\left(\int_0^1 h(s,t)\, {\rm d}s\right)(x)\\[.5mm]
&=& H(t)(x);
\end{eqnarray*}
for the penultimate equality, we used that
the continuous linear
evaluation map $\BV(\bI,F)\to F$, $\gamma\mto\gamma(x)$
and integration can be interchanged
(see, e.g., \cite[Exercise 1.1.3\,(a)]{GaN}).
Thus
\[
\Delta(t)=H(t)\to H(0)=\int_0^1h(t,0)\,{\rm d}s=\int_0^1
(d_2\varphi)_\star(f,g)\, {\rm d}s
=(d_2\varphi)_\star(f,g)
\]
as $t\to 0$, whence the directional derivative
$d(\varphi_\star)(f,g)$ exists and is the element
\[
d(\varphi_\star)(f,g)=(d_2\varphi)_\star(f,g)
\]
of $\BV(\bI,F)$.
By the inductive hypothesis,
the right-hand side
is a $C^{k-1}$-function of $(f,g)$
and hence continuous.
Thus $\varphi_\star$ is $C^1$
and $d(\varphi_\star)$ a $C^{k-1}$-map,
entailing that $\varphi_\star$ is $C^k$.
\end{proof}
\noindent
Cf.\ \cite{MRO}
for
extension
results in the context of Banach manifolds with corners.
\begin{la}\label{lem:PF_smooth}
 Let $h \colon U \rightarrow F$ be a smooth mapping defined on an open subset $U \subseteq E$ of a Banach space.
 Then the pushforward
 $$\BV([a,b],h)\colon \BV([a,b],U) \rightarrow \BV([a,b],F),\quad f \mapsto h\circ f$$
 is smooth.
\end{la}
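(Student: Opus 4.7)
The plan is to deduce this immediately from Proposition \ref{prop:star_smooth} by trivially extending $h$ to a smooth map that does not depend on the time variable. Specifically, I would define $\varphi \colon [a,b] \times U \to F$ by $\varphi(t,x) := h(x)$. Since $h$ is smooth and $\varphi$ is the composition of $h$ with the projection $[a,b] \times U \to U$, the map $\varphi$ is smooth (in particular $C^{k+2}$ for every $k$).

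Next I would observe that for every $f \in \BV([a,b],U)$,
\[
\varphi_\star(f) = \varphi \circ (\id_{[a,b]}, f) = h \circ f = \BV([a,b],h)(f).
\]
Thus $\BV([a,b],h) = \varphi_\star$ as maps $\BV([a,b],U) \to \BV([a,b],F)$.

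By Proposition \ref{prop:star_smooth} applied with this $\varphi$ and arbitrary $k \in \N_0$, the map $\varphi_\star$ is $C^k$ for every $k$, hence smooth. This yields smoothness of $\BV([a,b],h)$. There is no genuine obstacle here; the only point worth being explicit about is that $\varphi_\star$ takes values in $\BV([a,b],F)$ (rather than only in $C([a,b],F)$), but this is already part of the conclusion of Proposition \ref{prop:star_smooth}.
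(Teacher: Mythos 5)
Your proof is correct and is exactly the paper's argument: the paper's one-line proof also applies Proposition~\ref{prop:star_smooth} to the smooth map $\varphi\colon [a,b]\times U\to F$, $(t,u)\mapsto h(u)$. Nothing further is needed.
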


\begin{proof}
Apply Proposition \ref{prop:star_smooth} to the $C^\infty$-map $\varphi\colon \![a,b]\times  U \!\to\! F$,
$\!(t,u)\!\mto \! h(u)$.\!\!
\end{proof}
\section{The modeling spaces of bundle sections}\label{sec-model}
Let $M$ be a smooth manifold modeled on a Banach space $E$. We define functions of bounded variation with values in $M$ as follows:

\begin{defn}
A continuous function $f \colon [a,b] \rightarrow M$ is an \emph{$M$-valued function of bounded variation} if there exists a subdivision $a=t_0 < t_1 < \cdots < t_n=b$ and charts $(\varphi_i,U_i)$ of $M$ such that for each $1 \leq i\leq n$ we have $f([t_{i-1},t_i])\sub U_i$
and $\varphi_i \circ f|_{[t_i,t_{i-1}]}
\in \BV([t_{i-1},t_i],E)$. We write $\BV([a,b],M)$ for the
set of all $M$-valued $\BV$-maps on~$[a,b]$.
\end{defn}
\noindent
Note that the definition of manifold-valued $\BV$-maps is independent of any choices made for the subdivision of the interval and the charts. Indeed we can always refine a subdivision (cf.\ Lemma~\ref{into-prod}).
Due to the chain rule for $\BV$-maps, Corollary \ref{chainrule2}, a map $f$ which is of bounded variation if composed with one chart is also of bounded variation in any other chart which contains the image $f([t_{i-1},t_i])$.\smallskip

\noindent
To construct a manifold structure on the set $\BV([a,b],M)$, we require certain spaces of $\BV$-maps with values in the tangent bundle $TM$ of $M$. It will be convenient to discuss, more generally, spaces of $\BV$-sections for arbitrary vector bundles over $M$.

\begin{defn}\label{defn:gammaf}
 Let $\pi \colon B \rightarrow M$ be a smooth Banach vector bundle with typical fibre $F$. For $f \in \BV([a,b],M)$, we let 
 $$\Gamma_f(B) \coloneq \{\tau \in \BV([a,b],B) \mid \pi \circ \tau = f\}$$
 and call its elements \emph{$\BV$-maps over~$f$ with values in $B$}. If $B=TM$, we abbreviate $\Gamma_f \coloneq \Gamma_f (TM)$.
\end{defn}

\begin{numba}\label{resolve_Gammaf}
 For $f \in \BV([a,b],M)$, pick a subdivision $a=t_0 < t_1 < \cdots < t_n=b$ and local vector bundle trivializations $(\psi_i, U_i)$ of $B$ such that for each $1 \leq i\leq n$, we have $f([t_i,t_{i-1}]) \subseteq \pi(U_i)$.
 Thus $\psi_i\colon \pi^{-1}(U_i)\to U_i\times F$.
 We obtain an injective map
 \begin{align}
  \Psi\colon \Gamma_f(B) \rightarrow \prod_{1\leq i \leq n} \BV([t_{i-1},t_i],F),\ \tau \mapsto (\pr_2 \circ \,\psi_i \circ \tau|_{[t_{i-1},t_i]})_{i=1}^n. \label{thetamap}
 \end{align}
 Endow $\Gamma_f(B)$ with the initial topology with respect to the map $\Psi$.
 We shall refer to this topology as the \emph{$\BV$-topology.}
\end{numba}

\noindent
Employing Lemma \ref{into-prod}, the image of $\Psi$ is the closed vector subspace 
$$\left\{(g_i)_{1\leq i\leq n} \in {\textstyle\prod_{i=1}^n}
\BV([t_{i-1},t_i],F) \colon  g_{i-1}(t_i)=g_i(t_i)\;\,
\mbox{for all}\;\,  2\leq i\leq n\right\}.$$
A trivial calculation shows that with respect to addition in the fibres of the tangent bundle we have $\Psi(\tau+c\sigma)=\Psi(\tau)+c\Psi(\sigma)$ for any $\tau,\sigma \in \Gamma_f(B)$ and $c\in \R$. We deduce that the initial topology
turns $\Gamma_f(B)$ into a Banach space.

\begin{la}
 The $\BV$-topology on $\Gamma_f(B)$ does not depend on the choices in~\eqref{thetamap}.
\end{la}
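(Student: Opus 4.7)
The plan is to compare two admissible choices in \eqref{thetamap}, say a first with data $(t_i)_{i=0}^n, (\psi_i,U_i)_{i=1}^n$ yielding the map $\Psi$, and a second with data $(t'_j)_{j=0}^m, (\psi'_j,U'_j)_{j=1}^m$ yielding $\Psi'$, by routing both through a common refinement. First I would pick a joint refinement $a=r_0<r_1<\cdots<r_N=b$ of the two subdivisions and, for each $k$, indices $i(k),j(k)$ with $[r_{k-1},r_k]\subseteq[t_{i(k)-1},t_{i(k)}]\cap[t'_{j(k)-1},t'_{j(k)}]$. The two families of trivializations, restricted along the refinement, define maps $\Psi_r$ and $\Psi'_r$ from $\Gamma_f(B)$ into $\prod_k\BV([r_{k-1},r_k],F)$, and it suffices to show that the initial topologies of $\Psi,\Psi_r,\Psi'_r,\Psi'$ on $\Gamma_f(B)$ all coincide.

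For the refinement step ($\Psi$ versus $\Psi_r$, and symmetrically for the primed data), the restriction-to-subintervals map
\[
R\colon\prod_{i=1}^n\BV([t_{i-1},t_i],F)\;\to\;\prod_{k=1}^N\BV([r_{k-1},r_k],F)
\]
satisfies $\Psi_r=R\circ\Psi$. Applying Lemma~\ref{into-prod} factor-by-factor to each $[t_{i-1},t_i]$ identifies $R$ with a topological embedding onto a closed subspace, so the two initial topologies on $\Gamma_f(B)$ agree.

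For the change-of-trivializations step ($\Psi_r$ versus $\Psi'_r$), I exploit that on each $[r_{k-1},r_k]$ the transition of bundle trivializations has the form $\psi'_{j(k)}\circ\psi_{i(k)}^{-1}(x,v)=(x,\Theta_k(x,v))$ for a smooth map $\Theta_k\colon V_k\times F\to F$, with $V_k:=\pi(U_{i(k)})\cap\pi(U'_{j(k)})$. Writing $g_k:=\pr_2\circ\psi_{i(k)}\circ\tau|_{[r_{k-1},r_k]}$ and $g'_k:=\pr_2\circ\psi'_{j(k)}\circ\tau|_{[r_{k-1},r_k]}$ for $\tau\in\Gamma_f(B)$, one has $g'_k=\Theta_k\circ(f|_{[r_{k-1},r_k]},g_k)$. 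Since $f([r_{k-1},r_k])\subseteq V_k$, the insertion $g\mapsto(f|_{[r_{k-1},r_k]},g)$ is a continuous affine map $\BV([r_{k-1},r_k],F)\to\BV([r_{k-1},r_k],V_k\times F)$ by Lemma~\ref{lem:smooth_insertion}, and the pushforward $\BV([r_{k-1},r_k],\Theta_k)$ is smooth by Lemma~\ref{lem:PF_smooth}. Composing gives a continuous self-map $T_k$ of $\BV([r_{k-1},r_k],F)$ with $T_k(g_k)=g'_k$. Taking the product over $k$ yields a continuous $T$ with $\Psi'_r=T\circ\Psi_r$, so the $\Psi'_r$-topology is coarser than the $\Psi_r$-topology; by symmetry they agree.

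The main obstacle is the last step: one has to promote a nonlinear coordinate change, evaluated along a curve that is merely $\BV$, to a continuous operation on $\BV$-spaces. The pairing of Lemma~\ref{lem:smooth_insertion} with Lemma~\ref{lem:PF_smooth} is exactly the tool that makes this work; no delicate estimates beyond those already present in Section~\ref{sec-superpo} are needed.
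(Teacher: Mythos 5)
Your proposal is correct and follows essentially the same route as the paper: pass to a common refinement of the two subdivisions via Lemma~\ref{into-prod}, then show the change of trivializations induces a topological isomorphism between the localized $\BV$-spaces using the superposition machinery of Section~\ref{sec-superpo}. The only (immaterial) variation is that you obtain the transition operator by combining Lemma~\ref{lem:smooth_insertion} with Lemma~\ref{lem:PF_smooth} applied to the smooth transition map $\Theta_k$ --- for which you should first identify $V_k$ with a chart domain so that these Banach-space lemmas apply --- whereas the paper invokes Proposition~\ref{prop:star_smooth} for the map $c_i^f(s,x)$ directly.
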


\begin{proof}
 Let $\tilde{\Psi}$ be a map defined as in \eqref{thetamap} for different choices of $t_i$ and a different family of bundle trivializations $(\kappa_i,V_i)$. For $a<c<b$, the inclusion $\BV([a,b],F) \rightarrow \BV([a,c],F) \times \BV([c,b],F)$ is a topological embedding onto the closed vector subspace of pairs of mappings which take the same value at the bisection point $c$ (see Lemma~\ref{into-prod}).
  Hence, without loss of generality, we may pass to a common refinement of both subdivisions of $[a,b]$ and only have to show that the topology is independent of the families of trivializations chosen to construct it. We can check this independently for every $i$. Let $(\psi_i,U_i)$ and $(\kappa_i,V_i)$, $1\leq i \leq n$ be the families of bundle trivializations used to construct for $f$ the maps $\Psi$ and $\tilde{\Psi}$, respectively. Exploiting that $\psi_i \colon \pi^{-1}(U_i) \rightarrow U_i \times F$ we construct from $\kappa_i\circ \psi_i^{-1} \colon \pi^{-1}(U_i\cap V_i) \rightarrow U_i \cap V_i \times F$ a smooth map which is linear in the second component
  $$c_{i}^f \colon [t_{i-1},t_i] \times F \rightarrow F, \quad c_i^f (s, x)= \pr_2 \circ \kappa_i\circ \psi_i^{-1} (f(s),x).$$
  Then Lemma \ref{prop:star_smooth} implies that the change of trivialisations induces a continuous linear map  
 $(c_i^f)_\star \colon \BV([t_{i-1},t_i],F) \rightarrow \BV([t_{i-1},t_i],F)$. Its inverse is a mapping of the same type obtained by switching the roles of $\psi_i$ and $\kappa_i$, whence $(c_i^f)_\star$ is an isomorphism of Banach spaces. This shows that the $\BV$-topology does not depend on the choices of trivialisations in \eqref{thetamap}.
\end{proof}

\noindent
As the topology defined by the variation norm is finer than the compact-open topology, the same holds for the topology on $\Gamma_f(B)$ as the next result shows. In the following we consider vector bundles $B$ and denote by $B_x$ the fibre of $B$ above the point $x$.

\begin{la}\label{lem:co-top-weaker}
 The canonical inclusion of $\Gamma_f(B)$ into $C([a,b],B)$ is continuous.
\end{la}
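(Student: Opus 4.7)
The plan is to reduce continuity of the inclusion $\iota\colon \Gamma_f(B)\to C([a,b],B)$ (equipped with the compact-open topology on the target) to continuity on the pieces of a subdivision of $[a,b]$. Let $a=t_0<\cdots<t_n=b$ and $(\psi_i,U_i)$ be the data used to construct $\Psi$ in~\eqref{thetamap}. Recall that $C([a,b],B)$ embeds as a closed topological subspace of $\prod_{i=1}^n C([t_{i-1},t_i],B)$ via $g\mto (g|_{[t_{i-1},t_i]})_{i=1}^n$, with image the tuples that agree at the interior subdivision points (a gluing statement analogous to Lemma~\ref{into-prod}). Consequently, it suffices to verify that each restriction $r_i\colon \Gamma_f(B)\to C([t_{i-1},t_i],B)$, $\tau\mto \tau|_{[t_{i-1},t_i]}$, is continuous.

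Fix $i\in\{1,\ldots,n\}$. Since $f([t_{i-1},t_i])\sub \pi(U_i)$, the image of $r_i$ lies in $C([t_{i-1},t_i],\pi^{-1}(U_i))$, and under the local trivialization $\psi_i\colon \pi^{-1}(U_i)\to U_i\times F$ we may write
$$r_i(\tau)=\psi_i^{-1}\circ\bigl(f|_{[t_{i-1},t_i]},\, \pr_2\circ\psi_i\circ\tau|_{[t_{i-1},t_i]}\bigr).$$
The second component is precisely the $i$-th coordinate of $\Psi$, and hence depends continuously on $\tau\in \Gamma_f(B)$ as a map into $\BV([t_{i-1},t_i],F)$ by the very definition of the $\BV$-topology as the initial topology with respect to $\Psi$. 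Composing with the continuous inclusion $\BV([t_{i-1},t_i],F)\hookrightarrow C([t_{i-1},t_i],F)$ from Remark~\ref{rem:cont:inclusion} yields continuity into the compact-open space $C([t_{i-1},t_i],F)$. Inserting the constant function $f|_{[t_{i-1},t_i]}$ into the first slot (a continuous affine operation), and then pushing the result forward through the smooth chart inverse $\psi_i^{-1}$ (which induces a continuous map between the corresponding compact-open function spaces, cf.\ \cite[Lemma~B.8]{Schm}, as already used in the proof of Lemma~\ref{improve-chn}), exhibits $r_i$ as a composition of continuous maps.

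The only mildly subtle step is the bookkeeping in the first paragraph, namely the identification of $C([a,b],B)$ with the compatible tuples in the product over the subdivision; the substantive analytic input is already packaged in Remark~\ref{rem:cont:inclusion} (continuity of $\BV\hookrightarrow C$ for Banach-valued functions) together with the initial-topology definition of the $\BV$-topology on $\Gamma_f(B)$. No additional estimates are needed.
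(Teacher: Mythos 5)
Your proof is correct, and it reorganizes the argument rather than merely reproducing the paper's. The paper verifies directly that the preimage of each subbasic set $\lfloor K,O\rfloor$ is open: for a given section $\tau$ it must choose product neighborhoods $V_i^1\times V_i^2$ squeezed between the compact image $\psi_i\circ\tau([t_{i-1},t_i]\cap K)$ and $\psi_i(\pi^{-1}(U_i\cap O))$, a tube-lemma-type step, and then pulls the resulting sets $\Omega(K,i)$ back along $\Psi$. You instead use that the compact-open topology on $C([a,b],B)$ is initial with respect to the restrictions to the subintervals and factor each restriction $r_i$ as a composition of manifestly continuous maps: the $i$-th coordinate of $\Psi$ (continuous by the very definition of the $\BV$-topology), the inclusion $\BV([t_{i-1},t_i],F)\hookrightarrow C([t_{i-1},t_i],F)$ from Remark~\ref{rem:cont:inclusion}, insertion of the fixed continuous map $f|_{[t_{i-1},t_i]}$ into the first slot, and the pushforward by $\psi_i^{-1}$ between compact-open function spaces. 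The analytic input is identical in both arguments (Remark~\ref{rem:cont:inclusion} together with the initial-topology definition of the $\BV$-topology on $\Gamma_f(B)$), but your factorization avoids the pointwise neighborhood bookkeeping and is, if anything, cleaner. The one step you should spell out is the claim that $g\mapsto (g|_{[t_{i-1},t_i]})_{i=1}^n$ is a topological embedding of $C([a,b],B)$ into the product — equivalently, that $\lfloor K,O\rfloor=\bigcap_{i=1}^n r_i^{-1}\bigl(\lfloor K\cap[t_{i-1},t_i],O\rfloor\bigr)$ for the subbasic sets — which is standard and which you correctly flag; also note that the insertion $h\mapsto (f|_{[t_{i-1},t_i]},h)$ is not literally affine (the first factor lives in $U_i\subseteq M$), but it is plainly continuous for the compact-open topologies, which is all that is needed.
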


\begin{proof}
 It suffices to show that $\lfloor K,O\rfloor =\{f \in C([a,b],B), f(K)\subseteq O\}$ with $K \subseteq [a,b]$ compact and $O \subseteq B$ open is an open neighborhood for each $\tau \in \Gamma_f(B) \cap \lfloor K,O\rfloor$. Picking such a $\tau$, we refine the partition $a=t_0 < \cdots t_k =b$ and pick bundle trivialisations $(\psi_i,U_i)$ such that for every $1 \leq i \leq k$ there are open sets $V^1_i,V_i^2$ such that $\psi_i \circ  \tau([t_{i-1},t_i] \cap K) \subseteq V_i^1 \times V_i^2 \subseteq \psi_i (\pi^{-1}(U_i \cap O))$. Hence for every $i$, $\pr_2\circ \psi_i \circ \tau$ is contained in the set $\Omega(K,i)\coloneq \lfloor K\cap[t_{i-1},t_i],  V_i^2\rfloor$. (We set $\Omega(K,i)=\BV([t_{i-1},t_i],F)$ for $K\cap [t_{i-1},t_i]=\emptyset$.) Note that for each~$i$, the set $\Omega(K,i)$ is open in $\BV([t_{i-1},t_i],F)$ as the Banach space has a finer topology than the compact-open topology by Remark \ref{rem:cont:inclusion}. If now $\Psi$ is the map \eqref{thetamap} constructed from the data we chose, then $\tau \in \Psi^{-1}(\prod_i \Omega(K,i))$ and by construction, each element in this open set is also contained in $\lfloor K, O\rfloor$. This finishes the proof.
\end{proof}
\noindent
As a consequence, we obtain continuity of the evaluation map.

\begin{la}\label{lem:sect_eval}
 Let $f \in \BV([a,b],M)$, then the evaluation map 
 $$\mathrm{ev} \colon \Gamma_f(B) \times [a,b] \rightarrow B,\quad (\tau,s) \mapsto \tau (s)$$
 is continuous. Moreover, for each $x \in [a,b]$ the point evaluation
 $$\varepsilon_x \colon \Gamma_f(B) \rightarrow B_{f(x)},\quad \varepsilon_x (\tau) = \tau (x)$$
 is continuous linear and hence smooth.
\end{la}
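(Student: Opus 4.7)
The plan is to reduce everything to Lemma~\ref{lem:co-top-weaker} plus a standard function-space fact. By Lemma~\ref{lem:co-top-weaker}, the inclusion $\iota\colon \Gamma_f(B) \hookrightarrow C([a,b],B)$ is continuous, where $C([a,b],B)$ carries the compact-open topology. Since $[a,b]$ is locally compact Hausdorff, the classical evaluation
\[
\mathrm{ev}_C\colon C([a,b],B) \times [a,b] \to B,\quad (\gamma,s) \mapsto \gamma(s)
\]
is continuous. The map in question factors as $\mathrm{ev} = \mathrm{ev}_C \circ (\iota \times \mathrm{id}_{[a,b]})$, and is thus continuous.

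For the point evaluation, fix $x \in [a,b]$. Continuity of $\varepsilon_x$ follows from continuity of $\mathrm{ev}$ by precomposing with the continuous affine map $\tau \mapsto (\tau,x)$; that $\varepsilon_x$ lands in $B_{f(x)}$ is immediate from the defining property $\pi\circ\tau = f$. Linearity follows directly from the discussion preceding Lemma~\ref{lem:co-top-weaker}: the Banach space structure on $\Gamma_f(B)$ was defined so that $\Psi(\tau + c\sigma) = \Psi(\tau) + c\Psi(\sigma)$ with respect to fibrewise vector operations, hence $\varepsilon_x(\tau + c\sigma) = \tau(x) + c\sigma(x) = \varepsilon_x(\tau) + c\,\varepsilon_x(\sigma)$ in $B_{f(x)}$. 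Finally, continuous linear maps between Banach spaces are smooth in the Bastiani sense recalled in~\ref{defn-Ck}.

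No step here is a genuine obstacle; the lemma is essentially a repackaging of Lemma~\ref{lem:co-top-weaker} together with a basic function-space fact. If one preferred to avoid invoking the compact-open evaluation theorem, one could argue directly via the trivializations in~\eqref{thetamap}: choose $i$ with $x \in [t_{i-1},t_i]$ and a bundle trivialization $\psi_i$ as in \ref{resolve_Gammaf}, then write $\mathrm{pr}_2 \circ \psi_i \circ \varepsilon_x$ as the composition of the topological embedding $\Psi$, the coordinate projection onto the $i$th factor $\BV([t_{i-1},t_i],F)$, and point evaluation at $x$ on that Banach space (the latter being continuous since $\BV \hookrightarrow C$ by Remark~\ref{rem:cont:inclusion} and point evaluation is continuous on $(C([t_{i-1},t_i],F),\|\cdot\|_\infty)$). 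An analogous local argument, patched across the subdivision, handles $\mathrm{ev}$ directly, but this merely re-proves Lemma~\ref{lem:co-top-weaker} in the process.
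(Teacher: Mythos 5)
Your proof is correct and follows essentially the same route as the paper: continuity of the inclusion $\Gamma_f(B)\hookrightarrow C([a,b],B)$ from Lemma~\ref{lem:co-top-weaker} composed with the classical evaluation map, then linearity and continuity of $\varepsilon_x$ into the fibre. The only point the paper makes explicit that you elide is that the Banach space topology on $B_{f(x)}$ coincides with the subspace topology induced from $B$, which is what lets continuity into $B$ yield continuity into $B_{f(x)}$; this is minor and your argument is complete in substance.
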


\begin{proof}
 Due to Lemma \ref{lem:co-top-weaker}, the inclusion of $\Gamma_f(B)$ into $C([a,b],B)$ is continuous. Hence continuity of $\mathrm{ev}$ follows from the continuity of the evaluation map for continuous functions (cf., e.g., \cite[Lemma B.10]{Schm}).
 Inserting $x \in [a,b]$, we see that $\varepsilon_x = \mathrm{ev} (\cdot ,x)$ is continuous and takes its values in the fibre $B_{f(x)}$. As the subspace topology coincides with the Banach space topology of the fibre $B_{f(x)}$, we deduce that $\varepsilon_x$ is continuous as claimed. Linearity is obvious and continuous linear mappings are smooth.
\end{proof}

\begin{la}\label{lem:bundle-calc}
Let $\pi_1 \colon B_1 \rightarrow M$ and $\pi_2 \colon B_2 \rightarrow M$ be smooth Banach vector bundles over a Banach manifold $M$. Let $f \in \BV([a,b],M)$. Then we have:
\begin{enumerate}
\item If $\psi \colon B_1 \rightarrow B_2$ is a smooth vector bundle map over the identity, then $\psi \circ \tau \in \Gamma_f (B_2)$ for each $\tau \in \Gamma_f (B_1)$ and $$\Gamma_f (\psi) \colon \Gamma_f (B_1) \rightarrow \Gamma_f (B_2), \quad \tau \mapsto \psi \circ \tau$$
is a continuous linear map.
\item For the Whitney sum $B_1 \oplus B_2$, the Banach space $\Gamma_f (B_1 \oplus B_2)$ is canonically isomorphic to $\Gamma_f (B_1) \times \Gamma_f (B_2)$.
\end{enumerate}
\end{la}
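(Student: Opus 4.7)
For part (i), first observe that $\pi_2\circ \psi=\pi_1$ since $\psi$ covers the identity, so $\pi_2\circ(\psi\circ \tau)=\pi_1\circ\tau=f$, putting $\psi\circ\tau$ at least over the correct base curve. To verify that $\psi\circ\tau$ is BV and that $\Gamma_f(\psi)$ is continuous linear, I would refine the subdivision $a=t_0<\cdots<t_n=b$ and choose, for each $1\le i\le n$, a common chart $(\varphi_i,\Omega_i)$ of $M$ with $f([t_{i-1},t_i])\subseteq \Omega_i$ over which both $B_1$ and $B_2$ admit trivializations $(\psi^{(1)}_i,\Omega_i)$ and $(\psi^{(2)}_i,\Omega_i)$. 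Transported to $\varphi_i(\Omega_i)\subseteq E$ via $\varphi_i$, the bundle map $\psi$ then takes the fibrewise-linear form $(x,v)\mapsto(x,\Psi_i(x)(v))$ for a smooth map $\Psi_i\colon \varphi_i(\Omega_i)\to \cL(F_1,F_2)$, and the local representative of $\psi\circ\tau$ on $[t_{i-1},t_i]$ becomes $s\mapsto \Psi_i(\varphi_i(f(s)))(g_i(s))$, where $g_i$ is the local representative of $\tau$.

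The crux is to show that for each $i$ the assignment
\[
T_i\colon \BV([t_{i-1},t_i],F_1)\to \BV([t_{i-1},t_i],F_2),\quad g_i\mapsto \bigl(s\mapsto \Psi_i(\varphi_i(f(s)))(g_i(s))\bigr)
\]
is continuous linear. For this, set $h_i\colon \varphi_i(\Omega_i)\times F_1\to F_2$, $(x,v)\mapsto \Psi_i(x)(v)$, which is smooth as a map of open subsets of Banach spaces. Lemma~\ref{lem:PF_smooth} then provides a smooth pushforward $\BV([t_{i-1},t_i],h_i)$; Lemma~\ref{prod:iso} identifies its domain with an open subset of $\BV([t_{i-1},t_i],E)\times \BV([t_{i-1},t_i],F_1)$; and freezing $\varphi_i\circ f|_{[t_{i-1},t_i]}$ in the first slot using the smooth affine insertion of Lemma~\ref{lem:smooth_insertion} realizes $T_i$ as a smooth map. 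Fibrewise linearity of $\Psi_i(x)$ in $v$ upgrades smoothness to continuous linearity. Since the BV-topology on each $\Gamma_f(B_j)$ is the initial topology with respect to the local-representative embeddings from~\ref{resolve_Gammaf}, assembling the $T_i$'s shows that $\Gamma_f(\psi)$ is continuous linear.

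For part (ii), the canonical projections $\pr_j\colon B_1\oplus B_2\to B_j$ and inclusions $\iota_j\colon B_j\to B_1\oplus B_2$ are smooth vector bundle maps over the identity, so by part (i) they induce continuous linear maps $\Gamma_f(\pr_j)$ and $\Gamma_f(\iota_j)$. Setting
\[
\Phi(\tau)\coloneq (\pr_1\circ\tau,\pr_2\circ\tau),\qquad \Psi(\tau_1,\tau_2)\coloneq \iota_1\circ\tau_1+\iota_2\circ\tau_2,
\]
both $\Phi$ and $\Psi$ are continuous linear, and the fibrewise identities $\pr_j\circ\iota_k=\delta_{jk}\id$ and $\iota_1\circ\pr_1+\iota_2\circ\pr_2=\id$ imply $\Phi\circ\Psi=\id$ and $\Psi\circ\Phi=\id$, giving the required Banach space isomorphism.

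The main obstacle is the construction of $T_i$ in part (i): one must encode the fibrewise linearity of $\psi$ through a smooth jointly continuous $h_i(x,v)=\Psi_i(x)(v)$ so that Lemma~\ref{lem:PF_smooth} and the affine insertion apply and jointly produce a \emph{linear} map. Once $T_i$ is in hand, assembling across $i$ finishes part (i), and part (ii) is a formal consequence.
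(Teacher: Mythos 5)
Your proof is correct and reaches the statement by the same overall route as the paper: reduce to a common refined subdivision with simultaneous trivializations of $B_1$ and $B_2$, show that the induced local operators between the spaces $\BV([t_{i-1},t_i],F_j)$ are continuous linear, and assemble via the initial topology from \ref{resolve_Gammaf}; part (ii) is verbatim the paper's argument with projections and inclusions. The one genuine difference is how the local crux is handled. The paper forms the time-dependent map $\omega_i(t,x)=(\pr_2\circ\lambda_i\circ\psi\circ\kappa_i^{-1})(f(t),x)$ and invokes Proposition~\ref{prop:star_smooth}, which requires joint $C^{k+2}$-smoothness in $(t,x)$; since $\omega_i$ depends on $t$ only through the $\BV$-function $f$, it is in general merely continuous in $t$, so that proposition does not literally apply as cited. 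You instead keep the $t$-independent smooth local form $h_i(x,v)=\Psi_i(x)(v)$ of the bundle map, freeze $\varphi_i\circ f$ in the first slot via the continuous affine insertion of Lemma~\ref{lem:smooth_insertion}, and push forward with Lemma~\ref{lem:PF_smooth}; the composite $T_i$ is then smooth, and its pointwise linearity in $g_i$ yields continuous linearity. This is exactly the pattern the paper itself uses in the proof of Proposition~\ref{BVmfd-struct}, and it is the more robust choice here: your variant buys a proof that does not lean on smoothness in the time variable, at no extra cost. (Your passing claim that $\Psi_i$ is smooth into $(\cL(F_1,F_2),\|\cdot\|_{\op})$ is not needed — only joint smoothness of $h_i$ is used — and is in any case justified by Remark~\ref{from-axioms}\,(a).)
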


\begin{proof}
(a) If $\tau  \in \Gamma_f (B_1)$, then $\psi \circ \tau \colon [a,b] \rightarrow B_2$ satisfies $\pi_2 \circ \psi\circ \tau =f$. To see that the map is $\BV$, we pick a subdivision $a=t_0 < \cdots < t_n =b$ of $[a,b]$ such that $f([t_{i-1},t_i])$ is contained in $\pi_1^{-1}(U_i) \cap \pi_2^{-1}(V_i)$ for some bundle trivializations $(\kappa_i,U_i)$ of $B_1$ and $(\lambda_i,V_i)$ of $B_2$.
Let $W_i:=U_i\cap V_i$. Shrinking $W_i$ we may assume that there exists a manifold chart of $M$ for $W_i$ (whence we may and will in the following identify it with an open subset of a Banach space). Localizing $\Gamma_f (B_1)$ and $\Gamma_f (B_2)$ using mappings $\Psi_1,\Psi_2$ as in \eqref{thetamap} with respect to the trivializations $(\kappa_i)_i$ and $(\lambda_i)_i$, the map $\Gamma_f (\psi)$ is conjugate to $\theta \colon \prod_{i=1}^n \BV ([t_{i-1},t_i],F) \rightarrow \prod_{i=1}^n \BV ([t_{i-1},t_i],F), \theta=((\omega_i)_\star)_i$. 
 Here $\omega_i$ is the smooth map $\omega_i \colon [t_{i-1},t_i] \times F \rightarrow F, \omega_i(t,x)= (\text{pr}_2 \circ 
 \lambda_j \circ \psi \circ \kappa^{-1}_j)(f(t),v)$. From Proposition \ref{prop:star_smooth}, we deduce that 
 $(\omega_i)_\star$ is smooth and hence
 continuous. We deduce that $\theta$ and thus $\Gamma_f (\psi)$ are continuous. Evaluating at points $\Gamma_f(\psi)$ is easily seen to be linear.

 (b) For $j\in \{1,2\}$ let $\rho_j \colon B_1 \oplus B_2 \rightarrow B_i$ be the map taking $(v_1,v_2)$ to $v_j$ and $\iota_j \colon B_j \rightarrow B_1 \oplus B_2$ be the maps taking $v_j \in B_j$ to $(v_1,0)$ and $(0,v_2)$, respectively. Then by part (a)
 $$(\Gamma_f(\rho_1),\Gamma_f(\rho_2)) \colon \Gamma_f (B_1 \oplus B_2) \rightarrow \Gamma_f (B_1) \times \Gamma_f(B_2)$$ is continuous linear. Its inverse is $(\sigma,\tau) \mapsto \Gamma_f(\iota_1)(\sigma) + \Gamma_f (\iota_2)(\tau)$, whence it is an isomorphism of Banach spaces.
\end{proof}

\section{The Banach manifold {\boldmath$\BV([a,b],M)$}}\label{sect:BV-mfd}
In this section, we construct a manifold structure on the set of $\BV$-functions with values in a smooth Banach manifold $M$.
We shall always assume that $M$
admits a local addition,
and we shall assume that~$M$ is a
pure manifold, modeled
on a single Banach space~$E$
(see Remark~\ref{non-pure}
for the general case).
By contrast, $\BV([a,b],M)$
need not be pure;
we shall model it on a set of
Banach spaces.
\begin{defn}\label{def-loa}
Let $M$ be a smooth manifold. A \emph{local addition} is a smooth map
\[
\Sigma \colon U \to M,
\]
defined on an open neighborhood $U \subseteq TM$ of the zero-section
$0_M:=\{0_p\in T_pM\colon p\in M\}$
such that $\Sigma(0_p)=p$ for all $p\in M$,
\[
U':=\{(\pi_{TM}(v),\Sigma(v))\colon v\in U\}
\]
is open in $M\times M$ (where $\pi_{TM}\colon TM\to M$ is the bundle
projection) and the map
\[
\theta:=(\pi_{TM},\Sigma)\colon U \to U'
\]
is a $C^\infty$-diffeomorphism. If
\begin{equation}\label{bettersigma}
T_{0_p}(\Sigma|_{T_pM})=\id_{T_pM}\;\,
\mbox{for all $p\in M$,}
\end{equation}
we say that the local addition $\Sigma$ is \emph{normalized}.
\end{defn}

\begin{numba}
We mention that a Banach manifold
admits a spray if
it is \emph{smoothly paracompact}
in the sense that
for each
open cover of~$M$, there exists a smooth
partition of unity subordinate to it
(cf.\ Theorem~3.1 in \cite[IV, \S3]{Lang}).
A local addition can then be constructed as a restriction of
the exponential map associated to the spray
(see \cite[IV. \S 4]{Lang} for definitions and proofs).
Irrespective of sprays, each Lie group $G$ admits a local addition. The proof uses that $G$ has a trivial tangent bundle (see, e.g. \cite[C.2]{Schm}).
We mention that each Banach manifold admitting a local addition admits a normalized local addition (see \cite[Lemma A.14]{AGS}).
\end{numba}

\noindent
For the rest of this section, we stipulate the following convention.

\begin{numba}
We denote by $M$ a smooth Banach manifold with a local addition\linebreak
$\Sigma \colon TM \supseteq U \rightarrow M$, such that $\theta\coloneq (\pi_{TM} , \Sigma) \colon U \rightarrow U' \subseteq M \times M$ is a diffeomorphism onto an open subset $U'$.
\end{numba}

\begin{numba}{(Charts for $\BV([a,b],M)$)}
Pick $f \in \BV([a,b],M)$ and let $\Gamma_f$ be the Banach space constructed in Definition \ref{defn:gammaf}. We
consider the set
\[
O_f \coloneq \Gamma_f
\cap \lfloor  [a,b], U\rfloor,
\]
which is an open subset of $\Gamma_f$
by Lemma \ref{lem:co-top-weaker}, and the set
\[
O_f' \coloneq\{g\in \BV([a,b],M) \mid (f,g)([a,b])\subseteq U'\}.
\]
Then $\phi_f \colon O_f \rightarrow O_f'$, $\tau \mapsto \Sigma \circ \tau$ is a bijection with inverse $\phi_f^{-1}(g)=\theta^{-1} \circ (f,g)$. As the zero-section gets mapped by $\phi_f$ to $f$, the sets $O_f'$ cover $\BV([a,b],M)$ and we endow $\BV([a,b],M)$
with the final topology with respect to the mappings $(\phi_f)_{f \in \BV([a,b],M)}$.
We shall see that the maps $(\phi_f)^{-1}\colon O_f'\to O_f$
can be used as the charts for a smooth manifold structure on $\BV([a,b],M)$.
Once the manifold structure is constructed,
we shall refer to these charts as the \emph{canonical charts}.
\end{numba}
\noindent
The final topology on $\BV([a,b],M)$
is the finest topology making the maps $\phi_f$ continuous.
The resulting topology is finer than the compact-open topology, i.e.~the topology
induced by the inclusion $\BV([a,b],M) \subseteq C([a,b],M)$. To see this, view $\phi_f$ as a restriction of $C([a,b],\Sigma) \colon C([a,b],U) \rightarrow C([a,b],M)$ to $\Gamma_f \cap \lfloor [a,b],U\rfloor$. This map is continuous if we endow both sides with the compact-open topology (see e.g.~\cite[Lemma B.8]{Schm}).
Since the $\BV$-topology on $\Gamma_f$ is finer than the compact-open topology (see Lemma \ref{lem:co-top-weaker}), this establishes continuity also with respect to the $\BV$-topology. In particular, the sets $O_f'$ are open in the compact-open topology and thus also in the topological space $\BV([a,b],M)$.

\begin{prop}\label{BVmfd-struct}
For $f \in \BV([a,b], M)$, the maps $\phi_f^{-1} \colon O_f'\rightarrow O_f$ form a smooth manifold atlas for $\BV([a,b],M)$ turning it into a Banach manifold. 
The manifold structure does not depend on the choice of local addition used in its construction.
\end{prop}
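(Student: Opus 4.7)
The strategy is to verify the axioms of a smooth atlas for $(\phi_f^{-1})_{f\in\BV([a,b],M)}$, then deduce independence from the local addition as a special case of the chart-change argument.

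First, I would check the covering and bijection statements. For any $f$, the zero section over $f([a,b])$ lies in $U$ by definition of a local addition, so $0\in O_f$ and $\phi_f(0)=f\in O_f'$; the sets $O_f'$ therefore cover $\BV([a,b],M)$. That $\phi_f\colon O_f\to O_f'$ is a bijection with inverse $g\mapsto\theta^{-1}\circ(f,g)$ is immediate from $\theta=(\pi_{TM},\Sigma)$ being a diffeomorphism, and the image automatically satisfies $\pi_{TM}\circ\theta^{-1}(f,g)=f$.

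Second comes the main technical step: smoothness of each transition $\phi_{f_2}^{-1}\circ\phi_{f_1}$ on $\phi_{f_1}^{-1}(O_{f_1}'\cap O_{f_2}')\subseteq\Gamma_{f_1}$. Given $\tau$ in that open set, the transition sends it to $\sigma(s):=\theta^{-1}(f_2(s),\Sigma(\tau(s)))$, and $\pi_{TM}\circ\sigma=f_2$ holds by construction. To see smoothness, I would pick a common subdivision $a=t_0<\cdots<t_n=b$ fine enough that for each $i$ there are charts $\varphi_i^{(j)}\colon V_i^{(j)}\to E$ with $f_j([t_{i-1},t_i])\sub V_i^{(j)}$, the tangent bundle trivialises over $V_i^{(j)}$, and $(f_1(s),\Sigma(v))\in (V_i^{(1)}\times V_i^{(2)})\cap U'$ for all $s\in[t_{i-1},t_i]$ and $v$ near the zero section on this interval. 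In these local coordinates the transition acquires the form
\[
\tau_i\mto \omega_i\circ(\id_{[t_{i-1},t_i]},\tau_i),
\]
where $\omega_i\colon[t_{i-1},t_i]\times W_i\to E$ is a smooth map assembled from $\Sigma$, $\theta^{-1}$, and the smooth chart and trivialisation changes attached to $f_1,f_2$. Proposition~\ref{prop:star_smooth} (via Lemma~\ref{lem:PF_smooth} for the $t$-independent pieces) then yields smoothness of each of these pushforwards on the relevant open subset of $\BV([t_{i-1},t_i],E)$, and Lemma~\ref{into-prod} together with Lemma~\ref{lem:bundle-calc}\,(b) glues the local smooth maps into a smooth mapping on all of $\phi_{f_1}^{-1}(O_{f_1}'\cap O_{f_2}')$. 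Applying this argument with $f_1=f$ and arbitrary $f_2=g$ also shows that $\phi_f^{-1}(O_f'\cap O_g')$ is open in $\Gamma_f$, so the final topology on $\BV([a,b],M)$ turns each $\phi_f^{-1}$ into a homeomorphism and each $\phi_f$ into a chart.

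Third, Hausdorffness is inherited from $C([a,b],M)$: by the remarks following the definition of $O_f'$, the manifold topology is finer than the compact-open topology, and $C([a,b],M)$ is Hausdorff because $M$ is. Fourth, for independence from the local addition, given $\Sigma,\wt{\Sigma}$ with atlases $\cA,\wt{\cA}$, the same argument applied to $\wt{\phi}_f^{-1}\circ\phi_f$ shows that its coordinate expression is again $\tau_i\mto\wt{\omega}_i\circ(\id,\tau_i)$ for a smooth $\wt{\omega}_i$, hence smooth by Proposition~\ref{prop:star_smooth}; symmetry yields diffeomorphism, so $\cA$ and $\wt{\cA}$ define the same smooth structure. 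The main obstacle lies in the second step: one must carefully assemble a common subdivision on which $f_1,f_2$ and the bundle trivialisations fit together, identify the transition with a pushforward of the form handled by Proposition~\ref{prop:star_smooth}, and patch the local conclusions back together using Lemma~\ref{into-prod} without losing the bundle condition $\pi_{TM}\circ\sigma=f_2$.
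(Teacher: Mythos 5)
Your overall strategy — cover by the sets $O_f'$, localize the transition map $\tau\mapsto\theta^{-1}(f,\Sigma\circ\tau)$ on a common subdivision into chart domains, recognize the local pieces as pushforward-type operators, and glue via Lemma~\ref{into-prod} — is exactly the paper's, and the treatment of openness of the transition domains and of the independence from the local addition also matches. There is, however, one step that does not work as literally written. You claim that in local coordinates the transition takes the form $\tau_i\mapsto\omega_i\circ(\id_{[t_{i-1},t_i]},\tau_i)$ for a \emph{smooth} map $\omega_i\colon[t_{i-1},t_i]\times W_i\to E$ ``assembled from $\Sigma$, $\theta^{-1}$, and the smooth chart and trivialisation changes attached to $f_1,f_2$,'' and you then apply Proposition~\ref{prop:star_smooth}. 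But the fixed functions $f_1,f_2$ enter the transition formula through their values $f_j(s)$ (e.g.\ as the first argument of $\theta^{-1}$ and through the trivialisations of $TM$ along $f_j$), and $f_1,f_2$ are only $\BV$, hence merely continuous in $s$. Any $\omega_i$ that absorbs this dependence fails the joint $C^{k+2}$ hypothesis of Proposition~\ref{prop:star_smooth}, so that proposition cannot be invoked for such an $\omega_i$.

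The repair is the one the paper uses and which your proposal never cites: separate the dependence on the fixed $\BV$-function from the genuinely smooth, $t$-independent part. Concretely, factor the local transition as the continuous affine insertion $h\mapsto(\varphi\circ f,h)$ of Lemma~\ref{lem:smooth_insertion} (smooth because it is continuous and affine between Banach spaces, with $\varphi\circ f$ a fixed element of $\BV([r,s],E)$), followed by the pushforward of a pair of $\BV$-functions by a smooth map of two variables that no longer involves $s$ explicitly, which is smooth by Lemma~\ref{lem:PF_smooth}. With this decomposition in place of the single superposition operator, the rest of your argument (including the independence of the local addition, where the same factorization applies verbatim to $\wt{\phi}_f^{-1}\circ\phi_g$) goes through.
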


\begin{proof}
We have to prove that the transition maps
$\phi_g^{-1}\circ \phi_f$ are smooth for all
$f,g \in \BV ([a,b],M)$. We first note that the preimage
$f^{-1}(O_g')$ is open in $\Gamma_f$ as $O_g'$ is open in the final topology.
Notably, the domain of the transition map will be open if $O_f'\cap O_g'
\not=\emptyset$. 
Inserting the definitions yields 
 \begin{align}\label{formula:chch}
  \phi_f^{-1} \circ \phi_g (\tau) = \theta^{-1}(f,\Sigma \circ \tau).
 \end{align}
Use charts of $M$ to localize \eqref{formula:chch}.
To keep the notation simple, we shall for the rest of the proof assume the following: Restrict to small intervals $[r,s] \subseteq [a,b]$, such that each of the compact subsets $f([r,s]),g([r,s])$ and $\Sigma(\tau([r,s]))$ is contained in a single chart domain (we do not assume that the same chart is used simultaneously). Covering $[a,b]$ with such intervals we can work locally. The local choices restrict us to an open subset of the domain of the change of charts (but note that we can choose for every $\tau$ in the domain such a neighborhood containing $\tau$). 
 Summing up, we restrict to the interval $[r,s]$ to rewrite \eqref{formula:chch} as
 \begin{align}\label{chch:locform}
  \theta^{-1}(f|_{[r,s]},\Sigma \circ \tau|_{[r,s]}) = \theta^{-1}\circ (\varphi^{-1} \times \kappa^{-1}) \circ (\varphi\circ f|_{[r,s]},\kappa\circ \Sigma \circ \tau|_{[r,s]}) 
  \end{align}
  for suitable charts $(\varphi, U_\varphi)$ and $(\kappa, U_\kappa)$ of $M$. Since also $g$ maps $[r,s]$ into a chart domain of say $(\psi,U_\psi)$, we may replace $\Gamma_g$ and $\Gamma_f$ for the following argument with the space $\BV([r,s],E)$, due to \ref{resolve_Gammaf}. So up to a harmless identification in charts, we rewrite the
  transition map \eqref{formula:chch} (locally on a subinterval) as the composition of two mappings
  \begin{align} \label{eq:map1}
   \delta_{f,r,s} \colon \BV([r,s],E) \supseteq D &\rightarrow \BV([r,s],E\times E), \\ \tilde{\tau}|_{[r,s]} &\mapsto (\varphi \circ f, \kappa\circ \Sigma \circ \tau|_{[r,s]}) \notag \\
   \gamma_{r,s} \colon \BV([r,s],E\times E)&\supseteq \BV([r,s],\varphi(U_\varphi)\times \kappa(U_\kappa)) \rightarrow \BV([r,s],E) \label{eq:map2}\\
    (h_1,h_2)& \mapsto \theta^{-1}\circ (\varphi^{-1},\kappa^{-1}) \circ (h_1,h_2). \notag
  \end{align}
 where $D$ in \eqref{eq:map1} is an open set and $\tilde{\tau}$ the mapping which gets identified with $\tau_{[r,s]}$ by \ref{resolve_Gammaf}. We can further dissect $\delta_{f,r,s}$ as a pushforward of a $\BV$-function by a smooth map (which is smooth in the $\BV$-function by Lemma \ref{lem:PF_smooth}) and the mapping $\BV([r,s],E) \rightarrow \BV([r,s],E\times E)$,
$h \mapsto (\varphi \circ f, h)$. Also this mapping is smooth in $h$ due to Lemma \ref{lem:smooth_insertion}. We conclude that $\delta_{f,r,s}$ is smooth in $\tau$.
For the map $\gamma_{r,s}$, \eqref{eq:map2} shows that it is given by a pushforward of
$\BV$-functions with a smooth function, whence it is smooth by Lemma \ref{lem:PF_smooth}. We conclude that the transition map is smooth,
being a composition of smooth mappings. Summing up, we obtain for each local addition $\Sigma$ a smooth atlas $(\phi_f^\Sigma, O_f')_{f \in \BV([a,b],M}$. 

\noindent
 To see that the manifold structure does not depend on the choice of local addition, consider another local addition $\tilde{\Sigma}$ with associated open sets $\tilde{U}, \tilde{U}'$ and diffeomorphism $\tilde{\theta}$. Recall that the topology on the modeling spaces is finer than the compact-open topology. Hence, an argument as above shows that the domain of transition
maps for canonical charts with respect to different local additions is an open set. Hence it suffices to prove that the transition map induced by arbitrary pairs $\phi_f^\Sigma, \phi_g^{\tilde{\Sigma}}$ is smooth. Again we argue with a local formula as in \eqref{chch:locform}. The key insight is that we are always composing with smooth functions. Since the mappings induced by $\tilde{\Sigma}$ are also smooth, we can argue as above to see that also the transition maps between canonical charts with respect to different local additions are smooth. 
Summing up, the construction of the manifold structure does not depend on the choice of the local addition. 
\end{proof}
\noindent
We would like to identify the tangent bundle of the manifold $\BV([a,b],M)$ with the manifold of $\BV$-functions with values in the tangent manifold $TM$. To this end,
we recall from \cite[Lemma A.11]{AGS} that $TM$ admits a local addition if $M$ admits a local addition. In fact, if $\Sigma\colon U \rightarrow M$ is the local addition on $M$, then $$\Sigma_{TM} \coloneq T\Sigma \circ \kappa \colon \kappa (TU) \rightarrow TM$$
defines a local addition, where $\kappa \colon T^2 M \rightarrow T^2M$ denotes the canonical flip, i.e.\ the involution of the bundle given locally as $(x,y,z,w)\mapsto (x,z,y,w)$.
In particular, we can apply Proposition \ref{BVmfd-struct} to construct a Banach manifold structure for $\BV ([a,b], TM)$.
Essentially as in \cite[Theorem A.12]{AGS} one now establishes the following result:

\begin{prop}\label{prop:tangent_ident}
 Let $M$ be a Banach manifold admitting a local addition $\Sigma \colon U \rightarrow M$. Then 
 $$\BV ([a,b],\pi_M) \colon \BV ([a,b],TM) \rightarrow \BV ([a,b],M), \quad F \mapsto \pi_{M} \circ F$$ is a smooth vector bundle with fibre $\Gamma_f$ over $f \in \BV([a,b],M)$. For each $v \in T\BV([a,b], M)$, we have $\Phi (v)\coloneq (T\varepsilon_x (v)) \in \BV([a,b],TM)$ and the map 
 $$\Phi \colon T\BV([a,b],M) \rightarrow \BV([a,b],TM),\quad v \mapsto \Phi(v)$$
 is an isomorphism of smooth vector bundles (over the identity). We shall write $\Phi_M$ instead of $\Phi$ if we wish to emphasize the manifold $M$.
\end{prop}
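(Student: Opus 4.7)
I would follow the blueprint of \cite[Theorem A.12]{AGS}, reading the statement off the canonical charts of Proposition~\ref{BVmfd-struct}. Since Proposition~\ref{BVmfd-struct} shows that the manifold structure does not depend on the chosen local addition, I may assume $\Sigma$ is normalized, i.e., $T_{0_p}(\Sigma|_{T_pM})=\id_{T_pM}$ for every $p\in M$ (such a $\Sigma$ exists by \cite[Lemma A.14]{AGS}). A preliminary step I need is that for each $x\in[a,b]$ the point evaluation
\[
\varepsilon_x\colon \BV([a,b],M)\to M,\quad g\mapsto g(x),
\]
is smooth: in the canonical chart $\phi_f^{-1}$ it reads $\tau\mapsto \Sigma(\tau(x))$, which is the composition of the smooth map $\Sigma$ with the continuous linear point evaluation $\Gamma_f\to T_{f(x)}M$, $\sigma\mapsto\sigma(x)$, provided by Lemma~\ref{lem:sect_eval}. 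In particular, for $v\in T_g\BV([a,b],M)$ the prescription $\Phi(v)(x)=T\varepsilon_x(v)\in T_{g(x)}M$ makes sense pointwise, and $\pi_M\circ\Phi(v)=g$.

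The heart of the argument is a local chart computation. Fix $f\in\BV([a,b],M)$; since $O_f\subseteq\Gamma_f$ is open in a Banach space, $T\phi_f$ trivializes $TO_f'$ as $O_f\times\Gamma_f$. By the chain rule,
\[
\Phi\bigl(T\phi_f(\tau,\sigma)\bigr)(x)\;=\;T_{\tau(x)}\bigl(\Sigma|_{T_{f(x)}M}\bigr)(\sigma(x))
\]
for $(\tau,\sigma)\in O_f\times\Gamma_f$ and $x\in[a,b]$, where $\sigma(x)\in T_{f(x)}M$ is viewed as a tangent vector at $\tau(x)\in T_{f(x)}M$ via the canonical identification $T_v(T_{f(x)}M)\cong T_{f(x)}M$ available in a vector space. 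This is the pointwise action of the smooth vector bundle morphism $T\Sigma|_{TU}$, restricted to vertical vectors, on the pair of $\BV$-sections $(\tau,\sigma)$. By Lemma~\ref{lem:bundle-calc}(a) combined with Lemma~\ref{lem:PF_smooth}, the resulting map lies in $\Gamma_{\phi_f(\tau)}(TM)=\Gamma_{\phi_f(\tau)}$ and depends smoothly on $(\tau,\sigma)\in O_f\times\Gamma_f$. In particular $\Phi(v)\in\BV([a,b],TM)$ as required, and the normalization hypothesis forces the restriction of $\Phi\circ T\phi_f$ to $\{0_f\}\times\Gamma_f$ to be the identity $\sigma\mapsto\sigma$ on $\Gamma_f$.

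The local trivialization $\Psi_f:=\Phi\circ T\phi_f\colon O_f\times\Gamma_f\to\BV([a,b],\pi_M)^{-1}(O_f')$ is thus a smooth bijection that is linear in the second variable and, on each fibre, the linear isomorphism $\sigma\mapsto T\Sigma(\tau,\sigma)$ of $\Gamma_f$ onto $\Gamma_{\phi_f(\tau)}$ (bijectivity fibrewise follows from the normalization at the zero section together with local invertibility). By construction the transition cocycle of these trivialisations coincides with the transition cocycle of $T\BV([a,b],M)$, so declaring the $\Psi_f$ to be local trivialisations endows $\BV([a,b],\pi_M)$ with a well-defined smooth Banach vector bundle structure having fibre $\Gamma_g$ over $g$, and simultaneously exhibits $\Phi$ as a smooth vector bundle isomorphism over the identity. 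The main obstacle is matching the canonical chart on $\BV([a,b],TM)$ built from the local addition $\Sigma_{TM}=T\Sigma\circ\kappa$ with the tangent chart $T\phi_f$ on $\BV([a,b],M)$; this is precisely where the canonical flip $\kappa$ enters. Once this identification is granted, Lemmas~\ref{lem:sect_eval}, \ref{lem:bundle-calc}, and \ref{lem:PF_smooth} substitute for their $C^\infty$ counterparts, and the remainder of the argument is a direct transcription of the smooth-mapping proof in \cite[Theorem A.12]{AGS}.
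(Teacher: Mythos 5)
Your proposal follows the same route as the paper (the blueprint of \cite[Theorem A.12]{AGS}: normalize $\Sigma$, compute $\Phi\circ T\phi_f$ pointwise via $T\varepsilon_x$, and read the bundle structure off the canonical charts), and your pointwise formula $\Phi(T\phi_f(\tau,\sigma))(x)=T_{\tau(x)}\bigl(\Sigma|_{T_{f(x)}M}\bigr)(\sigma(x))$ is correct. But the decisive step is exactly the one you defer with ``once this identification is granted'': one must \emph{prove} that $\Phi\circ T\phi_f=\phi_{0\circ f}\circ\Xi_f$, where $\phi_{0\circ f}$ is the canonical chart of $\BV([a,b],TM)$ built from $\Sigma_{TM}=T\Sigma\circ\kappa$ and $\Xi_f$ is induced by $\Theta(v,w)=\kappa(T\lambda_p(v,w))$, and one must prove that $(\sigma,\tau)\mapsto\Theta\circ(\sigma,\tau)$ is an isomorphism of $\Gamma_f\times\Gamma_f$ onto $\Gamma_{0\circ f}(T^2M)$ restricting to a diffeomorphism of the relevant open sets. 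This is precisely the content of Lemma~\ref{lem:T2_split}, and it is where the work lies: without it you obtain neither the smoothness of $\Phi$ as a map into the manifold $\BV([a,b],TM)$ carrying its structure from Proposition~\ref{BVmfd-struct}, nor the compatibility of your trivializations $\Psi_f=\Phi\circ T\phi_f$ with that structure. Declaring the $\Psi_f$ to be trivializations merely transports the smooth structure of $T\BV([a,b],M)$ onto the \emph{set} $\BV([a,b],TM)$; the assertion to be proved is that this transported structure coincides with the canonical one.

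A secondary issue: citing Lemma~\ref{lem:bundle-calc}\,(a) together with Lemma~\ref{lem:PF_smooth} does not yield that $(\tau,\sigma)\mapsto\Phi(T\phi_f(\tau,\sigma))$ ``depends smoothly'' on $(\tau,\sigma)$. Lemma~\ref{lem:bundle-calc}\,(a) concerns a fixed base curve and gives continuity of $\Gamma_f(\psi)$ for a bundle morphism over the identity, whereas here the target $\Gamma_{\phi_f(\tau)}$ varies with $\tau$, so ``smooth dependence'' only makes sense after choosing charts on $\BV([a,b],TM)$ --- which is again the deferred identification. The correct assembly is: apply Lemma~\ref{lem:bundle-calc}\,(a) to $\Gamma_f(\Theta)$, with explicit inverse $\Gamma_f(\Theta^{-1})$ (this also settles the fibrewise bijectivity you attribute to ``local invertibility''), to obtain Lemma~\ref{lem:T2_split}; then carry out the curve computation identifying $\Phi\circ T\phi_f$ with $\phi_{0\circ f}\circ\Xi_f$. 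As written, your argument records the correct formulas and the correct references but leaves the proposition's actual proof as the granted hypothesis.
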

\noindent
For the reader's convenience, a detailed proof is recorded in Appendix \ref{app:details_tangent}. The smoothness of the mapping $\BV([a,b],\pi_M)$ followed a posteriori from the bundle identification in the proof of Proposition \ref{prop:tangent_ident}. However, smoothness for superposition operators will be studied systematically in the next section. As it fits thematically in the present
section, we state the next result here. Its proof is also postponed to Appendix \ref{app:details_tangent} as we need to treat superposition operators first.

\begin{la}\label{lem:product_mfd}
Let $M_1,M_2$ be Banach manifolds which admit local additions. Denote by $\text{pr}_i \colon M_1 \times M_2 \rightarrow M_i$ the canonical projection for $i  \in \{1,2\}$. Then
\begin{align*}
\BV([a,b],(\pr_1,\pr_2)) \colon \BV ([a,b],M_1\times M_2) &\rightarrow \BV([a,b],M_1) \times \BV([a,b],M_2),\\ f &\mapsto (\pr_1 \circ f, \pr_2 \circ f)
\end{align*}
is a diffeomorphism.
\end{la}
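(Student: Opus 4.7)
The plan is to reduce the statement to the familiar splitting of $\BV$-functions into components (Lemma~\ref{prod:iso}) via a judicious choice of local addition on $M_1\times M_2$. By Proposition~\ref{BVmfd-struct}, the manifold structure on $\BV([a,b],M)$ is independent of the chosen local addition, so I first observe that if $\Sigma_i\colon U_i\to M_i$ is a local addition on $M_i$ for $i\in\{1,2\}$, then under the canonical identification $T(M_1\times M_2)\cong TM_1\times TM_2$ the map $\Sigma:=\Sigma_1\times\Sigma_2\colon U_1\times U_2\to M_1\times M_2$ is a local addition on the product, with $U'=U_1'\times U_2'$ (after swapping the middle factors in $(M_1\times M_2)\times(M_1\times M_2)$) and associated diffeomorphism $\theta$ corresponding to $\theta_1\times\theta_2$. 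I use canonical charts built from this $\Sigma$ throughout.

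Next, I would verify set-theoretic bijectivity. Using product charts $\varphi_1\times\varphi_2$ in the local definition of $\BV$-functions into $M_1\times M_2$ and applying Lemma~\ref{prod:iso} on each subinterval of a common refinement, one sees that a continuous map $f=(f_1,f_2)\colon[a,b]\to M_1\times M_2$ is $\BV$ if and only if both $f_1$ and $f_2$ are $\BV$. Hence $\BV([a,b],(\pr_1,\pr_2))$ is a bijection onto $\BV([a,b],M_1)\times \BV([a,b],M_2)$.

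For smoothness in both directions, I fix $f=(f_1,f_2)$ and compute in canonical charts. Using product vector bundle trivializations in the setup~\ref{resolve_Gammaf}, together with a common refinement of subdivisions adapted to $f_1$ and $f_2$, Lemma~\ref{prod:iso} applied on each subinterval yields a canonical topological isomorphism of Banach spaces $\Gamma_f\cong\Gamma_{f_1}\times\Gamma_{f_2}$ (this is, on the fibre level at $f$, the same splitting already used in Lemma~\ref{lem:bundle-calc}\,(b), now for the product base). With this identification and the product local addition from the first step, the canonical chart $\phi_f\colon O_f\to O_f'$ factorizes as $\phi_f(\tau_1,\tau_2)=(\phi_{f_1}(\tau_1),\phi_{f_2}(\tau_2))$, because $\Sigma=\Sigma_1\times\Sigma_2$ applies componentwise. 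Consequently, when conjugated by the canonical charts, the map $\BV([a,b],(\pr_1,\pr_2))$ becomes the identity of $\Gamma_{f_1}\times\Gamma_{f_2}$ restricted to an open set; both it and its inverse are therefore smooth.

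The main obstacle I expect is bookkeeping rather than any hard analysis: one must verify that the identification $\Gamma_f\cong\Gamma_{f_1}\times\Gamma_{f_2}$ is compatible with the initial topology defined in~\ref{resolve_Gammaf}, which amounts to choosing product trivializations and passing to a common refinement of subdivisions for $f$, $f_1$ and $f_2$. No new analytic input beyond the component-splitting results already established for Banach-space-valued $\BV$-functions and the independence of the manifold structure from the choice of local addition is required.
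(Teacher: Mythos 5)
Your proof is correct and follows essentially the same route as the paper's: both use the product local addition $\Sigma_1\times\Sigma_2$ and the componentwise splitting of Lemma~\ref{prod:iso} to identify $\Gamma_{(f_1,f_2)}$ with $\Gamma_{f_1}\times\Gamma_{f_2}$ in the canonical charts. The only organizational difference is that the paper obtains smoothness of the forward map from Proposition~\ref{prop:smooth_pushfwd} and then argues only the inverse in charts, whereas you note that the canonical charts factor componentwise under the product local addition, so the conjugated map is literally the identity on an open subset of $\Gamma_{f_1}\times\Gamma_{f_2}$ and both directions follow at once.
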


\noindent
Finally, we have an analog of Lemma \ref{into-prod} for the manifold of mappings.

\begin{la}\label{mfd_splitting_intervall}
    For $a=t_0 < t_1 < \cdots < t_n=b$, the mapping
    \begin{align}\label{splitting:interval_mfd}
    \BV([a,b],M)\rightarrow \prod_{i=1}^n \BV([t_{i-1},t_i],M), \quad f \mapsto (f|_{[t_{i-1},t_i]})_{i=1}^n
    \end{align}
is an embedding of $C^\infty$-manifolds
onto a closed submanifold.
\end{la}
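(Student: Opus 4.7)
The plan is to reduce everything to the linear case---Lemma~\ref{into-prod}---by computing the restriction map $R$ of \eqref{splitting:interval_mfd} in canonical charts and tracing how the local structure pulls back through the local addition $\Sigma$. Fix $f\in \BV([a,b],M)$, set $f_i\coloneq f|_{[t_{i-1},t_i]}$, and use the canonical chart domains $O_f'\sub \BV([a,b],M)$ around $f$ and $\prod_{i=1}^n O_{f_i}'$ around $(f_1,\ldots,f_n)$ in the product manifold, parameterized by $\phi_f\colon O_f\to O_f'$ and by the product map $\prod_i \phi_{f_i}\colon \prod_i O_{f_i}\to \prod_i O_{f_i}'$, respectively.

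The first key step is to compute $R$ in these charts. For $\tau\in O_f$ one has $\phi_f(\tau)=\Sigma\circ\tau$, hence $R(\phi_f(\tau))=(\Sigma\circ\tau|_{[t_{i-1},t_i]})_{i=1}^n$; since $\pi_{TM}\circ \tau|_{[t_{i-1},t_i]}=f_i$ and $\phi_{f_i}^{-1}=\theta^{-1}\circ(f_i,\cdot)$, applying $\phi_{f_i}^{-1}$ componentwise recovers $\tau|_{[t_{i-1},t_i]}$. Thus, in charts, $R$ becomes the linear restriction
\[
\rho\colon \Gamma_f \to {\textstyle \prod_{i=1}^n} \Gamma_{f_i}, \quad \tau\mapsto (\tau|_{[t_{i-1},t_i]})_{i=1}^n.
\]
Passing to local trivializations of $TM$ as in \ref{resolve_Gammaf}, $\rho$ becomes the direct analogue of the linear map of Lemma~\ref{into-prod}, which therefore implies at once that $\rho$ is continuous linear (hence smooth), a topological embedding, and has closed image
\[
F\coloneq \{(\sigma_i)_{i=1}^n\in {\textstyle\prod_i}\Gamma_{f_i}\;|\; \sigma_{i-1}(t_i)=\sigma_i(t_i)\ \text{for}\ i=2,\ldots,n\}.
\]
Global smoothness of $R$ then follows as canonical charts cover both manifolds, and $R$ is a global topological embedding because it is injective and, in each canonical chart, already a homeomorphism onto its image.

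The final step is to verify that $N\coloneq R(\BV([a,b],M))$ is a closed submanifold, using the product chart itself as a submanifold chart. A tuple $(g_i)_i$ in the chart domain $\prod_i O_{f_i}'$ lies in $N$ iff $g_{i-1}(t_i)=g_i(t_i)$ in $M$ for each interior $t_i$; writing $g_i=\Sigma\circ\sigma_i$ and using that $\theta=(\pi_{TM},\Sigma)$ is a diffeomorphism, this translates into the tangent-vector equation $\sigma_{i-1}(t_i)=\sigma_i(t_i)$ in $T_{f(t_i)}M$, i.e.\ into $(\sigma_i)_i\in F$. Therefore the product chart sends $N\cap \prod_i O_{f_i}'$ onto $F\cap \prod_i O_{f_i}$, which is exactly the submanifold chart condition of \ref{conventions-mfd}. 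Closedness of $N$ in the product manifold follows because each point evaluation $\BV([t_{i-1},t_i],M)\to M$ is continuous (the $\BV$-topology is finer than the compact-open topology by Remark~\ref{rem:cont:inclusion}) and the diagonal of $M$ is closed.

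The main obstacle I anticipate is keeping the two levels of ``agreement'' straight---agreement in $M$ of base points, versus agreement in $T_{f(t_i)}M$ of tangent vectors---and arguing through the local diffeomorphism $\theta$ that the former forces the latter on a chart domain. Once that bookkeeping is clean, the three conclusions (smoothness, topological embedding, closed submanifold image) fall out of Lemma~\ref{into-prod} in parallel.
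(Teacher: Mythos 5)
Your proposal is correct and follows essentially the same route as the paper: conjugate the restriction map through the canonical charts $\phi_f$ and $\prod_i\phi_{f_i}$, resolve the section spaces via local trivializations as in \ref{resolve_Gammaf}, and reduce to the linear embedding of Lemma~\ref{into-prod}. Your additional bookkeeping (the $\theta$-injectivity argument translating base-point agreement into tangent-vector agreement, and the closedness of the image via continuity of point evaluations) merely makes explicit what the paper leaves implicit.
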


\begin{proof}
 Consider $f \in \BV([a,b],M)$ and pick a refinement $a=t_0=s_0< s_1 < \cdots < s_m = t_n =b$ of the subdivision $(t_i)_{i=0}^n$ such that the image of each $f|_{[s_{j-1},j]}$, $j=1,\ldots , m$, is contained in the
 domain $U_j$ of a single chart $(\psi_j,U_j)$ of $M$.
 Let $f_i \coloneq f|_{[t_{i-1},t_i]}$ for $i\in\{1,\ldots, n\}$. 
 We use the canonical manifold charts $\phi_f^{-1}$ and $\prod_{i=1}^n \phi_{f_i}^{-1}$ to conjugate the mapping \eqref{splitting:interval_mfd} to a map between bundle sections. Resolving the spaces of bundle sections using the refined subdivision shows that, up to a reordering of factors, this map is the map  
 \begin{align}
  \Gamma_f(TM) \rightarrow \prod_{j=1}^m \BV([s_{j-1},s_j], F), \quad \tau \mapsto (\pr_2 \circ \, T\psi_j \circ \tau|_{[s_{j-1},s_j]})_j
 \end{align}
 from \eqref{thetamap}. This is an embedding whose image is a closed vector subspace (cf.\ \ref{resolve_Gammaf}). Hence the canonical charts conjugate \eqref{splitting:interval_mfd} to a smooth embedding. Its image is a closed submanifold (where the canonical charts form submanifold charts). This finishes the proof.
\end{proof}
\begin{rem}\label{non-pure}
If $M$ is a Banach manifold modeled on a set $\cE$
of Banach spaces, let $\cC$ be the set of connected
components of~$M$. Each $N\in \cC$
can be regarded as a pure Banach manifold.
If each $N\in \cC$ admits a local addition,
we obtain Banach manifolds $\BV([a,b],N)$.
If desired, $\BV([a,b],M)$ can be given
the topology making
it the topological sum of the topological spaces $\BV([a,b],N)$, and the smooth manifold structure
making each $\BV([a,b],N)$ an open smooth submanifold.
\end{rem}
\begin{rem}\label{rem-open-subman}
Let $a<b$ be real numbers, $M$ be a Banach
manifold admitting a local addition and $Q\sub M$
be an open subset. Then $\BV([a,b],Q)$
is a smooth $C^\infty$-submanifold of $\BV([a,b],M)$.
In fact, a local addition for~$M$
restricts to a local addition for~$Q$;
the standard charts for $\BV([a,b],Q)$
constructed using the latter are also
standard charts for $\BV([a,b],M)$.
\end{rem}
\section{Canonical mappings between manifolds of
BV-functions}\label{sec-canonical}
In this section, we investigate superposition operators between manifolds of manifold-valued $\BV$-functions. This will allow us to construct Lie groups of $\BV$-functions taking values in Banach--Lie groups. We shall first deal with the evaluation before moving on to the more challenging pushforward and pullback operations.

\begin{cor}\label{cor:smooth_eval}
    For each $x \in [a,b]$, the evaluation map $$\varepsilon_x \colon \BV([a,b],M) \rightarrow M,\quad f \mapsto f(x)$$ is smooth.
\end{cor}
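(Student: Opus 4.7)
The plan is to verify smoothness of $\varepsilon_x$ by checking it in the canonical charts constructed in Proposition~\ref{BVmfd-struct}. Fix $f \in \BV([a,b],M)$ and consider the canonical chart $\phi_f\colon O_f \to O_f'$, $\tau \mapsto \Sigma \circ \tau$, centered at $f$. Composing $\varepsilon_x$ with $\phi_f$ yields the map $O_f \to M$, $\tau \mapsto (\Sigma \circ \tau)(x) = \Sigma(\tau(x))$.

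Next I would factor this composition. Point evaluation $\varepsilon_x^{\Gamma_f}\colon \Gamma_f \to T_{f(x)}M$, $\tau \mapsto \tau(x)$, is continuous linear by Lemma~\ref{lem:sect_eval}, hence smooth. Moreover, $\tau(x) \in U \cap T_{f(x)}M$ for each $\tau \in O_f$ (since $O_f \subseteq \lfloor [a,b], U\rfloor$), and the local addition $\Sigma\colon U \to M$ is smooth by definition. Therefore
\[
\varepsilon_x \circ \phi_f = \Sigma \circ \varepsilon_x^{\Gamma_f}|_{O_f}
\]
is a composition of smooth maps between Banach manifolds and thus smooth. Since $f$ was arbitrary and the canonical charts $(\phi_f)_{f \in \BV([a,b],M)}$ cover $\BV([a,b],M)$, we conclude that $\varepsilon_x$ is smooth.

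I do not expect any serious obstacle: the proof is a direct two-line verification from Lemma~\ref{lem:sect_eval} together with the definition of the canonical charts, and no further analytic input is required. The only small point to record is that the image of $O_f$ under $\varepsilon_x^{\Gamma_f}$ lies in the open set $U \subseteq TM$ where $\Sigma$ is defined, which is immediate from the definition $O_f = \Gamma_f \cap \lfloor [a,b], U\rfloor$.
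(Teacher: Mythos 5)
Your proposal is correct and follows essentially the same route as the paper: localize in the canonical chart $\phi_f$, observe that $\varepsilon_x\circ\phi_f=\Sigma\circ\varepsilon_x^{\Gamma_f}|_{O_f}$, and conclude via the smoothness of the point evaluation on $\Gamma_f$ (Lemma~\ref{lem:sect_eval}) and of the local addition $\Sigma$. Your additional remark that $\varepsilon_x^{\Gamma_f}(O_f)\subseteq U$ is a welcome extra precision but introduces no new idea.
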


\begin{proof}
It suffices to note that around each $f \in \BV([a,b],M)$, we can localize in the canonical chart $\varphi_f$ around $f$. This yields the identity
$\varepsilon_x \circ \phi_f = \Sigma \circ \epsilon_x|_{O_f} \colon \Gamma_f \supseteq O_f \rightarrow M$.
Now $\epsilon_x$ is the evaluation on the bundle sections $\Gamma_f$ which is smooth by Lemma \ref{lem:sect_eval} and $\Sigma$ is smooth, being the local addition. Hence $\varepsilon_x$ is smooth and this concludes the proof.
\end{proof}
\begin{prop}\label{prop:smooth_pushfwd}
Let $\varphi \colon M \rightarrow N$ be a $C^{k+2}$-mapping between Banach manifolds which admit local additions. Then the following map is $C^k$:
$$\BV([a,b],\varphi) \colon \BV([a,b],M) \rightarrow \BV([a,b],N), \quad f \mapsto \varphi \circ f.$$
Further, $T\BV([a,b],\varphi)=\Phi_N^{-1}\circ \BV([a,b],T\varphi)\circ \Phi_M$ and in particular, for every $f \in \BV([a,b],M)$ the map $T_f \BV([a,b],M)$ identifies as the continuous linear map $\Gamma_f (T\varphi)\colon \Gamma_f \rightarrow \Gamma_{\varphi \circ f}, \tau \mapsto T\varphi \circ \tau$.
\end{prop}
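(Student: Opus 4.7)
The plan is to reduce the claim to the smoothness of pushforward operators on $\BV$-spaces of Banach-space-valued functions (Proposition~\ref{prop:star_smooth} and Lemma~\ref{lem:PF_smooth}) by localizing via the canonical charts and then refining the partition of $[a,b]$ until both $f$ and $\varphi\circ f$ are captured by single chart domains of $M$ and $N$, respectively.

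First, I would verify that $\varphi\circ f\in \BV([a,b],N)$ for $f\in \BV([a,b],M)$. Using Lemma~\ref{mfd_splitting_intervall}, it suffices to check this on subintervals $[r,s]$ on which $f([r,s])$ lies in a chart $(\psi,U_\psi)$ of $M$ and $\varphi(f([r,s]))$ lies in a chart $(\chi,U_\chi)$ of $N$. There the local expression of $\varphi\circ f$ is $\chi\circ\varphi\circ\psi^{-1}\circ(\psi\circ f)$, which is a composition of a $C^{k+2}$ (hence $\text{FC}^{k+1}$, in particular $\text{FC}^1$) map with a BV map; Corollary~\ref{chainrule2} then yields that $\varphi\circ f|_{[r,s]}\in \BV([r,s],E_N)$, where $E_N$ is the modeling space of $N$.

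For the $C^k$-regularity, I would fix $f_0\in \BV([a,b],M)$, pass to the canonical charts $\phi_{f_0}\colon O_{f_0}\to O_{f_0}'$ for $M$ and $\phi_{\varphi\circ f_0}\colon O_{\varphi\circ f_0}\to O_{\varphi\circ f_0}'$ for $N$, and write the local representative
\[
\tau\mapsto \theta_N^{-1}\bigl(\varphi\circ f_0,\;\varphi\circ \Sigma_M\circ\tau\bigr).
\]
Using Lemma~\ref{mfd_splitting_intervall}, I would refine the subdivision so that on each piece the relevant images lie in common chart domains, and then resolve $\Gamma_{f_0}$ and $\Gamma_{\varphi\circ f_0}$ as products of spaces $\BV([t_{i-1},t_i],E)$ via the maps from \ref{resolve_Gammaf}. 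In these coordinates, the local map decomposes as: first, an affine insertion of the fixed BV-map $\psi\circ f_0$ into a second coordinate (smooth by Lemma~\ref{lem:smooth_insertion}); second, a pushforward by the smooth local addition chart (smooth by Lemma~\ref{lem:PF_smooth}); third, a pushforward by the $C^{k+2}$-map $\chi\circ\varphi\circ\psi^{-1}$, which is $C^k$ by Proposition~\ref{prop:star_smooth} (applied to the trivially time-dependent map). Composing these gives a $C^k$-map, so $\BV([a,b],\varphi)$ is $C^k$.

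For the identification of the tangent map, I would exploit Proposition~\ref{prop:tangent_ident}, which says $\Phi_M(v)(x)=T\varepsilon_x(v)$ defines the vector bundle isomorphism $\Phi_M\colon T\BV([a,b],M)\to \BV([a,b],TM)$. The key observation is the evident identity $\varepsilon_x\circ \BV([a,b],\varphi)=\varphi\circ\varepsilon_x$, from which the chain rule yields
\[
T\varepsilon_x\bigl(T\BV([a,b],\varphi)(v)\bigr)=T\varphi\bigl(T\varepsilon_x(v)\bigr)
\quad\text{for each } v\in T\BV([a,b],M),\; x\in[a,b].
\]
This gives $\Phi_N\circ T\BV([a,b],\varphi)=\BV([a,b],T\varphi)\circ \Phi_M$, hence the formula $T\BV([a,b],\varphi)=\Phi_N^{-1}\circ \BV([a,b],T\varphi)\circ \Phi_M$. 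Restricting to the fiber over $f$ and using that $\Phi_M,\Phi_N$ restrict to the canonical identifications of $T_f\BV([a,b],M)$ with $\Gamma_f$ and of $T_{\varphi\circ f}\BV([a,b],N)$ with $\Gamma_{\varphi\circ f}$, the tangent map becomes $\tau\mapsto T\varphi\circ\tau$; that this takes values in $\Gamma_{\varphi\circ f}$ and is continuous linear is exactly Lemma~\ref{lem:bundle-calc}\,(a) applied to the smooth vector bundle map $T\varphi\colon TM\to \varphi^*TN$ (more precisely, its composition with the canonical map into $TN$ over $f$).

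The main obstacle I anticipate is purely bookkeeping: arranging a common refinement of the partition of $[a,b]$ so that on each piece the images of $f_0$, $\Sigma_M\circ\tau$, and $\varphi\circ(\cdot)$ lie simultaneously in appropriate chart domains of $M$ and $N$, and then verifying that all chart-transition pushforwards intervening are smooth while only the pushforward by $\varphi$ itself consumes the $C^{k+2}$-hypothesis and produces the drop to $C^k$. Once this is set up correctly, every analytic ingredient is already supplied by the preceding sections.
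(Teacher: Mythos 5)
Your proof of the main smoothness claim is essentially the paper's argument: localize in canonical charts (using that the relevant chart domains are open already in the compact-open topology), refine the subdivision of $[a,b]$ so that all images sit in single chart domains, resolve $\Gamma_{f}$ and $\Gamma_{\varphi\circ f}$ via \ref{resolve_Gammaf}, and decompose the local representative into smooth insertions (Lemma~\ref{lem:smooth_insertion}), smooth pushforwards (Lemma~\ref{lem:PF_smooth}), and one pushforward by a $C^{k+2}$-map which is the only step consuming the hypothesis and producing the drop to $C^k$ via Proposition~\ref{prop:star_smooth}. Where you genuinely diverge is the derivative formula: the paper dismisses Step~4 with ``direct calculation using a normalized local addition'' and a pointer to \cite[Corollary A.15]{AGS}, whereas you derive $\Phi_N\circ T\BV([a,b],\varphi)=\BV([a,b],T\varphi)\circ\Phi_M$ from the naturality identity $\varepsilon_x\circ\BV([a,b],\varphi)=\varphi\circ\varepsilon_x$ together with the characterization $\Phi(v)=(T\varepsilon_x(v))_{x\in[a,b]}$ from Proposition~\ref{prop:tangent_ident}. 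This is cleaner and self-contained (it needs no normalized local addition and no chart computation), at the cost of being valid only once $k\geq 1$ is assumed so that $T\BV([a,b],\varphi)$ exists --- which is implicit in the statement anyway. One small imprecision: Lemma~\ref{lem:bundle-calc}\,(a) as stated concerns bundle maps over the identity, so it does not literally apply to $T\varphi$; but as you note, continuity and linearity of $\tau\mapsto T\varphi\circ\tau$ on the fibre follow already from its being the tangent map of a $C^1$-map at $f$, so nothing is lost.
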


\begin{proof} We shall denote by $\Sigma_M \colon U_M \rightarrow M$ and $\Sigma_N \colon U_N \rightarrow N$ the local additions on $M$ and $N$. Further, let $M$ and $N$ be modeled on the Banach spaces $E$ and $F$, respectively. For continuity and differentiability of $\BV([a,b],\varphi)$ we exploit that both properties can be checked on a cover of open sets.

\textbf{Step 1: Localization on the BV-manifolds.} Pick $f \in \BV([a,b], M)$ and note that the domain $O_{\varphi \circ f}$ of the canonical chart $\phi_{\varphi \circ f}^{-1}$ is an open set in the compact-open topology. The pushforward with $\varphi$ is continuous in the compact-open topology, cf.\ \cite[Lemma B.8]{Schm}. Hence, as the topology of $\Gamma_f$ is finer than the compact open topology, there is an open neighborhood $\Omega_f \subseteq \Gamma_f$ of $0$ such that $\varphi \circ \phi_f (\Omega_f) \subseteq O_{\varphi \circ f}$. \smallskip

\textbf{Step 2: Localization in chart of {\boldmath$M$} and \boldmath{$N$}.} Subdivide $[a,b]$ into subintervals $I_1, \ldots, I_n$ such that on each subinterval $I_j$ there are charts $(\kappa_j,V_j)$ of $M$ and $(\theta_j, W_j)$ such that 
\begin{enumerate}
\item $W_j \times W_j \subseteq (\pi_N, \Sigma_N)(U_N)$
\item $f(I_j) \subseteq V_j$ and $\varphi(V_j) \subseteq W_j$. 
\end{enumerate}
Then $Z_j \coloneq (\pi_M, \Sigma_M)^{-1} (V_j \times \varphi^{-1}(W_j)$ is an open $0$-neighborhood in $TV_j$ and we can pick $r_j >0$ such that $\{v \in TV_j \mid \pi_M(v) \in f(I_j) \text{and} \lVert T\kappa (v)\rVert < r_j\} \subseteq Z_j$. 

As $f = \Sigma_M (0)$ for $0\in \Gamma_f$, we may shrink $\Omega_f$ such that for each $1\leq j\leq n$, we have $\Sigma_M \circ \tau (I_j) \subseteq \varphi^{-1}(W_j)$ and $\sup_{t\in I_j}\lVert \text{pr}_2 T\kappa_j (\tau(t))\rVert_E < r_j$ for all $\tau\in \Omega_f$. As in \eqref{thetamap}, we now construct two embeddings $\Psi_f\colon \Gamma_f \rightarrow \prod_j \BV (I_j,E)$ and $\Psi_{\varphi\circ f} \colon \Gamma_{\varphi \circ f} \rightarrow \prod_j \BV (I_j, F)$.
 \smallskip

\textbf{Step 3: {\boldmath$\BV([a,b],\varphi)$} is locally {\boldmath$C^k$}.} To simplify the notation, we now drop the indices from Step~2, writing $\kappa$, $\theta$ and so forth instead of $\kappa_j, \theta_j$ (thus mappings will be defined on $[a,b]$). We prove that on the open $0$-neighborhood $\Psi_f(\Omega_f)$, the map $\Psi_{\varphi \circ f}\phi_{\varphi \circ f}^{-1} \circ \BV([a,b],\varphi) \circ \phi_f \Psi_f^{-1}$ is $C^k$. Note that, inverting \eqref{thetamap}, one obtains $\Psi_f^{-1}(\tau)=T\kappa^{-1}(\kappa \circ f, \tau)$, where $\kappa$ is the chart from Step~2. For $\tau \in \Psi_f (\Omega)$ this yields
\begin{align}\notag
&\Psi_{\varphi \circ f}\circ \phi_{\varphi \circ f}^{-1} \circ \BV([a,b],\varphi) \circ \phi_f \circ \Psi_f^{-1}(\tau) \\
 =&\text{pr}_2 \circ T\theta \circ (\pi_N, \Sigma_N)^{-1}) (\varphi \circ f,\varphi \circ \Sigma_M \circ T\kappa^{-1}(\kappa \circ f,\tau)). \label{eq:loc_form}
\end{align}   
We shall now construct several differentiable mappings between spaces of $\BV$-mappings which allow us to rewrite \eqref{eq:loc_form} as a composition with the desired differentiability conditions. The following mappings are
well-defined by our choices in Step~2:
\begin{align*}
i_1 &\colon \BV([a,b], E) \rightarrow \BV([a,b], \kappa(V) \times E), \quad \tau \mapsto (\kappa \circ f, \tau)\\
m_1 &\colon \BV([a,b], T\kappa (Z))\rightarrow \BV([a,b], \theta(W)),\quad \gamma \mapsto \theta \circ \varphi \circ \Sigma_M \circ T\kappa^{-1} (\gamma)\\
i_2 &\colon \BV([a,b], \theta(W)) \rightarrow \BV([a,b], \theta(W) \times \theta(W)),\quad g \mapsto (\theta \circ \varphi \circ f,g)\\
m_2 &\colon \BV([a,b], \theta(W) \times \theta(W)) \rightarrow \BV ([a,b],F), \\ 
  & \hspace{3.21cm} (g,h) \mapsto \text{pr}_2 \circ T\theta \circ ( \pi_N,\Sigma_N)^{-1}\circ (\theta,\theta)^{-1} \circ (g,h). \notag
\end{align*}
Then $i_1,i_2$ are smooth by Lemma \ref{lem:smooth_insertion} and $m_2$ is smooth by Lemma \ref{lem:PF_smooth}. Now as $\varphi$ is $C^{k+2}$, $m_1$ implements a pushforward by a $C^{k+2}$-map, whence $m_1$ is $C^k$ by Proposition \ref{prop:star_smooth}. Recall from Step~2 that $\text{pr}_2 \circ T\kappa$ maps all sections in $\Omega_f$ to the ball of radius~$r$ in~$E$ and~$i_1$ maps this ball to $T\kappa (Z)$. Hence, it makes sense to compose $i_1$ and $m_1$ for sections in $\Omega_f$. By construction, we can then rewrite \eqref{eq:loc_form} as the composition $m_2 \circ i_2 \circ m_1\circ i_1$, whence the formula yields a $C^k$-map on $\Psi_f(\Omega_f)$. In particular, $\BV([a,b],\varphi)$ is $C^k$ on each open set $\phi_f (\Omega_f)$. As these sets cover $\BV([a,b],M)$, we deduce that $\BV([a,b],\varphi)$ is $C^k$. \smallskip

\textbf{Step 4: Formula for the derivative.} The assertions on the derivative can be proved by direct calculation using a normalized local addition. We refer to \cite[Corollary A.15]{AGS} for a detailed derivation.
\end{proof}

\noindent
Adapting the localization argument in the proof of Proposition \ref{prop:smooth_pushfwd}, we obtain a manifold version of Proposition \ref{prop:star_smooth}:

\begin{prop}\label{prop:mfd_star_smooth}
Let $k\in \N_0\cup\{\infty\}$
and $\gamma \colon [a,b] \times M \rightarrow N$ be a $C^{k+2}$-map,
where $M$ and $N$ are Banach manifolds which admit local additions. 
Then the following map is $C^k$:
\[
\gamma_\star \colon \BV([a,b],M) \rightarrow \BV([a,b],N),\quad f \mapsto \gamma \circ (\id_{[a,b]},f).
\]
Notably, $\gamma_\star$ is smooth whenever $\gamma$
is smooth.
\end{prop}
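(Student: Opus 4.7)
The plan is to imitate the localization strategy used in the proof of Proposition~\ref{prop:smooth_pushfwd}, replacing the pushforward $\BV([a,b],\varphi)$ by the starred operator $\gamma_\star$ and, at the innermost step, invoking Proposition~\ref{prop:star_smooth} instead of Lemma~\ref{lem:PF_smooth}. Continuity and $C^k$-ness are local properties on the source, so it suffices to show, for every $f\in \BV([a,b],M)$, that $\gamma_\star$ is $C^k$ on a neighborhood of $f$.

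First I would set $g:=\gamma\circ(\id_{[a,b]},f)\in \BV([a,b],N)$ and work in the canonical charts $\phi_f\colon O_f\to O_f'$ around $f$ and $\phi_g\colon O_g\to O_g'$ around $g$. Since the topology on $\Gamma_f$ is finer than the compact-open topology (Lemma~\ref{lem:co-top-weaker}) and $\tau\mapsto \gamma\circ(\id,\Sigma_M\circ\tau)$ is continuous with respect to the compact-open topology (by continuity of $\gamma$ and of the usual push-forward on $C([a,b],\cdot)$), there is an open $0$-neighborhood $\Omega_f\subseteq O_f$ such that $\gamma_\star(\phi_f(\Omega_f))\subseteq O_g'$. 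Next, subdivide $[a,b]$ as $a=t_0<\cdots<t_n=b$ and pick charts $(\kappa_j,V_j)$ of $M$ and $(\theta_j,W_j)$ of $N$ such that, for each $j$, $f([t_{j-1},t_j])\subseteq V_j$, $\gamma([t_{j-1},t_j]\times V_j')\subseteq W_j$ for some open neighborhood $V_j'\subseteq V_j$ of $f([t_{j-1},t_j])$, and $W_j\times W_j\subseteq (\pi_N,\Sigma_N)(U_N)$. Shrinking $\Omega_f$ further, we may assume $\Sigma_M\circ\tau([t_{j-1},t_j])\subseteq V_j'$ for all $\tau\in \Omega_f$ and all~$j$. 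By Lemma~\ref{mfd_splitting_intervall} and \ref{resolve_Gammaf}, it is enough to verify the $C^k$-property on each subinterval $I_j:=[t_{j-1},t_j]$ separately, so we drop the index~$j$.

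The conjugated map $\Psi_g\circ\phi_g^{-1}\circ\gamma_\star\circ\phi_f\circ\Psi_f^{-1}$ has the form
\[
\tau\;\longmapsto\;\pr_2\circ T\theta\circ(\pi_N,\Sigma_N)^{-1}\bigl(\theta^{-1}\circ(\theta\circ g),\,\theta\circ\gamma(\id_I,\Sigma_M\circ T\kappa^{-1}(\kappa\circ f,\tau))\bigr),
\]
on an open $0$-neighborhood in $\BV(I,E)$, where $E$, $F$ are the model spaces of $M$, $N$. Just as in the proof of Proposition~\ref{prop:smooth_pushfwd}, I would split this into a composition of four pieces: two insertion maps $i_1,i_2$ (of the form $h\mapsto(\text{fixed},h)$), which are smooth by Lemma~\ref{lem:smooth_insertion}; a smooth pushforward map coming from the local addition $\Sigma_N$ and the chart $\theta$, smooth by Lemma~\ref{lem:PF_smooth}; and the central map
\[
\gamma^{\rm loc}_\star\colon\BV(I,T\kappa(Z))\supseteq \Omega\longrightarrow \BV(I,F),\qquad \tau\mapsto \theta\circ\gamma\circ(\id_I,\Sigma_M\circ T\kappa^{-1}(\tau)),
\]
where $Z\subseteq TV$ is the open set supplied by the local addition. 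This latter map is exactly of the form treated in Proposition~\ref{prop:star_smooth}, with the $C^{k+2}$-map $\widetilde\gamma(s,x):=\theta(\gamma(s,\Sigma_M(T\kappa^{-1}(s,x))))$ playing the role of $\varphi$, because $\gamma$, $\Sigma_M$, $T\kappa^{-1}$ and $\theta$ are all at least $C^{k+2}$. Consequently $\gamma^{\rm loc}_\star$ is $C^k$.

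Composing the four $C^k$ (or smooth) pieces yields a $C^k$-map on $\Psi_f(\Omega_f)$, and since the open sets $\phi_f(\Omega_f)$ cover $\BV([a,b],M)$ as $f$ ranges, we conclude that $\gamma_\star$ is $C^k$ globally. The main technical hurdle is the bookkeeping in Step~2: one must arrange the neighborhoods $V_j'$ and the bound on $\Omega_f$ so that $\gamma(s,\Sigma_M(T\kappa^{-1}(s,\tau(s))))$ actually lands in $W_j$ for every $\tau\in\Omega_f$ and $s\in I_j$, which makes the insertion into the chart $\theta$ meaningful; once this is set up, the reduction to Proposition~\ref{prop:star_smooth} is immediate.
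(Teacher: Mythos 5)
Your proposal is correct and follows essentially the same route as the paper: localize via the compact-open topology into canonical charts, then repeat Steps 2 and 3 of the proof of Proposition~\ref{prop:smooth_pushfwd}, with the only change being that the central local map is now genuinely $t$-dependent and of the form $\widetilde\gamma_\star$ for a $C^{k+2}$-map $\widetilde\gamma$ on $I\times U$, so that Proposition~\ref{prop:star_smooth} applies. (Only a cosmetic slip: in your formula for $\widetilde\gamma$ the argument of $T\kappa^{-1}$ should be the pair in $T\kappa(Z)$, not $(s,x)$.)
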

\begin{proof}
Note first that $\gamma_\star \colon C([a,b],M) \rightarrow C({a,b},N)$ makes sense and is continuous with respect to the compact-open topology, cf.\ \cite[Proposition 4.10]{GS22}. Since the topology on $\BV([a,b],M)$ is finer than the compact-open topology and for $f\in \BV([a,b],M)$ the domain $O_{\gamma_\star (f)}`$ of the canonical chart in $\BV([a,b],N)$ is an open subset with respect to the compact open topology, we deduce that $\gamma^{-1}_{\star}(O_{\gamma_\star (f)}`)$ is an open $f$-neighborhood. Hence, it suffices to prove that for each $f \in \BV([a,b],M)$ the map $\gamma_f \coloneq \phi_{\gamma_\star (f)} \circ \gamma_\star \circ  \phi_f$ is a $C^k$-map on some open neighborhood of $0 \in \Gamma_f$ (we shall suppress the open set in the notation).
 We can now proceed exactly as described in Step~2 and Step~3 of the proof of Proposition \ref{prop:smooth_pushfwd} and localize the sections in $\Gamma_f$ and $\Gamma_{\gamma_\star (f)}$ in suitable manifold charts. This results in a family of mappings which are $C^{k+2}$ and of the form $g_\star$ on $I \times U$ where $I$ is a compact interval and $U$ open in a Banach space. Since these $g_\star$ take values in a Banach space, we conclude with Proposition \ref{prop:star_smooth} that they are of class $C^k$. In conclusion also $\gamma_\star$ is of class $C^k$.
\end{proof}
\noindent
Lifting Lemma \ref{BV:aff_repara} to manifolds of $\BV$-functions, we obtain the following.

\begin{la}
Let $M$ be a Banach manifold with local addition and $\alpha \colon [c,d] \rightarrow [a,b], s \mapsto a+ b\cdot (s-c)/(b-a)$. Then the pullback
$$\BV(\alpha,M) \colon \BV([a,b],M) \rightarrow \BV ([c,d],M), \quad f \mapsto f \circ \alpha$$
is a diffeomorphism of Banach manifolds.
\end{la}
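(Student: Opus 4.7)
The plan is to reduce everything to the vector-valued case handled by Lemma~\ref{BV:aff_repara} via the canonical charts of Proposition~\ref{BVmfd-struct}. First, I would verify well-definedness: given $f\in \BV([a,b],M)$, pick a subdivision $a=t_0<\cdots<t_n=b$ and charts $(\varphi_i,U_i)$ as in the definition of $\BV$-maps, and set $s_i:=\alpha^{-1}(t_i)$, giving $c=s_0<\cdots<s_n=d$. Since $(f\circ\alpha)([s_{i-1},s_i])=f([t_{i-1},t_i])\subseteq U_i$ and $\alpha|_{[s_{i-1},s_i]}\colon [s_{i-1},s_i]\to [t_{i-1},t_i]$ is an affine bijection, applying Lemma~\ref{BV:aff_repara} to $\varphi_i\circ f|_{[t_{i-1},t_i]}\in \BV([t_{i-1},t_i],E)$ shows $\varphi_i\circ(f\circ\alpha)|_{[s_{i-1},s_i]}\in \BV([s_{i-1},s_i],E)$. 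Thus $f\circ\alpha\in \BV([c,d],M)$. Bijectivity is immediate: $\alpha^{-1}\colon[a,b]\to[c,d]$ is an affine map of the same form, and $\BV(\alpha^{-1},M)$ is a two-sided set-theoretic inverse.

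For smoothness, I would fix $f\in \BV([a,b],M)$ and work in the canonical charts $\phi_f\colon O_f\to O_f'$ and $\phi_{f\circ\alpha}\colon O_{f\circ\alpha}\to O_{f\circ\alpha}'$. A direct calculation using $\phi_f(\tau)=\Sigma\circ\tau$ and $\phi_{f\circ\alpha}^{-1}(g)=\theta^{-1}\circ(f\circ\alpha,g)$, together with the identity $\theta^{-1}(\pi_{TM}(v),\Sigma(v))=v$, yields
\[
\phi_{f\circ\alpha}^{-1}\circ \BV(\alpha,M)\circ\phi_f(\tau)\;=\;\theta^{-1}\circ(f\circ\alpha,\Sigma\circ\tau\circ\alpha)\;=\;\tau\circ\alpha
\]
for $\tau$ in a suitable neighborhood of $0$ in $\Gamma_f$. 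Thus, in canonical charts, $\BV(\alpha,M)$ is represented by the linear map $\Gamma_f\to \Gamma_{f\circ\alpha}$, $\tau\mapsto \tau\circ\alpha$.

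To establish continuity of this linear map, I would enlarge the subdivision so that one can simultaneously choose vector bundle trivializations $(\psi_i,U_i)$ of $TM$ with $f([t_{i-1},t_i])\cup (f\circ\alpha)([s_{i-1},s_i])\subseteq \pi_{TM}(U_i)$ (the two conditions coincide since $\alpha([s_{i-1},s_i])=[t_{i-1},t_i]$). Using the embeddings $\Psi_f\colon \Gamma_f\to\prod_{i=1}^n\BV([t_{i-1},t_i],E)$ and $\Psi_{f\circ\alpha}\colon \Gamma_{f\circ\alpha}\to\prod_{i=1}^n\BV([s_{i-1},s_i],E)$ from \eqref{thetamap} built from the same trivializations, the pullback $\tau\mapsto\tau\circ\alpha$ corresponds componentwise to $(g_i)_i\mapsto(g_i\circ\alpha|_{[s_{i-1},s_i]})_i$. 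By Lemma~\ref{BV:aff_repara}, each component is an isomorphism of Banach spaces, and the continuity constraint at endpoints that cuts out the closed image of $\Psi_f$ is clearly preserved (the values at the bisection points match after pullback). Hence the product map restricts to an isomorphism of Banach spaces between the images, proving that $\tau\mapsto\tau\circ\alpha$ is a continuous linear (indeed topologically linear isomorphic) map, and in particular smooth.

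The same argument applied to $\alpha^{-1}\colon [a,b]\to[c,d]$ (which is of the same affine form) shows that $\BV(\alpha^{-1},M)$ is smooth. Therefore $\BV(\alpha,M)$ is a diffeomorphism of Banach manifolds. The main bookkeeping obstacle is only choosing compatible subdivisions and trivializations on both sides so that the chart representation decomposes cleanly as a product of the vector-valued affine reparametrizations handled by Lemma~\ref{BV:aff_repara}; there is no conceptual difficulty.
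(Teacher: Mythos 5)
Your proposal is correct and follows essentially the same route as the paper's proof: reduce to the chart representation, observe that in canonical charts the pullback becomes the linear map $\tau\mapsto\tau\circ\alpha$ between $\Gamma_f$ and $\Gamma_{f\circ\alpha}$, and apply Lemma~\ref{BV:aff_repara} factorwise through the embeddings of type~\eqref{thetamap}, with smoothness of the inverse obtained by running the argument for $\alpha^{-1}$. Your write-up is somewhat more explicit about the chart computation, but there is no substantive difference.
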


\begin{proof}
Localizing in manifold charts, Lemma \ref{BV:aff_repara} shows that the pullback is a well-defined bijection. It is
sufficient to prove for every $f \in \BV([a,b],M)$ that $\phi_{f \circ \alpha}^{-1} \circ \BV(\alpha,M) \circ \phi_f$ is smooth (since we can obtain smoothness of the inverse by applying the same argument to $\BV(\alpha^{-1},M)$). This reduces the problem to showing that the pullback induces an isomorphism of Banach spaces between $\Gamma_f$ and $\Gamma_{\alpha \circ f}$. The two
spaces correspond to closed vector subspaces of a product over~$k$ of the Banach spaces $\BV(I_k,E)$ and $\BV(J_k,E)$,
respectively,
where the intervals
$I_k$ subdivide $[c,d]$ and the $J_k := \alpha (I_k)$
subdivide $[a,b]$. The statement follows by a direct application of Lemma \ref{BV:aff_repara} to the factors.
\end{proof}

\subsection*{The Lie group {\boldmath$\BV([a,b],G)$}}
We shall now leverage the results on canonical mappings between manifolds of $\BV$-functions to construct the $\BV$-analogue of current groups. To this end, we
assume throughout the section that $G$ is a Banach--Lie group with associated Lie algebra $\cg:=\Lf (G)$. Recall from \cite[C.2]{Schm} that each Lie group admits a local addition.
Hence $\BV([a,b],G)$ can be made a smooth manifold using Proposition \ref{BVmfd-struct}.
For $x\in [a,b]$,
let $\ve_x\colon \BV([a,b],G)\to G$
be the evaluation at~$x$.
The discussion of the manifolds of $\BV$-functions in the previous sections immediately yields the following result.

\begin{prop}\label{prop:Lie_grp}
The pointwise group
operations turn $\BV([a,b],G)$ into a Banach--Lie group.
The pointwise
Lie-bracket $\BV([a,b],\cg)\times\BV([a,b],\cg)\to\BV([a,b],\cg)$ is continuous, and the map
\begin{equation}\label{iso-lieas}
\Lf(\BV([a,b],G))\to \BV([a,b],\cg),\quad
v\mto (\Lf(\ve_x)(v))_{x\in [a,b]}
\end{equation}
is an isomorphism of topological Lie algebras.
\end{prop}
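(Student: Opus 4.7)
The plan is to assemble the claim from the functoriality results already established for manifolds of $\BV$-functions, together with the fact that the point evaluations are smooth group homomorphisms.

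First, I would verify that pointwise multiplication and inversion are smooth, which directly yields the Banach--Lie group structure. The multiplication $m\colon G\times G\to G$ is smooth, so Proposition~\ref{prop:smooth_pushfwd} produces a smooth pushforward $\BV([a,b],m)$; precomposing with the diffeomorphism $\BV([a,b],G)\times \BV([a,b],G)\cong \BV([a,b],G\times G)$ from Lemma~\ref{lem:product_mfd} identifies this with pointwise multiplication. Inversion is handled analogously via $\BV([a,b],\iota_G)$. Since these smooth maps coincide with the pointwise operations, $\BV([a,b],G)$ becomes a Banach--Lie group.

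Next, I would identify the Lie algebra. The neutral element is the constant function $c_e$ at $e\in G$. By Proposition~\ref{prop:tangent_ident}, $T_{c_e}\BV([a,b],G)\cong \Gamma_{c_e}$ via the fiber of $\Phi_G$ there, and $\Phi_G$ sends $v$ to $(T\ve_x v)_{x\in[a,b]}=(\Lf(\ve_x)(v))_{x\in[a,b]}$. A section $\tau\in \Gamma_{c_e}$ satisfies $\pi_G\circ \tau=c_e$, so $\tau(x)\in T_eG=\cg$ throughout; a chart of $G$ at $e$ with corresponding trivialization of $TG$ then identifies $\Gamma_{c_e}$ with $\BV([a,b],\cg)$ as Banach spaces. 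This identification realizes~\eqref{iso-lieas} as a topological vector space isomorphism.

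Finally, I would establish continuity of the pointwise bracket and that \eqref{iso-lieas} is a Lie algebra map. The bracket $[\cdot,\cdot]\colon \cg\times\cg\to \cg$ is continuous bilinear, hence smooth, so Lemma~\ref{lem:PF_smooth} gives a smooth pushforward $\BV([a,b],\cg\times\cg)\to\BV([a,b],\cg)$; composing with the topological linear isomorphism from Lemma~\ref{prod:iso} yields the pointwise bracket, which is thus smooth, in particular continuous. To check that~\eqref{iso-lieas} preserves brackets, I would observe that each evaluation $\ve_x$ is a smooth group homomorphism (Corollary~\ref{cor:smooth_eval} together with the definition of pointwise operations), so $\Lf(\ve_x)$ is a Lie algebra homomorphism, giving $\Lf(\ve_x)([v,w])=[\Lf(\ve_x)(v),\Lf(\ve_x)(w)]$ for every $x$; this is exactly the statement that \eqref{iso-lieas} sends the abstract bracket to the pointwise bracket. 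The main technical obstacle I anticipate is the bookkeeping to make the identification $\Gamma_{c_e}\cong \BV([a,b],\cg)$ genuinely compatible with $\Phi_G$ and the point evaluations, so that the explicit formula in~\eqref{iso-lieas} really matches the abstract bundle isomorphism; once that is in hand, every remaining step is a direct invocation of a previously proved smoothness or pushforward result.
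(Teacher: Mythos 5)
Your proposal is correct and follows essentially the same route as the paper: smoothness of the pointwise operations via Proposition~\ref{prop:smooth_pushfwd} and Lemma~\ref{lem:product_mfd}, the identification $\Lf(\BV([a,b],G))\cong\Gamma_{\mathbf 1}(TG)=\BV([a,b],\cg)$ via Proposition~\ref{prop:tangent_ident}, and the bracket compatibility from $\Lf(\ve_x)$ being a Lie algebra homomorphism. The only cosmetic difference is that you obtain continuity of the pointwise bracket from Lemma~\ref{lem:PF_smooth} while the paper cites Lemmas~\ref{prod:iso} and~\ref{improve-chn}; both are instances of the same pushforward machinery.
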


\begin{proof}
By definition, the pointwise operations are given by composition with the group multiplication and group inversion on $\BV([a,b],G)\times \BV([a,b],G) \cong \BV([a,b],G\times G)$ (Lemma \ref{lem:product_mfd}) and $\BV([a,b],G)$,
respectively. As these operations are smooth maps on $G\times G$ and $G$, also the group operations of $\BV([a,b],G)$ are smooth by Proposition \ref{prop:smooth_pushfwd}.
Likewise, the pointwise Lie bracket on $\BV([a,b],\cg)$
is continuous by Lemmas \ref{prod:iso}
and \ref{improve-chn}.\smallskip

\noindent
Let $e\in G$ be the neutral element and
$\mathbf{1} \colon [a,b] \rightarrow G$ be the constant map
with value~$e$.
We deduce from Proposition \ref{prop:tangent_ident} that 
\begin{align}\label{ident:Liealg}
\Lf (\BV([a,b],G)= T_{\mathbf{1}} \BV([a,b],G) \cong \Gamma_{\mathbf{1}} (TG) =\BV([a,b],\Lf(G))
\end{align}holds as locally convex vector spaces,
by means of the map $h$ described in~(\ref{iso-lieas}).
It remains to note that $\Lf(\ve_x)$
is a Lie algebra homomorphism for each $x\in [a,b]$,
as $\ve_x$ is a smooth group homomorphism
(Corollary~\ref{cor:smooth_eval}).
Thus $h$ is a Lie algebra homomorphism
and hence an isomorphism of topological Lie algebras.
\end{proof}
\noindent
The next observation is useful
in the proof of Lemma~\ref{to-constant-smooth}, and can be reused.
\begin{numba}\label{identity-chart}
Let ${\bf 1}\colon [a,b]\to G$ be the constant function
with value~$e$. 
Then $\Gamma_{\bf 1}=\BV([a,b],\cg)$
with $\cg:=\Lf(G)$. Pick a local addition $\Sigma\colon \Omega\to G$ for~$G$.
Then $Q:=\Omega\cap \cg$ is an open $0$-neighborhood,
$P:=\Sigma(Q)$ is an open $e$-neighborhood in~$G$
and $\kappa:=\Sigma|_Q\colon Q\to P$
is a $C^\infty$-diffeomorphism.
We have $\cO_{\bf 1}=\BV([a,b],Q)$
and the inverse of the canonical chart is
$\varphi_{\bf 1}=\BV([a,b],\kappa)\colon \BV([a,b],Q)\to
\BV([a,b],P)$, $\tau\mto\kappa\circ\tau$.
\end{numba}
\begin{la}\label{to-constant-smooth}\label{rem:ev_subm}
Let $a<b$ be real numbers and $G$ be a Banach--Lie group,
with neutral element~$e$.
For $g\in G$, let $c_g\colon [a,b]\to G$
be the constant mp with value~$g$.
Then the following holds.
\begin{itemize}
\item[\rm(a)]
The map $c\colon G\to \BV([a,b],G)$, $g\mto c_g$
is an embedding of smooth manifolds
and a group homomorphism.
\item[\rm(b)]
The map $\ve_a\colon \BV([a,b],G)\to G$,
$\gamma\mto \gamma(a)$
is a $C^\infty$-submersion and a group homomorphism.
Its kernel
\[
\BV_*([a,b],G):=\{\gamma\in \BV([a,b],G)\colon \gamma(a)=e\}
\]
is a Lie subgroup of $\BV([a,b],G)$.
\item[\rm(c)]
The map $\omega_b\colon
\BV_*([a,b],G)\to G$, $\gamma\mto\gamma(b)$
is a smooth submersion and a group homomorphism.
\end{itemize}
\end{la}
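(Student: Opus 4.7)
My plan is to reduce each assertion to a Banach-space computation in the canonical chart of~\ref{identity-chart} at $\mathbf{1}\coloneq c_e$, and to propagate conclusions via the homomorphism property together with smoothness of translations in the Banach--Lie group $\BV([a,b],G)$ given by Proposition~\ref{prop:Lie_grp}.

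For (a), $c$ is a homomorphism because the operations on $\BV([a,b],G)$ are pointwise. In the canonical chart at $\mathbf{1}$, the map $c$ is conjugated near $e$ to the restriction of the continuous linear inclusion $\cg\to\BV([a,b],\cg)$, $v\mto c_v$, to the $0$-neighborhood $Q\sub \cg$; in particular $c$ is smooth. Its image in the chart is $\cO_\mathbf{1}$ intersected with the closed vector subspace of constant functions, which is complemented by $\{\tau\in\BV([a,b],\cg)\colon\tau(a)=0\}$, so the canonical chart itself is already a submanifold chart at~$\mathbf{1}$. Left multiplications in $\BV([a,b],G)$ transport this to every point of $c(G)$, whence $c$ is a smooth embedding of Banach manifolds.

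For (b), $\ve_a$ is smooth by Corollary~\ref{cor:smooth_eval} and a homomorphism by the pointwise definition of the operations. Under the identification of Proposition~\ref{prop:Lie_grp}, its differential at $\mathbf{1}$ is the continuous linear evaluation $\BV([a,b],\cg)\to\cg$, $\tau\mto\tau(a)$, for which $v\mto c_v$ from part~(a) is a continuous linear right inverse; the kernel $\{\tau\colon\tau(a)=0\}$ is therefore closed and complemented. Thus $T_\mathbf{1}\ve_a$ is a split surjection, and the homomorphism property combined with smoothness of translations makes $\ve_a$ a submersion at every point. The submersion theorem then exhibits $\BV_*([a,b],G)=\ve_a^{-1}(e)$ as a closed submanifold; since it is also a subgroup, it is a Lie subgroup.

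For (c), $\omega_b$ is the restriction of the smooth homomorphism $\ve_b$ to the Lie subgroup $\BV_*([a,b],G)$, and is itself a homomorphism. With $\kappa\colon Q\to P$ as in~\ref{identity-chart}, I would exhibit the candidate local section
\[
\sigma\colon P\to \BV_*([a,b],G),\quad g\mto\bigl[\,t\mto\kappa\bigl(\tfrac{t-a}{b-a}\,\kappa^{-1}(g)\bigr)\bigr].
\]
The straight-line map $\cg\to\BV([a,b],\cg)$, $v\mto[t\mto\tfrac{t-a}{b-a}v]$ is continuous linear with image in $Q$ for small~$v$, and composition with $\kappa$ is smooth by Lemma~\ref{lem:PF_smooth}; thus $\sigma$ is smooth, satisfies $\sigma(g)(a)=e$, and $\omega_b\circ\sigma=\id_P$ by direct computation. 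Consequently $T_\mathbf{1}\omega_b$ has a continuous linear right inverse and is split surjective, and the homomorphism property yields submersion on all of $\BV_*([a,b],G)$. The principal technical point throughout is to set up the chart identifications carefully so that the relevant differentials become explicit point-evaluation or straight-line linear maps on the modeling spaces; after that, the conclusions all follow by standard Lie-group and submersion arguments.
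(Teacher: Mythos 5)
Your argument is correct and follows essentially the same route as the paper's proof: work in the canonical chart $\varphi_{\mathbf{1}}$ of \ref{identity-chart}, where $c$, $\ve_a$ and $\omega_b$ become the split-injective constant inclusion $v\mto c_v$, the split-surjective point evaluation $\tau\mto\tau(a)$, and the evaluation $\tau\mto\tau(b)$ with the straight-line right inverse $v\mto\bigl[t\mto\tfrac{t-a}{b-a}v\bigr]$, and then propagate to all points via the homomorphism property and smoothness of translations. The only cosmetic differences are that the paper obtains the embedding in (a) from ``injective immersion plus $\ve_a\circ c=\id_G$'' rather than from your complemented-subspace description of $c(G)$, and that the smoothness of postcomposition with $\kappa$ in your section $\sigma$ is really just the smoothness of the chart $\varphi_{\mathbf{1}}=\BV([a,b],\kappa)$ (Lemma~\ref{lem:PF_smooth} as stated concerns Banach-space-valued targets).
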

\begin{proof}
Let $\|\cdot\|_\cg$ be a norm on $\cg$ defining its topology.\smallskip

(a) and (b):
For $v\in \cg$,
let $s_v\colon [a,b]\to\cg$ be the constant map
with value~$v$. Then $(s_v)'=0$,
whence $\|s_v\|_{BV}^{\st}=\|s_v\|_\infty=\|v\|_{\cg}$.
The linear map $\delta \colon \BV([a,b],\cg)\to\cg$,
$\tau\mto\tau(a)$ is continuous
and has the linear map
$s\colon \cg\to\BV([a,b],\cg)$, $v\mto s_v$
as a right inverse, which is continuous
with $\|s\|_{\op}\leq 1$.
Hence $\delta$ is a submersion. As a consequence,
$\ve_a|_{\BV([a,b],P)}=
\kappa\circ \delta|_{\BV([a,b],Q)}\circ \varphi_{\bf 1}^{-1}$
is a submersion and hence so is~$\ve_a$,
being a group homomorphism. The final assertion of~(b)
follows since level sets of submersions are
submanifolds.\smallskip

Since $c\circ \kappa=\varphi_{\bf 1}\circ s|_Q$
is an immersion, also $c|_P$ is an immersion
and hence also~$c$, being a group homomorphism.
Moreover, $c$ is a homeomorphism onto its image,
as $\ve_a\circ \, c=\id_G$. Hence~$c$ is an embedding of
$C^\infty$-manifolds.\smallskip

(c) $\BV_*([a,b],\cg)$ is a closed vector subspace
of $\BV([a,b],\cg)$ and $\varphi_{\bf 1}$
takes $\BV_*([a,b],Q):=\BV_*([a,b],\cg)\cap\BV([a,b],Q)$
onto the intersection $\BV_*([a,b],P)$
$:=\BV_*([a,b],G)\cap \BV([a,b],P)$.
The restriction $\psi$ of $\varphi_{\bf 1}$
to a bijective map $\BV_*([a,b],Q)\to\BV_*([a,b],P)$
therefore is the inverse of a chart for the submanifold
$\BV_*([a,b],G)$.
For $v\in \cg$, consider the function
\[
f_v\colon [a,b]\to\cg,\quad
t\mto \frac{t-a}{b-a}v
\]
with $\|f_v\|_\infty=\|v\|_{\cg}$.
For the restriction $\lambda$ of Lebesgue measure to
$[a,b]$, we have $f_v'=\frac{1}{b-a}v\, \lambda$
with variation norm $\|f_v'\|=\|v\|_\cg$,
whence $\|f_v\|_{BV}^{\st}=\|v\|_\cg$.
The mapping $R\colon \BV_*([a,b],\cg)\to \cg$,
$\tau\mto \tau(b)$ is continuous linear
and has $f\colon \cg\to \BV_*([a,b],\cg)$,
$v\mto f_v$ as a right inverse that is continuous
linear with $\|f\|_{\op}\leq 1$. So $R$
is a submersion. As $\omega_b$
is a group homomorphism and
coincides with the submersion
$\kappa\circ R\circ \psi^{-1}$
on some ${\bf 1}$-neighborhood, $\omega_b$
is a submersion.
\end{proof}
\begin{rem}\label{semidirect-lie}
Note that the smooth group homomorphism
$c\colon G\to \BV([a,b],G)$
is a right inverse for~$\ve_a$.
We identify $G$ with the Lie subgroup $c(G)$.
By the preceding, $\BV([a,b],G)$ is
the following semidirect product as a Lie group:
$$
\BV([a,b],G) = \BV_\ast ([a,b],G)\rtimes G .
$$
\end{rem}
\begin{rem}
As $\omega_b$ is a group homomorphism and a
smooth submersion, its kernel
$$
\BV_\ast^\ell([a,b],G) \coloneq \{f \in \BV_\ast ([a,b],G) \colon f(b)=e\}
$$
is a Lie subgroup of $\BV_*([a,b],G)$.
As a consequence,
\[
\ell^{\BV}_{[a,b]} (G) \coloneq \{f \in \BV([a,b],G)
\colon f(a)=f(b)\}
=\BV^\ell_*([a,b],G)\rtimes G
\]
is a Lie subgroup of $\BV_*([a,b],G)\rtimes G=\BV([a,b],G)$.
It may be viewed as a $\BV$-loop group $\BV(\mathbb{S}_1,G)$,
but we refrain from giving a formal definition of $\BV$-functions on the circle.
\end{rem}
\section{Differential equations for {\boldmath$\BV$}-functions}\label{sec-BV-ode}
We define differential equations
for $\BV$-functions and discuss local uniqueness,
local existence and parameter-dependence of
$\BV$-solutions. We begin with differential
equations on open subsets of Banach spaces;
in a second step, $\BV$-differential equations on Banach
manifolds are addressed.
The discussion may be of wider interest,
irrespective of Lie theory.
\subsection*{Local theory of {\boldmath$\BV$}-differential equations}
We consider the following situation.
\begin{numba}\label{situ-bvde}
Let $(E,\lVert\cdot\rVert_E)$ and
$(F,\lVert\cdot\rVert_F)$ be Banach spaces, $U\sub F$
be an open subset and $f\colon E\times U\to F$
be a mapping such that $\wt{f}(y):=f(\cdot,y)\colon E\to F$
is continuous linear for each $y\in U$,
and $\wt{f}\colon U\to(\cL(E,F),\|\cdot\|_{\op})$
is continuous.
\end{numba}
\begin{defn}\label{defn-sol}
Let $a<b$ be in $\R$ and $\mu\in \Mdna([a,b],E)$.
For $\alpha,\beta\in [a,b]$ with $\alpha<\beta$,
we say that a
$\BV$-function $\gamma\colon [\alpha,\beta]\to U$
is a \emph{$\BV$-solution} to 
\begin{equation}\label{diff-cara}
y^\prime=f_*(\mu,y)  
\end{equation}
if $\gamma'=f_*(\mu,\gamma)$ holds,
using $\gamma'\in \Mdna([\alpha,\beta],F)$
as in \ref{def-f-prime}
and $f_*(\mu,\gamma):=
f_*(\mu|_{[\alpha,\beta]},\gamma)$ (see Remark~\ref{relax}).
If $t_0\in [a,b]$ and $y_0\in U$
are given, we call a $\BV$-function
$\gamma\colon[\alpha,\beta]\to U$
a \emph{solution to the $\BV$-initial value problem}
\begin{equation}\label{bv-initi}
y'=f_*(\mu,y),\quad y(t_0)=y_0
\end{equation}
if $\gamma$ is a $\BV$-solution to (\ref{diff-cara})
such that $t_0\in [\alpha,\beta]$
and $\gamma(t_0)=y_0$.
\end{defn}
\begin{rem}
For $\alpha,\beta\in [a,b]$ with $\alpha<\beta$,
a function $\gamma\in \cL^\infty_{\rc}([a,b],U)$
(e.g., a continuous function $\gamma\colon [a,b]\to U$)
is a $\BV$-solution to (\ref{diff-cara})
if and only if,
for some $t_1\in[\alpha,\beta]$,
\[
\gamma(t)=\left\{
\begin{array}{cl}
\gamma(t_1) + f_*(\mu,\gamma)([t_1,t]) &\mbox{\,if $\,t\geq t_1$,}\\
\gamma(t_1) - f_*(\mu,\gamma)([t,t_1]) &\mbox{\,if $\,t\leq t_1$;}
\end{array}\right.
\]
this then holds for all $t_1\in [\alpha,\beta]$
(see Lemma~\ref{bv-basepoint}).
The continuous function $\gamma\colon [\alpha,\beta]\to U$
is a solution to the $\BV$-initial value problem (\ref{bv-initi}) if and only if $t_0\in [\alpha,\beta]$
and
\[
\gamma(t)=\left\{
\begin{array}{cl}
y_0 + f_*(\mu,\gamma)([t_0,t]) &\mbox{\,if $\,t\geq t_0$,}\\
y_0 - f_*(\mu,\gamma)([t,t_0]) &\mbox{\,if $\,t\leq t_0$.}
\end{array}\right.
\]
\end{rem}
\begin{defn} In the setting of~\ref{situ-bvde},
we say that the differential equation
\begin{equation*}
y'=f_*(\mu, y) 
\end{equation*}
satisfies \emph{local uniqueness of $\BV$-solutions} if for all $\BV$-solutions $\gamma_1\colon  I_1 \rightarrow U$ and $\gamma_2\colon I_2 \rightarrow U$ of \eqref{diff-cara} and $t_0 \in I_1 \cap I_2$ with $\gamma_1(t_0)=\gamma_2(t_0)$, there exists a $t_0$-neighborhood~$K$
in $I_1\cap I_2$
such that
$$
\gamma_1|_K=\gamma_2|_K.
$$
\end{defn}
\noindent
An observation
helps to study local uniqueness,
and can be re-used later.
\begin{la}\label{so-subdivide}
Let $(E,\lVert\cdot\rVert_E)$
a Banach space, $\mu\in \Mdna([a,b],E)$,
and $\ve>0$.
Then there exists $\delta>0$
such that $\Var(\mu)([\alpha,\beta])<\ve$
for all $\alpha,\beta\in [a,b]$
such that $\alpha<\beta$
and $|\beta-\alpha|\leq \delta$.
\end{la}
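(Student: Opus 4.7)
The idea is to reduce to the scalar case by passing to the positive measure $\Var(\mu)$ and then invoke uniform continuity of its distribution function. Since $\mu \in \Mdna([a,b],E)$ is non-atomic, Lemma~\ref{la:nonatomic} tells us that $\Var(\mu)$ is also non-atomic. Consequently, by Lemma~\ref{char-na} applied to the scalar-valued measure $\Var(\mu)$, the distribution function
\[
F\colon [a,b] \to \R, \qquad x \mapsto \Var(\mu)([a,x])
\]
is continuous on $[a,b]$, with $F(a)=0$.

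Next, I would observe that for $\alpha < \beta$ in $[a,b]$,
\[
\Var(\mu)([\alpha,\beta]) = F(\beta) - F(\alpha) + \Var(\mu)(\{\alpha\}) = F(\beta) - F(\alpha),
\]
using additivity of $\Var(\mu)$ on the disjoint decomposition $[a,\beta] = [a,\alpha] \cup\, ]\alpha,\beta]$, together with the fact that $\Var(\mu)(\{\alpha\}) = 0$ (again by non-atomicity and Lemma~\ref{char-na}).

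Since $F$ is continuous on the compact interval $[a,b]$, it is uniformly continuous. Hence, given $\ve > 0$, there exists $\delta > 0$ such that $|F(\beta) - F(\alpha)| < \ve$ whenever $|\beta - \alpha| \leq \delta$. Combining this with the identity above yields $\Var(\mu)([\alpha,\beta]) < \ve$ for all such $\alpha < \beta$, which is what was to be shown.

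There is no real obstacle here: the entire argument rests on two facts already established in the paper, namely the transfer of non-atomicity from $\mu$ to $\Var(\mu)$ (Lemma~\ref{la:nonatomic}) and the characterization of non-atomicity via continuity of the distribution function (Lemma~\ref{char-na}). The only point worth writing out carefully is the additivity computation ensuring $\Var(\mu)([\alpha,\beta]) = F(\beta) - F(\alpha)$, so that uniform continuity of $F$ directly controls the variation on small subintervals.
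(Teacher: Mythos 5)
Your proof is correct and follows essentially the same route as the paper: non-atomicity of $\Var(\mu)$ via Lemma~\ref{la:nonatomic}, continuity of the distribution function via Lemma~\ref{char-na}, uniform continuity on the compact interval, and the additivity identity $\Var(\mu)([\alpha,\beta])=F(\beta)-F(\alpha)$. No gaps.
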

\begin{proof}
As $\Var(\mu)$ is non-atomic by Lemma~\ref{la:nonatomic},
the map $f\colon [a,b]\to\R$,
$x\mto \Var(\mu)([a,x])$ is continuous
(see Lemma~\ref{char-na})
and hence uniformly continuous. Thus,
we find $\delta>0$ such that
$|f(\beta)-f(\alpha)|<\ve$ for all $\alpha,\beta\in [a,b]$
with $|\beta-\alpha|<\delta$.
It remains to note that $|f(\beta)-f(\alpha)|=\Var(\mu)([\alpha,\beta])$ if $\alpha\leq\beta$.
\end{proof}
\begin{la}\label{lem-BV-loc-uniq}
In the situation of {\rm\ref{situ-bvde}},
assume that $\wt{f}\colon U\to (\cL(E,F),\|\cdot\|_{\op})$
is Lipschitz.
Then \eqref{diff-cara} satisfies local uniqueness of $\BV$-solutions.
\end{la}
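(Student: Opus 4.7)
The strategy is a contraction-style argument on a sufficiently small interval around $t_0$, driven by Lemma~\ref{star-Lipschitz} together with the non-atomicity of $\mu$ via Lemma~\ref{so-subdivide}. Let $L\geq 0$ be a Lipschitz constant for $\wt{f}$, and set $y_0:=\gamma_1(t_0)=\gamma_2(t_0)$. We may assume $L>0$.

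First I would reduce to the following key estimate on a small symmetric neighborhood $K$ of $t_0$ in $I_1\cap I_2$. By the integral form of a $\BV$-solution recorded after Definition~\ref{defn-sol}, for $t\in K$ with $t\geq t_0$ we have
\[
\gamma_1(t)-\gamma_2(t)\;=\;\bigl[f_*(\mu,\gamma_1)-f_*(\mu,\gamma_2)\bigr]([t_0,t]),
\]
and an analogous identity for $t\leq t_0$. Since the value of a vector measure on a measurable set is bounded in norm by its variation on that set, and since restrictions to $K$ behave compatibly (i.e.\ $(\gamma_i|_K)'=f_*(\mu|_K,\gamma_i|_K)$, as follows from Lemma~\ref{into-prod} together with the obvious restriction property of $\odot_\beta$), Lemma~\ref{star-Lipschitz} applied to the restricted measure $\mu|_K\in\Mdna(K,E)$ yields
\[
\|\gamma_1(t)-\gamma_2(t)\|_F\;\leq\;\|\mu|_K\|\cdot L\cdot\|\gamma_1|_K-\gamma_2|_K\|_\infty
\quad\text{for every }t\in K.
\]

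Taking the supremum over $t\in K$ on the left gives the self-referential inequality $\|\gamma_1|_K-\gamma_2|_K\|_\infty\leq L\,\|\mu|_K\|\,\|\gamma_1|_K-\gamma_2|_K\|_\infty$. It then suffices to shrink $K$ so that $L\,\|\mu|_K\|<1$. This is precisely what Lemma~\ref{so-subdivide} provides: there exists $\delta>0$ such that $\Var(\mu)([\alpha,\beta])\leq 1/(2L)$ whenever $|\beta-\alpha|\leq\delta$. Taking $K:=[t_0-\delta/2,t_0+\delta/2]\cap I_1\cap I_2$, we obtain $\|\mu|_K\|=\Var(\mu)(K)\leq 1/(2L)$, so the displayed inequality forces $\|\gamma_1|_K-\gamma_2|_K\|_\infty=0$, i.e.\ $\gamma_1|_K=\gamma_2|_K$.

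The main point requiring care is the compatibility of restriction: the fact that a $\BV$-solution restricted to a subinterval is again a $\BV$-solution with the corresponding restricted derivative-measure, and that $f_*(\mu,\gamma)|_K=f_*(\mu|_K,\gamma|_K)$. Both are essentially bookkeeping given Section~\ref{sec-bilin-meas} and Lemma~\ref{into-prod}. Once these identifications are in place, the argument is the natural $\BV$-analogue of the usual short-time Picard uniqueness argument, with Lemma~\ref{so-subdivide} replacing the role played by short time-intervals in the classical ODE setting.
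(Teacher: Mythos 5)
Your proposal is correct and follows essentially the same route as the paper's proof: the integral representation of $\BV$-solutions, the Lipschitz estimate from Lemma~\ref{star-Lipschitz} applied to $\mu|_K$, and Lemma~\ref{so-subdivide} to shrink $K$ until $L\,\Var(\mu)(K)<1$, forcing $\|\gamma_1|_K-\gamma_2|_K\|_\infty=0$. The restriction compatibility you flag is indeed the only bookkeeping point, and the paper handles it the same way.
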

\begin{proof}
Let $L$ be a Lipschitz constant for $\wt{f}$,
$a<b$ be real numbers, and $\mu\in \Mdna([a,b],E)$.
Let $\gamma_1\colon I_1\to U$ and $\gamma_2\colon I_2\to U$ be $\BV$-solutions of \eqref{diff-cara}
and $t_0\in I_1\cap I_2$
such that $\gamma_1(t_0)=\gamma_2(t_0)$.
Excluding a trivial case,
we may assume that $I_1\cap I_2$ is not a singleton.
Choose $\ve>0$ so small that $L\ve <\frac{1}{2}$.
Let $\delta>0$ be as in
Lemma~\ref{so-subdivide}. Then $K:=I_1\cap I_2\cap [t_0-\frac{\delta}{2},t_0+\frac{\delta}{2}]$
is a compact interval of length $\leq\delta$,
whence $\Var(\mu)(K)<\ve$
and thus $L\Var(\mu)(K)<\frac{1}{2}$.
For any $t\in K$ with $t\geq t_0$, we have
\begin{eqnarray*}
\|(\gamma_1-\gamma_2)(t)\|_F &=&  \|f_*(\mu,\gamma_1)([t_0,t])  -  f_*(\mu,\gamma_2) ([t_0,t]\|_F\\
&\leq& \|f_*(\mu,\gamma_1)|_K-f_*(\mu,\gamma_2)|_K\|\\
&=& \|f_*(\mu|_K,\gamma_1|_K)-f_*(\mu|_K,\gamma_2|_K)\|\\
&\leq & L\|\mu|_K\|\,\|\gamma_1|_K-\gamma_2|_K\|_\infty
=L\Var(\mu)(K)\|\gamma_1|_K-\gamma_2|_K\|_\infty;
\end{eqnarray*}
see\hspace*{-.3mm}
Lemma\hspace*{-.2mm}~\ref{star-Lipschitz}\hspace*{-.3mm}
for\hspace*{-.3mm} the\hspace*{-.3mm} last\hspace*{-.3mm} line.\hspace*{-.4mm}
Also,\hspace*{-.3mm}
$\|(\gamma_1\hspace*{-.2mm}\!-\!\hspace*{-.2mm}\gamma_2)(t)\|_F\!\leq\!
L\Var(\mu)(K)
\|\gamma_1|_K\hspace*{-.2mm}\!-\!\hspace*{-.2mm}\gamma_2|_K\|_\infty$\linebreak
for all $t\in K$ with $t\leq t_0$.
Taking the supremum over $t\in K$, we get
\[
\|\gamma_1|_K-\gamma_2|_K\|_\infty\leq L\Var(\mu)(K)\|\gamma_1|_K-\gamma_2|_K\|_\infty\leq \frac{1}{2}\|\gamma_1|_K-\gamma_2|_K\|_\infty.
\]
Hence $\|\gamma_1|_K-\gamma_2|_K\|_\infty=0$
and thus $\gamma_1|_K=\gamma_2|_K$.   
\end{proof}
\begin{la}\label{on-intersect}
Let the differential equation \eqref{diff-cara} satisfy local uniqueness of $\BV$-solutions. Let $\gamma_{1}\colon I_{1} \rightarrow E$ and $\gamma_{2}\colon I_{2} \rightarrow E$ be $\BV$-solutions to \eqref{diff-cara} with $\gamma_{1}\left(t_{0}\right)=\gamma_{2}\left(t_{0}\right)$ for some $t_{0} \in I_{1} \cap I_{2}$. Then
\[
\gamma_1 |_{I_1 \cap I_2} =\gamma_2|_{I_1 \cap I_2}.
\]
\end{la}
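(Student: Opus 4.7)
The plan is to carry out the standard connectedness argument that is used for ordinary differential equations, adapted to the $\BV$-setting. Let $I := I_1 \cap I_2$; this is an interval (possibly degenerate, but we may assume it is not a singleton, otherwise the claim is trivial). I will consider the coincidence set
\[
J := \{t \in I : \gamma_1(t) = \gamma_2(t)\}
\]
and show that $J = I$ by proving that $J$ is nonempty, closed in $I$, and open in $I$, then invoking connectedness of $I$.

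First, $t_0 \in J$ by hypothesis, so $J \neq \emptyset$. Next, since every $\BV$-function on a compact interval is continuous (Lemma~\ref{char-na}), both $\gamma_1$ and $\gamma_2$ are continuous on $I$; hence $\gamma_1 - \gamma_2$ is continuous on $I$, and $J = (\gamma_1 - \gamma_2)^{-1}(\{0\})$ is closed in $I$. For openness, let $s \in J$, so $\gamma_1(s) = \gamma_2(s)$. Restrict $\gamma_1$ and $\gamma_2$ to the $\BV$-solutions $\gamma_1|_{[\alpha_1,\beta_1]}$ and $\gamma_2|_{[\alpha_2,\beta_2]}$ on compact subintervals of $I_1, I_2$ containing $s$ (using Lemma~\ref{into-prod} to see that these restrictions remain $\BV$-solutions of the same equation with respect to the restricted measure, cf.\ Remark~\ref{relax}). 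By the assumed local uniqueness property, there is an $s$-neighborhood $K$ in $I$ with $\gamma_1|_K = \gamma_2|_K$, so $K \subseteq J$ and $J$ is open in $I$.

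Since $I$ is an interval, hence connected, and $J$ is a nonempty clopen subset of $I$, we conclude $J = I$, which is exactly the assertion $\gamma_1|_{I_1 \cap I_2} = \gamma_2|_{I_1 \cap I_2}$.

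There is essentially no obstacle: all three properties (nonemptiness, closedness, openness) follow directly from the hypotheses. The only mild point to verify is that when we invoke local uniqueness at a point $s \neq t_0$, the two solutions really do solve the same differential equation on a common subinterval containing~$s$; this is handled by Remark~\ref{relax}, which allows $f_*(\mu, \gamma)$ to be understood via the restriction $\mu|_{[\alpha,\beta]}$ of the vector measure to any compact subinterval on which $\gamma$ is defined.
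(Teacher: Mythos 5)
Your proof is correct and is essentially the same connectedness argument the paper uses: the coincidence set is nonempty, closed by continuity, and open by local uniqueness, hence all of $I_1\cap I_2$. The only difference is that your restriction to compact subintervals around $s$ is not actually needed, since the paper's definition of local uniqueness already applies directly to the given solutions $\gamma_1,\gamma_2$ at any point of $I_1\cap I_2$ where they agree.
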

\begin{proof}
Let $A := \{t \in I_1 \cap I_2 \colon  \gamma_1(t) = \gamma_2(t)\}$. 
Since $\gamma_1$ and $\gamma_2$ are continuous and $F$ is Hausdorff, $A$ is closed in $I_1\cap I_2$.
By local uniqueness of $\BV$-solutions,
$A$ is also open in $I_1 \cap I_2$. By hypothesis, $A \neq \emptyset$.  Since $I_1 \cap I_2$ is an interval and hence connected, it follows that $A = I_1 \cap I_2$ and thus
$\gamma_1 |_{I_1 \cap I_2} =\gamma_2|_{I_1 \cap I_2}$.
\end{proof}
\begin{la}\label{twopiece}
Let $\gamma_1\colon[t_0,t_1]\to E$ and
$\gamma_2\colon [t_1,t_2]\to E $ be $\BV$-solutions of \eqref{diff-cara} and $\gamma_1(t_1)=\gamma_2(t_1)$. Then also
\[
\gamma:[t_0,t_2]\to E;\quad t\mto \left\{\begin{aligned}
\gamma_1(t)  &~\textit{if}~ t\in[t_0,t_1] \\
\gamma_2(t)  &~\textit{if}~ t\in[t_1,t_2]
\end{aligned}\right.
\]
is a solution of \eqref{diff-cara}.
\end{la}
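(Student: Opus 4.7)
The plan is first to show that $\gamma$ belongs to $\BV([t_0,t_2],E)$ with values in $U$, then to identify its derivative measure, and finally to match this derivative against $f_*(\mu,\gamma)$.

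For the first step, $\gamma$ is continuous because $\gamma_1$ and $\gamma_2$ are continuous and agree at $t_1$, and clearly $\gamma([t_0,t_2])\subseteq U$. By Lemma~\ref{into-prod}, the compatibility condition $\gamma_1(t_1)=\gamma_2(t_1)$ is exactly what is needed for $(\gamma_1,\gamma_2)$ to lie in the image of the embedding; hence $\gamma\in\BV([t_0,t_2],E)$, and its associated measure is
\[
\gamma'(A)\;=\;\gamma_1'(A\cap[t_0,t_1])+\gamma_2'(A\cap[t_1,t_2])\qquad(A\in\cB([t_0,t_2])),
\]
as recorded in the proof of Lemma~\ref{into-prod}.

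For the second step I would invoke the hypothesis $\gamma_j'=f_*(\mu,\gamma_j)$ to rewrite
\[
\gamma'(A)\;=\;f_*(\mu,\gamma_1)(A\cap[t_0,t_1])+f_*(\mu,\gamma_2)(A\cap[t_1,t_2])
\]
and then compare with $f_*(\mu,\gamma)(A)$. The required identification rests on a simple restriction property of the operation $\odot_\beta$: if $Y\subseteq X$ is measurable and $g\in\cL^\infty_{\rc}(X,E_1)$, $\mu\in\Mes(X,E_2)$, then $(g\odot_\beta\mu)(A)=(g|_Y\odot_\beta\mu|_Y)(A)$ for every measurable $A\subseteq Y$. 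This is immediate for $g\in\cF(X,E_1)$ from the definition, and extends by continuity using density of simple functions in $\cL^\infty_{\rc}$. Applying this with $Y=[t_0,t_1]$ and $Y=[t_1,t_2]$, and using that $\tilde f\circ\gamma$ restricts to $\tilde f\circ\gamma_j$ on the corresponding subinterval, yields
\[
f_*(\mu,\gamma)(A\cap[t_0,t_1])=f_*(\mu,\gamma_1)(A\cap[t_0,t_1]),
\]
and analogously on $[t_1,t_2]$. Since $f_*(\mu,\gamma)\in\Mdna([t_0,t_2],F)$ is non-atomic, the singleton $\{t_1\}$ has measure zero, so $f_*(\mu,\gamma)(A)$ splits additively across $t_1$ and matches the two-piece expression for $\gamma'(A)$.

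The main obstacle is really just the bookkeeping of the restriction property for $\odot_\ve$ and the careful use of non-atomicity at the junction point $t_1$; everything else is formal once $\gamma$ is identified as an element of $\BV([t_0,t_2],U)$ via Lemma~\ref{into-prod}. Once $\gamma'=f_*(\mu,\gamma)$ is established set-theoretically on $\cB([t_0,t_2])$, Definition~\ref{defn-sol} is satisfied and $\gamma$ is a $\BV$-solution of \eqref{diff-cara}.
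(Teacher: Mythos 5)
Your proof is correct and follows the same route as the paper, which simply declares the lemma ``immediate from Lemma~\ref{into-prod}''; you glue $\gamma_1,\gamma_2$ via that lemma and then verify $\gamma'=f_*(\mu,\gamma)$ using the restriction property of $\odot_\ve$ and non-atomicity at $t_1$. The extra bookkeeping you supply (the restriction identity for $\odot_\beta$, checked on simple functions and extended by density, plus the null junction point) is exactly the content the paper leaves implicit.
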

\begin{proof}
This is immediate from Lemma~\ref{into-prod}.
\end{proof}
\begin{defn}
We say that the differential equation \eqref{diff-cara} satisfies \emph{local existence of $\BV$-solutions} if for any $t_0 \in [a, b]$ and $y_0 \in U$, there exists a $\BV$-solution $\gamma: I \rightarrow U\subseteq F$ for the initial value problem \eqref{bv-initi},
defined on a compact interval~$I$
which is a $t_0$-neighborhood in $[a,b]$.
\end{defn}
\noindent
We recall a result concerning parameter-dependence
of fixed points
(see Theorem~4.7 and Proposition~4.3\,(a)
in \cite{FXX}).
\begin{numba}\label{pardep-fp}
\emph{Let $E$ be a locally convex space,
$(F,\|\cdot\|_F)$
be a Banach space and $P\sub E$ as well
as $U \sub F$ be open subsets.
Let $k\in \N_0\cup\{\infty\}$
and $f \colon P \times U \to F$
be a $C^k$-map such that for some $\theta\in [0,1[$,
all of the mappings
$f_p := f(p, \cdot)\colon  U \to U$
are Lipschitz with Lipschitz constant~$\theta$.
Let $P_0$ be the set of all
$p\in P$ such that $f_p$ has a fixed point
$x_p$ $($which is necessarily unique$)$.
Then $P_0$ is open in~$P$ and the map
$P_0\to U$, $p\mto x_p$ is $C^k$.}
\end{numba}
\noindent
The next result guarantees local existence and local uniqueness of $\BV$-solutions, and addresses dependence
on parameters.
\begin{prop}\label{carath-ex}
Let $(E,\|\cdot\|_E)$
and $(F,\|\cdot\|_F)$
be Banach spaces, $y_0\in F$,
$R>0$ and
$f\colon E\times B^F_R(y_0)\to F$
be a map such that $\wt{f}(y):=f(\cdot,y)\colon E\to F$
is continuous linear for each $y\in B^E_R(y_0)$
and $\wt{f}\colon B^F_R(y_0)\to (\cL(E,F),\|\cdot\|_{\op})$ is Lipschitz. Let
$L\in [0,\infty[$ be a Lipschitz constant for $\wt{f}$.
Then
we have\emph{:}
\begin{itemize}
\item[\rm(a)]
For all real numbers $a<b$ and each $\mu\in \Mdna([a,b],E)$,
the differential equation $y'=f_*(\mu,y)$
satisfies local uniqueness of $\BV$-solutions.
\item[\rm(b)]
For all $a<b$ and each $\mu\in \Mdna([a,b],E)$
such that $L\hspace*{.2mm}\|\mu\|<1$
holds for the variation norm and
\[
\|\wt{f}\|_\infty \|\mu\| \;< \; R
\]
with $\|\wt{f}\|_\infty:=
\sup\left\{\|\wt{f}(y)\|_{\op}\colon y\in
B^F_R(y_0)\right\}$,
we get that, for each $t_0\in [a,b]$,
the initial value problem
\begin{equation}\label{inivapro}
y'=f_*(\mu,y),\qquad
y(t_0)=y_0
\end{equation}
has a $\BV$-solution $\eta_\mu\colon [a,b]\to B^F_R(y_0)$
defined on all of $[a,b]$.
\item[\rm(c)]
The set $Q:=\{\mu\in \Mdna([a,b],E)\colon \|\wt{f}\|_\infty\|\mu\|<R\mbox{ and $L\|\mu\|<1$}\}$ is a convex
open $0$-neighborhood in $\Mdna([a,b],E)$.
If $\wt{f}$ is, moreover, $C^k$
with $k\in \N_0\cup\{\infty\}$
then also the following map is~$C^k$:
\begin{equation}\label{the-pardep}
Q\to \BV([a,b],E),\quad \mu\mto \eta_\mu .
\end{equation}
\end{itemize}
\end{prop}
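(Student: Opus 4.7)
I would dispatch part~(a) by direct citation of Lemma~\ref{lem-BV-loc-uniq}: $\wt{f}\colon B^F_R(y_0) \to (\cL(E,F),\|\cdot\|_{\op})$ is Lipschitz with constant~$L$ by hypothesis, which is exactly what that lemma requires.

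For part~(b), the plan is to set up Banach's fixed point theorem in the Banach space $C([a,b],F)$. Let $c_{y_0}\colon [a,b]\to F$ denote the constant function with value~$y_0$ and let $I_{t_0}\colon \Mdna([a,b],F)\to \BV([a,b],F)$ be the continuous linear map sending $\nu$ to the unique $\BV$-function~$g$ with $g(t_0)=0$ and $g'=\nu$ (Lemma~\ref{bv-basepoint}); since $g(t)=\nu([t_0,t])$ for $t\geq t_0$ and $g(t)=-\nu([t,t_0])$ for $t\leq t_0$, one has $\|I_{t_0}(\nu)\|_\infty\leq \|\nu\|$. On the open ball $U':=\{\eta\in C([a,b],F)\colon \|\eta-c_{y_0}\|_\infty<R\}$, whose elements take values in $B^F_R(y_0)$ so that $f_*(\mu,\eta)$ is defined, I would consider the operator $T_\mu(\eta):=c_{y_0}+I_{t_0}(f_*(\mu,\eta))$. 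Estimate~(\ref{size-dense}) yields
$$\|T_\mu(\eta)-c_{y_0}\|_\infty\leq \|f_*(\mu,\eta)\|\leq \|\wt{f}\|_\infty\|\mu\|<R,$$
so $T_\mu(U')\sub U'$, and Lemma~\ref{star-Lipschitz} gives
$$\|T_\mu(\eta)-T_\mu(\zeta)\|_\infty\leq L\|\mu\|\,\|\eta-\zeta\|_\infty,$$
a contraction with constant $L\|\mu\|<1$. The resulting unique fixed point $\eta_\mu\in U'$ automatically lies in $\BV([a,b],F)$ by its defining equation and solves~(\ref{inivapro}).

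For part~(c), openness and convexity of~$Q$ are routine from continuity and sublinearity of the total variation norm. For smoothness of $\mu\mapsto \eta_\mu$, I would localize: fix $\mu_0\in Q$, pick $\theta_0\in(L\|\mu_0\|,1)$, and set $W:=\{\mu\in \Mdna([a,b],E)\colon L\|\mu\|<\theta_0\text{ and }\|\wt{f}\|_\infty\|\mu\|<R\}$, an open $\mu_0$-neighborhood in~$Q$. The map $T\colon W\times U'\to U'$ is $C^k$: by Lemma~\ref{diff-pfwd}, $(\mu,\eta)\mapsto f_*(\mu,\eta)$ is $C^k$ from $\Mdna([a,b],E)\times \cL^\infty_{\rc}([a,b],B^F_R(y_0))$ (and hence from $W\times U'$) into $\Mdna([a,b],F)$, and post-composing with the continuous linear maps $I_{t_0}$ and $\BV([a,b],F)\hookrightarrow C([a,b],F)$ preserves $C^k$-ness. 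For each $\mu\in W$, $T_\mu$ is a contraction on $U'$ with uniform constant $\theta_0<1$, so the parameter-dependent fixed point theorem~\ref{pardep-fp} will give that $\mu\mapsto \eta_\mu$ is $C^k$ as a map $W\to C([a,b],F)$; as the $W$'s cover~$Q$, this yields smoothness on all of~$Q$ into $C([a,b],F)$. To upgrade to smoothness into $\BV([a,b],F)$, I would feed the fixed point back into the equation: $\eta_\mu=c_{y_0}+I_{t_0}(f_*(\mu,\eta_\mu))$ expresses $\mu\mapsto \eta_\mu$ as the composition of $\mu\mapsto(\mu,\eta_\mu)$ (into $W\times U'$, $C^k$ by what was just shown), the $C^k$-map $f_*$ into $\Mdna([a,b],F)$, and the continuous affine map $\nu\mapsto c_{y_0}+I_{t_0}(\nu)$ into $\BV([a,b],F)$. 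The principal obstacle I expect is precisely this mismatch between the sup-norm setting in which the Banach contraction works (Lemma~\ref{star-Lipschitz} bounds $\|f_*(\mu,\eta)-f_*(\mu,\zeta)\|$ only by $\|\eta-\zeta\|_\infty$) and the stronger $\BV$-smoothness asserted; the bootstrap through the fixed point equation is what resolves it.
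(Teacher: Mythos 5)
Your argument matches the paper's proof in all essentials: (a) is dispatched by Lemma~\ref{lem-BV-loc-uniq} exactly as in the paper; (b) runs the same contraction on a ball in $C([a,b],F)$ with the fixed point recognized as $\BV$ from its defining equation; and (c) localizes to a region of uniform contraction constant, applies~\ref{pardep-fp} to get $C^k$-dependence into $C([a,b],F)$, and bootstraps through $\eta_\mu=T(\mu,\eta_\mu)$ to upgrade to $\BV([a,b],F)$ --- the paper's $\Psi_\theta(\mu,\eta)=y_0+J(f_*(\mu,\eta))-f_*(\mu,\eta)([a,t_0])$ is precisely your $c_{y_0}+I_{t_0}(f_*(\mu,\eta))$, and its $Q_\theta$ is your $W$. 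The one slip is that in (b) you invoke Banach's fixed point theorem on the open (hence incomplete) ball $U'$; the paper works instead on the closed ball $\{\eta:\|\eta-c_{y_0}\|_\infty\le r\}$ with $\|\wt{f}\|_\infty\|\mu\|\le r<R$, which your own estimate shows $T_\mu$ preserves, so the repair is immediate.
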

\noindent
Note that $\|\wt{f}\|_\infty<\infty$ as $\wt{f}$ is Lipschitz
and $B^F_R(y_0)$ is bounded.

\noindent
\begin{proof}
(a) This is a special case of
Lemma~\ref{lem-BV-loc-uniq}.\smallskip

(b) Given $\mu$ as described in~(b),
there exists $r\in \;]0,R[$
such that $\|\wt{f}\|_\infty\|\mu\|\leq r$.
Write $y_0$ also for the constant function
$[a,b]\to F$, $t\mto y_0$.
Then $A:=\{\eta\in C([a,b],F)\colon
\|\eta-y_0\|_\infty\leq r\}$
is a closed subset
of the Banach space $C([a,b],F)$. For each $\eta\in A$, we have $\eta\in C([a,b],B_R^F(0)\}$,
whence $f_*(\mu,\eta)\in \Mdna([a,b],F)$ and
\begin{equation}\label{fix-is-BV}
\Phi(\eta)\colon [a,b]\to F,\quad
t\mto \left\{\begin{array}{cl}
y_0+f_*(\mu,\eta)([t_0,t])&\mbox{\,if $\,t\geq t_0$;}\\
y_0-f_*(\mu,\eta)([t,t_0])&\mbox{\,if $\,t\leq t_0$}
\end{array}\right.
\end{equation}
is in $\BV([a,b],F)$, by Lemma~\ref{bv-basepoint}.
Using variation norms,
get
$\|\Phi(\eta)(t)-y_0\|_F\leq \|f_*(\mu,\eta)\|
\leq \|\wt{f}\|_\infty\|\ve\|_{\op}\|\mu\|\leq r$
due to (\ref{size-dense}) (where $\ve$ is the bilinear evaluation map
$\cL(E,F)\times E\to F$ with $\|\ve\|_{\op}\leq 1$).
Hence $\Phi(\eta)\in A$ for all $\eta\in A$.
For all $\eta,\zeta\in A$, we have for all $t\in [t_0,b]$
\begin{eqnarray*}
\|\Phi(\eta)(t)-\Phi(\zeta)(t)\|_F
&=&\|(f_*(\mu,\eta)-f_*(\mu,\zeta))([t_0,t])\|_F\\
&\leq & \Var(f_*(\mu,\eta)-f_*(\mu,\zeta))([t_0,t])\\
&\leq & \|f_*(\mu,\eta)-f_*(\mu,\zeta)\|
\leq L\|\mu\|\, \|\eta-\zeta\|_\infty,
\end{eqnarray*}
using Lemma~\ref{star-Lipschitz}.
A similar calculation establishes
$\|\Phi(\eta)(t)-\Phi(\zeta)(t)\|_F
\leq L\|\mu\|\, \|\eta-\zeta\|_\infty$
for all $t\in [a,t_0]$. Taking the supremum
over $t\in [a,b]$, we get
\[
\|\Phi(\eta)-\Phi(\zeta)\|_\infty\leq L\|\mu\|\, \|\eta-\zeta\|_\infty.
\]
Thus $\Phi\colon A\to A$ is a contraction,
as $L\|\mu\|<1$.
By Banach's Fixed Point Theorem,
$\Phi$ has a unique fixed point $\eta$.
Then $\eta=\Phi(\eta)\in \BV([a,b],F)$.
As $\eta(t)$ is given by the
right-hand side of~(\ref{fix-is-BV}),
$\eta$ solves the initial value
problem~(\ref{inivapro}).\smallskip

(c) Fix $a<b$. For all $\theta\in \;]0,1[$,
\[
Q_\theta:=\{\mu\in \Mdna([a,b],E)\colon L\|\mu\| <\theta\}
\]
is an open $0$-neighborhood in $\Mdna([a,b],E)$.
As $Q$ is the union of the sets
$Q_\theta$, it suffices to prove the conclusion
of~(c) with $Q_\theta$ in place of~$Q$.
For
$\mu\in Q_\theta$ and $\eta\in C([a,b],
B^F_R(y_0)$, we write
$\Psi_\theta(\mu,\eta)\colon [a,b]\to F$
for the function denoted by $\Phi(\eta)$ in~(\ref{fix-is-BV}).
As above, we see that $\Psi_\theta(\mu,\eta)\in \BV([a,b],F)$
and that
\[
\|\Psi_\theta(\mu,\eta)-\Psi_\theta(\mu,\zeta)\|_\infty\leq
L\|\mu\|\,\|\eta-\zeta\|_\infty\leq \theta\|\eta-\zeta\|_\infty
\]
for all $\mu\in Q_\theta$ and $\eta,\zeta\in C([a,b], B^F_R(y_0))$. As a consequence, the mapping\linebreak
$\Psi_\theta(\mu,\cdot)\colon
C([a,b],B^F_R(y_0))\to C([a,b],F)$
is Lipschitz with Lipschitz constant~$\theta$,
for each $\mu\in Q_\theta$.
By the proof of~(b), the map
has a fixed point $\eta_\mu$,
which is unique by~\cite[Proposition~4.3\,(a)]{FXX}.
We claim that the map
\[
\Psi_\theta\colon Q_\theta\times C([a,b],B^F_R(y_0))\to
\BV([a,b],F)
\]
is $C^k$. Then $\Psi_\theta$ is also $C^k$
as a map to $C([a,b],F)$, using that the inclusion map
$\BV([a,b],F)\to C([a,b],F)$ is continuous
linear (see Remark~\ref{rem:cont:inclusion}). Hence $h_\theta\colon Q_\theta\to C([a,b],F)$,
$\mu\mto\eta_\mu$
is a $C^k$-map, by~\ref{pardep-fp}.
As a consequence,
also
\[
Q_\theta\to \BV([a,b],F),\quad \mu\mto \eta_\mu=
\Psi_\theta(\mu,\eta_\mu)=\Psi_\theta(\mu,h_\theta(\mu))
\]
is a $C^k$-map (as required).
To prove the claim, we use that the linear map
\[
J\colon \Mdna([a,b],F)\to\BV([a,b],F),\quad
\mu\mto(t\mto \mu([a,t]))
\]
is continuous: $J(\mu)(a)=0$
and $J(\mu)'=\mu$ for
$\mu\in \Mdna([a,b],F)$
yields $\|J(\mu)\|_{BV}=\|J(\mu)\|_F+\|J(\mu)'\|=\|\mu\|$.
For all $(\mu,\eta)\in Q_\theta\times C([a,b], B^F_R(y_0))$,
\[
\Psi_\theta(\mu,\eta)(t)=y_0+f_*(\mu,\eta)([a,t])-f_*(\mu,\eta)([a,t_0])
\]
holds for all $t\in [a,b]$, whence
\[
\Psi_\theta(\mu,\eta)=y_0+J(f_*(\mu,\eta))-f_*(\mu,\eta)(S)
\]
with $S:=[a,t_0]$. Since $f_*$
is $C^k$ by Lemma~\ref{diff-pfwd}
and both the evaluation at~$S$
(see (\ref{bounded-op})) and~$J$ are continuous linear,
we deduce that $\Psi_\theta$ is $C^k$
as a map to $\BV([a,b],F)$.
\end{proof}
\begin{cor}\label{cor-loc-ex}
Let $(E,\|\cdot\|_E)$ and $(F,\|\cdot\|_F)$
be Banach spaces, $U\sub F$ be an open subset and
$f\colon E\times U\to F$ be a $C^2$-map
such that $\wt{f}(y):=f(\cdot,y)\colon E\to F$
is linear for each $y\in U$. Then
$y'=f_*(\mu,y)$ satisfies local existence
of $\BV$-solutions, for all real numbers
$a<b$ and $\mu\in \Mdna([a,b],E)$.
\end{cor}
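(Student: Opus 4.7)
\noindent
\textbf{Proof plan for Corollary~\ref{cor-loc-ex}.}
My plan is to reduce the corollary to Proposition~\ref{carath-ex}\,(b) by localizing both in the state variable $y$ (to obtain a Lipschitz condition on $\wt{f}$ and a bound on $\|\wt{f}\|_\infty$) and in the time variable (to make the restricted measure small enough that the contraction hypothesis of Proposition~\ref{carath-ex}\,(b) is met). Concretely, fix $t_0\in [a,b]$, $y_0\in U$, and $\mu\in\Mdna([a,b],E)$. Since $f\colon E\times U\to F$ is $C^2$ and linear in the first argument, Remark~\ref{from-axioms}\,(a) yields that $\wt{f}\colon U\to(\cL(E,F),\|\cdot\|_{\op})$ is $C^1$ and hence locally Lipschitz. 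Choose $R>0$ so small that $\wb{B}^F_R(y_0)\sub U$ and $\wt{f}|_{B^F_R(y_0)}$ is Lipschitz, with some Lipschitz constant $L\geq 0$; moreover $M:=\sup\{\|\wt{f}(y)\|_{\op}\colon y\in B^F_R(y_0)\}<\infty$ as $B^F_R(y_0)$ is bounded.

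Next I would shrink the time interval. Put $\ve:=\min\{1/(L+1),R/(M+1)\}>0$. By Lemma~\ref{so-subdivide} applied to $\mu\in\Mdna([a,b],E)$, there exists $\delta>0$ with $\Var(\mu)([\alpha,\beta])<\ve$ for all $\alpha<\beta$ in $[a,b]$ with $\beta-\alpha\leq\delta$. Choose a compact interval $I\sub [a,b]$ of length at most $\delta$ which is a $t_0$-neighborhood in $[a,b]$ (take $I=[\alpha,\beta]$ with $\alpha<t_0<\beta$ if $t_0\in\;]a,b[$, and an appropriate one-sided interval at the endpoints). Writing $\mu_I:=\mu|_{\cB(I)}$, we have $\mu_I\in\Mdna(I,E)$ by Remark~\ref{easy-nonatomic}\,(c) and Lemma~\ref{la:nonatomic}, and
\[
\|\mu_I\|=\Var(\mu)(I)<\ve,
\]
so that $L\|\mu_I\|<1$ and $M\|\mu_I\|<R$.

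Now I apply Proposition~\ref{carath-ex}\,(b) to the map $f|_{E\times B^F_R(y_0)}$, the interval $I$, the measure $\mu_I\in\Mdna(I,E)$, and the initial time $t_0\in I$: this yields a $\BV$-solution
\[
\eta\colon I\to B^F_R(y_0)\sub U
\]
of $\eta'=f_*(\mu_I,\eta)$ with $\eta(t_0)=y_0$. By Remark~\ref{relax}, $f_*(\mu_I,\eta)=f_*(\mu,\eta)$, so $\eta$ is a $\BV$-solution to the initial value problem $y'=f_*(\mu,y)$, $y(t_0)=y_0$, defined on the compact $t_0$-neighborhood $I$ of $[a,b]$. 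This establishes local existence of $\BV$-solutions.

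The only delicate point is the choice of $I$: one must simultaneously ensure that $L\|\mu_I\|<1$ (contraction condition) and $M\|\mu_I\|<R$ (so that the fixed-point iteration stays inside the ball on which $\wt{f}$ is Lipschitz), both of which are achieved uniformly by making $\Var(\mu)(I)$ small via Lemma~\ref{so-subdivide}. Everything else is an immediate invocation of the already-established Proposition~\ref{carath-ex}\,(b).
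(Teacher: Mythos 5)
Your proposal is correct and follows essentially the same route as the paper's proof: localize in the state variable via Remark~\ref{from-axioms}\,(a) to get a Lipschitz bound on a ball, shrink the time interval via Lemma~\ref{so-subdivide} so that the restricted measure satisfies the smallness hypotheses $L\|\mu\|<1$ and $\|\wt{f}\|_\infty\|\mu\|<R$, and then invoke Proposition~\ref{carath-ex}\,(b). The only differences are cosmetic (an explicit formula for $\ve$ and the extra remark that the restricted measure stays in $\Mdna$).
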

\begin{proof}
Given $a$, $b$, $\mu$,
$t_0\in [a,b]$ and $y_0\in U$,
there exists $R>0$ such that $B^F_R(y_0)\sub U$
and $\wt{f}|_{B^F_R(y_0)}$
is Lipschitz with some constant~$L$
(see Remark~\ref{from-axioms}\,(a)).
There is $\ve>0$ such that $\|\wt{f}|_{B^F_R(y_0)}\|_\infty\ve<R$ and $L\ve<1$. By Lemma~\ref{so-subdivide},
there is $\delta>0$ such that $\Var(\mu)(K)<\ve$
for each compact subinterval $K\sub [a,b]$
of length $\leq 2\delta$.
Then $K:=[t_0-\delta,t_0+\delta]\cap [a,b]$
is a compact subinterval of $[a,b]$
of length $\leq 2\delta$
and a $t_0$-neighborhood in $[a,b]$.
By Proposition~\ref{carath-ex}\,(b),
the initial value problem $y'=f_*(\mu,y)$, $y(t_0)=y_0$
has a $\BV$ solution defined on~$K$.
\end{proof}
\begin{numba}
If $M$ and $N$ are Banach manifolds
of class $\text{FC}^1$,
$\phi\colon M\to N$ is an $\text{FC}^1$-map
and $X\colon M\to TM$ and $Y\colon N\to TN$
are continuous vector fields, we say that $X$
is \emph{$\phi$-related} to $Y$ if $Y\circ\phi=T\phi\circ X$.
\end{numba}
\begin{numba}
If also $L$ is an $\text{FC}^1$-Banach manifold,
$\psi\colon N\to L$ an $\text{FC}^1$-map
and $Z\colon L\to TL$ a continuous vector field
such that $Y$ is $\psi$-related to~$Z$,
then $X$ is $(\psi\circ\phi)$-related to~$Z$,
as $Z\circ \psi\circ\phi=T\psi\circ Y\circ \phi=
T\psi\circ T\phi\circ X=T(\psi\circ\phi)\circ X$.
\end{numba}
\begin{numba}
Consider an $FC^1$-map $\phi\colon U\to V$
between
open sets $U\sub E$ and $V\sub F$ in Banach spaces.
Consider continuous vector fields
$X\colon U\to TU=U\times E$ and $Y\colon V\to TV=V\times F$;
thus $X=(\id_U,\xi_X)$ and $Y=(\id_V,\xi_Y)$
with continuous maps $\xi_X\colon U\to E$
and $\xi_Y\colon V\to F$.
Then $X$ is $\phi$-related to~$Y$ if and~only~if
\[
\xi_Y\circ \phi=(d\phi)\circ (\id_U,\xi_X),
\]
in which case we shall say that
$\xi_X$ is \emph{$\phi$-related} to $\xi_Y$.
\end{numba}
\begin{numba}\label{use-related}
Let $f\colon E\times U\to F$
be as in~\ref{situ-bvde}. Let $Y$ be a Banach space,
$V\sub Y$ be an open subset and
$g\colon E\times V\to Y$ be a map
such that $\wt{g}(z):=g(\cdot,z)\colon E\to Y$
is continuous linear for all $z\in V$ and
$\wt{g}\colon V\to(\cL(E,Y),\|\cdot\|_{\op})$
is continuous.
Let $\tau\colon U\to V$ be an $FC^1$-map
and assume that
\begin{equation}\label{global-related}
g(v,\tau(x))=d\tau(x,f(v,x))\quad\mbox{for all $\,v\in E$
and $x\in U$.}
\end{equation}
Thus $f(v,\cdot)$ is $\tau$-related to $g(v,\cdot)$,
for each $v\in E$.
Let $a<b$ be real numbers, $\mu\in \Mdna([a,b],E)$
and $\gamma\colon [\alpha,\beta]\to U$
be a $\BV$-function, with
$a\leq \alpha<\beta\leq b$.\smallskip

\noindent
Then we have:
\emph{If $\gamma$ solves the $\BV$-differential equation}
\[
y '=f_*(\mu,y),
\]
\emph{then $\tau\circ\gamma$ solves the $\BV$-differential
equation}
\[
y'=g_*(\mu,y).
\]
In fact,
if we write $\gamma'=\rho\, {\rm d}\nu$
with $\nu\in \Mes([\alpha,\beta])_+$
and $\rho\in \cL^1(\nu,F)$, then
$\tau\circ\gamma\in \BV([a,b],V)$
and
\begin{eqnarray*}
(\tau\circ \gamma)' &=& (d\tau)_*(\gamma,\gamma')=
(d\tau)_*(\gamma,f_*(\mu,\gamma))\\
&=& (d\tau)_*(\gamma,f_*(\rho\, {\rm d}\nu,\gamma))
=(d\tau)_*(\gamma,(f\circ (\rho,\gamma))\, {\rm d}\nu))\\
&= & (d\tau)\circ (\gamma,f\circ(\gamma,\rho))\, {\rm d}\nu
=(g\circ (\rho,\tau\circ\gamma))\,{\rm d}\nu
=g_*(\mu,\tau\circ\gamma),
\end{eqnarray*}
by Remarks~\ref{rem-explicit-1} and~\ref{rem-explicit-2}.
\end{numba}
\subsection*{BV-differential equations on manifolds}
\noindent
So far, we studied $\BV$-differential
equations on open subsets of Banach spaces.
We now extend the approach to
differential equations on Banach manifolds.
\begin{numba}\label{situ-Mx}
Let $M$ be a Banach manifold modeled
on a Banach space~$F$.
Let $E$ be a Banach space
and
\[
f\colon E\times M\to TM
\]
be a continuous map such that $f(v,p)\in T_pM$
for all $(v,p)\in E\times M$,
the map $f(\cdot,p)\colon E\to T_pM$
is linear for each $p\in M$, 
and such that, for each chart
$\phi\colon U_\phi\to V_\phi\sub F$ of~$M$,
the map
\begin{equation}
f_\phi:=d\phi\circ f\circ (\id_E\times \phi^{-1})\colon
E\times V_\phi\to F,\;\;
(v,x)\mto d\phi (f(v,\phi^{-1}(x)))
\end{equation}
has the property that
\[
\wt{f}_\phi\colon V_\phi\to(\cL(E,F),\|\cdot\|_{\op}),
\quad
x\mto f_\phi(\cdot,x)
\]
is continuous.
\end{numba}
\begin{numba}\label{situ-m-2}
Let $a<b$ be in $\R$; let
$\mu \in \Mdna([a,b],E)$
and $\alpha,\beta\in [a,b]$ with $\alpha<\beta$.
We call a $\BV$-function $\gamma\colon [\alpha,\beta]\to M$
a \emph{solution to the $\BV$-differential equation}
\begin{equation}\label{undef}
\dot{y}=f_*(\mu,y)
\end{equation}
if $\phi\circ \gamma|_K$
solves the $\BV$-differential equation
\[
y'=(f_\phi)_*(\mu,y)
\]
for each chart $\phi\colon U_\phi\to  V_\phi\sub F$
of~$M$ and each non-degenerate
compact subinterval $K\sub [\alpha,\beta]$
such that $\gamma(K)\sub U_\phi$.
If $t_0\in [a,b]$ and $y_0\in M$, we say that $\gamma$
\emph{solves the $\BV$-initial value problem}
\begin{equation}\label{undef-2}
\dot{y}=f_*(\mu,y),\quad y(t_0)=y_0
\end{equation}
if $t_0\in [\alpha,\beta]$,
$\gamma(t_0)=y_0$ and
$\gamma$ solves (\ref{undef}).
\end{numba}
\begin{rem}
Note that neither do we define
$\dot{y}$ nor $f_*(\mu,y)$.
\end{rem}
\begin{la}\label{chara-sol-mfd}
In the situation of~{\rm\ref{situ-m-2}},
the following conditions are equivalent
for $\alpha,\beta\in [a,b]$
with $\alpha<\beta$
and a function
$\gamma\colon [\alpha,\beta]\to M$:
\begin{itemize}
\item[\rm(a)] $\gamma$ is $\BV$-function
which solves the $\BV$-differential equation
{\rm(\ref{undef})}.
\item[\rm(b)] There exists a
subdivision $a=t_0<t_1<\cdots <t_n=b$ of $[a,b]$
and charts $\phi_j\colon U_j\to V_j\sub F$
of~$M$ with $\gamma([t_{j-1},t_j])\subseteq U_j$
for $j\in \{1,\ldots, n\}$,
such that $\phi_j\circ \gamma|_{[t_{j-1},t_j]}$
is a $\BV$-solution of the $\BV$-differential
equation
\[
y'=(f_\phi)_*(\mu,y).
\]
\end{itemize}
\end{la}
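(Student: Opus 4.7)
\textbf{Proof plan for Lemma~\ref{chara-sol-mfd}.}
The implication (a)$\Rightarrow$(b) is almost immediate. By the definition of $\BV([\alpha,\beta],M)$, pick a subdivision $\alpha=t_0<\cdots<t_n=\beta$ and charts $\phi_j\colon U_j\to V_j$ with $\gamma([t_{j-1},t_j])\sub U_j$ and $\phi_j\circ \gamma|_{[t_{j-1},t_j]}\in \BV([t_{j-1},t_j],F)$. Each $K:=[t_{j-1},t_j]$ is a non-degenerate compact subinterval with $\gamma(K)\sub U_j$, so the hypothesis in (a) applied to this pair yields that $\phi_j\circ \gamma|_{[t_{j-1},t_j]}$ solves $y'=(f_{\phi_j})_*(\mu,y)$.

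For (b)$\Rightarrow$(a), first note that $\gamma$ is a $\BV$-function because each piece $\phi_j\circ \gamma|_{[t_{j-1},t_j]}$ is $\BV$ by hypothesis. To verify the solution property, fix an arbitrary chart $\psi\colon U_\psi\to V_\psi$ of $M$ and a non-degenerate compact subinterval $K\sub [\alpha,\beta]$ with $\gamma(K)\sub U_\psi$. Refine the subdivision $(t_j)$ so that the endpoints of~$K$ are included; then $K$ is a finite union of intervals $K_i:=K\cap [t_{j(i)-1},t_{j(i)}]$ on each of which both $\phi_{j(i)}$ and $\psi$ are defined on the image of~$\gamma$.

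The main step is a change-of-chart argument via~\ref{use-related}. Set $\tau:=\psi\circ \phi_{j(i)}^{-1}$ on $\phi_{j(i)}(U_{j(i)}\cap U_\psi)$. Since $f(v,p)\in T_pM$ satisfies $T\phi_{j(i)}(f(v,p))=f_{\phi_{j(i)}}(v,\phi_{j(i)}(p))$ and $T\psi(f(v,p))=f_\psi(v,\psi(p))$, applying $T\tau=T\psi\circ T\phi_{j(i)}^{-1}$ we obtain the compatibility
\[
f_\psi(v,\tau(x))=d\tau(x,f_{\phi_{j(i)}}(v,x))
\]
for all $v\in E$ and $x$ in the relevant chart image. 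This is precisely condition~(\ref{global-related}). Since $\phi_{j(i)}\circ \gamma|_{K_i}$ is a $\BV$-solution of $y'=(f_{\phi_{j(i)}})_*(\mu,y)$ by~(b), \ref{use-related} yields that $\tau\circ \phi_{j(i)}\circ \gamma|_{K_i}=\psi\circ \gamma|_{K_i}$ is a $\BV$-solution of $y'=(f_\psi)_*(\mu,y)$.

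It remains to glue these local solutions on~$K$. By Lemma~\ref{into-prod}, $\psi\circ \gamma|_K\in \BV(K,F)$ and its derivative measure is determined piecewise by the derivatives on each~$K_i$; moreover $f_*(\mu,\psi\circ \gamma|_K)$ restricts to $f_*(\mu|_{K_i},\psi\circ \gamma|_{K_i})$ on each Borel subset of $K_i$, by construction of $\odot_\ve$ via the restriction of the underlying vector measure. Since the two $\Mdna(K,F)$-valued measures $(\psi\circ \gamma|_K)'$ and $(f_\psi)_*(\mu,\psi\circ \gamma|_K)$ agree on each piece $K_i$ by the previous step, they agree on~$K$. The anticipated obstacle is merely the verification of this additive gluing of $\BV$-derivatives and of $f_*$ across partition points; this is carried out with Lemma~\ref{into-prod}, the piecewise description of $f_*$ recorded in Remark~\ref{relax}, and the fact that $(\psi\circ \gamma|_K)'$ is non-atomic so no mass is concentrated at the partition points.
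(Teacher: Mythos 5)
Your proposal is correct and follows essentially the same route as the paper's proof: (a)$\Rightarrow$(b) by unwinding the definitions, and for (b)$\Rightarrow$(a) refining the subdivision to include the endpoints of $K$, transporting each local solution through the transition map $\tau=\psi\circ\phi_{j}^{-1}$ via the relatedness mechanism of \ref{use-related}, and then gluing. The only (immaterial) difference is that the paper packages the gluing as Lemma~\ref{twopiece} and reduces to a single piece before changing charts, whereas you change charts piecewise and glue the derivative measures directly via Lemma~\ref{into-prod} — which is exactly how Lemma~\ref{twopiece} is proved.
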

\begin{proof}
(a)$\Rightarrow$(b) holds by definition.
For the reverse implication, let $t_0,\ldots, t_n$ and charts $\phi_j$ be as in (b). Let $\phi\colon U_\phi\to V_\phi\sub F$
be a chart for~$M$
and $K\sub [\alpha,\beta]$ be a non-degenerate
compact subinterval such that $\gamma(K)\sub U_\phi$.
Write $K=[s,t]$.
After replacing $\{t_0,\ldots,t_n\}$
with $\{t_0,\ldots,t_n\}\cup\{s,t\}$
and duplicating charts, we may assume that
$s=t_k$ and $t=t_\ell$
for certain $k<\ell$ in $\{1,\ldots,n\}$.
Then $t_k<\cdots< t_\ell$ is a subdivision
of $[t,s]$; by Lemma~\ref{twopiece},
we only need to show that
$(\phi\circ \gamma)|_{[t_{j-1},t_j]}$
solves $y'=(f_\phi)_*(\mu,y)$ for all
$j\in \{k+1,\ldots,\ell\}$.
We may therefore assume that $n=1$
and $K=[a,b]$. Then $\gamma([a,b])\sub U_1\cap U_\phi$.
After replacing the chart domains with their
intersection, we may assume
$U:=U_1=U_\phi$. Then
\[
\tau:=\phi\circ (\phi_1)^{-1}\colon V_1\to V_\phi
\]
is a $C^\infty$-diffeomorphism.
By definition,
$f(v,\cdot)|_U$
is $\phi_1$-related to
$(\id_{V_1},f_{\phi_1}(v,\cdot))$.
By construction,
$(\id_{V_1},f_{\phi_1}(v,\cdot))$
is $\tau$-related to
\[
T\tau \circ (\id_{V_1},f_{\phi_1}(v,\cdot))
\circ\tau^{-1}=(\id_{V_\phi},f_\phi(v,\cdot)).
\]
Since $\phi_1\circ \gamma$ solves $y'=(f_{\phi_1})_*(\mu,y)$,
this implies that
$\phi\circ \gamma=\tau\circ (\phi_1\circ\gamma)$
solves $y'=(f_\phi)_*(\mu,y)$
(see \ref{use-related}).
\end{proof}
\begin{defn}
In the situation of \ref{situ-m-2},
we say that the $\BV$-differential equation
(\ref{undef}) satisfies
\emph{local uniqueness of $\BV$-solutions} if
for all $\BV$-solutions $\gamma_1\colon I_1\to M$
and $\gamma_2\colon I_2 \to M$ of \eqref{undef}
and $t_0 \in I_1 \cap I_2$ with $\gamma_1(t_0)
=\gamma_2(t_0)$, there exists a $t_0$-neighborhood
$K$ in $I_1\cap I_2$ such that
\[
\gamma_1|_K=\gamma_2|_K .
\]
\end{defn}
The following lemma can be proved like
Lemma~\ref{on-intersect},
replacing $F$ with $M$.
\begin{la}\label{on-intersect-2}
Let the differential equation \eqref{undef}
satisfy local uniqueness of $\BV$-solutions. Let $\gamma_1\colon I_1\to M$ and $\gamma_2\colon I_2 \to M$ be $\BV$-solutions to \eqref{undef} with
$\gamma_1(t_0)=\gamma_2(t_0)$
for some $t_0 \in I_1 \cap I_2$. Then
$\gamma_1 |_{I_1 \cap I_2} =\gamma_2|_{I_1 \cap I_2}$.
$\,\square$
\end{la}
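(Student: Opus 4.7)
The plan is to mimic the proof of Lemma~\ref{on-intersect} verbatim, with the Banach space $F$ replaced by the Banach manifold $M$. The only two facts from the vector-space setting that were really used there are (i) continuity of the candidate solutions, and (ii) Hausdorffness of the target, which together make the equality locus closed; combined with local uniqueness it is also open, and connectedness of an interval finishes the argument.

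Concretely, I would define
\[
A := \{t \in I_1\cap I_2 : \gamma_1(t) = \gamma_2(t)\}.
\]
Since each $\BV$-function $[\alpha,\beta]\to M$ is continuous by definition (it is continuous on charts composed with $\phi_j$ and the finite family of charts glues via Lemma~\ref{into-prod} / Lemma~\ref{chara-sol-mfd}), the maps $\gamma_1,\gamma_2 \colon I_1\cap I_2 \to M$ are continuous. Banach manifolds are Hausdorff, so the diagonal of $M\times M$ is closed and hence $A = (\gamma_1,\gamma_2)^{-1}(\Delta_M)$ is closed in $I_1\cap I_2$.

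To see that $A$ is open, let $s\in A$. By the assumed local uniqueness of $\BV$-solutions applied to $\gamma_1,\gamma_2$ at the common point $s$, there is a neighborhood $K\subseteq I_1\cap I_2$ of $s$ with $\gamma_1|_K = \gamma_2|_K$; thus $K\subseteq A$. So $A$ is open in $I_1\cap I_2$. By hypothesis $t_0\in A$, so $A\neq\emptyset$. Since $I_1\cap I_2$ is an interval, hence connected, we conclude $A = I_1\cap I_2$, i.e.\ $\gamma_1|_{I_1\cap I_2} = \gamma_2|_{I_1\cap I_2}$.

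I do not expect any real obstacle: the only point to double-check is that every $\BV$-map into $M$ is actually continuous, which is built into the definition of $\BV([a,b],M)$ at the start of Section~\ref{sec-model} (the defining condition $f([t_{i-1},t_i])\subseteq U_i$ with $\varphi_i\circ f|_{[t_{i-1},t_i]}\in \BV([t_{i-1},t_i],E)$ already forces continuity on each piece, and the pieces agree at the endpoints). After that, the proof is literally the connectedness argument from Lemma~\ref{on-intersect} transcribed with $M$ in place of $F$.
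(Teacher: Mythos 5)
Your proposal is correct and is exactly the argument the paper intends: the paper's proof of this lemma consists of the single remark that one argues as in Lemma~\ref{on-intersect} with $F$ replaced by $M$, which is precisely the closed-open-connected argument you transcribe. Your additional check that manifold-valued $\BV$-maps are continuous and that $M$ is Hausdorff correctly identifies the only two facts needed for the transfer.
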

\begin{prop}\label{main-global}
Let $(E,\|\cdot\|_E)$ be a Banach space,
$M$ be a manifold modeled on a Banach space~$F$
and $f\colon E\times M\to TM$
be a $C^2$-map
such that $f(v,p)\in T_pM$
for all $(v,p)\in E\times M$
and $f(\cdot,p)\colon E\to T_pM$
is linear for all $p\in M$.
Then the following holds.
\begin{itemize}
\item[\rm(a)]
For all real numbers $a<b$ and each $\mu\in \Mdna([a,b],E)$,
the differential equation $\dot{y}=f_*(\mu,y)$
satisfies local uniqueness of $\BV$-solutions.
\item[\rm(b)]
For each $y_0\in M$,
there exists $\ve>0$
such that, for all $a<b$ and each $\mu\in \Mdna([a,b],E)$
with variation norm $\|\mu\|<\ve$,
and each $t_0\in [a,b]$,
the $\BV$-initial value problem
\begin{equation}\label{inivapro-2}
\dot{y}=f_*(\mu,y),\qquad
y(t_0)=y_0
\end{equation}
has a $\BV$-solution $\eta_\mu\colon [a,b]\to M$
defined on all of $[a,b]$.
\item[\rm(c)]
For $\ve$ as in {\rm(b)} and all $a<b$ in $\R$,
the set $\{\mu\in \Mdna([a,b],E)\colon \|\mu\|<\ve\}$
$=: Q$
is open in $\Mdna([a,b],E)$.
If $f$ is, moreover, $C^{k+1}$ with $k\in \N_0\cup\{\infty\}$,
then the following map is $C^k$:
\[
Q\to\BV([a,b],M),\quad \mu\mto\eta_\mu .
\]
\end{itemize}
\end{prop}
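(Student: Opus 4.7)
The plan is to reduce all three parts to the local theory on open subsets of Banach spaces already established in Proposition~\ref{carath-ex}, by working in a chart around a fixed initial value and using Lemma~\ref{chara-sol-mfd} to translate between the manifold-level notion of $\BV$-solution and its chart-based characterization.

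For (a), I would take $\BV$-solutions $\gamma_1\colon I_1\to M$ and $\gamma_2\colon I_2\to M$ with $\gamma_1(t_0)=\gamma_2(t_0)=p$, pick any chart $\phi\colon U_\phi\to V_\phi$ of~$M$ with $p\in U_\phi$, and use continuity of $\gamma_1,\gamma_2$ to find a compact $t_0$-neighborhood $K\subseteq I_1\cap I_2$ with $\gamma_j(K)\sub U_\phi$. Since $f$ is $C^2$, the chart representative $f_\phi$ is $C^2$ and $\wt{f_\phi}$ is therefore locally Lipschitz (Remark~\ref{from-axioms}(a)); shrinking $K$ lets me assume $\phi\circ\gamma_j|_K$ has image in a ball on which $\wt{f_\phi}$ is Lipschitz. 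By definition of a $\BV$-solution on $M$, both $\phi\circ\gamma_j|_K$ solve the local $\BV$-ODE $y'=(f_\phi)_*(\mu,y)$, so Proposition~\ref{carath-ex}(a) (i.e.\ Lemma~\ref{lem-BV-loc-uniq}) gives $\phi\circ\gamma_1=\phi\circ\gamma_2$ on a $t_0$-neighborhood; applying $\phi^{-1}$ concludes.

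For (b), given $y_0$ I would fix once and for all a chart $\phi\colon U_\phi\to V_\phi$ with $\phi(y_0)=0$, choose $R>0$ with $\wb{B^F_R(0)}\sub V_\phi$, and set $L$ to be a Lipschitz constant of $\wt{f_\phi}$ on $B^F_R(0)$ together with $C\coloneq \sup\{\|\wt{f_\phi}(x)\|_{\op}\colon x\in B^F_R(0)\}<\infty$. Choosing $\ve>0$ with $C\ve<R$ and $L\ve<1$, the thresholds appearing in Proposition~\ref{carath-ex}(b) depend only on the variation norm (not on $a,b,t_0$). Thus, for arbitrary $a<b$, any $\mu\in\Mdna([a,b],E)$ with $\|\mu\|<\ve$, and any $t_0\in[a,b]$, Proposition~\ref{carath-ex}(b) applied to the restriction of $f_\phi$ to $E\times B^F_R(0)$ yields a $\BV$-solution $\xi_\mu\colon[a,b]\to B^F_R(0)$ with $\xi_\mu(t_0)=0$, and $\eta_\mu\coloneq\phi^{-1}\circ\xi_\mu$ is the desired solution by Lemma~\ref{chara-sol-mfd}.

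For (c), the set $Q=\{\mu\colon \|\mu\|<\ve\}$ is an open ball, hence convex and open. If $f$ is $C^{k+1}$, then $f_\phi$ is $C^{k+1}$ and $\wt{f_\phi}$ is $C^k$ by Remark~\ref{from-axioms}(b)(c), so Proposition~\ref{carath-ex}(c) gives a $C^k$-map $Q\to\BV([a,b],F)$, $\mu\mto\xi_\mu$. Since each $\xi_\mu$ has image in the open set $B^F_R(0)\sub V_\phi$, this corestricts to a $C^k$-map into the open submanifold $\BV([a,b],V_\phi)$ of $\BV([a,b],F)$. Composing with the smooth pushforward $\BV([a,b],\phi^{-1})\colon \BV([a,b],V_\phi)\to\BV([a,b],U_\phi)$ (smooth by Proposition~\ref{prop:smooth_pushfwd}) and the inclusion of the open submanifold $\BV([a,b],U_\phi)\hookrightarrow\BV([a,b],M)$ (Remark~\ref{rem-open-subman}) gives the desired $C^k$-dependence. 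The main obstacle I expect is bookkeeping in part~(c): one must verify that all the openness statements used (the image of $\xi_\mu$ inside $V_\phi$, the inclusion $\BV([a,b],U_\phi)\sub\BV([a,b],M)$) genuinely turn the local smoothness coming from Proposition~\ref{carath-ex}(c) into smoothness at the manifold level, without appealing to any further manifold-of-maps construction beyond what has already been proved.
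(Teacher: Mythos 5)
Your proposal is correct and follows essentially the same route as the paper's proof: all three parts are reduced to Proposition~\ref{carath-ex} by fixing a chart around $y_0$ (shrunk to a ball on which $\wt{f}_\phi$ is Lipschitz), translating via Lemma~\ref{chara-sol-mfd}, and returning to the manifold level through the smooth pushforward of Proposition~\ref{prop:smooth_pushfwd}. The only cosmetic difference is that you spell out the chart argument for local uniqueness in (a), which the paper dispatches with the remark that local uniqueness can be checked in charts.
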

\begin{proof}
(a) Since $f$ is $C^2$, also all of the maps
$f_\phi\colon E\times V_\phi\to F$
are $C^2$,
whence $\wt{f}_\phi
\colon V_\phi\to (\cL(E,F),\|\cdot\|_{\op})$
is locally Lipschitz, by Remark~\ref{from-axioms}\,(a).
As a consequence,
$y'=(f_\phi)_*(\mu,y)$
satisfies local uniqueness of $\BV$-solutions
for each chart~$\phi$
(cf.\ Lemma~\ref{lem-BV-loc-uniq}).
As local uniqueness can be checked in charts,
we deduce that $\dot{y}=f_*(\mu,y)$
satisfies local uniqueness of $\BV$-solutions.\smallskip

(b) and (c): Let $\phi\colon U_\phi\to V_\phi\sub F$ be a chart
of~$M$ such that $y_0\in U_\phi$.
After shrinking the domain,
we may assume that $V_\phi$
is a ball $B^F_R(x_0)$ for some $x_0\in F$
and $\wt{f}_\phi\colon V_\phi\to (\cL(E,F),
\|\cdot\|_{\op})$
is Lipschitz. Let $L$ be a Lipschitz
constant for $\wt{f}_\phi$.
Choose $\ve>0$ so small that $\ve L<1$
and $\ve \|\wt{f}_\phi\|_\infty<R$.
Then
\[
y'=(f_\phi)_*(\mu,y),\quad y(t_0)=x_0
\]
has a solution $\eta_\mu\colon [a,b]\to B^F_R(x_0)$
for all $a<b$ in $\R$, all
$\mu \in \Mdna([a,b],F)$ with variation norm
$\|\mu\|<\ve$, and all $t_0\in [a,b]$.
Moreover, the set $Q:=\{\mu\in \Mdna([a,b],F)\colon \|\mu\|<\ve\}$ is open in $\Mdna([a,b],F)$
and the map $h\colon Q\to \BV([a,b],B_R^F(x_0))$,
$\mu\mto \eta_\mu$ is $C^k$, by Proposition~\ref{carath-ex}
(b) and (c). We are using here that $\wt{f}_\phi$
is $C^k$ as $f_\phi$ is $C^{k+1}$
(see \cite[Proposition~1\,(b)]{Asp}).
Then $\phi^{-1}\circ\eta_\mu$
is a $\BV$-solution for $\dot{y}=f_*(\mu,y)$,
$y(t_0)=y_0$
(cf.\ Lemma~\ref{chara-sol-mfd}).
Since
\[
(\phi^{-1})_*\colon \BV([a,b],B^F_R(x_0))\to \BV([a,b],M),\;\;
\gamma\mto\phi^{-1}\circ\gamma
\]
is smooth (see Proposition~\ref{prop:smooth_pushfwd})
and $h$ is $C^k$, we deduce that
the composition
$(\phi^{-1})_*\circ h\colon
Q\to \BV([a,b],M)$, $\mto \phi^{-1}\circ \eta_\mu$
is~$C^k$.
\end{proof}
\noindent
Applying local charts,
the following assertion is immediate from
Lemma~\ref{chara-sol-mfd}
and Corollary~\ref{cor-loc-ex}.
\begin{la}
Let $M$ be a Banach manifold, $E$ be a Banach space
and $f\colon E\times M\to TM$
be a $C^2$-map such that $f(v,p)\in T_pM$
for all $(v,p)\in E\times M$
and $f(\cdot,p)\colon E\to T_pM$
is linear for each $p\in M$.
Then the differential equation $\dot{y}=f_*(\mu,y)$
satisfies local existence
of $\BV$-solution for all real numbers
$a<b$ and each $\mu\in \Mdna([a,b],E)$,
in the following sense: For each $t_0\in [a,b]$
and each $y_0\in M$,
there exists a compact subinterval $K\sub [a,b]$
which is a $t_0$-neighborhood in $[a,b]$
and a $\BV$-solution $\eta\colon K\to M$
for the initial value problem
$\dot{y}=f_*(\mu,y)$, $y(t_0)=y_0$. $\,\square$
\end{la}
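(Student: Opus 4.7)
The plan is to reduce the manifold statement directly to the local Banach-space version already established in Corollary~\ref{cor-loc-ex}, using a single chart at $y_0$, and then invoke Lemma~\ref{chara-sol-mfd} to re-interpret the resulting local solution as a solution of the manifold equation.

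First, I would pick a chart $\phi\colon U_\phi\to V_\phi\sub F$ of $M$ with $y_0\in U_\phi$, and form the local representation $f_\phi\colon E\times V_\phi\to F$ from~\ref{situ-Mx}. Since $f$ is $C^2$ and $\phi,\phi^{-1}$ are smooth, $f_\phi$ is $C^2$, and the associated map $\wt f_\phi(x):=f_\phi(\cdot,x)\colon E\to F$ is continuous linear for every $x\in V_\phi$ (by the hypothesis on $f(\cdot,p)$ and the linearity of the tangent map of $\phi$ fiberwise). Thus the hypotheses of Corollary~\ref{cor-loc-ex} are satisfied for the quadruple $(E,F,V_\phi,f_\phi)$.

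Next, I would apply Corollary~\ref{cor-loc-ex} with the initial data $(t_0,\phi(y_0))\in [a,b]\times V_\phi$ and with $\mu\in \Mdna([a,b],E)$ from the hypothesis. This provides a compact subinterval $K\sub [a,b]$ which is a $t_0$-neighborhood in $[a,b]$, together with a $\BV$-solution $\wt\eta\colon K\to V_\phi$ of the $\BV$-initial value problem $y'=(f_\phi)_*(\mu|_K,y)$, $y(t_0)=\phi(y_0)$. Here the restriction $\mu|_K$ lies in $\Mdna(K,E)$ automatically, and after shrinking $K$ if necessary (a continuous $\wt\eta$ stays in $V_\phi$ on some compact $t_0$-neighborhood) we may assume that the full trajectory is contained in $V_\phi$.

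Finally, I would set $\eta:=\phi^{-1}\circ\wt\eta\colon K\to U_\phi\sub M$. Then $\eta(t_0)=y_0$, and composing the $F$-valued $\BV$-function $\wt\eta$ with the smooth chart inverse $\phi^{-1}$ makes $\eta$ an $M$-valued $\BV$-function, as witnessed by the trivial subdivision $\{K\}$ together with the chart $\phi$ (since $\phi\circ\eta=\wt\eta\in\BV(K,F)$). The implication (b)$\Rightarrow$(a) of Lemma~\ref{chara-sol-mfd}, applied to that same subdivision and chart, then confirms that $\eta$ solves $\dot y=f_*(\mu,y)$ on $K$ in the sense of~\ref{situ-m-2}. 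No genuine obstacle is to be expected here: the analytic work was already done in Corollary~\ref{cor-loc-ex}, and the chart-invariance of the solution concept is precisely what Lemma~\ref{chara-sol-mfd} provides.
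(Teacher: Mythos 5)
Your proposal is correct and follows exactly the route the paper intends: the paper's proof is literally the remark that the lemma is ``immediate from Lemma~\ref{chara-sol-mfd} and Corollary~\ref{cor-loc-ex}, applying local charts,'' and you have simply spelled out that one-chart reduction. The only superfluous step is the ``shrinking $K$'' remark, since Corollary~\ref{cor-loc-ex} already produces a solution with values in the chart image.
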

\begin{la}\label{mfd-related-sol}
Let $f\colon E\times M\to TM$
be as in \emph{\ref{situ-Mx}} and $f_1\colon E\times N\to TN$
be an analogous map. Assume that
$\psi\colon M\to N$ is an $\text{FC}^1$-map
such that $f(v,\cdot)$ is $\psi$-related to $f_1(v,\cdot)$
for all $v\in E$.
Then $\psi\circ\eta$ is a $\BV$-solution
to $\dot{y}=(f_1)_*(\mu,y)$
for all $a<b$, each $\mu\in \Mdna([a,b],E)$
and each $\BV$-solution $\eta\colon [\alpha,\beta]\to M$ to $\dot{y}=f_*(\mu,y)$ with $\alpha<\beta$ in $[a,b]$.
\end{la}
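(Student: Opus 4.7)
The plan is to reduce to the local relatedness statement in \ref{use-related} by working simultaneously in suitable charts on $M$ and $N$. Since $\eta\colon [\alpha,\beta]\to M$ is continuous and $\psi\circ\eta\colon [\alpha,\beta]\to N$ is continuous as well, I would first choose a subdivision $\alpha=t_0<t_1<\cdots <t_n=\beta$ together with charts $\phi_j\colon U_{\phi_j}\to V_{\phi_j}\sub F$ of $M$ and $\chi_j\colon U_{\chi_j}\to V_{\chi_j}\sub F_N$ of $N$ such that $\eta([t_{j-1},t_j])\sub U_{\phi_j}$, $(\psi\circ\eta)([t_{j-1},t_j])\sub U_{\chi_j}$, and (after shrinking $U_{\phi_j}$) $\psi(U_{\phi_j})\sub U_{\chi_j}$ for each $j\in\{1,\ldots, n\}$. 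By refining if necessary, I may also arrange (using Lemma~\ref{chara-sol-mfd}(a)$\Rightarrow$(b)) that $\phi_j\circ\eta|_{[t_{j-1},t_j]}$ solves $y'=(f_{\phi_j})_*(\mu,y)$ for every $j$.

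Next I would translate the hypothesis of $\psi$-relatedness into the local form required by~\ref{use-related}. Setting
\[
\tau_j \coloneq \chi_j\circ\psi\circ\phi_j^{-1}\colon V_{\phi_j}\to V_{\chi_j},
\]
which is an $\text{FC}^1$-map, the identity $\chi_j\circ\psi=\tau_j\circ\phi_j$ and the definitions of $f_{\phi_j}$ and $(f_1)_{\chi_j}$ yield, for every $v\in E$ and $x\in V_{\phi_j}$,
\[
(f_1)_{\chi_j}(v,\tau_j(x)) = d\chi_j\bigl(f_1(v,\psi(\phi_j^{-1}(x)))\bigr) = d\chi_j\bigl(T\psi(f(v,\phi_j^{-1}(x)))\bigr) = d\tau_j\bigl(x, f_{\phi_j}(v,x)\bigr),
\]
where the middle equality uses $\psi$-relatedness and the last one uses $T\chi_j\circ T\psi = T\tau_j\circ T\phi_j$. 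This is precisely the relatedness condition~\eqref{global-related} of~\ref{use-related}, with $f_{\phi_j}$ in the role of $f$, $(f_1)_{\chi_j}$ in the role of $g$, and $\tau_j$ in the role of $\tau$.

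Applying~\ref{use-related} on each interval $[t_{j-1},t_j]$ then gives that $\tau_j\circ (\phi_j\circ\eta|_{[t_{j-1},t_j]}) = \chi_j\circ\psi\circ\eta|_{[t_{j-1},t_j]}$ is a $\BV$-function that solves $y'=((f_1)_{\chi_j})_*(\mu,y)$. Invoking the (b)$\Rightarrow$(a) direction of Lemma~\ref{chara-sol-mfd} for $\psi\circ\eta$, $N$, and $f_1$, we conclude that $\psi\circ\eta$ is a $\BV$-solution of $\dot{y}=(f_1)_*(\mu,y)$.

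No step is truly difficult; the main bookkeeping challenge is arranging a single subdivision and matched pairs of charts $(\phi_j,\chi_j)$ that simultaneously satisfy $\eta([t_{j-1},t_j])\sub U_{\phi_j}$, $\psi(U_{\phi_j})\sub U_{\chi_j}$, and the local solvability provided by Lemma~\ref{chara-sol-mfd}. Once this is in place, the computation that $\psi$-relatedness of vector fields passes to the local form~\eqref{global-related} is a routine application of the chain rule, and~\ref{use-related} delivers the conclusion.
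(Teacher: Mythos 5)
Your proposal is correct and takes essentially the same route as the paper: both localize in matched pairs of charts with $\psi(U_{\phi_j})\subseteq U_{\chi_j}$, transport the $\psi$-relatedness to the local representatives $f_{\phi_j}$ and $(f_1)_{\chi_j}$ via $\tau_j=\chi_j\circ\psi\circ\phi_j^{-1}$, and then invoke \ref{use-related} together with Lemma~\ref{chara-sol-mfd}. The only cosmetic difference is that you fix a global subdivision of $[\alpha,\beta]$ up front, whereas the paper argues locally around each $t_0$ and lets the solution criterion handle the patching.
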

\begin{proof}
Given $t_0\in [\alpha,\beta]$,
there exist charts $\kappa\colon U_\kappa\to V_\kappa$ of~$N$
and $\phi\colon U_\phi\to V_\phi$ of~$M$
such that $\psi(U_\phi)\sub U_\kappa$
and $\eta(t_0)\in U_\phi$. There exists a compact
subinterval $K\sub [\alpha,\beta]$
which is a $t_0$-neighborhood in $[\alpha,\beta]$
such that $\eta(K)\sub U_\phi$.
Now $f(v,\cdot)$ is $\phi$-related to
$f_\phi(v,\cdot)$ and
$f_1(v,\cdot)$ is $\kappa$-related to $(f_1)_\psi(v,\cdot)$
for all $v\in E$,
whence $f_\phi(v,\cdot)$ is
$\kappa\circ\psi\circ \phi^{-1}$-related
to $(f_1)_\kappa(v,\cdot)$.
Since $\phi\circ \eta|_K$ solves $y'=(f_\phi)_*(\mu,y)$,
\ref{use-related}
shows that $\kappa\circ \psi\circ\eta|_K=
(\kappa\circ\psi\circ \phi^{-1})\circ (\phi\circ\eta|_K)$
solves $y'=((f_1)_\kappa)_*(\mu,y)$.
Hence $\psi\circ\eta$ solves $\dot{y}=(f_1)_*(\mu,y)$.
\end{proof}
\begin{rem}
In the situation of Definition~\ref{defn-sol},
given a non-degenerate subinterval $I\sub [a,b]$
one can call a function $\gamma\colon I\to U$
a solution to the $\BV$-differential equation
$y'=f_*(\mu,y)$
if $\gamma|_K$ is a $\BV$-solution
for each non-degenerate compact subinterval
$K\sub I$. Likewise, in \ref{situ-m-2},
a function $\gamma\colon I\to M$
can be called a solution to $\dot{y}=f_*(\mu,y)$
if $\gamma|_K$
is a $\BV$-solution
for each non-degenerate compact subinterval
$K\sub I$. Such solutions are locally $\BV$.
In this work,
the added generality is not needed;
we only consider
$\BV$-solutions on compact intervals.
\end{rem}
\section[Banach--Lie groups are vector measure regular]{\!\!\!Banach-Lie groups are vector measure regular}\label{sec-vm-reg}

We now prove that each Banach--Lie group
is \emph{vector measure regular} in the sense
that the conclusions of Theorem~\ref{Banach-Lie-VM-reg}
are satisfied.
\begin{numba}
Throughout the section,
let $G$ be a Banach--Lie group with neutral element~$e$
and $\cg:=\Lf(G):=T_eG$. As explained in the introduction,
the map
\begin{equation}\label{liegp-f}
f\colon \cg\times G\to TG,\;\;  (v,g)\mto v.g:=T R_g(v)
\end{equation}
is smooth. We have $f(v,g)\in T_gG$ for all
$(v,g)\in \cg\times G$ and the map $f(\cdot,g)=T_eR_g\colon \cg\to T_gG$ is linear for all $g\in G$.
Thus, the $\BV$-initial value problem
\begin{equation}\label{evol-ini}
\dot{y}=f_*(\mu,y),\quad y(a)=e
\end{equation}
can be posed on $K:=[a,b]$ for all real numbers $a<b$,
using $\mu\in \Mdna(K,\cg)$ as a parameter.
If a solution $\eta\colon [a,b]\to G$
exists, it is unique by Proposition~\ref{main-global}\,(a) and Lemma~\ref{on-intersect-2}; we then
call $\eta$ the \emph{evolution} of~$\mu$
and write
\[
\Evol_K(\mu):=\eta.
\]
We let $\cD_K$ be the set of all
$\mu\in \Mdna(K,\cg)$
for which an evolution exist;
thus, we have an evolution map
\[
\Evol_K\colon \cD_K\to \BV(K,G).
\]
If $K=[0,1]$, we write $\Evol(\eta)$ in place of
$\Evol_K(\eta)$ and obtain an evolution map
on $\cD:=\cD_{[0,1]}$.
\end{numba}
\begin{la}\label{product-ints}
For each $g\in G$,
the vector field $f(v,\cdot)\colon G\to TG$
is $R_g$-related to itself
for all $v\in \cg$.
Hence, if $a<b$ are real numbers, $K:=[a,b]$,
$\mu\in \Mdna(K,\cg)$
and $\eta\colon [\alpha,\beta]\to G$
is a $\BV$-solution to $\dot{y}=f_*(\mu,y)$
with $\alpha<\beta$ in~$K$,
then also $R_g\circ \eta\colon [\alpha,\beta]\to G$,
$t\mto \eta(t)g$
is a $\BV$-solution to $\dot{y}=f_*(\mu,y)$.
Notably, $R_g\circ \Evol_K(\mu)$
is the unique $\BV$-solution to $\dot{y}=f_*(\mu,y)$, $y(a)=g$
defined on~$K$, for each $\mu\in \cD_K$.
\end{la}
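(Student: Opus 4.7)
The plan is to reduce everything to Lemma~\ref{mfd-related-sol} by checking the $R_g$-relatedness of the right-invariant vector field $f(v,\cdot)$ to itself, which is essentially the group law in differential form. Once that is in hand, the second statement is immediate from Lemma~\ref{mfd-related-sol}, and the uniqueness assertion is just a matter of matching initial conditions and invoking the local uniqueness already proved in Proposition~\ref{main-global}\,(a) together with Lemma~\ref{on-intersect-2}.

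First I would verify the relatedness. Since the group multiplication satisfies $R_g\circ R_h=R_{hg}$ for all $h,g\in G$, the chain rule yields $T R_g\circ T R_h=T(R_g\circ R_h)=T R_{hg}$. Evaluating on $v\in T_eG=\cg$ at the base point $e$ gives
\[
T R_g\bigl(T_e R_h(v)\bigr)=T_e R_{hg}(v),\quad\text{i.e.,}\quad TR_g\circ f(v,\cdot)=f(v,\cdot)\circ R_g,
\]
which is exactly the statement that the smooth vector field $f(v,\cdot)\colon G\to TG$ is $R_g$-related to itself, for each $v\in\cg$. Note $R_g$ is a $C^\infty$-diffeomorphism (hence $\text{FC}^1$), so the hypothesis of Lemma~\ref{mfd-related-sol} is satisfied with $M=N=G$, $\psi=R_g$, $f_1=f$.

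Next, given any $\BV$-solution $\eta\colon[\alpha,\beta]\to G$ of $\dot y=f_*(\mu,y)$ on a non-degenerate subinterval of $K$, Lemma~\ref{mfd-related-sol} applied to $\psi=R_g$ tells us that $R_g\circ\eta\colon[\alpha,\beta]\to G$, $t\mapsto \eta(t)g$, is a $\BV$-solution of the same equation $\dot y=f_*(\mu,y)$. In particular, for $\mu\in\cD_K$, the function $t\mapsto\Evol_K(\mu)(t)\,g$ is a $\BV$-solution of $\dot y=f_*(\mu,y)$ on $K$ whose value at $a$ is $\Evol_K(\mu)(a)\,g=e\,g=g$, so it is a $\BV$-solution of the initial value problem $\dot y=f_*(\mu,y)$, $y(a)=g$.

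Finally, uniqueness on all of $K$ follows directly from Proposition~\ref{main-global}\,(a) applied to $\dot y=f_*(\mu,y)$ (whose hypotheses are met since $f$ in~\eqref{liegp-f} is smooth and linear in the first slot), combined with Lemma~\ref{on-intersect-2}: any two $\BV$-solutions on $K$ that agree at $a$ must agree on all of $K$. This step is routine; the only thing to be slightly careful about is that the smoothness of $f$ in~\eqref{liegp-f} as a map $\cg\times G\to TG$ indeed puts it in the setting of \ref{situ-Mx} and of Proposition~\ref{main-global}, but this is straightforward because $f(\cdot,g)=T_eR_g$ is continuous linear and varies smoothly in $g$. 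There is no genuine obstacle here; the content of the lemma is the one-line identity $TR_g\circ TR_h=TR_{hg}$.
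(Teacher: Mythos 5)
Your proposal is correct and follows essentially the same route as the paper: the identity $TR_g\circ TR_h=TR_{hg}$ gives the $R_g$-relatedness of $f(v,\cdot)$ to itself, Lemma~\ref{mfd-related-sol} then transports solutions, and uniqueness comes from Proposition~\ref{main-global}\,(a) (with Lemma~\ref{on-intersect-2} upgrading local to global uniqueness, which the paper leaves implicit here). No gaps.
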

\begin{proof}
For all $g,h\in G$, we have $R_g\circ R_h=R_{hg}$,
whence $TR_g f(v,h)=TR_g(v.h)=TR_gTR_h(v)=T(R_g\circ R_h)(v)=
TR_{hg}(v)=v.hg=f(v,hg)=f(v,R_g(h))$
for all $v\in \cg$. Thus $f(v,\cdot)$ is $R_g$-related
to $f(v,\cdot)$ for each $v$, whence
Lemma~\ref{mfd-related-sol} applies. Since $f$ is smooth,
$\dot{y}=f_*(\mu,y)$ satisfies local uniqueness
of solutions by Proposition~\ref{main-global}\,(a).
\end{proof}
\noindent
{\bf Proof of Theorem~\ref{Banach-Lie-VM-reg}.}
By Proposition~\ref{main-global} (b) and (c), there exists $\ve>0$
such that, for all real numbers $a<b$,
the evolution $\Evol_{[a,b]}(\mu)$
exists for all $\mu\in \Mdna([a,b],\cg)$
such that $\|\mu\|<\ve$, and the map
\[
\{\mu\in \Mdna([a,b],\cg)\colon \|\mu\|<\ve\}\to
\BV([a,b],G),\quad \mu\mto \Evol_{[a,b]}(\mu)
\]
is smooth.
Now consider an arbitrary
$\mu\in \Mdna([0,1],\cg)$. By Lemma~\ref{so-subdivide},
there exists $\delta>0$ such that $\Var(\mu)([\alpha,\beta])<\frac{\ve}{2}$ for all $\alpha<\beta$ in $[0,1]$
with $\beta-\alpha\leq \delta$.
Pick a subdivision $0=t_0<t_1<\cdots<t_n=1$
of $[0,1]$ such that $t_j-t_{j-1}\leq \delta$
for all $j\in \{1,\ldots, n\}$.
Then
\[
U:=\{\nu\in \Mdna([0,1],\cg)\colon \|\nu-\mu\|<\ve/2\}
\]
is an open $\mu$-neighborhood in $\Mdna([0,1],\cg)$.
Given $\nu\in U$, we have $\nu_j:=\nu|_{[t_{j-1},t_j]}\in
\Mdna(K_j,\cg)$ with $K_j:=[t_{j-1},t_j]$
for $j\in \{1,\ldots, n\}$.
Moreover, $\nu_j$ is contained in the open
$\ve$-ball $U_j\sub \Mdna(K_j,\cg)$
around~$0$;
the map
\[
U\to U_j,\quad \nu\mto \nu_j
\]
is smooth, being a restriction of the map $r_j\colon \Mdna([0,1],\cg)\to\Mdna(K_j,\cg)$,
$m\mto m|_{K_j}$ which is continuous and linear.
For $j\in \{1,\ldots,n\}$,
we already have a smooth evolution map
\[
\Evol_{K_j}|_{U_j}\colon U_j\to \BV(K_j,G).
\]
As point evaluations are smooth by Corollary~\ref{cor:smooth_eval}, also
\[
\evol_{K_j}\colon U_j\to G,\quad m \mto\Evol_{K_j}(m)(t_j)
\]
is smooth.
For each $\nu\in U$,
\[
\!\!\!\gamma_\nu(t) \!:=\!
     \begin{cases}
       \Evol_{K_1}(\nu_1)(t) &                           \mbox{if $t\in[0,t_1]$;}\\
       \Evol_{K_j}(\nu_j)(t)\evol_{K_{j-1}}(\nu_{j-1})\cdots\evol_{K_1}(\nu_1) &\mbox{if $t\in[t_{j-1},t_j]$, $j\geq 2$}\\
       \end{cases}
\]
for $t\in [0,1]$ defines a continuous
function which is piecewise of bounded variation;
thus $\gamma_\nu\in \BV([0,1],G)$.
Using Lemma~\ref{chara-sol-mfd}\,(b),
we see that $\gamma_\nu$ solves the $\BV$-initial
value problem~(\ref{evol-ini});
thus $\gamma_\nu=\Evol(\mu)$.
To complete the proof, we show
that $\Evol|_U$ is smooth.
For each $j\in \{1,\ldots, n\}$,
consider the map
\[
\rho_j\colon \BV([0,1],G)\to \BV(K_j,G),\quad \gamma\mto
\gamma|_{K_j}.
\]
Since
\[
\rho\colon\BV([0,1],G)\to\prod_{j=1}^n\BV(K_j,G),\;\; \gamma\mto(\rho_1(\gamma),\ldots, \rho_n(\gamma))
\]
is an embedding of smooth manifolds
by Lemma~\ref{mfd_splitting_intervall}, the map
$\Evol|_U$ will be smooth if $\rho\circ\Evol|_U$
is smooth, which holds if the components
$\rho_j\circ \Evol|_U$ are smooth for
all $j\in \{1,\ldots,n\}$.
Now
\[
\rho_1\circ\Evol|_U=\Evol_{K_1}\circ \, r_1|_U
\]
is smooth. The map
\[
c_j\colon G\to \BV(K_j,G)
\]
taking $g\in G$ to the constant function~$g$
is a smooth group homomorphism (see Lemma~\ref{to-constant-smooth}).
For $j\in \{2,\ldots,n\}$,
\[
\rho_j(\Evol(\nu))
=\Evol_{K_j}(\nu_j)
c_j(\evol_{K_{j-1}}(\nu_{j-1})\cdots \evol_{K_1}(\nu_1))
\in \BV(K_j,G)
\]
is $C^\infty$ in $\nu\in U$,
whence $\rho_j\circ \Evol|_U$ is smooth
also in this case. $\,\square$
\begin{numba}
Let $\kappa\colon Q\to P$
be a $C^\infty$-diffeomorphism from an open
$0$-neighborhood $Q\sub \cg$ onto an open
$e$-neighborhood $P\sub G$ such that $\kappa(0)=e$
and for all $a<b$ the set
$\BV([a,b],P)$ is open in $\BV([a,b],G)$
and the mapping $\BV([a,b],\kappa)\colon$
$\BV([a,b],Q)\to\BV([a,b],P)$
is a $C^\infty$-diffeomorphism
(see \ref{identity-chart}).
Let $\phi:=\kappa^{-1}\colon P\to Q$;
consider $f\colon \cg\times G\to TG$,
$(x,g)\mto x.g$ as in~(\ref{liegp-f}),
\[
f_\phi\colon \cg\times Q\to\cg,\quad (v,x)\mto
d\phi(f(v,\phi^{-1}(x)))
\]
and the smooth map
\[
h\colon \cg\times Q\to \cg,\quad (v,x)\mto
(f_\phi(\cdot,x))^{-1}(v).
\]
In view of Remark~\ref{rem-explicit-1}, we then have
\begin{equation}\label{hf-inv}
(f_\phi)_*(h_*(\mu,\gamma),\gamma)=\mu\quad\mbox{and}\quad
h_*((f_\phi)_*(\mu,\gamma),\gamma)=\mu
\end{equation}
for all $a<b$,
$\mu\in \Mdna([a,b],\cg)$
and $\gamma\in \cL^\infty_{\rc}([a,b],Q)$.
If $\eta\in \BV([a,b],P)$,
then
\[
\mu:=h_*((\phi\circ \eta)',\phi\circ\eta)
\in \Mdna([a,b],\cg)
\]
satisfies
\[
(f_\phi)_*(\mu,\phi\circ\eta)=(\phi\circ\eta)',
\]
whence $\eta$ is a $\BV$-solution to
\begin{equation}\label{smalleta}
\dot{y}=f_*(\mu,y)
\end{equation}
and hence
$\eta=\Evol_K(\mu)\eta(a)$
with $K:=[a,b]$, by Lemma~\ref{product-ints}.
\end{numba}
\begin{numba}
Let $\eta$ and $\mu$ be as before.
For $g\in G$, we know from Lemma~\ref{product-ints}
that also the map $\eta g\colon [a,b]\to G$,
$t\mto\eta(t)g$ is a solution to $\dot{y}=f_*(\mu,y)$.
If $\eta g\in \BV([a,b],P)$,
this implies that $\phi\circ (\eta g)$ solves
$y'=f_\phi(\mu,y)$; thus
\[
(\phi\circ (\eta g))'=(f_\phi)_*(\mu,\phi\circ (\eta g)).
\]
In view of (\ref{hf-inv}),
this implies
\begin{equation}\label{logader-unchanged}
h_*((\phi\circ (\eta g))',\phi\circ (\eta g))=\mu .
\end{equation}
\end{numba}
\begin{numba}\label{defn-logder}
Let $a<b$ in $\R$ and $K:=[a,b]$.
For each $\eta\in \BV(K,G)$,
there exists a
subdivision
$a=t_0<\cdots<t_n=b$ of~$K$
such that
\[
\eta(t)\eta(s)^{-1}\in P
\]
for all $j\in \{1,\ldots, n\}$ and $t,s\in
[t_{j-1}, t_j]=:K_j$.
Notably,
$\eta_j(t):=\eta(t)\eta(t_{j-1})^{-1}\in P$
for $j\in \{1,\ldots, n\}$ and $t\in K_j$.
We define $\delta^r_K(\eta)\in \Mdna([a,b],\cg)$
via
\[
\delta^r_K(\eta)(A):=\sum_{j=1}^n h_*((\phi\circ\eta_j)',
\phi\circ \eta_j)(A\cap K_j)\quad\mbox{for $\, A\in \cB([a,b])$.}
\]
If $K=[0,1]$,
we write $\delta^r$ in place of $\delta^r_K$.
\end{numba}
\begin{numba}
We mention that $\delta^r_K(\eta)$
is well defined, independent of the choice
of the subdivision $a=t_0<\cdots<t_n=b$.
It suffices to show that a refinement of
the subdivision does not change $\delta^r_K(\eta)$.
It suffices to pass to a refinement
of the form $\{t_0,\ldots, t_n\}\cup\{s\}$
with $s\in \;]t_{j-1},t_j[$
for some $j\in \{1,\ldots,n\}$.
Let $t'_i:=t_i$ for $i\leq j-1$, $t_j':=s$, $t_i':=t_{i-1}$
for $i\in \{j+1,\ldots, n+1\}$.
Let $\zeta_i(t):=\eta(t)\eta(t_i')^{-1}$
for $i\in \{1,\ldots, n+1\}$.
Then $\zeta_i=\eta_i$ for $i\in \{1,\ldots, j-1\}$,
$\zeta_j=\eta_j|_{[t_j,s]}$ and $\zeta_i=\eta_{i-1}$
for $i\in \{j+2,\ldots, n+1\}$.
Moreover,
\[
\zeta_{j+1}(t)=\eta(t)\eta(s)^{-1}=\eta(t)\eta(t_{j-1})^{-1}g
=\eta_j(t)g
\]
for all $t\in [s,t_{j+1}]$ with $g:=\eta(t_{j-1})\eta(s)^{-1}$,
where $\eta_j(t)\in P$ and $\zeta_{j+1}(t)\in P$.
Thus
\[
h_*((\phi\circ \zeta_{j+1})',\phi\circ \zeta_{j+1})=h_*((\phi\circ \eta_j)',\phi\circ \eta_j)|_{[s,t_{j-1}]}
\]
using (\ref{logader-unchanged}).
Hence
$\sum_{j=1}^nh_*((\phi\circ \eta_j)',\phi\circ \eta_j)(K_j\cap A)=\sum_{i=1}^{n+1}
h_*((\phi\circ \zeta_i)',\phi\circ \zeta_i)([t_{i-1}',t_i']\cap A)$
for all $A\in \cB([a,b])$.
\end{numba}
\begin{prop}\label{props-logder}
Let $a<b$ in $\R$ and $K:=[a,b]$.
\begin{itemize}
\item[\rm(a)]
The map $\delta^r_K\colon \BV(K,G)\to \Mdna(K,\cg)$,
$\eta\mto \delta^r(\eta)$
is smooth.
\item[\rm(b)]
Each $\eta\in \BV(K,G)$
solves the $\BV$-differential equation
$\dot{y}=f_*(\delta^r_K(\eta),y)$.
Thus $\eta=\Evol_K(\delta^r_K(\eta))\eta(a)$.
\item[\rm(c)]
For $\eta,\zeta\in \BV(K,G)$,
we have $\delta^r(\eta)=\delta^r(\zeta)$
if and only if there exists $g\in G$
such that $\eta(t)=\zeta(t) g$
for all $t\in K$.
\item[\rm(d)]
The map $\delta^r_K$ restricts to a $C^\infty$-diffeomorphism
$\BV_*(K,G)\to \Mdna(K,\cg)$
with inverse $\mu\mto\Evol_K(\mu)$.
\end{itemize}
\end{prop}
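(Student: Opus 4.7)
The plan is to establish (a)--(d) in this order, each part building on the previous.

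For (a), I would show smoothness by localization. Given $\eta_0\in\BV(K,G)$, pick a subdivision $a=t_0<\cdots<t_n=b$ as in~\ref{defn-logder}. Since the $\BV$-topology is finer than the compact-open topology (Lemma~\ref{lem:co-top-weaker}) and the condition ``$\eta(t)\eta(s)^{-1}\in P$ for $s,t\in K_j$'' is open in the compact-open topology, the same subdivision works on a $\BV$-open neighborhood $\cU$ of~$\eta_0$. On $\cU$ the assignment $\eta\mapsto\eta_j$ with $\eta_j(t):=\eta(t)\eta(t_{j-1})^{-1}$ is a smooth map into $\BV(K_j,P)$, being the composition of the smooth restriction (Lemma~\ref{mfd_splitting_intervall}), the smooth point evaluation at $t_{j-1}$ (Corollary~\ref{cor:smooth_eval}), the smooth constant-function embedding (Lemma~\ref{to-constant-smooth}), and the smooth group operations on $\BV(K_j,G)$ (Proposition~\ref{prop:Lie_grp}). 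The chart $\BV(K_j,\phi)$ is a smooth diffeomorphism onto $\BV(K_j,Q)$ by~\ref{identity-chart} and Proposition~\ref{prop:smooth_pushfwd}; the derivative $\gamma\mapsto\gamma'$ is a continuous linear isometry $\BV(K_j,\cg)\to\Mdna(K_j,\cg)$; and $h_*$ is smooth by Lemma~\ref{diff-pfwd}, since $h$ is smooth and linear in its first argument. Extending the resulting measures by zero to $K$ and summing (both continuous-linear operations) then shows $\delta^r_K|_\cU$ is smooth.

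For (b), the identities~(\ref{hf-inv}) give the local statement directly: setting $\mu_j:=h_*((\phi\circ\eta_j)',\phi\circ\eta_j)=\delta^r_K(\eta)|_{K_j}$, one obtains $(\phi\circ\eta_j)'=(f_\phi)_*(\mu_j,\phi\circ\eta_j)$, so $\eta_j$ is a $\BV$-solution to $\dot y=f_*(\mu_j,y)$ on $K_j$ with $\eta_j(t_{j-1})=e$. Right-translating by $\eta(t_{j-1})$ (Lemma~\ref{product-ints}), $\eta|_{K_j}$ solves the same equation. Lemma~\ref{chara-sol-mfd} assembles these pieces into a $\BV$-solution on~$K$ of $\dot y=f_*(\delta^r_K(\eta),y)$. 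Since $\Evol_K(\delta^r_K(\eta))\,\eta(a)$ is another $\BV$-solution with the same initial value (Lemma~\ref{product-ints}), local uniqueness (Proposition~\ref{main-global}(a)) combined with Lemma~\ref{on-intersect-2} forces the asserted identity.

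For (c), if $\eta(t)=\zeta(t)g$ then on a common subdivision one computes $\eta_j(t)=\zeta(t)g(\zeta(t_{j-1})g)^{-1}=\zeta_j(t)$, so the defining summands coincide. Conversely, letting $\xi:=\Evol_K(\delta^r_K(\eta))=\Evol_K(\delta^r_K(\zeta))$, part~(b) gives $\eta=\xi\,\eta(a)$ and $\zeta=\xi\,\zeta(a)$, and the required element is $g:=\zeta(a)^{-1}\eta(a)$.

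For (d), injectivity of $\delta^r_K|_{\BV_*(K,G)}$ is immediate from~(c) (with $g=e$). For surjectivity, Theorem~\ref{Banach-Lie-VM-reg} makes $\eta:=\Evol_K(\mu)\in\BV_*(K,G)$ well-defined; applying $h_*(\cdot,\phi\circ\eta_j)$ to $(\phi\circ\eta_j)'=(f_\phi)_*(\mu|_{K_j},\phi\circ\eta_j)$ and using~(\ref{hf-inv}) yields $\delta^r_K(\eta)|_{K_j}=\mu|_{K_j}$ on each piece, hence $\delta^r_K(\Evol_K(\mu))=\mu$. Smoothness in both directions then follows from~(a) and Theorem~\ref{Banach-Lie-VM-reg}. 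The main obstacle I anticipate is the careful bookkeeping in~(a)---producing a common subdivision on a $\BV$-neighborhood and tracking $\delta^r_K$ as a composition of the smooth building blocks of Sections~\ref{sec-the-rhs}--\ref{sec-canonical}---together with verifying in~(d) that the local recovery of~$\mu$ via~(\ref{hf-inv}) reassembles correctly to the global measure.
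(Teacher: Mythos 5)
Your proposal is correct and follows essentially the same route as the paper's proof: localize $\delta^r_K$ over a common subdivision valid on a whole $\BV$-neighborhood, decompose it into the smooth building blocks (restriction, evaluation, group operations, $\BV(K_j,\phi)$, $\zeta\mapsto\zeta'$, $h_*$), and derive (b)--(d) from~(\ref{hf-inv}) together with Lemmas~\ref{product-ints} and~\ref{chara-sol-mfd}. The only (inessential) difference is in~(a), where you obtain the stable subdivision by an openness argument in the compact-open topology, whereas the paper constructs the neighborhood $\gamma\,\BV(K,S)$ explicitly via $e$-neighborhoods $B$, $A$, $S$ with $BB\subseteq P$.
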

\begin{proof}
(a) Let $B\sub G$ be an open $e$-neighborhood
such that $BB\sub P$. Given $\gamma\in \BV([a,b],G)$,
its image $\gamma([a,b])$
is compact. Hence, there is an open $e$-neighborhood $A\sub G$
such that $gAg^{-1}\sub B$
for all $g\in \gamma([a,b])$.
Let $S\sub G$ be an open $e$-neighborhood such that
$S S^{-1}\sub A$. We show that $\delta^r_K$
is smooth on the open $\gamma$-neighborhood $\gamma \BV(K,S)$
in $\BV(K,G)$. Let $a=t_0<\cdots< t_n=b$
be a subdivision of~$K$ such that
\[
\gamma(t)\gamma(s)^{-1}\in B\quad\mbox{for all $j\in \{1,\ldots, n\}$ and $s,t\in [t_{j-1},t_j]=:K_j$.}
\]
Let $\eta\in \gamma \BV(K,S)$.
For $j$, $s$, and $t$ as before, we have
$\eta(t)=\gamma(t)x$ and $\eta(s)=\gamma(s)y$
for certain $x,y\in S$. Then $xy^{-1}\in A$,
whence
\[
\eta(t)\eta(s)^{-1}=\gamma(t)xy^{-1}\gamma(s)^{-1}
=\underbrace{\gamma(t)xy^{-1}\gamma(t)^{-1}}_{\in B}\underbrace{\gamma(t)\gamma(s)^{-1}}_{\in B}
\in BB\sub P.
\]
We can therefore use the previous subdivision
$t_0<\cdots<t_n$ to calculate the $\eta_j$ and $\delta^r_K(\eta)$
simultaneously for each $\eta\in \gamma \BV(K,S)$.
Note that the map
\[
\theta\colon \Mdna([a,b],\cg)\to \bigoplus_{j=1}^n\Mdna(K_j,\cg),\;\;
\mu\mto (\mu|_{K_j})_{j=1}^n
\]
is a surjective isometry if we define $\|(\mu_1,\ldots,\mu_n)\|:=\sum_{j=1}^n\|\mu_j\|$ for $\mu_j\in \Mdna(K_j,\cg)$,
with
\[
\theta^{-1}(\mu_1,\ldots, \mu_n)(A)=\sum_{j=1}^n\mu_j(A\cap K_j)
\quad\mbox{for $A\in \cB([a,b])$.}
\]
It therefore suffices to show that
$\theta\circ\delta^r_K$ is smooth on $\gamma\BV(K,S)$,
which holds if each component
\[
\eta\mto h_*((\phi\circ \eta_j)',\phi\circ \eta_j)
\]
is smooth there. Now
$\BV(K_j,\phi)\colon \BV(K_j,P)\to\BV(K_j,Q)$
is smooth for all $j\in \{1,\ldots, n\}$.
Moreover,
$h_*\colon \Mdna(K_j,\cg)\times C(K_j,Q)\to \Mdna(K_j,\cg)$
is smooth and $\BV(K_j,\cg)\to\Mdna(K_j,\cg)$, $\zeta\mto\zeta'$ is continuous and linear and thus smooth.
It only remains to note that $\eta_j=\eta|_{K_j}\eta(t_{j-1})^{-1}\in \BV(K_j,G)$ is a smooth function
of $\eta\in \BV(K,G)$
because $\BV(K_j,G)$ is a Lie group,
$G$ has a smooth inversion map,
the map $G\to \BV(K_j,G)$, $g\mto (t\mto g)$
is smooth (see Lemma~\ref{to-constant-smooth}\,(a)),
the restriction map
$\BV(K,G)\to \BV(K_j,G)$, $\zeta\mto\zeta|_{K_j}$
is smooth (cf.\ Lemma~\ref{mfd_splitting_intervall})
and the point evaluation $\BV(K,G)\to G$,
$\eta\mto\eta(t_{j-1})$ is smooth
(see Corollary~\ref{cor:smooth_eval}).\smallskip

(b) Given $\eta\in \BV(K,G)$,
let $t_1<\cdots<t_n$ and $\eta_1,\ldots,\eta_n$
be as in \ref{defn-logder}. For each $j\in \{1,\ldots,n\}$,
we know that $\eta_j$
solves $\dot{y}=f_*(\delta^r_K(\eta)|_{K_j},y)$
with $K_j:=[t_{j-1},t_j]$
(cf.\ (\ref{smalleta})),
whence also $\eta|_{K_j}$ solves this
differential equation, by Lemma~\ref{product-ints}.
Hence $\eta$ solves $\dot{y}=f_*(\delta^r_K(\eta),y)$,
whence $\eta=\Evol_K(\delta^r_K(\eta))\eta(a)$
(see Lemma~\ref{product-ints}).\smallskip

(c) If $\eta=\zeta g$,
then $\eta_j=\zeta_j$ in \ref{defn-logder}
and hence $\delta^r_K(\eta)=\delta^r_K(\zeta)$.
That $\eta=\zeta g$ for some $g\in G$
whenever $\delta^r_K(\eta)=\delta^r_K(\zeta)$
is immediate from~(b).\smallskip

(d) Since $\BV_*(K,G)$ is a submanifold of $\BV(K,G)$,
the restriction of $\delta^r_K$ to a map $D\colon \BV_*(K,G)\to \Mdna(K,\cg)$
is smooth. By~(c), $D$ is injective.
Conversely, for each $\mu\in \Mdna(K,\cg)$,
we have $\Evol_K(\mu)\in \BV_*(K,G)$.
As the latter is a submanifold and $\Evol_K$ is smooth,
also the corestriction
$E\colon \Mdna(K,\cg)\to\BV_*(K,G)$, $\mu\mto\Evol_K(\mu)$
is~$C^\infty$. If
$\mu\in \Mdna(K,\cg)$ is given,
let $\eta:=\Evol_K(\mu)$.
Let the subdivision
$a=t_0<\cdots< t_n=b$
as well as $\eta_j$ and $K_j:=[t_{j-1},t_j]$
for $j\in \{1,\ldots, n\}$
be as in \ref{defn-logder}. Then $\eta_j$
solves $\dot{y}=f_*(\delta^r_K(\eta)|_{K_j},y)$,
whence $(\phi\circ \eta_j)'=
(f_\phi)_*(\delta^r_K(\eta)|_{K_j},\phi\circ \eta_j)$.
But $\eta|_{K_j}$ solves $\dot{y}=f_*(\mu|_{K_j},y)$,
whence also $\eta_j$ solves $\dot{y}=f_*(\mu|_{K_j},y)$.
Hence $(\phi\circ \eta_j)'=(f_\phi)_*(\mu|_{K_j},\phi\circ\eta_j)$. Then
\begin{eqnarray*}
\mu|_{K_j}&=& h_*((f_\phi)_*(\mu|_{K_j},\phi\circ \eta_j),
\phi\circ\eta_j)
=
h_*((\phi\circ \eta_j)',\phi\circ\eta_j)\\
&=& h_*((f_\phi)_*((\delta^r_K(\eta))|_{K_j},\phi\circ\eta_j),
\phi\circ\eta_j)
=\delta^r_K(\eta)|_{K_j}
\end{eqnarray*}
for all $j\in \{1,\ldots, n\}$,
whence $\mu=\delta^r_K(\eta)$ and thus
$\delta^r_K(\Evol(\mu))=\mu$.
\end{proof}
\begin{rem}
As before, let $K:=[a,b]$.
Considering $\Evol_K$ as a bijection onto $\BV_*(K,G)$,
we have
\[
\delta^r_K(\eta)=(\Evol_K)^{-1}(\eta \eta(a)^{-1})
\]
for each $\eta\in \BV(K,G)$, by (c) and (d) in Proposition~\ref{props-logder}.
Hence $\delta^r_K(\eta)$ is independent of
the choices of~$\kappa$ and $S$
in the above definition.
\end{rem}
\noindent
For $g\in G$, let $c_g\colon [0,1]\to G$
be the constant function with value~$g$.
\begin{cor}\label{semid-again}
The mapping
\[
\Mdna([0,1],\cg)\times G\to \BV([0,1],G),\quad
(\mu,g)\mto \Evol(\mu)c_g
\]
is a $C^\infty$-diffeomorphism.
\end{cor}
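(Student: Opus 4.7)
The plan is to exhibit an explicit smooth two-sided inverse of the map
$\Phi\colon \Mdna([0,1],\cg)\times G\to\BV([0,1],G)$, $(\mu,g)\mto\Evol(\mu)c_g$,
namely
\[
\Psi\colon \BV([0,1],G)\to\Mdna([0,1],\cg)\times G,\quad \eta\mto(\delta^r(\eta),\eta(0)).
\]
Everything needed is available from the preceding results; no new analytical work is required.

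First, I would check smoothness of $\Phi$. The map $(\mu,g)\mto(\Evol(\mu),c_g)$ is smooth by Theorem~\ref{Banach-Lie-VM-reg} in the first coordinate and by Lemma~\ref{to-constant-smooth}\,(a) in the second. Since $\BV([0,1],G)$ is a Banach--Lie group (Proposition~\ref{prop:Lie_grp}), its pointwise multiplication is smooth, and composing yields smoothness of $\Phi$. Smoothness of $\Psi$ is even more immediate: $\delta^r$ is smooth by Proposition~\ref{props-logder}\,(a) and $\ve_0\colon \eta\mto\eta(0)$ is smooth by Corollary~\ref{cor:smooth_eval}.

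Next, I would verify $\Psi\circ\Phi=\id$. Since $\Evol(\mu)\in\BV_*([0,1],G)$, we have $\Evol(\mu)(0)=e$, whence $\Phi(\mu,g)(0)=e\cdot g=g$. For the other component, $\Evol(\mu)c_g$ arises from $\Evol(\mu)$ by pointwise right multiplication with the constant $g$, so by Proposition~\ref{props-logder}\,(c) we get $\delta^r(\Evol(\mu)c_g)=\delta^r(\Evol(\mu))=\mu$, where the last equality is Proposition~\ref{props-logder}\,(d). To finish, $\Phi\circ\Psi=\id$ is exactly the content of Proposition~\ref{props-logder}\,(b): $\eta=\Evol(\delta^r(\eta))\,\eta(0)=\Evol(\delta^r(\eta))\,c_{\eta(0)}=\Phi(\delta^r(\eta),\eta(0))$.

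There is no genuine obstacle; the only point to be careful about is recognizing that ``right multiplication by the constant $g$'' in $\BV([0,1],G)$ corresponds precisely to the relation $\eta(t)=\zeta(t)g$ in Proposition~\ref{props-logder}\,(c), which is what guarantees invariance of $\delta^r$ under the factor $c_g$.
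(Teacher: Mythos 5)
Your proof is correct. It differs from the paper's only in how the bijectivity and the smoothness of the inverse are organized: the paper factors the map as the composition of $\Evol\times\id_G\colon \Mdna([0,1],\cg)\times G\to\BV_*([0,1],G)\times G$, which is a diffeomorphism by Proposition~\ref{props-logder}\,(d), with the multiplication map $(\gamma,g)\mapsto\gamma c_g$, which is a diffeomorphism by the semidirect product decomposition $\BV([0,1],G)=\BV_*([0,1],G)\rtimes G$ of Remark~\ref{semidirect-lie}; you instead exhibit the global inverse $\Psi=(\delta^r,\ve_0)$ directly and verify the two composition identities from parts (a)--(d) of Proposition~\ref{props-logder}, together with Theorem~\ref{Banach-Lie-VM-reg}, Lemma~\ref{to-constant-smooth}\,(a), Proposition~\ref{prop:Lie_grp} and Corollary~\ref{cor:smooth_eval} for the smoothness of $\Phi$. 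Both routes rest on the same underlying facts; the paper's version is shorter because Remark~\ref{semidirect-lie} already packages the splitting-off of the value at the left endpoint, while yours has the advantage of displaying the inverse explicitly and making visible exactly which properties of the right logarithmic derivative are used (invariance under right translation by constants for $\Psi\circ\Phi=\id$, and the reconstruction formula $\eta=\Evol(\delta^r(\eta))\,c_{\eta(0)}$ for $\Phi\circ\Psi=\id$).
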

\begin{proof}
In view of Remark~\ref{semidirect-lie},
the map $\pi\colon \BV_*([0,1],G)\times G\to
\BV([0,1],G)$, $(\gamma,g)\mto \gamma c_g$
is a $C^\infty$-diffeomorphism.
The assertion follows as also the mapping
$\Evol\colon \Mdna([0,1],\cg)\to \BV_*([0,1],G)$
is a $C^\infty$-diffeomorphism,
by Proposition~\ref{props-logder}\,(d).
\end{proof}
\begin{rem}
Since $\Mdna([0,1],\cg)$
is a Banach space and hence contractible,
the corollary entails that $\BV([0,1],G)$
and~$G$ are homotopy equivalent.
\end{rem}
\begin{appendix}
\section{Detailed proofs for Section \ref{sect:BV-mfd}}\label{app:details_tangent}
To identify the tangent manifold of the manifold $\BV([a,b],M)$ of $\BV$-functions, we first need a technical result for the identification of spaces of sections in the double tangent bundle $T^2M = T(TM)$.

\begin{la}\label{lem:T2_split}
Let $f \in \BV ([a,b],M)$ and $0 \colon M \rightarrow TM$ the zero-section. Denoting by $\kappa \colon T^2M \rightarrow T^2M$ the canonical flip of the double tangent bundle, we let $\Theta (v,w) \coloneq \kappa(T\lambda_p (v,w))$
for $v,w \in T_pM$. Then the following holds:
\begin{enumerate}
\item[\rm{(a)}] The map
\begin{align}\label{Theta_iso}
\Gamma_f \times \Gamma_f \rightarrow \Gamma_{0\circ f} (T^2M), \;\;
(\sigma , \tau) \mto \Theta\circ (\sigma, \tau)
\end{align}
is an isomorphism of Banach spaces.
\item[\rm{(b)}] Let $U \subseteq TM$ be an open neighborhood of the zero-section $0(M)$. Then \eqref{Theta_iso} restricts to a diffeomorphism
$$\Xi_f \colon \{\tau \in \Gamma_f \colon \tau([a,b])\subseteq U\} \times \Gamma_f \rightarrow \{\gamma \in \Gamma_{0\circ f} \colon \gamma([a,b]) \subseteq \kappa(TU)\}.$$
\end{enumerate}
\end{la}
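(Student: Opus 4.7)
The plan is to reduce part (a) to the product decomposition in Lemma~\ref{prod:iso} via local trivializations, and then to derive (b) by observing that $\Xi_f$ is merely the restriction of the isomorphism from (a) to matching open subsets of its domain and codomain.

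First I would compute $\Theta$ in local coordinates. Pick a chart of $M$ with image $V\subseteq F$; over $V$ the bundles $TM$ and $T^2M$ trivialize as $V\times F$ and $(V\times F)\times (F\times F)$, respectively. The inclusion $\lambda_p$ becomes $v\mapsto (p,v)$, so its tangent is $T\lambda_p(v,w)=((p,v),(0,w))$. Since the canonical flip in these coordinates satisfies $\kappa((x,y),(z,w))=((x,z),(y,w))$, one obtains $\Theta(v,w)=((p,0),(v,w))$. In particular $\pi_{TM}\circ\Theta(v,w)=0_p$, so $\Theta\circ(\sigma,\tau)$ does lie in $\Gamma_{0\circ f}(T^2M)$, and the induced map on fibres $T_pM\times T_pM\to T_{0_p}(TM)$ reads simply as $\id_{F\times F}$.

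Next I would piece together the local statements globally. Choose a subdivision $a=t_0<\cdots<t_n=b$ such that $f([t_{i-1},t_i])$ lies in such a chart domain $U_i$. Using the embeddings of type \eqref{thetamap} that define the $\BV$-topologies on $\Gamma_f$ and on $\Gamma_{0\circ f}(T^2M)$, the map \eqref{Theta_iso} becomes, on each subinterval, essentially the identity embedding
\[
\BV([t_{i-1},t_i],F)\times\BV([t_{i-1},t_i],F)\hookrightarrow \BV([t_{i-1},t_i],F\times F\times F\times F),\quad (\sigma_i,\tau_i)\mapsto (f_i,0,\sigma_i,\tau_i),
\]
which is an affine topological embedding onto a closed affine subspace by Lemma~\ref{prod:iso}. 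The matching conditions at interval endpoints on the left correspond exactly to those on the right under the local description, so one obtains an isomorphism of Banach spaces in~(a).

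For (b), both sets are open in the respective ambient Banach spaces: $\{\tau\in\Gamma_f:\tau([a,b])\subseteq U\}$ is open by Lemma~\ref{lem:co-top-weaker}, and the analogous statement for $\kappa(TU)$ uses that $\kappa$ is a diffeomorphism. The local formula $\Theta(v,w)=((p,0),(v,w))$ together with $\kappa^{-1}((p,0),(v,w))=((p,v),(0,w))$ shows that $\Theta(v,w)\in\kappa(TU)$ if and only if $v\in U$, so the isomorphism from~(a) restricts to a bijection between the two open subsets in question. Since isomorphisms of Banach spaces restrict to diffeomorphisms between open sets, $\Xi_f$ is a diffeomorphism.

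The main obstacle I anticipate is the coordinate bookkeeping for $\Theta$, especially checking that the base points of $T^2M$-fibres behave correctly under~$\kappa$ and that the compatibility at subdivision endpoints translates consistently through the trivializations; once this is settled the rest is formal.
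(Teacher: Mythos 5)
Your proposal is correct, but it takes a more hands-on route than the paper. You compute $\Theta$ directly in a chart, obtain $\Theta(v,w)=((p,0),(v,w))$, and then unwind both $\Gamma_f\times\Gamma_f$ and $\Gamma_{0\circ f}(T^2M)$ through the embeddings of type \eqref{thetamap} down to products of spaces $\BV([t_{i-1},t_i],F)$, where the map becomes the canonical identification of Lemma~\ref{prod:iso}; part (b) then follows by matching open subsets under the local formula. The paper instead argues at the bundle level: it identifies $\Gamma_{0\circ f}(T^2M)$ with $\Gamma_f(\pi_{TM}^{-1}(0(M)))$, identifies $\Gamma_f\times\Gamma_f$ with $\Gamma_f(TM\oplus TM)$ via Lemma~\ref{lem:bundle-calc}\,(b), and recognizes the map \eqref{Theta_iso} as $\Gamma_f(\Theta)$ for the vector bundle isomorphism $\Theta\colon TM\oplus TM\to\pi_{TM}^{-1}(0(M))$ over $\id_M$, citing \cite[Lemma A.20]{AGS} both for the properties of $\Theta$ and for part (b). Your version is self-contained (it does not lean on the external reference) at the cost of the coordinate bookkeeping you yourself flag; the paper's version is shorter and reuses Lemma~\ref{lem:bundle-calc}\,(a) as a black box. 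One small inaccuracy in your write-up: the embeddings \eqref{thetamap} record only the fibre components $\pr_2\circ\psi_i\circ\tau$, so the local form of \eqref{Theta_iso} is the isomorphism $\BV(I_i,F)\times\BV(I_i,F)\to\BV(I_i,F\times F)$ rather than a map into $\BV(I_i,F^4)$ carrying the base data $(f_i,0)$ along; this does not affect the argument, since the base curve is fixed in both descriptions.
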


\begin{proof}
(a) Note that $\Gamma_{0\circ f}(T^2M) =\Gamma_f (\pi^{-1}_{TM}(0(M)))$ as a set and as a topological space (as can be seen by direct inspection of the norms). A pointwise calculation shows that also the vector space structures coincide, whence the two spaces coincide as Banach spaces. As in Lemma \ref{lem:bundle-calc} (b), we identify $\Gamma_f \times \Gamma_f$ with $\Gamma_f (TM \oplus TM)$, whence by \cite[Lemma A.20 (a)]{AGS} and Lemma \ref{lem:bundle-calc} (a):
$$\Theta \circ (\sigma , \tau) = \Gamma_f (\Theta) (\sigma,\tau) \in \Gamma_f (\pi^{-1}_{TM}(0(M)))$$
for all $(\sigma,\tau) \in \Gamma_f \times \Gamma_f$. The map in \eqref{Theta_iso} therefore coincides up to identification with $\Gamma_f(\Theta)$ which is an isomorphism with inverse $\Gamma_f (\Theta^{-1})$ (combining \cite[Lemma A.20 (a)]{AGS} with Lemma \ref{lem:bundle-calc} (a)).

(b) can be established exactly as in \cite[Lemma A.20 (e)]{AGS}.
\end{proof}
\noindent
We shall use this result to establish the identification of the tangent manifold as a manifold of $\BV$-functions with values in the tangent manifold $TM$.

\begin{proof}[\bf Proof of Proposition \ref{prop:tangent_ident}]
Without loss of generality, we may assume that the local addition $\Sigma$ on $M$ is normalized. We use again for $f \in \BV ([a,b],M)$ the smooth parametrization $\phi_f \colon O_f \rightarrow O_f'\subseteq \BV([a,b],M)$ which maps $0 \in \Gamma_f$ to $f$. Thus $T\phi_f (0,\cdot) \colon \Gamma_f \rightarrow T_f \BV ([a,b],M)$ is an isomorphism of Banach spaces. For $\tau \in \Gamma_f$, since $\Sigma$ is normalized we have for each $x \in [a,b]$ 
\begin{align*}
T\varepsilon_x T \phi_f (0,\tau) = T\Sigma|_{T_{f(x)}M}(\tau(x))=\tau(x).
\end{align*}
We deduce that $\Phi (T\phi_f (0,\tau))=\tau \in \Gamma_f \subseteq \BV([a,b], TM)$, whence $\Phi (v)\in \Gamma_f$ for each $v \in T_f \BV([a,b],M)$ and $\Phi$ is a linear bijection of $T_f \BV([a,b],M)$ onto $\Gamma_f$.
This already implies that $\Phi$ is a bijection. Thus it suffices to prove that $\Phi$ is a smooth diffeomorphism. If this is true, $\BV([a,b],\pi_M) \circ \Phi = \pi_{\BV ([a,b],M)}$ implies that $\BV([a,b],\pi_M)$ is smooth and the projection of the vector bundle we wished to construct.

The sets $S_f \coloneq T\phi_f (O_f \times \Gamma_f)$ form an open cover of $T\BV([a,b],M)$ for $f \in \BV([a,b],M)$. Let $\Xi_f$ be the diffeomorphism from Lemma \ref{lem:T2_split} (b). Then the images 
$$\Phi(S_f) = (\phi_{0\circ f} \circ \Xi_f)(O_f\times \Gamma_f)=\phi_{0\circ f}(O_{0\circ f})$$ are open sets which cover $\BV([a,b],TM)$. To establish that $\Phi$ is a smooth diffeomorphism it therefore suffices to show for each $f \in \BV([a,b],M)$ that
$\Phi \circ T\phi_f = \phi_{0\circ f}\circ \Xi_f$.
Let $\lambda_p \colon T_pM \rightarrow TM$ be the inclusion of the fibre for $p \in M$. Now we use that tangent vectors can be represented as equivalence classes of curves passing through their base point to write
$$T\phi_f (\sigma, \tau) = [t \mapsto \Sigma \circ (\sigma +t \tau)], \quad (\sigma ,\tau) \in O_f \times \Gamma_f.$$
A immediate computation shows $\Phi(T\phi_f (\sigma,\tau))$ equals
\begin{align*}
([t\mapsto& \Sigma \circ \lambda_{f(x)}(\sigma(x)+t\tau(x))])_{x \in [a,b]} \hspace{3cm}\\
=& (T(\Sigma \circ \lambda_{f(x)})(\sigma(x),\tau(x)))_{x \in [a,b]}\\ 
=& (\Sigma_{TM} ((\kappa \circ T\lambda_{f(x))})(\sigma(x),\tau(x)))_{x \in [a,b]}\\
=&((\Sigma_{TM}\circ \Xi_f)(\sigma,\tau)(x))_{x\in [a,b]} = (\phi_{0\circ f}\circ \Xi_f)(\sigma,\tau)
\end{align*}
Hence $\Phi$ is a diffeomorphism.
\end{proof}
\noindent
Next, we identify the manifold of $\BV$-functions with values in a product as a product of manifolds of $\BV$-functions. 

\begin{proof}[\bf Proof of Lemma \ref{lem:product_mfd}]
The map sending a $\BV$-function in $\BV([a,b],M_1 \times M_2)$ to its components is the pushforward $\BV([a,b], (\pr_1,\pr_2))$, hence smooth by Proposition \ref{prop:smooth_pushfwd}. Clearly it is also a bijection. Hence, we only need to prove that its inverse $\theta \colon \BV([a,b],M_1) \times \BV([a,b],M_2) \rightarrow \BV([a,b],M_1\times M_2)$ is smooth. For this, we recall that if $\Sigma_i$ is the local addition on $M_i, i\in \{1,2\}$, then $\Sigma_1\times \Sigma_2$ is a local addition on $M_1\times M_2$. Let now $(f,g) \in \BV([a,b],M_1 \times M_2)$; then $\theta (O_f \times O_g) = O_{(f,g)}$, by construction. Thus,
it suffices to prove that $\theta_{f,g} \coloneq \varphi_{(f,g)}\circ \theta \circ  \varphi_f^{-1} \times \varphi_g^{-1}$ is smooth for every pair $(f,g)$. To verify the latter, we localize $\Gamma_f,\Gamma_g$ and $\Gamma_{(f,g)}$ using families of manifold charts $(\kappa_i)_i, (\lambda_i)_i$ and $(\kappa_i \times \lambda_i)_i$ via $\eqref{thetamap}$. Following Remark \ref{rem:cont:inclusion}, we see that the maps \eqref{thetamap} conjugate $\theta_{f,g}$ in each component to the restriction of the insertion which is smooth by Lemma \ref{prod:iso}. As the topology on $\Gamma_{(f,g)}$ is is initial with respect to the map \eqref{thetamap}, we deduce that $\theta_{f,g}$ is smooth. This concludes the proof.
\end{proof}
\end{appendix}

\end{document}